\newcommand{\n}{\noindent}
\newcommand{\rrr}{\longrightarrow}
\newcommand{\s}{{\rm Spec}}
\newcommand{\oo}{{\mathcal O}}
\newcommand{\bb}{\bigskip}
\newcommand{\e}{{\sf E}}
\newcommand{\scd}{sch\'ematiquement dominant}
\newcommand{\isl}{isomorphisme local\,}
\newcommand{\rt}{\, \widetilde{\longrightarrow}\, }
\newcommand{\wt}{\widetilde}
\newcommand{\sm}{{\sf Sm}}
\newcommand{\et}{{\sf Et.sep}}
\newcommand{\ul}{\underleftarrow{\lim}\,}
\newcounter{spec}
\newenvironment{thlist}{\begin{list}{\rm{(\roman{spec})}}%
{\usecounter{spec}\labelwidth=20pt\itemindent=0pt\labelsep=10pt}}%
{\end{list}}%
\numberwithin{equation}{section}
\newtheorem{Th}{Théorème}
\newtheorem{lemme}{Lemme}[subsection]
\newtheorem{thm}[lemme]{Théorème}
\newtheorem{prop}[lemme]{Proposition}
\newtheorem{cor}[lemme]{Corollaire}
\newtheorem{conj}[lemme]{Conjecture}
\newtheorem{scholie}[lemme]{Scholie}
\theoremstyle{definition}
\newtheorem{defn}[lemme]{Définition}
\theoremstyle{remark}
\newtheorem{rque}[lemme]{Remarque}
\newtheorem{rques}[lemme]{Remarques}
\newtheorem{ex}[lemme]{Exemple}
\newtheorem{para}[lemme]{}
\newtheorem{vide}[lemme]{}
\begin{document}
\title{Recoller pour s\'eparer}
\author{Daniel Ferrand}
\address{IMJ-PRG, Case 247, 4 place Jussieu, 75252 Paris Cedex 05, France}
\email{daniel.ferrand@imj-prg.fr}
\author{Bruno Kahn}
\address{IMJ-PRG, Case 247, 4 place Jussieu, 75252 Paris Cedex 05, France}
\email{bruno.kahn@imj-prg.fr}
\subjclass[2010]{14A15, 14B25}
\date{22 octobre 2015}
\begin{abstract} On introduit une notion de \emph{s\'eparateur} d'un morphisme de sch\'emas $f : T\rrr S$: en particulier, un s\'eparateur est universel parmi les morphismes de $T$ vers un $S$-sch\'ema s\'epar\'e $E$. Lorsque $f$ est quasi-s\'epar\'e, son s\'eparateur existe si et seulement si l'adh\'erence sch\'ematique de la diagonale se projette sur les deux facteurs par des morphismes plats de type fini. En particulier, $f $ admet un s\'eparateur si  $T$ est  noeth\'erien de Dedekind et $S = \s({\bf Z})$, ou si $f$ est \'etale de pr\'esentation finie et que $S$ est normal. Dans tout schéma normal de type fini sur un anneau noethérien, il existe un ouvert contenant les points de codimension $1$ et admettant un séparateur. A contrario, on donne plusieurs exemples de morphismes $f$ n'admettant pas de s\'eparateur. 

Comme application, on attache \`a tout sch\'ema lisse $T$ sur une base normale $S$ un morphisme vers un $S$-sch\'ema \'etale de pr\'esentation finie et \emph{s\'epar\'e}, qui est universel (variante séparée du ``sch\'ema des composantes connexes des fibres''), g\'en\'eralisant simultan\'ement le cas classique o\`u la base est un corps et le cas d'un morphisme propre et lisse (factorisation de Stein).\\

\noindent {\sc Abstract.} 
We introduce the notion of a \emph{separator} for a morphism of schemes $f : T\rrr S$; in particular, it is universal among morphisms from $T$ to separated $S$-schemes $E$. A separator is a local isomorphism; this property conveys the intuition of \emph{gluing some affine covering more}, in order to make the scheme separated. When $f$ is quasi-separated, its separator exists if and only if the schematic closure of the diagonal projects on both factors by flat morphisms of finite type. In particular, $f$ admits a separator if $T$ is Noetherian Dedekind and $S=\s({\bf Z})$, or if $f$ is étale of finite presentation and $S$ is normal. Any normal scheme of finite type over a Noetherian ring admits an open subset containing all the points of codimension 1, which has  a separator. A contrario, we give several examples of morphisms $f$ that do not admit a separator. 

As an application, we attach to every smooth scheme $T$ over a normal base $S$ a morphism to a  \emph{separated} étale $S$-scheme of finite presentation, which is universal (a kind of separated alternative for ``scheme of connected components of the fibres''). This simultaneously generalizes the classical case where the base is a field, and the case of a smooth and proper morphism (Stein factorisation).
\end{abstract}
\maketitle
\newpage

\tableofcontents






\section{Introduction}

\enlargethispage*{20pt}

\subsection{}  La question de l'existence d'une \emph{enveloppe s\'epar\'ee} d'un sch\'ema ne sera pas 
abord\'ee dans toute sa g\'en\'eralit\'e, mais plut\^ot  sous une forme restreinte : \'etant donn\'e un sch\'ema $T$, disons int\`egre et noeth\'erien, existe-t-il un sch\'ema \emph{s\'epar\'e} $E$ et un isomorphisme local surjectif  $h:T\rrr~E$ ? La propriété d'\^{e}tre un isomorphisme local exprime exactement l'intuition que le passage de $T$ à $E$ consiste à \emph{recoller davantage} les ouverts d'un recouvrement. En effet, pour un tel morphisme, chaque point de $T$ est contenu dans un ouvert $U$ sur lequel $h$ induit une immersion ouverte vers $E$ ; si $U$ et $V$ sont des ouverts avec cette propriété, l'image $h(U\cup V) = h(U) \cup h(V)$ est la réunion d'ouverts  isomorphes respectivement à $U$ et à $V$, mais recollés le long de l'ouvert $h(U) \cap h(V)$, lequel contient, éventuellement strictement, l'ouvert $h(U\cap V)$, isomorphe à $U \cap V$.

Dans le cas g\'en\'eral o\`u $T$ n'est plus suppos\'e noeth\'erien, ni surtout int\`egre, il est apparu qu'une hypoth\`ese sur les images dans $E$ des points maximaux de $T$ est indispensable, et que sa  bonne formulation utilise le morphisme diagonal, $\Delta : T \rrr T\times_{E}T$ : il doit \^etre \scd\, i.e. l'homomorphisme $\oo_{T\times_{E}T} \rrr \Delta_{\star}(\oo_{T})$ doit être injectif.\medskip

Dans la situation relative (à un schéma de base $S$) on est conduit à la définition suivante : partant d'un morphisme $ f : T \rrr S$, nous dirons qu'un morphisme de $S$-sch\'emas $h : T \rrr E$  est un {\it s\'eparateur} pour $f$ si $E$ est un $S$-sch\'ema s\'epar\'e et si $h$ est un isomorphisme local surjectif quasi-compact et quasi-s\'epar\'e, dont le morphisme diagonal est \scd\footnote{Rappelons qu'un morphisme de sch\'emas  $f : T\rrr S $ est dit \emph{quasi-s\'epar\'e} si son morphisme diagonal $\Delta_{f} $ est quasi-compact ; cette hypoth\`ese implique que l'image directe $(\Delta_{f})_{\star}(\oo_{T})$ est une $\oo_{T\times_{S} T}$-alg\`ere quasi-coh\'erente, et que donc l'adh\'erence sch\'ematique de $\Delta_{f} $ existe ; de plus cette adh\'erence sch\'ematique a pour espace sous-jacent l'adh\'erence (topologique) de $\Delta_{f}(T)$ dans $T\times_{S} T$ \cite[6.10.5]{EGAG}.
Enfin, un morphisme est \emph{de type fini} s'il est quasi-compact et localement de type fini.}. Si le sch\'ema $S$ lui-m\^eme est s\'epar\'e, les notions de s\'eparateur relatif \`a $S$ ou relatif \`a $\s({\bf Z})$ co\"{i}ncident. 
\medskip

Si $h : T \rrr E$ est un s\'eparateur pour $f$, on montre que la restriction de $h$ \`a \emph{tout} ouvert  $U$ de $T$, s\'epar\'e sur $S$, induit un isomorphisme de $U$  sur son image $h(U)$; de plus, ce morphisme $h$ est universel au sens o\`u tout $S$-morphisme de $T$ dans un sch\'ema s\'epar\'e sur $S$ se factorise de fa\c con unique par $h$ ; autrement dit, un séparateur, quand il existe, est aussi une \emph{ enveloppe séparée}.
 \medskip

\subsection{} \label{s1.2} L'existence d'un s\'eparateur pour $T$ \'equivaut \`a l'existence d'un schéma quotient de $T$ par une relation  d'\'equivalence convenable $R$  (qui n'est autre que $\xymatrix{ T \times_{E}T  \ar@<0.5ex>[r] \ar@<-0.5ex>[r]& T}$, lorsque le s\'eparateur existe). Le quotient $T/R$ est s\'epar\'e sur $S$ si le morphisme $R \rrr T\times_{S} T$ est une immersion ferm\'ee. Il est donc bien naturel d'envisager a priori, comme relation, \emph{l'adh\'erence sch\'ematique  de la diagonale} $T \rrr T\times_{S} T$. Mais cette adh\'erence sch\'ematique n'est pas toujours une relation d'\'equivalence.
\medskip

 Cette difficult\'e appara\^it d\'ej\`a dans la cat\'egorie des espaces topologiques. Soient $T$ un espace topologique et $R$ l'adh\'erence de la diagonale dans $T\times T$. La relation $(x, y) \in R$ signifie que chaque voisinage de $x$ rencontre chaque voisinage de $y$  ; cette relation n'est en g\'en\'eral pas transitive. Mais on v\'erifie imm\'ediatement que cette adh\'erence  $R$  est  une relation d'\'equivalence d\`es que les applications compos\'ees $
\xymatrix{R \ar@<0.5ex>[r] \ar@<-0.5ex>[r]& T}$ sont ouvertes ; l'espace topologique quotient $T/R$ est alors s\'epar\'e et l'application $T \rrr T/R$ est ouverte (voir \cite[I.55]{TG} pour ces derniers points).
\medskip

La d\'emarche, propos\'ee ici pour les sch\'emas, pr\'esente des analogies avec la remarque pr\'ec\'edente, les morphismes plats de sch\'emas rempla\c cant les applications ouvertes d'espaces topologiques.
\bb

En effet, soient $T$ un $S$-sch\'ema et $R \subset T\times_{S} T$ l'adh\'erence sch\'ematique de la diagonale.
On montre que  $R$ d\'efinit une relation d'\'equivalence sur $T$ d\`es que les morphismes compos\'es $
\xymatrix{R \ar@<0.5ex>[r] \ar@<-0.5ex>[r]& T}$ sont plats ; si, de plus, ils sont de type fini ces morphismes sont alors des isomorphismes locaux, du moins sous une hypothèses de finitude assez anodine sur $T$.

On d\'egage ensuite un \'enonc\'e g\'en\'eral : soit  \; $d_{0}, d_{1} :
\xymatrix{R \ar@<0.5ex>[r] \ar@<-0.5ex>[r]& T}
$ une relation d'\'equivalence sur un sch\'ema $T$ telle que les morphismes $d_{0}$ et $d_{1}$ soient des isomorphismes locaux et que le morphisme ``de r\'eflexivit\'e'' $\varepsilon : T \rrr R$ soit \scd. Alors, le faisceau fppf quotient $\wt{T/R}$ est repr\'esentable \emph{par un sch\'ema.}
\medskip

\subsection{} Cela ram\`ene donc le probl\`eme de l'existence d'un s\'eparateur  \`a une question de platitude. Voici les cas principaux où nous savons démontrer cette existence:
 
\begin{Th}\label{t1} Soit $f:T\to S$ un morphisme de schémas. 
\begin{enumerate}
\item  (prop. \ref{p5.3}) Si $S=\s({\bf Z})$ et $T$ est noethérien régulier de dimension $1$, $f$ admet un séparateur.
\item (cor. \ref{c5.1}) De même si $S$ est normal connexe et $f$ étale de présentation finie.
\item (th. \ref{t6.1}) Si $S=\s(A)$ où $A$ est noéthérien, si $f$ est de type fini et si $T$ est normal, il existe un ouvert $U\subset  T$, contenant les points de codimension $1$, tel que $f_{|U}$ admette un séparateur.
\end{enumerate}
\end{Th}


\subsection{} Le langage des anneaux locaux \emph{apparentés}, certes un peu désuet aujourd'hui, éclaire certains aspects de nos constructions. En effet, le passage d'un schéma intègre $T$ à son séparateur, quand ce dernier existe, consiste simplement à identifier les points de $T$ dont les anneaux locaux sont apparentés.

Mais nous définissons un schéma intègre noethérien de dimension 1, n'ayant que deux points fermés et qui n'admet pas de séparateur ; il fournit en un sens  l'archétype de la non-séparation. Ce schéma admet cependant une enveloppe séparée, ce qui montre que les deux notions sont différentes.
\medskip

Pour mettre en lumière ce qu'implique l'existence d'un séparateur, et aussi pour expliquer pourquoi elle n'est pas plus fréquente, nous construisons des morphismes de schémas $f : T \rightarrow S$, o\`u $T$ est réunion de deux ouverts affines, et  qui n'admettent pas de séparateurs, bien que $f$ soit, dans un cas, lisse de dimension relative 1 et $S$ régulier, et dans l'autre cas étale sur une base affine de dimension 1. Notons que ces exemples portent sur des schémas noethériens, et qu'il en est de m\^{e}me de tous les contre-exemples du texte; ce ne sont donc pas des défauts de finitude qui provoquent les défauts de séparation.

\subsection{} On cherche ensuite \`a d\'efinir un foncteur de la cat\'egorie des sch\'emas (en tout cas plats de pr\'esentation finie) sur une base $S$ vers la cat\'egorie des $S$-sch\'emas \'etales de pr\'esentation finie, foncteur qui soit adjoint \`a gauche de l'inclusion de ces cat\'egories ; autrement dit, il s'agit d'associer \`a tout tel sch\'ema $T/S$ un sch\'ema $E$ \'etale et de pr\'esentation finie sur $S$, muni d'un morphisme de $S$-sch\'emas $h : T \rrr E$ qui soit universel pour les morphismes de $T$ vers un $S$-sch\'ema \'etale de pr\'esentation finie.
 
 On montre que cette propri\'et\'e universelle du morphisme $h : T \rrr E$ est \'equivalente au fait que les fibres de $h$ soient g\'eom\'etriquement connexes. C'est d'ailleurs ce qu'on constate dans deux situations classiques :

\begin{itemize}
\item Si $f : T \rrr S$ est un morphisme propre et lisse, avec $S$ noeth\'erien, la factorisation de Stein 
$$
h : T \rrr {\rm Spec}(f_{\star}(\oo_{T}))
$$
 d\'efinit l'adjoint cherch\'e ; et  dans ce cas, les fibres de  $h$ sont, bien s\^ur,  g\'eom\'etriquement connexes. 

\item Si $S$ est le spectre d'un corps, cet adjoint est bien connu :  il est not\'e $\pi_{0}(T/S)$, et il ``repr\'esente'' les composantes connexes de $f$.
\end{itemize}
\medskip

 Un r\'esultat développé par  {\sc Romagny} \cite{Rom11} (mais déjà signalé dans le livre  {\sc Laumon--Moret-Bailly} \cite{LMB00}) \'eclaire la suite. Il montre que le ``foncteur des composantes connexes des fibres'' d'un morphisme lisse $T \rrr S$ est repr\'esentable par un \emph{espace alg\'ebrique}  qu'il note $\pi_{0}(T/S)$ ; il est muni d'un morphisme $h : T \rrr \pi_{0}(T/S)$ dont les fibres sont, comme pr\'evu, g\'eom\'etriquement connexes ; de plus $\pi_{0}(T/S)$ est \'etale sur $S$. Il ne semble pas qu'on dispose de conditions g\'en\'erales assurant que cet espace alg\'ebrique soit un sch\'ema, hormis s'il est {\it s\'epar\'e}.
\medskip

\n Cela justifie qu'on se restreigne dans la suite \`a la sous-cat\'egorie des sch\'emas  \'etales de pr\'esentation finie, et  qui sont de plus {\it s\'epar\'es}.

\subsection{} On cherche donc \`a associer fonctoriellement \`a tout $S$-sch\'ema plat de pr\'esentation finie $T$, un morphisme $h : T \rrr E$ de $S$-sch\'emas, o\`u  $E$ est \'etale et s\'epar\'e sur $S$, qui soit universel. On constate d'abord que la cat\'egorie des $S$-morphismes surjectifs de $T$ vers un $S$-sch\'ema \'etale s\'epar\'e est tr\`es simple : elle est isomorphe \`a un ensemble ordonn\'e filtrant \`a gauche, et elle poss\`ede un \'el\'ement initial si $S$ est int\`egre. Ainsi l'adjoint cherch\'e existe d\`es que $S$ est int\`egre.

\n Mais, sans hypoth\`eses suppl\'ementaires, ce foncteur, que l'on note $\pi^s(T/S)$, n'a pas les propri\'et\'es attendues ; en particulier il ne commute  en g\'en\'eral  pas aux changements de base $S' \rightarrow S$, m\^eme s'ils sont \'etales.
\bb

Or, une variante des r\'esultats de s\'eparation cit\'es en \ref{s1.2} s'applique aux espaces alg\'ebriques et montre que, sur un sch\'ema \emph{normal} $S$, un $S$-espace alg\'ebrique \'etale et quasi-compact $F$ admet un morphisme \'etale et birationnel $F \rrr F^{\rm sep}$ sur un espace alg\'ebrique \'etale \emph{et s\'epar\'e}, qui est alors (repr\'esentable par) un sch\'ema, d'apr\`es  un r\'esultat classique. En appliquant cette construction 
\`a l'espace alg\'ebrique 
de {\sc Romagny}, 
on obtient l'énoncé suivant:

\begin{Th}[\protect{Th. \ref{p9.4} et prop. \ref{p10.1}}] Soient $S$ un schéma normal, $\sm(S)$ la catégorie des $S$-schémas lisses de présentation finie et $\et(S)$ la sous-catégorie pleine de $\sm(S)$ formée des $S$-schémas étales \emph{séparés}. Alors le foncteur d'inclusion
\[\et(S)\to \sm(S)\]
admet un adjoint à gauche, qui commute aux changements de base lisses et aux produits  (finis).
\end{Th}

Voici un survol du texte.

\begin{itemize}
\item Le \S \ref{s2} introduit les s\'eparateurs et en donne les premi\`eres propri\'et\'es ; une attention particuli\`ere est port\'ee \`a la condition pour le morphisme diagonal d'\^etre \scd. Un critère technique pour qu'un morphisme plat soit un isomorphisme local est donné ici ; il resservira plus loin.

\item Le \S \ref{s3} montre, entre autres choses, qu'un séparateur, quand il existe, est aussi une enveloppe séparée. Ce paragraphe contient aussi un premier exemple de schéma [quasi-séparé] qui admet une enveloppe séparée sans admettre  de s\'eparateur. 

\item Dans le \S \ref{s4}, on montre que le faisceau quotient fppf $\wt{T/R}$, d'un sch\'ema $T$ par une relation d'\'equivalence $R$, est repr\'esentable par un sch\'ema si, essentiellement,  les deux projections $\xymatrix{R \ar@<0.5ex>[r] \ar@<-0.5ex>[r]& T}$ sont des isomorphismes locaux.

\item Le \S \ref{s5}, qui est le coeur du texte, contient les crit\`eres d'existence de s\'eparateurs ; ils reposent sur le résultat de \S \ref{s4}.

\item Dans le \S \ref{s6}, il est démontré qu'un schéma normal de type fini sur un anneau noethérien possède un ouvert contenant les points de codimension 1 et qui admet un séparateur.

\item Dans le \S \ref{s07}, adopter  l'ancien point de vue des anneaux  locaux apparentés s'avère cependant instructif, et cela conduit à l'archétype de ce qui s'oppose à la séparation.

\item Le \S \ref{s7} propose deux autres exemples de morphismes qui n'admettent pas de séparateur.

 \item Le \S \ref{s8} contient une extension succincte aux espaces algébriques des résultats du \S \ref{s5}.
 
\item Ces r\'esultats de s\'eparation sont utilis\'es au \S \ref{s9}, o\`u on consid\`ere l'adjoint \`a gauche de l'inclusion de la cat\'egorie des $S$-sch\'emas \'etales, dans celle des $S$-sch\'emas lisses ; on montre pourquoi il faut se restreindre aux sch\'emas \'etales qui sont de plus \'epar\'es ; cet adjoint (s\'epar\'e) n'existe, en g\'en\'eral, que si $S$ est int\`egre, et il ne poss\`ede les propri\'et\'es attendues que si la base $S$ est normale. 

\item L'Appendice \ref{sA} rassemble des d\'efinitions et des r\'esultats portant sur les isomorphismes locaux et les adh\'erences sch\'ematiques, avec une insistance particulière sur les isomorphismes locaux. Ces résultats sont utilisés partout dans le texte.

\item  L'Appendice \ref{sB} montre   le r\^ole de la platitude pour forcer l'adh\'erence sch\'ematique de la diagonale \`a \^etre une relation d'\'equivalence.
\end{itemize}
\vspace{1cm}

{\it Il faut souligner, \`a l'or\'ee d'un travail sur la s\'eparation des sch\'emas, que la terminologie actuelle \,\emph{ sch\'ema / sch\'ema s\'epar\'e},\, introduite dans la version de \emph{EGA  I} parue en 1971, remplace maintenant la terminologie \, \emph{pr\'esch\'ema/ sch\'ema}\,  qui \'etait utilis\'ee jusqu'\`a la fin des ann\'ees 60 ; les 7 volumes des \emph{EGA II }\`a \emph{IV}, traitent de pr\'esch\'emas, ainsi que les r\'e\'editions de \emph{SGA 1} et de \emph{SGA 2} ; mais la r\'e\'edition de \emph{SGA 3} adopte la nouvelle terminologie.}

\newpage

\section{S\'eparateurs}\label{s2}

\subsection{Définition et propriétés générales}

\begin{defn}\phantomsection\label{d2.1} Soit $f : T \rrr S$ un morphisme de sch\'emas. On appelle \emph{s\'eparateur} pour $f$ un morphisme de $S$-sch\'emas $h : T \rrr E$, de but  un sch\'ema s\'epar\'e sur $S$, et qui v\'erifie les propri\'et\'es  suivantes :

\n $i)$ $h$ est un isomorphisme local surjectif, quasi-compact et quasi-s\'epar\'e.

\n $ii)$ Le morphisme diagonal $\Delta_{h}$ est \scd\, (définition \ref{dA.1}).
\end{defn}

Lorsque $S = \s({\bf Z})$ on parle simplement de s\'eparateur pour $T$.

\begin{rques}\phantomsection\label{r2.1} \

$i)$\, Un morphisme $f : T \rrr S$ qui admet un s\'eparateur  est quasi-s\'epar\'e,  ou de manière équivalente, le morphisme diagonal $\Delta_f$ est quasi-compact. En effet, en notant $h : T \rightarrow E$ un séparateur,  $\Delta_{f}$ se factorise en 
$$
T \; \stackrel{\Delta_{h}}{\rrr} \; T \times_{E}T \; \stackrel{u}{\rrr}\; T\times_{S} T .
$$
Puisque $h$ est quasi-séparé, $\Delta_{h}$ est quasi-compact, de plus $u$ est une immersion ferm\'ee puisque qu'elle provient, par changement de base, du morphisme diagonal du $S$-sch\'ema s\'epar\'e  $E$ ;  donc le composé est quasi-compact. 
\medskip

\n $ii)$ Si $T$ est intègre, la propriété $i)$ de \ref{d2.1}implique $ii)$, comme il résultera de l'équivalence entre $a)$ et $c)$ dans la proposition \ref{p2.1} ci-dessous.
\medskip 

\n $iii)$ Pour un isomorphisme local $h : T \rrr S$, la propri\'et\'e d'avoir un morphisme diagonal \scd\, signifie intuitivement que $h$ induit une injection sur l'ensemble des points maximaux de $T$; c'est \`a rapprocher de la notion de morphisme \emph{birationnel} tel que d\'efini dans \cite[2.3.4]{EGAG} ; en général, un morphisme $h : T \rrr S$ y est dit {\it birationnel} s'il induit une bijection de l'ensemble des points maximaux de $T$ sur l'ensemble de ceux de $S$, si, pour tout point maximal $s$ de $S$, l'ensemble $h^{-1}(s)$ est r\'eduit \`a un point $t$ et enfin si le morphisme $\oo_{S, s} \rrr \oo_{T, t}$ est bijectif. 

Lorsque $h : T \rrr S$ est un isomorphisme local, la remarque 2.3.4.1 de loc. cit. entra\^ine que $h$ est birationnel  d\`es qu'il induit une {\it bijection} entre les ensembles des points maximaux (cf. la propri\'et\'e $c)$ de la proposition \ref{p2.1} ci-dessous). 
\end{rques}

\begin{lemme}\phantomsection Soit $h : T \rrr E$ un morphisme fid\`element plat quasi-compact et quasi-s\'epar\'e. Si le morphisme diagonal $\Delta_{h}$ est \scd, alors le morphisme canonique $\theta : \oo_{E} \rrr~h_{\star}(\oo_{T})$ est un isomorphisme.\end{lemme}

\begin{proof} Consid\'erons le diagramme
\[
\xymatrix{ T \ar[r]^{\Delta_{h}} & T\times_{E}T \ar[r]^{p_{1}} \ar[d]_{p_{0}} & T\ar[d]^h\\
& T \ar[r]_{h} & E.}
\]

Puisque $h$ est fid\`element plat quasi-compact, il suffit de v\'erifier que l'application $h^{\star}(\theta)$ obtenue par changement de base  est bijective. Comme le morphisme $h$ est aussi suppos\'e quasi-s\'epar\'e, on dispose d'apr\`es \cite[9.3.3]{EGAG} d'un isomorphisme
$$
h^{\star}h_{\star}(\oo_{T}) \; \simeq \; {p_{1}}_{\star}{p_{0}}^{\star}(\oo_{T}) .
$$
On est donc ramen\'e \`a voir que l'application compos\'ee, qui sera not\'ee $\theta'$,
$$
\oo_{T} \simeq h^{\star}(\oo_{E}) \; \stackrel{h^{\star}(\theta)}{\rrr} \; h^{\star}h_{\star}(\oo_{T}) \; \simeq \; {p_{1}}_{\star}(\oo_{T\times_{E}T})
$$
est bijective. Cette applicationn $\theta'$ est isomorphe \`a l'application canonique associ\'ee au morphisme $p_{1} : T\times_{E}T \rrr T$, comme on s'en convainc par r\'eduction au cas o\`u $T$ et $E$ sont affines. Notons $\varphi : \oo_{T\times_{E}T} \rrr {\Delta_{h}}_{\star}(\oo_{T})$ l'application associ\'ee  au morphisme diagonal. Puisque le compos\'e $p_{1}\Delta_{h}$ est l'application identique, l'application compos\'ee
$$
\oo_{T}\, \stackrel{\theta'}{\rrr} \, {p_{1}}_{\star}(\oo_{T\times_{E}T}) \, \stackrel{{p_{1}}_{\star}(\varphi)}{\rrr} \, {p_{1}}_{\star}{\Delta_{h}}_{\star}(\oo_{T})
$$
est bijective, et par suite, ${p_{1}}_{\star}(\varphi)$ est surjective. Mais, $\varphi$ \'etant injective par hypoth\`ese, il en est de m\`eme de ${p_{1}}_{\star}(\varphi)$, donc $\theta'$ est bien bijective.
\end{proof}

\begin{ex}\phantomsection Soit $T$ un sch\'ema int\`egre noeth\'erien r\'egulier de dimension 1 dont l'ensemble des points est fini. Alors le morphisme de $T$ dans son enveloppe affine est un s\'eparateur.
\end{ex}

\n Posons en effet $B = \Gamma(T, \oo_{T})$ ; le morphisme canonique $T \rrr \s(B)$ est \emph{l'enveloppe affine} de $T$ \cite[9.1.21]{EGAG}. Soit $K$ le corps des fractions de $B$, i.e. le corps des fonctions rationnelles sur $T$. Pour tout point ferm\'e $t$ de $T$, l'anneau local en $t$ est muni d'une injection
$$
j_{t} : \oo_{T, t} \; \rrr \; K
$$
dont l'image est un anneau de valuation discr\`ete de $K$ ; on a \cite[8.5.1.1]{EGAG}
$$
B = \Gamma(T, \oo_{T})\; = \; \bigcap_{t\in T} j_{t}(\oo_{T, t}) .
$$
Les anneaux locaux de points ferm\'es distincts peuvent avoir la m\^eme image dans $K$ (voir \S 6) ; soient $A_{1}, \cdots A_{n}$ les anneaux de valuation \emph{distincts} qui proviennent de points de $T$, de sorte que $B = A_{1}\cap \cdots \cap A_{n}$. Le th\'eor\`eme d'approximation \cite[6, \S 7.1, prop. 1]{AC} indique que, en notant $\mathfrak{p}_{i} = B \cap \mathfrak{m}(A_{i})$, l'inclusion $B \subset A_{i}$ induit un isomorphisme
$$
B_{\mathfrak{p}_{i}}\quad \wt{\rrr} \quad A_{i} .
$$
Cela montre que le morphisme $T \rrr \s(B)$ est un isomorphisme local ; il est surjectif d'apr\`es la proposition 2 de loc.cit, enfin son morphisme diagonal est \scd\  puisque $T$ est int\`egre (remarque \ref{r2.1} {\it ii)}). \qed
\medskip

Nous montrerons que tout sch\'ema localement noeth\'erien r\'egulier de dimension 1 admet un s\'eparateur (proposition \ref{p5.3}), et nous donnerons  un exemple de sch\'ema noeth\'erien int\`egre de dimension 1, ayant deux points ferm\'es, et qui n'admet pas de s\'eparateur (exemple \ref{ex6.1}).

\subsection{Le morphisme diagonal}

La proposition qui suit indique quelques cons\'equences de l'hypoth\`ese {\it ii)} de la définition \ref{d2.1}, portant sur le morphisme diagonal ; nous avons d\'evelopp\'e certains d\'etails de leur d\'emonstration pour mettre en \'evidence des arguments que l'on retrouvera souvent dans la suite de ce texte.

\begin{prop}\phantomsection \label{p2.1}Consid\'erons les propri\'et\'es suivantes relatives \`a un isomorphisme local quasi-s\'epar\'e  $h:T \rrr S$.

\begin{itemize}
\item[ a)] Le morphisme diagonal $\Delta_{h} : T \rrr T\times_{S}T$ est \scd.

\item[ b)] Pour tout ouvert $U$ de $T$ tel que $h$ induise une immersion ouverte de $U$ dans $S$, le morphisme 
 $U \rrr h^{-1}(h(U))$ est \scd. 
  
 \item[b')]  Il existe un recouvrement de $T$ par des ouverts ayant les deux propri\'et\'es \'evoqu\'ees en $b)$.
 
\item[ c)] La restriction du morphisme $h$ \`a l'ensemble des points maximaux de $T$ est injective.

\end{itemize}

\n Alors  la propri\'et\'e $a)$ est \'equivalente aux propri\'et\'es  $b)$  et  $b')$, et elles impliquent  $c)$.

\n Si $T$ est r\'eduit,  les propri\'et\'es  $a), b), b')$ et $c)$ sont \'equivalentes.

\n Si $T$ est localement noeth\'erien, de sorte que l'ensemble {\rm Ass}$(T)$ des points associés à $T$ est défini, les propri\'et\'es $a), b)$et  $b')$  sont aussi \'equivalentes à 

d) {\it La restriction de $h$ \`a l'ensemble {\rm Ass}$(T)$ est injective}.
\end{prop}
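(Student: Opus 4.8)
The plan is to reduce everything to schematic density of explicit open immersions read off in local charts. Since $h$ is a local isomorphism, $T$ is covered by opens $U_i$ on which $h$ restricts to an open immersion with image $V_i:=h(U_i)\subseteq S$. For two such charts one has a canonical identification $U_i\times_S U_j\cong V_i\cap V_j$ (a fibre product of open immersions into $S$), and under it the restriction of $\Delta_h$ to this open becomes the open immersion $h(U_i\cap U_j)\hookrightarrow V_i\cap V_j$; equivalently, transported through $h|_{U_i}:U_i\rt V_i$, it is the inclusion $U_i\cap U_j\hookrightarrow U_i\cap h^{-1}(V_j)$. Because being \scd\ means injectivity of the canonical map $\oo_{T\times_S T}\rrr(\Delta_h)_\star\oo_T$, a property that is local on the target, restricts to opens, and may be checked on an open cover, all of the claims become assertions about schematic density of these concrete open immersions.

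The chain a) $\Leftrightarrow$ b) $\Leftrightarrow$ b') is then bookkeeping. For a) $\Rightarrow$ b), given $U$ with $h|_U$ an open immersion of image $V$, the open $h^{-1}(V)\times_S U\subseteq T\times_S T$ is carried by the first projection isomorphically onto $h^{-1}(V)$ (since $U\rt V$), and the restriction of $\Delta_h$ to it is exactly $U\hookrightarrow h^{-1}(V)=h^{-1}(h(U))$; hence \scd\ of $\Delta_h$ restricts to \scd\ of $U\rrr h^{-1}(h(U))$. The implication b) $\Rightarrow$ b') is immediate, since the opens on which $h$ is an open immersion cover $T$. For b') $\Rightarrow$ a) I run the computation backwards: \scd\ of $U_j\hookrightarrow h^{-1}(V_j)$ restricts, along the open $U_i\cap h^{-1}(V_j)$, to \scd\ of $U_i\cap U_j\hookrightarrow U_i\cap h^{-1}(V_j)$, which is \scd\ of $\Delta_h$ on the chart $U_i\times_S U_j$; as these cover $T\times_S T$, property a) follows.

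For a) $\Rightarrow$ c) (no extra hypothesis), suppose $t_1\neq t_2$ are maximal points of $T$ with $h(t_1)=h(t_2)=s$, and pick charts $U_1\ni t_1$, $U_2\ni t_2$. Since a local isomorphism sends maximal points of $T$ to maximal points of $S$, the point $s$ is maximal in $V_1\cap V_2$; on the other hand $s\notin h(U_1\cap U_2)$, for otherwise the injectivity of $h$ on $U_1$ and on $U_2$ would force $t_1=t_2$. An open subscheme missing a maximal point misses an entire irreducible component, so $h(U_1\cap U_2)$ is not schematically dense in $V_1\cap V_2$, contradicting a) on that chart. Replacing ``maximal point'' by ``point of ${\rm Ass}$'' and using the standard criterion that an open immersion into a locally Noetherian scheme is \scd\ iff its image contains ${\rm Ass}$ gives a) $\Rightarrow$ d) in the Noetherian case.

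The converses c) $\Rightarrow$ a) (for $T$ reduced) and d) $\Rightarrow$ a) (Noetherian) are where the real work lies, and I expect them to be the main obstacle: one must upgrade a pointwise injectivity hypothesis into schematic density on each chart. Concretely, I must show that every maximal point (resp. associated point) $\eta$ of the larger open $U_i\cap h^{-1}(V_j)$ already lies in the smaller open $U_i\cap U_j$; density then follows because $U_i\cap h^{-1}(V_j)$ is reduced (resp. by the ${\rm Ass}$ criterion). The key is that $h(\eta)$ is a maximal (resp. associated) point of $S$ lying in $V_j$, so its unique preimage $t'\in U_j$ under $h|_{U_j}:U_j\rt V_j$ is again a maximal (resp. associated) point of $T$ with $h(t')=h(\eta)$; hypothesis c) (resp. d)) then forces $t'=\eta$, whence $\eta\in U_i\cap U_j$. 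Assembling the charts yields \scd\ of $\Delta_h$. Finally, for $T$ reduced ${\rm Ass}(T)$ is exactly the set of maximal points, so c) and d) coincide there, consistently with the two stated equivalences.
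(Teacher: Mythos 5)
Your proof is correct and follows essentially the same route as the paper: restrict $\Delta_h$ to charts where it becomes an explicit open immersion, and invoke the EGA criteria that a schematically dominant open immersion must contain the maximal points (resp., in the locally Noetherian reduced cases, the converse via maximal/associated points). The only difference is that you keep the arguments for c) and d) entirely chart-local, where the paper instead identifies the underlying set of $T\times_S T$ with the set-theoretic fibre product and argues globally with the maximal (resp.\ associated) points of $T\times_S T$; the substance is the same.
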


\begin{proof} Montrons l'\'equivalence des propri\'et\'es $a), b)$ et $b')$. Soit $i : U \rrr T$ une immersion ouverte. L'immersion  ouverte $j : U \rrr h^{-1}h(U)$  est \'egale au compos\'e des morphismes
 $$
 U\; \stackrel{(1,i)}{\rrr} \; U\times_{S}T \; \stackrel{h\times 1}{\rrr} \; h(U)\times_{S}T \; \simeq \; h^{-1}h(U)\leqno{(\star)}
 $$
 Le morphisme $(1, i)$ est la restriction du morphisme diagonal $\Delta_{h} : T \rrr T\times_{S}T$ \`a l'ouvert $U\times_{S}T$. Si  $h$ induit un isomorphisme de $U$ sur l'ouvert $h(U)$, $h\times 1$ est un isomorphisme et $j$ est \scd\, si et seulement si le morphisme $(1, i)$ l'est. Cela montre l'implication a) $\Rightarrow$ b).

\n Comme $h$ est un isomorphisme local, $T$ poss\`ede un recouvrement par des ouverts sur lesquels $h$ induit une immersion ouverte, donc b) $\Rightarrow$ b'). 

\n Montrons  que b') implique a). Consid\'erons un recouvrement de $T$ par des ouverts $U$ isomorphes \`a leur image, de sorte que $T\times_{S}T$ est recouvert par les ouverts correspondants $U\times_{S}T$. Comme on l'a vu ci-dessus, les morphismes not\'es $(1, i)$ dans $(\star)$ sont \scd s, et donc $\Delta_h$ aussi.
\medskip
 
 Avant d'aborder la propri\'et\'e c) qui concerne les points maximaux, remarquons que si $h : T \rrr S$ est un isomorphisme local, un point $t \in T$ est maximal si et seulement si $h(t)$ est maximal pour $S$ ; en effet, le morphisme $\oo_{S,h(t)} \rrr \oo_{T, t}$ est un isomorphisme, et dire que $t$ est maximal dans $T$ signifie que l'anneau local $\oo_{T, t}$ a un seul id\'eal premier.
 
  Pour v\'erifier les autres implications de la proposition \ref{p2.1}, nous utiliserons la propri\'et\'e suivante :
 
 \n \cite[5.4.3]{EGAG} {\it Soit $j : U \rrr V$ une immersion ouverte de sch\'emas. Si $j$ est \scd e, alors les points maximaux de $V$ sont dans $U$ ; r\'eciproquement, si $V$ est r\'eduit et si les points maximaux de $V$ sont dans $U$ alors $j$ est \scd e}.
 
 \medskip

 Montrons que $a)$ entra\^ine $c)$. Soit $\eta$ un point maximal de $h(T) \subset S$ et notons  $S_{0} = \s(\oo_{S, \eta})$ ; il s'agit de v\'erifier que la fibre $T_{0} = T\times_{S}S_{0}$ est irr\'eductible, i.e que l'intersection de deux ouverts non vides de $T_{0}$ est non vide. Puisque le morphisme $S_{0} \rrr S$ est plat, et que l'immersion diagonale $\Delta_{h}$ est quasi-compacte, le morphisme diagonal $T_{0} \rrr T_{0}\times_{S_{0}}T_{0}$ est encore \scd\, (lemme \ref{lA.6}); soient $U$ et $V$ des ouverts non vides de $T_{0}$ ; puisque $S_{0}$ a un seul point, le produit fibr\'e $U\times_{S_{0}}V$ est non vide ; comme l'immersion $U \cap V \rrr U\times_{S_{0}}V$ est \scd e, l'intersection $U\cap V$ est non vide.
\medskip

Voici une autre fa\c con de voir la relation entre les propri\'et\'es $a)$ et $c)$.

\n Remarquons d'abord que pour un isomorphisme local $h : T \rrr S$, l'ensemble sous-jacent au sch\'ema $T \times_{S}T$ est \'egal au produit fibr\'e des ensembles sous-jacents aux facteurs ; en effet, soient $x$ un point de $T\times_{S}T$, $x_{i} = d_{i}(x)$ ses deux projections dans $T$, et $s$ son image dans $S$ ; comme les extensions $\kappa(s) \rrr \kappa(x_{i})$ sont des isomorphismes, le sch\'ema $\s(\kappa(x_{0})\otimes_{\kappa(s)}\kappa(x_{1})) \subset T\times_{S}T$ est r\'eduit \`a un point, n\'ecessairement \'egal \`a  $x$ ; de plus les homomorphismes locaux
$$
\xymatrix{& \oo_{T,x_{0}} \ar[dr] &\\
\oo_{S, s} \ar[ur] \ar[dr] &&   \oo_{T\times_{S}T, x}\\
& \oo_{T, x_{1}} \ar[ur] &}
$$
sont tous des isomorphismes. Par suite, si $x$ est un point maximal de $T\times_{S}T$, alors $x_{0}$ et $x_{1}$ sont des points maximaux de $T$ qui ont la m\^eme image dans $S$. Cela montre que $h$ induit une injection sur l'ensemble des points maximaux de $T$(conditionn $c)$)  si et seulement si les points maximaux de $T\times_{S}T$ sont dans l'image du morphisme diagonal. Enfin, puisque ce morphisme diagonal est une immersion ouverte, cette derni\`ere propri\'et\'e \'equivaut, lorsque $T$ est r\'eduit,  \`a la propri\'et\'e $a)$ : le morphisme diagonal $\Delta_{h} : T \rrr T\times_{S}T$ est \scd.
\medskip

\n $d) \Leftrightarrow a)$\, Rappelons d'abord le crit\`ere suivant:

 \n \cite[3.1.8]{EGAIV2} {\it Soit $j : U \rrr V$ une immersion ouverte de sch\'emas localement no\'eth\'eriens. Alors  $j$ est \scd e si et seulement si les points associ\'es de $V$ sont dans $U$, i.e.  si {\rm Ass}$(U) = {\rm Ass}(V)$.}
 \medskip
 
Supposons maintenant que $T$ soit localement noeth\'erien. Soit $x \in T$ un point associ\'e ; pour tout $x' \in T$ tel que $h(x) = h(x')$, on a un isomorphisme $\oo_{S, h(x)} \, \wt{\rrr}\, \oo_{T, x'}$, donc $x'$ est aussi associ\'e. Le raisonnement fait plus haut pour les points maximaux, mais appliqu\'e maintenant aux points associ\'es, montre que le morphisme $h$ induit une injection {\rm Ass}$(T) \rrr ${\rm Ass}$(S)$ si et seulement si l'ensemble {\rm Ass}$(T\times_{S}T)$ est contenu dans l'ouvert $\Delta_{h}(T)$, c'est-\`a-dire si $\Delta_{h}$ est \scd.
\end{proof}


\subsection{Existence d'isomorphismes locaux} (cf. A.3) 
Sont rassemblés ici divers critères assurant que des morphismes sont des isomorphismes locaux ; ils seront souvent utilisés dans la suite. 

\begin{prop}\phantomsection\label{cp} Soient $f : Y \rrr X$ et $t : X_{0}\rrr X$ des morphismes de schémas, et $f_{0} : Y_{0}=X_{0}\times_{X}Y \rrr X_{0}$ le morphisme déduit de $f$ par changement de base. On suppose que $f$ est plat, que $t$ est quasi-compact, quasi-séparé et \scd , et enfin que $f_{0}$ est un monomorphisme. Alors, 
\begin{itemize}
\item[a)] Si $f$ est affine, c'est un isomorphisme.
\item[b)] Si $f$ est ouvert, c'est un isomorphisme local.
\end{itemize}
\end{prop}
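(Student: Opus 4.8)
Je ram\`enerais les deux assertions \`a un seul \'enonc\'e pr\'eliminaire : \emph{$f$ est un monomorphisme}, c'est-\`a-dire que la diagonale $\Delta_f : Y \rrr Y\times_X Y$ est un isomorphisme. En effet, la diagonale de $f_0$ se d\'eduit de $\Delta_f$ par le changement de base $t$, de sorte que l'hypoth\`ese ``$f_0$ monomorphisme'' signifie exactement que $t^\star \Delta_f$ est un isomorphisme. Comme la conclusion est locale sur $Y$ (et, dans le cas a), aussi sur $X$ puisque $f$ y est affine), je commencerais par me ramener au cas o\`u $X = \s(R)$ et $Y = \s(B)$ sont affines, avec $R \rrr B$ plat ; quitte \`a remplacer $X_0$ par la somme disjointe d'un recouvrement affine fini --- ce qui pr\'eserve le caract\`ere \scd\ de $t$ --- je peux de plus supposer $t$ affine, donc $p$ affine ci-dessous.

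\textbf{Le c\oe ur : $f$ est un monomorphisme.} L'id\'eal $\mathcal I$ de l'immersion ferm\'ee $\Delta_f$ dans $Y\times_X Y$ est, via l'une des projections $q : Y\times_X Y \rrr Y$, un facteur direct de $q_\star \oo_{Y\times_X Y}$ (la suite qui le d\'efinit est scind\'ee par $\Delta_f$, puisque $q\circ\Delta_f=\mathrm{id}_Y$) ; comme $q$ est plat, $\mathcal I$ est un $\oo_Y$-module plat. En tirant $t$ en arri\`ere le long du morphisme plat $f$, le lemme \ref{lA.6} fournit un morphisme \scd\ quasi-compact $p : Y_0 \rrr Y$, et l'hypoth\`ese sur $f_0$ se traduit par $p^\star \mathcal I = 0$. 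J'utiliserais alors la platitude deux fois : $p$ \'etant \scd, on a une injection $\oo_Y \hookrightarrow p_\star \oo_{Y_0}$, et la tensoriser par le module plat $\mathcal I$ pr\'eserve l'injectivit\'e, d'o\`u $\mathcal I \hookrightarrow \mathcal I \otimes_{\oo_Y} p_\star \oo_{Y_0}$ ; comme $p$ est affine, la formule de projection identifie le but \`a $p_\star p^\star \mathcal I = 0$. Donc $\mathcal I = 0$ et $f$ est un monomorphisme.

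\textbf{Conclusion des deux cas.} Dans le cas b), $f$ est d\'esormais un monomorphisme plat et \emph{ouvert} ; son image $D$ est alors un ouvert de $X$, et $f$ se factorise en un monomorphisme fid\`element plat $Y \rrr D$, donc en un isomorphisme, suivi de l'immersion ouverte $D\hookrightarrow X$ : autrement dit $f$ est une immersion ouverte, en particulier un \isl. Dans le cas a), $f$ est un monomorphisme affine et plat ; sa diagonale \'etant un isomorphisme, elle est \textit{a fortiori} \scd, et si l'on sait que $f$ est \emph{surjectif}, donc fid\`element plat, le lemme \'enonc\'e plus haut (si $\Delta_h$ est \scd\ et $h$ fid\`element plat quasi-compact quasi-s\'epar\'e, alors $\oo_X \rrr f_\star \oo_Y$ est un isomorphisme) montre que $\oo_X \rrr f_\star \oo_Y$ est bijectif ; $f$ \'etant affine, c'est un isomorphisme.

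\textbf{Obstacle principal.} Le point d\'elicat commun est la descente de l'annulation de $\mathcal I$ \`a travers $t$, qui n'est pas plat mais seulement \scd : je la contournerais en rapatriant la question sur $Y$ via le changement de base plat $p$ (lemme \ref{lA.6}) et en exploitant conjointement que $\mathcal I$ est plat et que $p$ est affine, pour disposer de la formule de projection. Dans le cas b) l'hypoth\`ese d'ouverture fait apparaitre une immersion ouverte sur laquelle on obtient gratuitement la fid\`ele platitude ; en revanche, dans le cas a), c'est pr\'ecis\'ement le passage du monomorphisme \`a un \emph{isomorphisme} qui requiert de garantir la surjectivit\'e de $f$ (sa fid\`ele platitude), et c'est l\`a que se concentre la seconde difficult\'e, o\`u doivent se r\'einvestir les hypoth\`eses de l'\'enonc\'e.
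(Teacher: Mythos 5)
Your argument is essentially correct, and it takes a genuinely different route from the paper's. The paper works on the base: it sets $F=\mathrm{Coker}(\oo_{X}\rrr f_{\star}\oo_{Y})$, observes that $F$ is flat (a fact which already uses that $f$ is \emph{faithfully} flat), identifies $t^{\star}F$ with the corresponding cokernel for $f_{0}$ via the base-change isomorphism for the affine morphism $f$, and kills $F$ by the lemme \ref{lA.5} applied to $t$ itself; this shows that $\oo_{X}\rrr f_{\star}\oo_{Y}$ is surjective, hence bijective by faithful flatness, hence that the affine morphism $f$ is an isomorphism. You work on $Y$ instead: you kill the diagonal ideal $\mathcal{I}$ by (in effect re-proving) the lemme \ref{lA.5}, applied to the base-changed morphism $p:Y_{0}\rrr Y$, which is quasi-compact, quasi-s\'epar\'e et \scd\ par le lemme \ref{lA.6} precisely because $f$ is flat. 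Your route buys two things: the flatness of $\mathcal{I}$ comes for free (it is a direct summand of the flat $\oo_{Y}$-module $q_{\star}\oo_{Y\times_{X}Y}$, where $q$ is a projection, thanks to the splitting $q\circ\Delta_{f}=\mathrm{id}$, with no surjectivity needed), and the intermediate statement ``$f$ est un monomorphisme'' lets b) follow without re-running the computation. One point you should make explicit: the identification $p^{\star}\mathcal{I}\simeq\mathcal{I}_{0}$ (the diagonal ideal of $f_{0}$) is not automatic, since $t$ is \emph{not} flat; it holds because the sequence $0\rrr\mathcal{I}\rrr B\otimes_{R}B\rrr B\rrr 0$ is $B$-split, hence stays exact after the base change $B\rrr B_{0}$.

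Two caveats. In b), your final ``autrement dit $f$ est une immersion ouverte'' is too strong if read globally: being a monomorphism is not local on the source, and your c\oe ur establishes it only after the reduction to affines. (Take $X$ integral, $t:\eta\rrr X$ the generic point, and $Y$ two copies of $X$ glued along a dense open $\neq X$: every hypothesis of b) holds, $f_{0}$ is even an isomorphism, yet $f$ is not a monomorphism.) What your local argument actually yields --- each small enough affine $V\subset Y$ maps into $X$ by a flat, open, affine monomorphism, hence by an open immersion --- is exactly the asserted ``isomorphisme local'', and you should stop there. In a), you are right that surjectivity does not follow from the stated hypotheses: the inclusion of the generic point of a trait, with $t=\mathrm{id}$, satisfies all of them without being an isomorphism. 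The statement is implicitly about faithfully flat $f$; this is what the paper assumes with its ``quitte \`a se restreindre \`a un ouvert affine de $X$'', and it is what holds in every application of \ref{cp} in the text. So the ``seconde difficult\'e'' you isolate is a missing hypothesis rather than a missing argument; once $f$ is assumed surjective, your conclusion of a) via the lemma on $\theta:\oo_{E}\rrr h_{\star}(\oo_{T})$ is fine.
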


L'existence du morphisme $t : X_{0}\rrr X$ possédant les propriétés indiquées est en particulier vérifiée lorsque $X$ et $Y$ sont intègres et que $f$ est birationnel; il suffit alors de prendre pour $X_{0}$ le point générique de $X$.

\begin{proof}
Considérons d'abord le cas où $f$ est affine. Quitte à se restreindre à un ouvert affine de $X$, on peut supposer de plus que $f$ est fidèlement plat. Introduisons le $\oo_{X}$-module $F = {\rm Coker}(\oo_{X} \rrr f_{\star}(\oo_{Y}))$. Il est quasi-coh\'erent, et il est plat \cite[I \S 3.5, prop. 9]{AC}. Puisque $f$ est affine, l'homomorphisme canonique $t^{\star}f_{\star}(\oo_{Y}) \rrr  {f_{0}}_{\star}(\oo_{Y_{0}})$  est bijectif \cite[9.3.3]{EGAG}; d'où l'on tire l'isomorphisme
$$
t^{\star}(F) \simeq  {\rm Coker}(\oo_{X_{0}} \rrr f_{\star}(\oo_{Y_{0}})).
$$
 D'après l'hypothèse $f_{0}$ est un monomorphisme  plat et affine, et il est surjectif ; c'est donc un isomorphisme; par suite, on a   $t^{\star}(F) = 0$ ; comme $t$ est un morphisme quasi-compact, quasi-séparé et  \scd, $F$ est nul (lemme \ref{lA.5}).
\medskip

On suppose maintenant que $f$ est ouvert, et on va se ramener au cas $a)$. Soit $y$ un point de $Y$ et $V$ un ouvert affine de $Y$ contenant $y$ et tel que l'ouvert $U = f(V)$ soit contenu dans un ouvert affine de $X$; il est donc séparé et la restriction de $f$ à $V$ donne un morphisme $g : V \rrr U$ qui est fidèlement plat et \emph{affine} \cite[5.3.10]{EGAG}, et on va voir que $g$ est un isomorphisme. Pour pouvoir utiliser $a)$, il faut vérifier que le morphisme $g_{0} : V_{0} \rrr U_{0}$ est un isomorphisme (notations évidentes). Or, posons $W = f^{-1}(U) \subset Y$, ce qui permet d'écrire $g$ comme le composé
$$
V \; \stackrel{j}{\rrr} \; W \; \stackrel{f'}{\rrr} \; U ,
$$
où $j$ est une immersion ouverte et où $f' = f\times_{X}1_{U}$ est fidèlement plat. On en tire la décomposition $g_{0} = f'_{0}j_{0}$ ; mais $g_{0}$ est surjectif, et $f'_{0}$ est un isomorphisme ; l'immersion ouverte $j_{0}$ est donc surjective, c'est-à-dire un isomorphisme ; cela montre que $g_{0}$ est un isomorphisme
\end{proof}

\begin{prop}\phantomsection\label{l2.2} Soient $U \stackrel{v}{\rrr} W \stackrel{w}{\rrr} V$ deux morphismes de sch\'emas  poss\`edant les propri\'et\'es suivantes : 
\begin{thlist}
\item  $w$ est affine plat, et, en outre, l'une des hypoth\`eses suivantes est satisfaite :
\begin{itemize}
\item[a)] le morphisme $w$ est ouvert ; 
\item[b)]  l'ensemble des composantes irr\'eductibles de $V$ est localement fini, et $w$ est  de type fini ;
\end{itemize}
\item $v$ est quasi-compact, et \scd, i.e. l'application canonique $\oo_{W} \rightarrow ~v_{\star}(\oo_{U})$ est injective  (cf. définition \ref{dA.1});
\item le compos\'e $t = wv : U\rrr V$ est une immersion ouverte.
\end{thlist}
\n Alors $w$ est une immersion ouverte.
\end{prop}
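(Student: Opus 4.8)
The plan is to localise the statement so that the open immersion $t=wv$ identifies $U$ with a well-understood open of $V$, then to deduce that $w$ is a monomorphism by a single application of Proposition \ref{cp} taken \emph{over the base $W$}, and finally to upgrade ``flat monomorphism'' to ``immersion ouverte'' using the alternative (i)\,a) or (i)\,b).

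First I would set $V_{0}=t(U)$, an open subscheme of $V$, and identify $U$ with $V_{0}$ through the open immersion $t$. Writing $W_{0}=w^{-1}(V_{0})$ and $w_{0}=w|_{W_{0}}\colon W_{0}\rrr V_{0}$, the relation $wv=t$ shows that $v$ factors through $W_{0}$; since $w$ is affine, hence separated, the composite $t^{-1}w_{0}\colon W_{0}\rrr U$ is separated, and $v\colon U\rrr W_{0}$ is a section of it (because $t^{-1}w_{0}v=t^{-1}t=\mathrm{id}_{U}$), hence a closed immersion. As $v$ is \scd\ over $W$ it remains \scd\ over the open $W_{0}$, so this closed immersion is an isomorphism; consequently $w_{0}\colon W_{0}\rt V_{0}$ is an isomorphism. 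Two facts are recorded for later use: the open immersion $j\colon W_{0}\rrr W$ is \scd\ (this is exactly hypothesis (ii)), quasi-compact (it equals $v$ composed with the inverse of the isomorphism $v\colon U\rt W_{0}$, both quasi-compact) and quasi-separated; and, scheme-theoretically, $W_{0}\times_{V}W=W_{0}$, since every point of $W$ lying over $V_{0}$ already lies in $W_{0}$ and $w_{0}$ is an isomorphism.

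Next I would prove that $w$ is a monomorphism by applying Proposition \ref{cp} over the base $W$. Take $f=p_{1}\colon W\times_{V}W\rrr W$, flat and affine as a base change of $w$, and take $t=j\colon W_{0}\rrr W$, which by the previous step is quasi-compact, quasi-separated and \scd. The morphism $f_{0}$ deduced by base change is the first projection $W_{0}\times_{V}W\rrr W_{0}$, that is, by the identity $W_{0}\times_{V}W=W_{0}$ above, the identity of $W_{0}$; in particular it is a monomorphism. Proposition \ref{cp}\,a) then yields that $p_{1}$ is an isomorphism, i.e. the diagonal $\Delta_{w}$ is an isomorphism, which is precisely to say that $w$ is a monomorphism. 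It remains to make $w$ an open immersion. For every $y\in W$ with image $x=w(y)$ the local homomorphism $\oo_{V,x}\rrr\oo_{W,y}$ is flat, hence faithfully flat, and is an epimorphism of rings because $w$ is a monomorphism; a faithfully flat epimorphism is an isomorphism, so $w$ induces isomorphisms on all local rings and is injective on points. Under hypothesis (i)\,a) the morphism $w$ is open, hence a homeomorphism onto the open $w(W)$, and with the stalkwise isomorphisms this exhibits $w$ as an isomorphism onto an open subscheme. Under hypothesis (i)\,b) I would first establish that $w$ is open: being of finite type its image is constructible (Chevalley), being flat it is stable under g\'en\'erisation, and the local finiteness of the irreducible components of $V$ makes the underlying space locally noeth\'erien, so a constructible subset stable under g\'en\'erisation is open; one then concludes as in case a).

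The main obstacle I anticipate is precisely the choice to apply Proposition \ref{cp} over $W$ rather than over $V$. Over $V$ the obvious candidate $t=wv$ need \emph{not} be \scd, since its image $V_{0}$ need not be sch\'ematiquement dense in $V$ (for instance when $V$ is disconnect\'e and $V_{0}$ is one of its composantes); whereas over $W$ the \scd ness of $v$ supplied by (ii) furnishes exactly the sch\'ematiquement dominant morphism $j\colon W_{0}\rrr W$ that Proposition \ref{cp} requires, and the identity $W_{0}\times_{V}W=W_{0}$ makes the base-changed morphism a monomorphism at no cost. The second, more technical point is the openness of $w$ in case b), where the local finiteness of the components of $V$ is what plays the role otherwise taken by a noetherian hypothesis.
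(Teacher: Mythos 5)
Your treatment of case (i)\,a) is correct, and your intermediate step is a genuine variant of the paper's: instead of applying the proposition \ref{cp} to $\wt{w}\colon W\rrr w(W)$ (which presupposes knowing that $w(W)$ is open), you apply it to the projection $p_{1}\colon W\times_{V}W\rrr W$ with $t=j\colon W_{0}\rrr W$, using the identification $W_{0}\times_{V}W\simeq W_{0}$ to obtain the monomorphism hypothesis for free; since $p_{1}$ is surjective (it has the section $\Delta_{w}$), affine and flat, \ref{cp}\,a) makes it an isomorphism, hence $\Delta_{w}$ is an isomorphism and $w$ is a flat affine monomorphism inducing isomorphisms on local rings. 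Combined with the openness assumed in (i)\,a), this closes that case; it is essentially the paper's steps I--II reorganised so that the base of the application of \ref{cp} is $W$ rather than $w(W)$.

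The gap is in case (i)\,b). You invoke Chevalley to claim that $w(W)$ is constructible, and you justify this by asserting that the local finiteness of the set of irreducible components of $V$ makes the underlying space locally noeth\'erien. That assertion is false: the spectrum of any non-noetherian domain (an infinite-rank valuation ring, or $\mathbf{Z}[x_{1},x_{2},\dots]$) is irreducible, hence has a single irreducible component, yet its underlying space is not noeth\'erien. Chevalley's theorem requires the morphism to be of pr\'esentation finie, or the target to be locally noeth\'erien; for a morphism merely of type fini over a non-noetherian base the image of a constructible set need not be constructible (already $B\rrr B/I$ with $I$ non finitely generated is a finite morphism whose image $V(I)$ is not constructible). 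As the remark following the statement points out, hypothesis (i)\,b) only matters when $w$ is of type fini \emph{without} being of pr\'esentation finie --- precisely the regime where your argument breaks down. Moreover your conclusion that $w$ is a flat monomorphism with isomorphic local rings only gives that $w(W)$ is stable under g\'en\'erisation; it does not give that $w(W)$ is locally closed. The paper supplies the missing content in its step III: writing $C=\Gamma(W)$ as generated over $B=\Gamma(V)$ by finitely many elements $x_{1},\dots,x_{n}$, the \emph{ideal of denominators} of each $x_{i}$ cuts out an open $V(x_{i})\supset w(W)$, and on $V'=\bigcap V(x_{i})$ the morphism $w$ becomes a \emph{closed} immersion; only then does the hypothesis on the irreducible components intervene, via the separate lemma that a flat immersion into a scheme whose irreducible components form a locally finite family is an open immersion. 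You need some substitute for that step.
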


Notons que si le morphisme $w$ est de présentation finie et plat alors il est ouvert, et l'hypothèse (i)  $a)$ est donc satisfaite ; la condition sur les composantes irréductibles dans (i)   $b)$ n'est donc requise que si $w$ est de type fini sans \^{e}tre  de présentation finie.

La d\'emonstration proc\`ede par \'etapes.
\medskip

\n I. Montrons d'abord que, en posant  $W_{0} = w^{-1}t(U)$, $v$ et $w$ induisent des isomorphismes 
$$
U \; \;  \wt{\rrr} \; \; W_{0}\; \; \wt{\rrr}\; \; t(U) ,
$$
et que l'immersion ouverte $W_{0} \rrr W$ est quasi-compacte et \scd e.
 Notons $v_{0} : U \rrr W_{0}$  et  $w_{0} : W_{0} \rrr t(U)$ les morphismes en questions. Leur composé $w_{0}v_{0}$ est un isomorphisme d'après (iii) donc $v_{0}$ est une immersion fermée puisque $w_{0}$ est affine (propriété (i)). Par ailleurs, $v_{0}$ est \scd \, comme $v$ (propriété (ii)); donc $v_{0}$ est un isomorphisme, et par suite, $w_{0}$ est aussi un isomorphisme ; enfin, l'immersion $W_{0} \rrr W$ est quasi-compacte et \scd\,  puisque $v$ a ces deux propriétés.
\medskip 

\n II. Montrons que si l'ensemble $w(W)$  est un ouvert de $V$, en particulier si le morphisme $w$ est ouvert (hypothèse (i) $a)$), alors $w$ est une immersion ouverte.

Notons $\wt{w} : W \rrr w(W)$ le morphisme d\'eduit de $w$. Il est fid\`element plat, et affine \cite[9.1.1]{EGAG}, et d'après l'étape  I, la restriction de $\wt{w}$ à l'ouvert rétro-compact  et \scd\, $W_{0} \subset W$  est un isomorphisme. En appliquant \ref{cp}, on voit que  $\wt{w}$ est un isomorphisme, donc que $w$ est une immersion ouverte.
\medskip

\n III. \, Montrons que si $w$ est de type fini, alors c'est une immersion.

\n Il s'agit d'exhiber un ouvert $V'$ de $V$, contenant $w(W)$ et tel que le morphisme $w' : W \rrr V'$ soit une immersion ferm\'ee. Notons d'abord que  la conclusion \'etant locale sur $V$, on peut supposer que $V$ est affine ; cela entra\^{i}ne que $W$ l'est aussi puisque le morphisme $w$ est affine ; notons $B = \Gamma(V) \rrr \Gamma(W) = C$ l'homomorphisme d'anneaux sous-jacent \`a $w$, et soit $\bar{B} \subset C$ l'image de $B$ dans $C$. Pour tout \'el\'ement $x \in C$ l'ensemble $V(x) \subset V$ des $\mathfrak{p}$ tels que $x \in \bar{B}_{\mathfrak{p}}$, est form\'e des $\mathfrak{p}$ qui ne contiennent pas \emph{l'id\'eal des d\'enominateurs} de $x$, soit l'id\'eal $\{s \in B, sx \in \bar{B} \}$ ; $V(x)$ est donc un ouvert. On a $w(W)\subset V(x)$  : en effet, soit  $\mathfrak{p}$ un id\'eal premier de $B$ qui se rel\`eve \`a $C$, donc tel que l'homomorphisme  $B_{\mathfrak{p}} \rrr C_{\mathfrak{p}}$ soit fid\`element plat ; la proposition \ref{cp} entra\^ine que cet homomorphisme est un isomorphisme, donc que l'on a $B_{\mathfrak{p}} \simeq (\bar{B})_{\mathfrak{p}} \, = \, C_{\mathfrak{p}}$ ; en particulier, $\mathfrak{p} \in V(x)$.

\n Comme $C$ est une $B$-alg\`ebre de type fini, engendr\'ee, disons, par $x_{1}, \cdots , x_{n}$, la partie $V' = V(x_{1})\cap V(x_{2}) \cap \cdots \cap V(x_{n})$ est un ouvert de $V$, et elle  contient  $w(W)$ ; son image r\'eciproque $w^{-1}(V')$ est donc \'egale \`a $W$. Par construction, pour tout id\'eal premier $\mathfrak{p} \in V'$, on a $(\bar{B})_{\mathfrak{p}} \, = \, C_{\mathfrak{p}}$, donc $w$ induit une immersion ferm\'ee  $W \rrr V'$.
\bb

\n IV.\, Fin de la d\'emonstration. La conclusion sous l'hypothèse (i) $b)$ résulte du 

\begin{lemme}\phantomsection Soit $w : W \rrr V$ une immersion plate. Si l'ensemble des composantes irréductibles de $V$ est localement fini, alors $w$ est une immersion ouverte.
\end{lemme}
\medskip

Sans hypothèse sur les composantes irréductibles, il existe des immersions fermées plates qui ne sont pas ouvertes  \cite[4.2.3]{EGAG}.
 \medskip
 
\begin{proof} Puisque $w$ est une immersion fermée dans un ouvert de $V$, on peut se restreindre à cet ouvert et supposer que $w$ est une immersion fermée. Pour tout $x \in W$ l'homomorphisme surjectif $\oo_{V, w(x)} \rrr \oo_{W,x}$  est aussi injectif  puisqu'il est fidèlement plat.
 Ces isomorphismes $\oo_{V, w(x)} \, \wt{\rrr}\,   \oo_{W,x}$ montrent que  l'ensemble fermé $w(W)$ est stable par générisation, et il s'agit de voir qu'il est ouvert. Or, son complémentaire est stable par spécialisation ; c'est donc la réunion des composantes irréductibles de $V$ qui sont disjointes de $w(W)$. Par hypothèse, l'ensemble de ces composantes est localement fini ; leur réunion est donc localement (i.e sur chaque ouvert affine) un fermé ; c'est un fermé.
\end{proof}

{\footnotesize Voici enfin une variante de \ref{cp} où l'hypothèse ``générique'' (i.e. liée à $t : X_{0}\rrr X$), est conséquence d'une hypothèse sur la diagonale. 

\begin{cor}\phantomsection\label{l2.1} Soit $f : Y \rrr X$ un morphisme plat quasi-s\'epar\'e et ouvert. On suppose que  le morphisme diagonal $\Delta_{f}$ est \scd.

\n Alors $f$ est un isomorphisme local.
\end{cor}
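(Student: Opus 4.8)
The plan is to reduce to an affine situation where $f$ becomes separated, deduce that it is then a monomorphism, and finally feed this into Proposition~\ref{cp}.

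Since being a local isomorphism is a property local on the source, I would first fix a point $y\in Y$, choose an affine open $U\ni f(y)$ in $X$ and an affine open $V\ni y$ contained in $f^{-1}(U)$, and replace $f$ by $f_{|V}\colon V\rrr U$. All four hypotheses survive this restriction: flatness and quasi-separation are clear, openness holds because $V$ is open in $Y$, and the diagonal of $f_{|V}$ is exactly the restriction of $\Delta_f$ above the open subscheme $V\times_U V\subset Y\times_X Y$ (one checks $\Delta_f^{-1}(V\times_U V)=V$). Since the injectivity of $\oo_{Y\times_X Y}\rrr(\Delta_f)_\star(\oo_Y)$ is inherited by its restriction over an open of the target, $\Delta_{f_{|V}}$ remains \scd\ (definition~\ref{dA.1}). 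Thus it suffices to treat the case $Y=\s(B)$, $X=\s(A)$ affine.

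In that case $f$ is affine, hence separated, so its diagonal $\Delta_f\colon Y\rrr Y\times_X Y$ is a \emph{closed} immersion. A schematically dominant closed immersion is an isomorphism; therefore $\Delta_f$ is an isomorphism, i.e.\ $f$ is a monomorphism (concretely, the multiplication $B\otimes_A B\rrr B$ is surjective and, by hypothesis, injective, hence bijective). This is the whole substance of the corollary, and also its main obstacle: schematic dominance of the diagonal forces the monomorphism property only after one has localized enough to make $f$ separated. Globally this genuinely fails — a non-separated local isomorphism such as the affine line with a doubled origin has a schematically dominant diagonal yet is not a monomorphism — which is precisely why one cannot apply Proposition~\ref{cp} with $t=1_X$ before passing to the affine setting.

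It then remains to invoke Proposition~\ref{cp}(b) with $X_0=X$ and $t=1_X$: the identity is quasi-compact, quasi-s\'epar\'e and \scd, the base change $f_0$ equals $f$, which we have just shown to be a monomorphism, and $f$ is flat and open by hypothesis. Proposition~\ref{cp}(b) yields that $f$ is a local isomorphism. Running this over a covering of $Y$ by such affine opens, every point of $Y$ acquires a neighbourhood on which $f$ restricts to a local isomorphism (in fact to an open immersion), so $f$ itself is a local isomorphism. Apart from the bookkeeping of hypotheses, the only delicate point is the reduction itself — namely that schematic dominance of the diagonal localizes correctly — and it is exactly this that makes the passage to the separated (affine) situation legitimate.
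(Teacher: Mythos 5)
Votre preuve est correcte, mais elle suit un chemin r\'eellement diff\'erent de celui du texte. Le texte reprend le d\'ebut de la d\'emonstration de la proposition \ref{cp} b) pour se ramener \`a un morphisme affine fid\`element plat $g : V \rrr U$, prend pour le morphisme auxiliaire $t$ de \ref{cp} la restriction $f' = f\times_X 1_U : W \rrr U$ elle-m\^eme, et montre que le changement de base $w = g\times_U 1_W$ est une immersion ouverte en injectant dans la proposition \ref{l2.2} un carr\'e cart\'esien o\`u appara\^it $\Delta_{f'}$ ; l'hypoth\`ese sur la diagonale n'intervient que par ce carr\'e, et on conclut par descente le long de $f'$. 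Vous extrayez au contraire de l'hypoth\`ese un \'enonc\'e interm\'ediaire plus fort avant toute invocation : au-dessus d'un ouvert affine (donc s\'epar\'e) $V$ de la source, $\Delta_{f_{|V}}$ est \`a la fois une immersion ferm\'ee et \scd e --- ce dernier point parce que la dominance sch\'ematique se conserve par restriction au-dessus de l'ouvert $V\times_X V$ de $Y\times_X Y$ --- donc un isomorphisme, si bien que $f_{|V}$ est un monomorphisme ; la proposition \ref{cp} b) appliqu\'ee avec $t=1$ (ou, plus directement, la descente fpqc pour le monomorphisme affine fid\`element plat $V' \rrr f(V')$) conclut alors. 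Votre remarque selon laquelle l'implication globale \og diagonale \scd e $\Rightarrow$ monomorphisme \fg\ est fausse (droite \`a origine d\'edoubl\'ee) et ne devient vraie qu'apr\`es localisation sur un morceau s\'epar\'e est exactement la bonne justification de la r\'eduction. Votre variante a l'avantage d'\^etre plus courte et de contourner enti\`erement la proposition \ref{l2.2} ; celle du texte a l'avantage de calquer mot pour mot la d\'emonstration de \ref{cp} b). Les deux sont valides.
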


\begin{proof} En procédant comme dans le début de la démonstration de \ref{cp} b), dont nous reprenons les notations, on se ramène à la situation suivante : on considère un morphisme $g : V \rrr U$, qui est le composé 
$$
V \; \stackrel{j}{\rrr} \; W \; \stackrel{f'}{\rrr} \; U ,
$$
où $j$ est une immersion ouverte et où le morphisme $f' = f\times_{X}1_{U}$ est fidèlement plat, ouvert et à morphisme diagonal $\Delta_{f'}$  \scd; de plus, le morphisme $g = f' j$ est supposé fidèlement plat et {\it affine}. Il s'agit de montrer que $g$ est un isomorphisme. 

C'est le morphisme $f' : W \rrr U$ qui tient ici le rôle du morphisme $t : X_{0}\rrr X$ de loc. cit. On va donc vérifier que le morphisme
$w = g\times_{U} 1_{W}$ est un isomorphisme ; comme il est surjectif, il suffit de vérifier que c'est une immersion ouverte; cela découlera de \ref{l2.2}.

Considérons le diagramme
$$
\xymatrix{V \ar[r]^{g} \ar@<-1ex>[d]_{v} \ar[dr]_{j}& U\\
W' \ar[u] \ar[r]_{w} & W \ar[u]_{f'}} ,
$$
où $W' = V\times_{U}W$, et où $v$ est caractérisé par l'égalité $j = wv$. On sait déjà que $w$ est plat affine et ouvert (propriété (i) de \ref{l2.2}). Pour voir que le morphisme $v$ est quasi-compact et \scd\, (propriété (ii)), on remarque que le carré 
$$
\xymatrix{W \ar[r]^{\Delta_{f'}} &W\times_{U}W\\
V \ar[u] \ar[r]_<<<<<<{v} & W'=V\times_{U}W \ar[u]_{j\times 1}}
$$
est cartésien et que $\Delta_{f'}$ a ces propriétés par hypothèse; enfin la propriété (iii) requiert que $wv$ soit une immersion ouverte, ce qui est bien le cas.
\end{proof}
}
\bigskip

\section{Propriété universelle}\label{s3}

\subsection{Propriété universelle des séparateurs}

\begin{prop}\phantomsection\label{p3.1} Soient $f : T \rrr S$ un morphisme et $T \stackrel{h}{\rrr} E \stackrel{g}{\rrr} S$ un s\'eparateur de $f$. Alors
\begin{thlist}
\item Pour tout ouvert $U$ r\'etrocompact dans $T$, $h(U)$ est un ouvert de $E$, et le morphisme $U \rrr h(U)$ induit par $h$ est un s\'eparateur de $U$.
\item Soit  $U$ un ouvert de $T$ qui est s\'epar\'e sur $S$ (par exemple un ouvert s\'epar\'e). Alors,  la restriction de $h$ induit un isomorphisme de $U$ sur l'ouvert $h(U)$. En particulier, si $T$ est s\'epar\'e sur $S$, $h$ est un isomorphisme.
\item {\rm (Propri\'et\'e universelle du s\'eparateur)}\, Pour tout $S$-morphisme $h' : T \rrr E'$  dans un sch\'ema s\'epar\'e sur $S$, il existe un unique $S$-morphisme $u : E \rrr E'$ tel que $h' = uh$.
\end{thlist}
\end{prop}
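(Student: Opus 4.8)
Le plan est de d\'emontrer successivement (i), (ii) et (iii), les deux premiers points \'etant essentiellement formels et le dernier reposant sur une descente fpqc. Pour (i), je partirais de ce qu'un isomorphisme local est ouvert : $h(U)$ est donc un ouvert de $E$, s\'epar\'e sur $S$ comme ouvert d'un $S$-sch\'ema s\'epar\'e. Le morphisme induit $h_U : U \rrr h(U)$ est surjectif par construction et isomorphisme local par restriction ; la r\'etrocompacit\'e de $U$ dans $T$, jointe \`a la quasi-compacit\'e et \`a la quasi-s\'eparation de $h$, fournit les m\^emes propri\'et\'es pour $h_U$ (les images r\'eciproques par $h_U$ d'ouverts quasi-compacts restent quasi-compactes). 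Le point \`a v\'erifier est que $\Delta_{h_U}$ est \scd : or $U\times_{h(U)}U$ s'identifie \`a l'ouvert $p_0^{-1}(U)\cap p_1^{-1}(U)$ de $T\times_{E}T$, au-dessus duquel $\Delta_{h_U}$ n'est autre que la restriction de $\Delta_h$ (de source $\Delta_h^{-1}(U\times_{h(U)}U)=U$). La propri\'et\'e d'\^etre \scd\ \'etant stable par restriction aux ouverts du but (appendice \ref{sA}), $\Delta_{h_U}$ est \scd, et $h_U$ est un s\'eparateur de $U$.

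Pour (ii), soit $U$ un ouvert s\'epar\'e sur $S$. Comme ci-dessus (mais sans recourir \`a la r\'etrocompacit\'e pour ce point), $h_U : U\rrr h(U)$ est un isomorphisme local surjectif de diagonale \scd, et je montrerais que c'est un isomorphisme en examinant $\Delta_{h_U} : U \rrr U\times_{h(U)}U$. D'une part, $U\times_{h(U)}U \rrr U\times_S U$ est une immersion ferm\'ee, obtenue par changement de base \`a partir de la diagonale $\Delta_{h(U)/S}$, elle-m\^eme immersion ferm\'ee car $h(U)$ est s\'epar\'e sur $S$. D'autre part, $\Delta_{U/S}$ est une immersion ferm\'ee (car $U$ est s\'epar\'e sur $S$) qui se factorise par $\Delta_{h_U}$ suivie de cette immersion ferm\'ee ; comme $U\times_{h(U)}U$ est donc un sous-sch\'ema ferm\'e de $U\times_S U$ contenant $\Delta_{U/S}$, le morphisme $\Delta_{h_U}$ est lui-m\^eme une immersion ferm\'ee. \'Etant \`a la fois immersion ferm\'ee et \scd, $\Delta_{h_U}$ est un isomorphisme, c'est-\`a-dire que $h_U$ est un monomorphisme ; un isomorphisme local surjectif et monomorphe \'etant un isomorphisme, on obtient (ii), le cas $U=T$ donnant la derni\`ere assertion.

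Pour (iii), l'ingr\'edient central est la descente fpqc. Comme $h$ est fid\`element plat et quasi-compact, c'est un recouvrement fpqc ; tout sch\'ema \'etant un faisceau pour la topologie fpqc, on dispose, en notant $R=T\times_E T$ de projections $p_0,p_1$, de la suite exacte
\[ E'(E)\rrr E'(T) \rightrightarrows E'(R), \]
d'o\`u imm\'ediatement l'unicit\'e de $u$. Pour l'existence il suffit de v\'erifier que $h'p_0=h'p_1$. Je poserais $\psi=(h'p_0,h'p_1):R\rrr E'\times_S E'$ ; comme $E'$ est s\'epar\'e sur $S$, la diagonale $\Delta_{E'/S}$ est une immersion ferm\'ee, donc $\psi^{-1}(\Delta_{E'/S})$ est un sous-sch\'ema ferm\'e de $R$. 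Or les \'egalit\'es $p_0\Delta_h=p_1\Delta_h=\mathrm{id}_T$ donnent $\psi\circ\Delta_h=\Delta_{E'/S}\circ h'$, de sorte que $\Delta_h$ se factorise par ce ferm\'e ; comme $\Delta_h$ est \scd, ce sous-sch\'ema ferm\'e est $R$ tout entier. Ainsi $\psi$ se factorise par $\Delta_{E'/S}$, i.e. $h'p_0=h'p_1$, et par descente $h'$ provient d'un unique $u:E\rrr E'$ avec $h'=uh$.

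L'essentiel de la difficult\'e r\'eside dans l'existence en (iii), et plus pr\'ecis\'ement dans l'identit\'e $h'p_0=h'p_1$ : c'est l\`a que le fait que $\Delta_h$ soit \scd\ joue son r\^ole, en propageant \`a tout morphisme vers un sch\'ema s\'epar\'e une co\"incidence qui n'a lieu a priori que sur la diagonale. Les points (i) et (ii) sont plus formels, une fois admises la stabilit\'e de la \scd-it\'e par restriction aux ouverts et l'\'equivalence, pour une immersion, entre ``\scd\ et ferm\'ee'' et ``isomorphisme''.
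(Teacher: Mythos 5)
Votre d\'emonstration est correcte. Les points (i) et (iii) suivent essentiellement la m\^eme voie que le texte : pour (i), identification de $U\times_{h(U)}U$ \`a un ouvert de $T\times_E T$ et restriction de $\Delta_h$ (le texte invoque le lemme \ref{lA.6}, vous la stabilit\'e de la dominance sch\'ematique par restriction aux ouverts du but, ce qui revient au m\^eme) ; pour (iii), descente fpqc le long de $h$ et utilisation de la dominance sch\'ematique de $\Delta_h$ pour forcer $h'p_0=h'p_1$ --- votre sch\'ema des co\"{\i}ncidences $\psi^{-1}(\Delta_{E'/S})$ est exactement le contenu du lemme \ref{lA.9} que cite le texte. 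En revanche, votre (ii) emprunte une route r\'eellement diff\'erente : le texte passe par la proposition \ref{p2.1} (injectivit\'e de $h$ sur les points maximaux) puis par la proposition \ref{pA.1} $ii)$, dont la preuve repose sur une d\'ecomposition \`a la Zorn des isomorphismes locaux s\'epar\'es en sommes d'immersions ouvertes ; vous montrez directement que $\Delta_{h_U}$ est \`a la fois une immersion ferm\'ee (par le proc\'ed\'e de \ref{sA.1.1}, puisque $U$ est s\'epar\'e sur $S$) et \scd e, donc un isomorphisme, d'o\`u $h_U$ monomorphisme, donc immersion ouverte surjective, donc isomorphisme. Votre argument est plus court et plus autonome (il n'utilise que l'appendice \ref{sA}), au prix de ne pas faire appara\^{\i}tre le lien avec les points maximaux qui sert ailleurs dans le texte ; l'approche du texte, elle, r\'eutilise des \'enonc\'es structurels (\ref{p2.1}, \ref{pA.1}) qui ont un int\'er\^et propre.
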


\begin{proof} $i)$ L'hypoth\`ese sur $U$ signifie que le morphisme $i : U \rrr T$ est quasi-compact, i.e. que pour tout ouvert affine $V$ de $T$, $U\cap V$ est quasi-compact. Sans hypoth\`ese sur $U$, il est clair que $h(U)$ est un ouvert de $E$ ; c'est donc un sch\'ema s\'epar\'e sur $S$. Notons $h' : U \rrr h(U)$ la restriction de $h$ ; c'est un isomorphisme local. Consid\'erons le carr\'e cart\'esien suivant.
$$
\xymatrix{U \ar[r]^>>>>>{\Delta_{h'}} \ar[d]_{i} & U\times_{h(U)}U \ar[r]^u & U\times_{E}U \ar[d]^{i\times i} \\
T \ar[rr]_{\Delta_{h}} & &T\times_{E}T }
$$
Le morphisme $u$ est un isomorphisme puisque $h(U) \rrr E$ est une immersion ; comme $i\times i$ est une immersion ouverte et que l'immersion $\Delta_{h}$ est quasi compacte et \scd e, il en est de m\^eme de $\Delta_{h'}$ (lemme \ref{lA.6}). Pour voir que $h'$ est un s\'eparateur, il reste \`a v\'erifier que $h'$ est quasi-compact ; or, on peut l'\'ecrire comme le compos\'e de l'immersion ouverte $U \rrr h^{-1}(h(U))$ qui est quasi-compacte puisque $i$ est quasi-compact, et du morphisme $h^{-1}(h(U)) \rrr h(U)$ qui est quasi-compact car $h$ l'est.
\medskip

\n $ii)$\, Soit $U$ un ouvert de $T$ s\'epar\'e sur $S$ ; le morphisme $U \rrr E$ induit par $h$ est s\'epar\'e d'apr\`es le lemme \ref{lA.1}. L'hypoth\`ese sur $\Delta_{h}$ implique d'apr\`es la proposition \ref{p2.1} que $h$ induit une injection sur l'ensemble des points maximaux de $U$ ; donc, (proposition \ref{pA.1} $ii)$), la restriction de $h$ \`a $U$ est une immersion ouverte.\medskip

\n $iii)$ Soit $h' : T \rrr E'$  un $S$-morphisme vers un schéma séparé sur $S$. Dans le carr\'e commutatif
$$
\xymatrix{T \ar[r]^{\Delta_{h}} \ar[d]_{\Delta_{h'}} & T\times_{E}T \ar@{-->}[dl]^w \ar[d]^{\phi}\\
T\times_{E'}T \ar[r]_{\phi'} & T\times_{S} T}
$$
les morphismes $\phi$ et $\phi'$ sont des immersions ferm\'ees puisque $E$ et $E'$ sont s\'epar\'es sur $S$, et $\Delta_{h}$ est \scd\, par hypoth\`ese ; le lemme \ref{lA.9} assure l'existence du morphisme $w$ rendant les triangles commutatifs. Consid\'erons maintenant le diagramme
$$
\xymatrix{T\times_{E}T \ar[d]_{w} \ar@<0.5ex>[r] \ar@<-0.5ex>[r] &T \ar@{=}[d] \ar[r]^h & E\ar@{-->}[d]^u\\
T\times_{E'}T  \ar@<0.5ex>[r] \ar@<-0.5ex>[r] &T \ar[r]_{h'} & E'}
$$
La ligne sup\'erieure est exacte puisque $h$ est fid\`element plat quasi-compact, donc un \'epimorphisme effectif \cite[VIII 5.3]{SGA1} ; d'o\`u l'existence et l'unicit\'e de $u$. 
\end{proof}

Le corollaire suivant pr\'ecise la relation entre séparateur sur $S$ et s\'eparateur  \og absolu \fg\, i.e. sur $S = \s({\bf Z})$. Rappelons (lemme \ref{lA.1}) que si le sch\'ema $E$ est s\'epar\'e tout morphisme $E \rrr S$ est s\'epar\'e, et que si $S$ est s\'epar\'e ainsi que le morphisme $E \rrr S$, alors le sch\'ema $E$ est s\'epar\'e. 

\begin{cor}\phantomsection Soient $f : T \rrr S$ un morphisme de sch\'emas et $u : S \rrr S'$ un morphisme \emph{s\'epar\'e}. 
\begin{thlist}
\item Si $f$ admet le s\'eparateur $T \stackrel{h}{ \rrr} E \stackrel{g}{\rrr} S$, alors $uf$ admet le s\'eparateur  $T \stackrel{h}{ \rrr} E \stackrel{ug}{\rrr} S'$. 

\item Si $uf$ admet le s\'eparateur $T \stackrel{h'}{ \rrr} E' \stackrel{g'}{\rrr} S'$, alors $g'$ se factorise par $u$ : $g' = ug''$, et $T \stackrel{h'}{ \rrr} E' \stackrel{g''}{\rrr} S$ est un s\'eparateur pour $f$.
\end{thlist}
En particulier, si $S$ est s\'epar\'e, c'est-\`a-dire si le morphisme $S \rrr \s({\bf Z})$ est s\'epar\'e, les notions de s\'eparateur pour $f$ et de s\'eparateur ``absolu'' sont \'equivalentes.
\end{cor}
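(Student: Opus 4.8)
The plan is to notice that conditions $i)$ and $ii)$ of D\'efinition \ref{d2.1} bear on the morphism $h$ alone and make no reference to the base: being a surjective, quasi-compact, quasi-s\'epar\'e local isomorphism whose diagonal $\Delta_h$ is \scd\ are absolute properties of $h : T \rrr E$ (the fibre product $T\times_E T$ does not involve $S$). Hence, in passing from $S$ to $S'$, the only clause that can change is the requirement that the target be separated over the base. The whole statement thus reduces to transporting the separatedness of the structure morphism along $u$, plus a single use of the universal property.

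For $i)$, I would simply compose: since $E$ is separated over $S$, the structure morphism $g : E \rrr S$ is separated, and $u$ is separated by hypothesis, so $ug : E \rrr S'$ is separated (separatedness is stable under composition). As $i)$ and $ii)$ for $h$ are untouched, $T \stackrel{h}{\rrr} E$ with structure morphism $ug$ is a s\'eparateur for $uf$.

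For $ii)$, the key step is to manufacture $g''$ from the universal property. Because $u$ is separated, $S$ is a separated $S'$-scheme, and $f : T \rrr S$ is an $S'$-morphism into it (indeed $u\circ f = uf$ is exactly the $S'$-structure of $T$). Applying Proposition \ref{p3.1} $iii)$ to the s\'eparateur $h' : T \rrr E'$ over $S'$ yields a unique $S'$-morphism $g'' : E' \rrr S$ with $f = g''h'$; that $g''$ is an $S'$-morphism means precisely $ug'' = g'$, which is the asserted factorization. It then remains to see that $g''$ is separated, so that $E'$ is indeed a separated $S$-scheme: this follows from the cancellation property for separated morphisms, the composite $ug'' = g'$ being separated. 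With $i)$ and $ii)$ for $h'$ unchanged, $T \stackrel{h'}{\rrr} E' \stackrel{g''}{\rrr} S$ is a s\'eparateur for $f$.

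The main (and essentially only) obstacle is the separatedness bookkeeping in $ii)$: one must use both that the universal property delivers an $S'$-morphism rather than a bare scheme morphism --- so that $g' = ug''$ comes for free --- and the cancellation principle to pass from the separatedness of $ug''$ to that of $g''$. The final \og en particulier \fg\ is just the case $S' = \s({\bf Z})$: then $u : S \rrr \s({\bf Z})$ is separated exactly when $S$ is separated (lemme \ref{lA.1}), and the two parts identify s\'eparateurs for $f$ with absolute ones.
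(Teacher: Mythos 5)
Your proof is correct and follows essentially the same route as the paper: part (i) by composing separated morphisms, and part (ii) by applying the universal property of the $S'$-séparateur to the $S'$-morphism $f : T \rrr S$ (with $S$ separated over $S'$) to get $g''$ with $ug'' = g'$, then deducing separatedness of $g''$ by cancellation — which is exactly the paper's decomposition of $\Delta_{g'}$ through $\Delta_{g''}$ combined with \ref{sA.1.1}. The preliminary observation that the conditions on $h$ itself are base-independent is a helpful way of framing why only the separatedness of the target needs attention.
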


\begin{proof} Rappelons que deux s\'eparateurs du m\^eme morphisme sont canoniquement isomorphes (proposition \ref{p3.1}, $iii)$). Le diagramme suivant r\'esume la situation de l'\'enonc\'e.
$$
\xymatrix{E \ar[dr]_{g} & T \ar[l]_{h} \ar[d]_{f} \ar[r]^{h'} &E' \ar@{-->}[dl]^{g''}  \ar@/^1pc/[ddl]^{g'}\\
& S \ar[d]_{u} &\\
&S' &}
$$
$i)$ \, Comme le morphisme $ug$ est s\'epar\'e, la conclusion de $i)$ d\'ecoule de la d\'efinition.

\n $ii)$\, D'apr\`es la propri\'et\'e universelle du $S'$-s\'eparateur $E'$ (proposition \ref{p3.1}, {\it iii)}), il existe un unique $S'$-morphisme $g'' : E' \rrr S$ tel que $f = g''h'$. Il faut voir que le morphisme $g''$ est s\'epar\'e ; or, l'immersion ferm\'ee $\Delta_{g'} : E' \rrr E'\times_{S'}E'$ se d\'ecompose en
$$
E' \; \stackrel{\Delta_{g''}}{\rrr} \; E'\times_{S}E' \; \stackrel{v}{\rrr}\;  E'\times_{S'}E'
$$
o\`u $v$ est une immersion ; il r\'esulte donc de \ref{sA.1.1} que $\Delta_{g''}$ est une immersion ferm\'ee.\end{proof}

\subsection{Enveloppes séparées}

\begin{defn}\phantomsection\label{d3.1}  Un $S$-schéma $f : T \rrr S$ {\it admet une enveloppe séparée} s'il existe un $S$-morphisme $\varphi : T \rrr F$, avec $F$ séparé sur $S$, qui vérifie la propriété universelle (iii) de la proposition \ref{p3.1}. \end{defn}

La proposition \ref{p3.1} dit qu'un séparateur est une enveloppe séparée. On trouvera en \ref{ex6.1} un exemple de schéma (intègre noethérien  de dimension 1) qui n'admet pas de séparateur bien qu'il possède une enveloppe séparée.
\medskip

 Voici une autre famille  d'exemples. (Voir aussi l'exemple \ref{ex5.1} et le \S\ref{s7.2}.)

\begin{prop}\phantomsection \label{p3.2} Soient $U_{0} \stackrel{i}{\rrr} U \stackrel{\theta}{\rrr} V$ deux morphismes de $S$-schémas. On suppose que les morphismes $i$ et $j = \theta i$ sont des immersions ouvertes, et qu'elles sont  \scd es (ce sera le cas si ces trois schémas sont intègres) ; enfin, on suppose que $V$ est séparé sur $S$.  On définit  un $S$-schéma $T$ par recollement de $U$ et de $V$ le long de $U_{0}$. Alors

$i)$  Ce schéma admet une \emph{enveloppe séparée}. 

 $ii)$ Supposons de plus que $\theta$ soit quasi-compact. Alors, pour que $T$ admette un séparateur sur $S$ il faut et il suffit que $U_{0}$ soit retrocompact dans $U$ et que $\theta$  soit un isomorphisme local quasi-séparé.
\end{prop}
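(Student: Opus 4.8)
Le plan est de construire explicitement l'enveloppe séparée, puis de ramener l'existence d'un séparateur à une propriété de ce morphisme.

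Pour $i)$, je poserais $h : T \rrr V$ le $S$-morphisme obtenu en recollant $\theta : U \rrr V$ sur l'ouvert $U$ et l'identité de $V$ sur l'ouvert $V$ ; ces deux morphismes coïncident sur $U_{0} = U\cap V$, où $\theta i = j$ est l'inclusion, de sorte que $h$ est bien défini. Il s'agit alors de vérifier la propriété universelle (iii) de la proposition \ref{p3.1}. Étant donné un $S$-morphisme $h' : T \rrr E'$ vers un $S$-schéma séparé, l'unique candidat est $u = h'|_{V}$ (puisque $h|_{V}$ est l'identité) ; il reste à voir que $h'|_{U} = u\circ\theta$. Ces deux $S$-morphismes $U \rrr E'$ coïncident sur $U_{0}$ de par la structure de recollement ; leur égalisateur est un sous-schéma fermé de $U$ car $E'$ est séparé sur $S$, et il contient l'ouvert $U_{0}$ ; comme $i$ est \scd, cet égalisateur est $U$ tout entier, d'où $h'|_{U} = u\circ\theta$. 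Cela fournit l'existence et l'unicité de $u$, donc $h$ est une enveloppe séparée.

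Pour $ii)$, le point de départ est qu'un séparateur est une enveloppe séparée (proposition \ref{p3.1} iii)) et que l'enveloppe séparée est unique à isomorphisme unique près ; par conséquent $T$ admet un séparateur sur $S$ si et seulement si le morphisme $h : T\rrr V$ ci-dessus est lui-même un séparateur, c'est-à-dire vérifie la définition \ref{d2.1}. Comme $h|_{V}$ est l'identité, $h$ est automatiquement surjectif, et $h$ est un \isl si et seulement si $\theta$ en est un. L'hypothèse $\theta$ quasi-compact entraîne d'autre part que $h$ est quasi-compact, car l'image réciproque d'un ouvert quasi-compact $W\subset V$ est la réunion des deux ouverts quasi-compacts $W$ et $\theta^{-1}(W)$. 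Le cœur de la démonstration est alors l'analyse du morphisme diagonal $\Delta_{h} : T \rrr T\times_{V} T$, menée en décomposant $T\times_{V} T$ suivant le recouvrement $T = U\cup V$ en les quatre ouverts $U\times_{V} U$, $V\times_{V} V = V$, $U\times_{V} V \simeq U$ et $V\times_{V} U \simeq U$. Au-dessus de $U\times_{V} U$, $\Delta_{h}$ se restreint en $\Delta_{\theta}$ ; au-dessus des deux termes croisés, en l'immersion ouverte $i : U_{0} \rrr U$ ; au-dessus de $V\times_{V} V$, en l'identité. On en déduit que $\Delta_{h}$ est quasi-compact si et seulement si $\Delta_{\theta}$ l'est (i.e. $\theta$ est quasi-séparé) et $i$ l'est (i.e. $U_{0}$ est rétrocompact dans $U$). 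Jointe à l'équivalence déjà notée entre les propriétés, pour $h$ et pour $\theta$, d'être des isomorphismes locaux, cela donne la nécessité des conditions annoncées.

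Pour la suffisance, il reste à établir que $\Delta_{h}$ est \scd. C'est là l'obstacle principal, car on ne suppose ni $T$ réduit ni localement noethérien, et l'on ne peut donc pas se ramener aux points maximaux via la proposition \ref{p2.1}. Je montrerais directement que $\Delta_{\theta}$ est \scd : comme $\theta$ est un \isl, il est plat, et les deux projections $U\times_{V} U \rrr U$ en sont des changements de base, donc plates ; l'immersion ouverte $i : U_{0}\rrr U$ étant \scd\ et quasi-compacte (hypothèse de rétrocompacité), le lemme \ref{lA.6} assure que $U_{0}\times_{V} U$ et $U\times_{V} U_{0}$ sont \scd s dans $U\times_{V} U$. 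Leur intersection $U_{0}\times_{V} U_{0}$ l'est alors aussi, et elle est contenue dans $\Delta_{\theta}(U)$ car $\theta|_{U_{0}} = j$ est un monomorphisme ; d'où $\Delta_{\theta}$ \scd. En recollant cette information sur les quatre cartes ci-dessus — le caractère \scd\ de $i$ réglant les termes croisés, et $V\times_{V} V$ ne demandant rien — on conclut que $\Delta_{h}$ est \scd. Ainsi $h$ satisfait toutes les conditions de la définition \ref{d2.1} : c'est un séparateur, ce qui achève la preuve.
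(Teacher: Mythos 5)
Votre démonstration est correcte et suit essentiellement la même route que celle du texte : même construction de l'enveloppe séparée $T\rrr V$ avec le même argument de schéma des coïncidences plus dominance schématique, même réduction de $ii)$ via l'unicité de l'enveloppe séparée, même décomposition de $T\times_{V}T$ en quatre cartes, et votre vérification que $\Delta_{\theta}$ est \scd\ (dominance schématique de l'ouvert $U_{0}\times_{V}U_{0}$, contenu dans $\Delta_{\theta}(U)$, obtenue par deux changements de base plats) coïncide avec l'argument du texte montrant que $i\times i$ est \scd\ et que $\Delta_{\theta}\circ i=(i\times i)\circ\Delta_{j}$. Seule remarque mineure : l'assertion que l'intersection de deux ouverts \scd s est \scd e mérite d'être justifiée par la quasi-compacité de $i$ (lemme \ref{lA.6} puis composition), ce qui est exactement la factorisation $(1_{U}\times i)\circ(i\times 1_{U_{0}})$ du texte.
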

 
\begin{proof} Désignons par $u : U \rrr T$ et $v : V \rrr T$ les immersions ouvertes canoniques, de sorte qu'on a  la figure suivante :
$$
\xymatrix{U_{0}\ar[r]^{i} \ar[d]_{j} \ar[dr]^{w}& U \ar[d]^{u}\\
V \ar[r]_{v} &T}
$$
où $w = ui = vj$ ; notons que  les immersions  ouvertes $u$, $v$ et $w$  sont \scd es puisque  $i : U_{0} \rrr U$ et $j : U_{0} \rrr V$ le sont  et que le carr\'e suivant  de faisceaux sur $T$ est cart\'esien
$$
\xymatrix{\oo_{T} \ar[r] \ar[d] & u_{\star}(\oo_{U}) \ar[d]\\
v_{\star}(\oo_{V}) \ar[r] & w_{\star}(\oo_{U_{0}})} 
$$
comme il découle de la définition du recollement.
 
 \n $i)$ Soit $\varphi : T \rrr V$ le morphisme défini par  $\varphi u = \theta $  et $\varphi v = {\rm id}_{V}$. Montrons que $\varphi$ est une enveloppe séparée. Soit $\psi : T \rrr Z$ un $S$-morphisme vers un $S$-schéma séparé $Z$. Comme $\varphi v = {\rm id}_{V}$ on voit qu'il y a au plus une factorisation de $\psi$ par $\varphi$. Pour l'existence de la factorisation on va montrer que $\psi = \psi v \varphi$, ce qui prouvera notre assertion. 
 $$
 \xymatrix{T \ar[r]^{\varphi} \ar[d]_{\psi} & V \ar@{-->}[dl]^{\psi v}\\
 Z &}
 $$
  Puisque $Z$ est séparé sur $S$, le {\it schéma des co\"incidences } Ker$(\psi, \psi v \varphi)$ est un sous-schéma fermé de $T$ \cite[5.2.5]{EGAG} ; pour voir qu'il est égal à $T$, il suffit de vérifier que l'immersion ouverte \scd e $w = ui = vj$ se factorise par lui, i.e. que l'on a $\psi v \varphi w  = \psi w$. Or, on a $\psi v \varphi u i = \psi v \theta i$ puisque $\varphi u = \theta $ ; comme $\theta i = j$ on trouve bien $\psi v \varphi (u i)  = \psi (vj)$.
  \medskip
  
 \n $ii)$ Supposons que $T$ admette un séparateur sur $S$ ; alors ce dernier est égal à $ \varphi$ en vertu de l'unicité de l'enveloppe séparée ; cela implique que le morphisme $\varphi u = \theta$ soit un isomorphisme local ; montrons que $\theta$ est quasi-séparé. Considérons le morphisme diagonal $\Delta_{\varphi}$. En identifiant $U$ et $V$ avec les ouverts de  $T$ qui leur correspondent, on voit que $T \times_{V} T$ est recouvert par les ouverts $U \times_{V}U ,\, U\times_{V} V, \, V\times_{V}U$  et $V \times_{V}V$ ; il faut prendre garde que dans ces produits fibrés la structure de schéma sur $V$ est donnée par $\varphi$ ; restreint \`a ces ouverts, le morphisme diagonal s'\'ecrit respectivement
$$
\xymatrix{ U\times_{V}U  & U\times_{V}V  & V\times_{V}U  & V\times_{V}V\\
U \ar[u]^{\Delta_{\theta}}& U_{0}\ar[u]_{(i, j)}& U_{0} \ar[u]^{(j, i)} & V\ar@{=}[u]}
$$
Puisque $\varphi$ est un séparateur, il est quasi-séparé, ce qui implique que les morphismes $\Delta_{\theta}$ et $(i, j)$ sont quasi-compacts, et, en particulier, que $\theta$ est quasi-séparé. Composé avec l'isomorphisme  $U\times_{V}V \simeq U$, le morphisme $(i, j)$ devient $i : U_{0} \rrr U$ ; la quasi-compacité   de cette immersion ouverte signifie que $U_{0}$ est rétrocompact dans $U$. Les conditions indiquées sont donc nécessaires.

Abordons la réciproque. Il faut montrer que si $\theta$ est un isomorphisme local quasi-compact et quasi-séparé, et si $U_{0}$ est rétrocompact, alors $\varphi$ est un séparateur. Le fait qu'il soit un isomorphisme local est clair, ainsi que sa quasi-compacité. Les hypothèses, et la décomposition du morphisme diagonal indiquée plus haut montrent que $\Delta_{\varphi}$ est quasi-compact, i.e. que $\varphi$ est quasi-séparé. Il reste à voir que $\Delta_{\varphi}$ est \scd, et il suffit de voir que $\Delta_{\theta}$ est \scd\, puisque, par hypothèse,  $i$ est \scd. Considérons le diagramme commutatif suivant
$$
\xymatrix{ U \ar[r]^<<<<<{\Delta_{\theta}} & U\times_{V}U\\
U_{0} \ar[u]^{i} \ar[r]_<<<<<{\Delta_{j}} &U_{0}\times_{V}U_{0} \ar[u]_{i\times i}}
$$
Comme $j$ est une immersion ouverte, $\Delta_{j}$ est un isomorphisme ; pour pouvoir conclure il suffit donc de montrer que $i \times i$ est \scd. Or l'immersion $i$ est quasi-compacte et \scd e ; le morphisme $i \times 1_{U_{0}} : U_{0}\times_{V}U_{0} \rrr U\times_{V}U_{0}$ est donc encore \scd \, puisque $j : U_{0} \rrr V$ est une immersion ouverte ; et la platitude de $\theta : U \rrr V$ entraîne que $1_{U}\times i : U\times_{V}U_{0} \rrr U\times_{V}U$ est \scd\, (lemme \ref{lA.6}).
\end{proof}

 Dans cet esprit, on peut hasarder la conjecture suivante :

\begin{conj}\phantomsection\label{cj3.1} Soit $T$ un schéma intègre noethérien. On suppose qu'il existe un morphisme universellement fermé surjectif et birationnel de schémas noethériens intègres $p : T' \rrr T$, et que $T'$ possède un séparateur. Alors $T$ possède une enveloppe séparée $\varphi : T \rrr F$ au sens de la définition \ref{d3.1}. 
\end{conj}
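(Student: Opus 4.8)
The plan is to corepresent the separated-envelope functor of $T$ inside the separator of $T'$ that we already possess. We may assume $S=\Spec(\mathbf{Z})$. Let $h':T'\rrr E'$ be the separator of $T'$; by the universal property \ref{p3.1}~$iii)$, $E'$ is the separated envelope of $T'$, so that for every separated scheme $Z$ precomposition with $h'$ gives a bijection $\mathrm{Hom}(E',Z)\rt\mathrm{Hom}(T',Z)$. Since $h'$ is a surjective local isomorphism with integral source, $E'$ is integral with the same function field $K$ as $T'$ and, $p$ being birational, as $T$. First I would describe the functor $Z\mapsto\mathrm{Hom}(T,Z)$ (on separated $Z$) as a subfunctor of $\mathrm{Hom}(E',Z)$ cut out by a relation on $E'$, and then represent it by a separated quotient $F$ of $E'$.

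Next I would verify that $p$ is an epimorphism onto separated schemes: if $\psi_1p=\psi_2p$ with $Z$ separated, the scheme of coincidences $\mathrm{Ker}(\psi_1,\psi_2)$ is a closed subscheme of $T$ \cite[5.2.5]{EGAG} containing the image of $p$, which is all of $T$ since $p$ is surjective, hence equal to $T$ because $T$ is reduced; thus $\psi_1=\psi_2$. The central point is then a coequalizer statement: writing $R'=T'\times_T T'$ with projections $\mathrm{pr}_0,\mathrm{pr}_1$, a morphism $\psi':T'\rrr Z$ to a separated scheme factors through $p$ if and only if $\psi'\mathrm{pr}_0=\psi'\mathrm{pr}_1$. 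Necessity is clear; for sufficiency one uses that $p$, being universally closed and surjective, is a topological quotient map, which yields $\psi$ as a continuous map of underlying spaces, leaving only the descent of the map of structure sheaves.

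Granting this, transport the relation to $E'$. Through the identification $\mathrm{Hom}(T',Z)=\mathrm{Hom}(E',Z)$, the condition $\psi'\mathrm{pr}_0=\psi'\mathrm{pr}_1$ becomes: $u':E'\rrr Z$ equalizes the two composites $h'\mathrm{pr}_0,h'\mathrm{pr}_1:R'\rrr E'$. As $Z$ is separated and the scheme of coincidences is closed, one may replace the image of $(h'\mathrm{pr}_0,h'\mathrm{pr}_1)$ by its schematic closure $R_0\subset E'\times E'$ without altering the condition. I would then set $F$ to be the separated quotient of $E'$ by $R_0$. To realize this quotient as a scheme I would show that $R_0$ is an equivalence relation whose two projections to $E'$ are local isomorphisms and whose reflexivity morphism is \scd, which reduces representability to the results of \S\ref{s4} and \S\ref{s5}; it then remains to check that $F$ is separated and that the morphism $\varphi:T\rrr F$ obtained by descending $E'\rrr F$ along $p$ has the universal property \ref{p3.1}~$iii)$, i.e. is a separated envelope.

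The hard part, and the reason this remains a conjecture, is precisely the descent of structure sheaves in the coequalizer step: one must prove that $\oo_T$ is the equalizer of $p_\star(\oo_{T'})\rightrightarrows q_\star(\oo_{R'})$, where $q=p\,\mathrm{pr}_0$, i.e. that functions on $T'$ that are constant (in the schematic sense) on the fibres of $p$ descend to $T$. This is not faithfully flat descent, $p$ not being flat, so the flatness machinery of \S\ref{s4} does not apply directly. The difficulty can be measured inside $K$: such a section is an element of $K$ regular on $p^{-1}(V)$, and the issue is to recognize it as regular on $V$, that is, to control the gap between $p_\star(\oo_{T'})$ and $\oo_T$. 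This is where the hypothesis \emph{universally closed} (to control the topology and specializations, presumably via a valuative criterion) must be combined with \emph{birational} (to glue everything inside the common field $K$). The case where $p$ is an isomorphism, for which $F=E'$, and the gluing situation of \ref{p3.2} provide consistency checks, but the case of a general universally closed contraction remains to be settled.
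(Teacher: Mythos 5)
Note first that the statement you are asked about is not a theorem of the paper: it is stated as Conjecture \ref{cj3.1}, and the authors offer no proof of it, only the heuristic remark that the expected $F$ might be the coequalizer of the two projections of the schematic closure of the diagonal in $T\times T$. So there is no argument in the paper to measure yours against. Your proposal is a reasonable plan of attack, and you are candid that it is not a proof: the central step --- that $\oo_{T}$ is the equalizer of $p_{\star}(\oo_{T'})\rightrightarrows q_{\star}(\oo_{R'})$, i.e.\ that a function on $T'$ agreeing with itself on $R'=T'\times_{T}T'$ descends to $T$ --- is exactly where faithfully flat descent is unavailable ($p$ is not flat) and where the hypotheses \emph{universellement ferm\'e} and \emph{birationnel} would have to do real work. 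As it stands this step is not established, so the proposal does not prove the conjecture.

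Beyond that acknowledged gap there is a second one you pass over more quickly: even granting the coequalizer statement, you still need the quotient of $E'$ by $R_{0}$ to exist as a \emph{separated scheme} with the universal property of Definition \ref{d3.1}. You propose to reduce this to \S\ref{s4}--\S\ref{s5} by showing that $R_{0}$ is an equivalence relation whose projections to $E'$ are local isomorphisms with \scd\ reflexivity morphism; but nothing in the hypotheses forces the projections of $R_{0}$ (a schematic closure, hence a priori only a closed subscheme of $E'\times E'$) to be flat, let alone local isomorphisms. Indeed the whole point of the conjecture is that $T$ itself need not admit a separator precisely because such flatness can fail (cf.\ example \ref{ex6.1} and \S\ref{s07}), so a proof along your lines would have to produce $F$ by some mechanism other than Proposition \ref{p4.1} --- for instance a direct construction of the coequalizer in the style of the verification carried out for example \ref{ex6.1}, or for the enveloppe affine in example \ref{ex5.1}. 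In short, the architecture of your reduction is sensible and consistent with the authors' own speculation, but both the descent step and the representability step remain open, which is consistent with the statement being a conjecture rather than a theorem.
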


Il est même possible qu'en désignant encore par $T_{1}$ l'adhérence schématique de la diagonale de $T\times T$, on ait une suite exacte dans la catégorie des schémas
$$
\xymatrix{T_{1} \ar@<0.5ex>[r]^{d_{0}} \ar@<-0.5ex>[r]_{d_{1}} &T  \ar[r]^{\varphi} & F.}
$$


\section{Passage au quotient}\label{s4}

\subsection{Repr\'esentabilit\'e de certains faisceaux quotients}


\begin{prop}\phantomsection\label{p4.1} Soient $S$ un sch\'ema et $\xymatrix{d_{0}, d_{1} :T_{1} \ar@<0.5ex>[r] \ar@<-0.5ex>[r]& T_{0}}$ un couple de morphismes de $S$-sch\'emas provenant d'une relation d'\'equivalence sur $T_{0}$.  Notons $T_{0} \stackrel{\varepsilon}{\rrr} T_{1} \stackrel{d}{\rrr} T_{0}\times_{S} T_{0}$ la d\'ecomposition canonique du morphisme diagonal, o\`u $d = (d_{0}, d_{1})$. On suppose que le morphisme $\varepsilon$ est \scd, et que $d_{0}$  et  $d_{1}$ sont des isomorphismes locaux. \\
Alors, le faisceau fppf quotient $\wt{T_{0}/T_{1}}$ est repr\'esent\'e par un \emph{sch\'ema} $E$ sur $S$, et le morphisme canonique $h : T_{0}\rrr~E$ est un isomorphisme local. Si, de plus,  $d_{0}$ et $d_{1}$ sont quasi-compacts, alors $h : T_{0}\rrr E$ est quasi-compact.
\end{prop}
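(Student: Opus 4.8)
The plan is to construct the quotient scheme locally and then glue, using the hypothesis that $d_0, d_1$ are local isomorphisms to produce, around each point, an open piece on which the equivalence relation becomes trivial. First I would work locally on $T_0$: pick a point $t \in T_0$ and an open $U \subset T_0$ containing $t$ on which $d_0$ (restricted suitably to the slice of $T_1$ over $U$) is an open immersion, which is possible since $d_0$ is a local isomorphism. The idea is that $T_1$ plays the role of $T_0 \times_E T_0$ once $E$ is built, and the reflexivity section $\varepsilon$ being schematically dominant forces the relation to be controlled by its behaviour on maximal (or associated) points, exactly as in Proposition \ref{p2.1}. The key geometric input is that a local isomorphism whose diagonal is schematically dominant behaves like a separator, so the candidate quotient should be separated-looking locally.

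The central step is to verify that the fppf sheaf quotient $\wt{T_0/T_1}$ is representable. Here I would invoke the standard criterion for effectivity of flat equivalence relations: since $d_0$ and $d_1$ are local isomorphisms, they are in particular flat and locally of finite presentation (or at least flat, which is what matters for the sheaf quotient), and the relation is an honest equivalence relation by hypothesis. The local-isomorphism property means that, \'etale-locally on $T_0$, the two projections are open immersions, so the quotient presheaf is, locally, a quotient by an open equivalence relation — and such quotients are representable by gluing. Concretely, I would cover $T_0$ by opens $U_\alpha$ on which $d_0, d_1$ induce open immersions from the relevant part of $T_1$, form the naive topological quotients $h(U_\alpha)$, and check the gluing data are compatible; the schematically dominant diagonal $\varepsilon$ guarantees the separatedness needed so that the gluing identifications along overlaps are consistent (no two distinct maximal points get glued inconsistently), mirroring the topological remark in the introduction that the closure of the diagonal is an equivalence relation once the projections are open.

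The hard part will be checking that the locally-constructed scheme $E$ genuinely represents the fppf sheaf, rather than merely giving a topological quotient: one must show that for an fppf test scheme, a compatible pair of points of $T_0$ descends uniquely to a point of $E$, which requires the equivalence relation to be \emph{effective} as a descent datum. This is where the flatness of $d_0, d_1$ is essential — it lets me apply fppf descent for the structure sheaf, and the schematic dominance of $\varepsilon$ ensures the descended algebra is the right one (this is precisely the content of the lemma following Definition \ref{d2.1}, relating $\oo_E$ to $h_\star(\oo_{T_0})$). Once representability is established, that $h$ is a local isomorphism is immediate from the local open-immersion description, and its quasi-compactness under the additional hypothesis that $d_0, d_1$ are quasi-compact follows since the quasi-compactness of a map can be checked after the fppf cover $h$ (equivalently, after base change along $h$, where $h$ becomes $d_1$, or one of the projections, which is quasi-compact by assumption).
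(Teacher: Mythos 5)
Your overall strategy (glue open pieces of $T_0$ along opens determined by the relation) is the right one and matches the paper's, but the local model you start from is incorrect and this creates a real gap. You propose to cover $T_0$ by opens $U$ over which $d_0$, restricted to the slice of $T_1$ over $U$, is an open immersion. A local isomorphism only gives such opens in $T_1$, not in $T_0$: over any open $U\subset T_0$ the fibre $d_0^{-1}(U)$ may well have several sheets (think of a nontrivial finite \'etale equivalence relation), so no such cover of $T_0$ exists in general. The correct local input, which is the heart of the paper's proof and is absent from your plan, is that the relation restricted to a \emph{separated} open $U_0\subset T_0$ is the equality relation: the induced section $\varepsilon': U_0\to U_1$ is a closed immersion because $\Delta_{U_0/S}$ is closed and $d'$ is a monomorphism, and it is \scd\ as a restriction of $\varepsilon$, hence an isomorphism. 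This is precisely where the hypothesis on $\varepsilon$ is used --- not, as you suggest, to make gluing identifications ``consistent at maximal points''. One then shows that for two separated opens $U, U'$ the part $W$ of $T_1$ lying over $U\times_S U'$ maps \emph{injectively}, hence by open immersions, to opens of $U$ and of $U'$; that injectivity uses transitivity of the relation together with the triviality on separated opens, and the cocycle condition for the gluing is again a consequence of transitivity. None of these verifications appear in your proposal, and without them the gluing data are not even defined.

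Separately, your appeal to ``the standard criterion for effectivity of flat equivalence relations'' cannot deliver the conclusion: that criterion produces an algebraic space, and the paper's own remark following the proposition (Hironaka's example of a free $\mathbf{Z}/2$-action on a proper scheme whose quotient is not a scheme) shows that an equivalence relation whose projections are local isomorphisms need not have a scheme quotient; the hypothesis on $\varepsilon$ is exactly what rules this out, so it must enter the representability argument in an essential way. In the paper, representability is instead elementary once $E$ is built: the gluing squares are cartesian, so $T_1\;\rt\; T_0\times_E T_0$, and this isomorphism makes the natural surjection of fppf sheaves $\wt{T_0/T_1}\to\wt{E}$ injective as well. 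Your final point on quasi-compactness is essentially right ($h^{-1}h(U)=d_0(d_1^{-1}(U))$ for $U$ affine), but it rests on the identification $T_1\simeq T_0\times_E T_0$, which your argument does not establish.
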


\begin{proof} On va d\'efinir un repr\'esentant du faisceau quotient $\wt{T_{0}/T_{1}}$ par recollement des ouverts s\'epar\'es de $T_{0}$ le long d'ouverts attach\'es \`a la relation d'\'equivalence, et pr\'ecis\'es plus bas.
\bb

Soit $U_{0} \subset T_{0}$ un ouvert s\'epar\'e ; soit $U_{1} \subset T_{1}$ la restriction \`a $U_{0}$ de la relation d'\'equivalence. On a donc un diagramme commutatif \`a  carr\'es cart\'esiens, et dont les fl\`eches verticales sont des immersions ouvertes
$$
\xymatrix{T_{0}\ar[r]^{\varepsilon} & T_{1} \ar[r]^>>>>>{d}
&T_{0}\times_{S}T_{0}\\
U_{0} \ar[u] \ar[r]_{\varepsilon'}& U_{1} \ar[u] \ar[r]_>>>>>{d'} &U_{0}\times_{S}U_{0}\ar[u]} 
$$
Montrons que le morphisme $\varepsilon'$ est un isomorphisme, i.e. une immersion ferm\'ee \scd e. En effet, c'est une immersion ferm\'ee d'après \ref{sA.1.1} puisque le morphisme diagonal $d'\varepsilon' : U_{0}\rrr U_{0}\times_{S}U_{0}$ est une immersion ferm\'ee ($U_{0}$ est s\'epar\'e sur $S$, cf. lemme \ref{lA.1}, et que $d'$ est un monomorphisme  ; de plus $\varepsilon'$ est \scd\, puisque c'est la restriction \`a l'ouvert $U_{1}$ du morphisme \scd\,  $\varepsilon$.\medskip

\n Cela montre que la relation d'\'equivalence induit l'\'egalit\'e sur les ouverts de $T_{0}$ qui sont s\'epar\'es.
\medskip

\n Soit maintenant $U'_{0}$ un second ouvert s\'epar\'e de $T_{0}$, et soit $W \subset T_{1}$ l'ouvert image r\'eciproque  de $U_{0}\times_{S}U'_{0} \subset T_{0}\times_{S}T_{0}$, de sorte que les carr\'es de la figure suivante sont  cart\'esiens

$$
\xymatrix{T_{0}\ar[r] & T_{1} \ar[r]^>>>>>{d}
&T_{0}\times_{S}T_{0}\\
U_{0}\cap U'_{0} \ar[u] \ar[r] & W \ar[u] \ar[r] &U_{0}\times_{S}U'_{0}\ar[u]} 
$$
Puisque $d_{0}$ et $d_{1}$ sont des isomorphismes locaux, ce sont des applications ouvertes ; donc $d_{0}(W)$ est un ouvert de $U'_{0}$, et $d_{1}(W)$ est un ouvert de $U_{0}$.
Montrons que $d_{0}$ et $d_{1}$ induisent des isomorphismes
$$
U'_{0} \supset d_{0}(W) \quad \widetilde{\longleftarrow} \quad W \quad \widetilde{\rrr}\quad d_{1}(W) \subset U_{0} .
$$
 Il suffit  de voir que les restrictions de $d_{0}$  et $d_{1}$  \`a $W$ sont des applications (ensemblistes) injectives puisqu'un isomorphisme local injectif est une immersion ouverte ; montrons-le pour $d_{1}$. Soient donc $w$ et $w'$ deux points de $W$ tels que $d_{1}(w) = d_{1}(w')$ ; notant $\sim$ la relation d'\'equivalence induite par $T_{1}$ sur les points de $T_{0}$, on a, puisque $W \subset T_{1}$,
$$
d_{0}(w) \; \sim \;  d_{1}(w) \; = \; d_{1}(w') \; \sim \; d_{0}(w') .
$$
Par transitivit\'e, on en tire l'\'equivalence $d_{0}(w) \sim d_{0}(w')$ ; mais ces deux \'el\'ements sont dans l'ouvert  s\'epar\'e $U'_{0}$, et sur un tel ouvert, on a vu que  la relation d'\'equivalence est l'identit\'e ; donc $d_{0}(w) = d_{0}(w')$, et cela entra\^ine l'\'egalit\'e voulue $w = w'$ puisque le morphisme $d = (d_{0}, d_{1})$ est une immersion.
\medskip

\n On va d\'efinir  le \emph{sch\'ema}  qui repr\'esentera le quotient en recollant les couples d'ouverts s\'epar\'es de $T_{0}$ le long des ouverts du type $d_{0}(W)$ et $d_{1}(W)$ introduits ci-dessus. Précisément, soit $(U_{\lambda})$ un recouvrement de $T_{0}$ par des ouverts s\'epar\'es, par exemple des ouverts affines \cite[5.3]{EGAG}. Pour tout couple $(\lambda, \mu)$ on note $W_{\lambda \mu}$ l'ouvert de $T_{1}$ induit par l'ouvert $U_{\lambda}\times_{S}U_{\mu}$, de sorte qu'on a un carr\'e cart\'esien
$$
\xymatrix{W_{\lambda \mu} \ar[d] \ar[r] & U_{\lambda}\times_{S}U_{\mu} \ar[d]\\
T_{1} \ar[r]_<<<<<<{d} & T_{0}\times_{S}T_{0} } 
$$
D'apr\`es ce qui pr\'ec\`ede, les projections $d_{0}$ et $d_{1}$ induisent des isomorphismes de $W_{\lambda \mu}$ sur les images $d_{0}(W_{\lambda \mu}) = V_{\mu \lambda}$ et $d_{1}(W_{\lambda \mu}) = V_{\lambda \mu}$, lesquelles sont des ouverts ; il existe donc un unique isomorphisme $\varphi_{\mu \lambda}$ rendant commutatif  le diagramme suivant
\begin{equation}\label{eq4.1}
\xymatrix{& W_{\lambda \mu} \ar[dl]_{d_{1}} \ar[dr]^{d_{0}} &\\
U_{\lambda} \supset V_{\lambda \mu} \ar[rr]_{\varphi_{\mu \lambda}} && V_{\mu \lambda} \subset U_{\mu}} 
\end{equation}
Comme ces trois fl\^eches sont des isomorphismes, et que l'on peut se repr\'esenter $W_{\lambda \mu}$ comme l'ensemble des couples $(x, y) \in U_{\lambda} \times U_{\mu}$ tels que $x \sim y$, l'isomorphisme $\varphi_{\mu \lambda}$ est caract\'eris\'e par la propri\'et\'e suivante : 
\medskip

\qquad {\it Pour $x \in V_{\lambda \mu}$, $\varphi_{\mu \lambda}(x)$ est l'unique \'el\'ement  $y \in U_{\mu}$ tel que $x \sim y$.} 
\medskip

Il faut vérifier la  {\it condition de recollement} ; elle sera une cons\'equence de la transitivit\'e de la relation d'\'equivalence comme on va le voir en suivant \cite[4.1.7]{EGAG}, dont nous adoptons les notations, et aussi \cite[I.16]{TG}. 

\n Il est clair que $\varphi_{\lambda \lambda}$ est l'identit\'e de $U_{\lambda}$. La condition de recollement \`a v\'erifier s'\'enonce alors ainsi ({\it loc. cit.}) :  pour tout triplet $(\lambda, \mu, \nu)$, on a $\varphi_{\mu \lambda}(V_{\lambda \mu} \cap V_{\lambda \nu})  \subset V_{\mu \nu}$, et, sur cette intersection, on a 
$$
\varphi_{\nu \lambda} \; = \; \varphi_{\nu \mu}\circ \varphi_{\mu \lambda} .
$$
Or, soit $x \in V_{\lambda \mu} \cap V_{\lambda \nu}$ ; la caract\'erisation des isomorphismes $\varphi$ conduit aux relations
$$
\varphi_{\mu \lambda}(x) \; \sim \; x \; \sim \; \varphi_{\nu \lambda}(x)
$$
La transitivit\'e de la relation d'\'equivalence montre qu'on a donc un couple d'\'el\'ements \'equivalents 
$$
(\varphi_{\mu \lambda}(x), \varphi_{\nu \lambda}(x)) \, \in \,   U_{\mu} \times U_{\nu} .
$$
L'interprétation des ouverts $W$ rappel\'ee en \eqref{eq4.1} implique l'existence d'un unique \'el\'ement  $ z \in W_{\mu \nu}$ tel que 
$\varphi_{\mu \lambda}(x) = d_{1}(z)$  et  $\varphi_{\nu \lambda}(x) = d_{0}(z)$ ;  cela  s'\'ecrit aussi 
$$
\varphi_{\nu \lambda}(x) \; = \; \varphi_{\nu \mu}(\varphi_{\mu \lambda}(x)).
$$
La condition de recollement est donc satisfaite.
\bb

\n Soit $E$ le sch\'ema d\'efini par le recollement des schémas $U_{\lambda}$ en identifiant l'ouvert $V_{\lambda \mu} \subset U_{\lambda}$ à l'ouvert $V_{\mu \lambda} \subset U_{\mu}$ au moyen de l'isomorphisme $\xymatrix{V_{\lambda \mu} \ar[r]^{\varphi_{\mu \lambda}} & V_{\mu \lambda}}$. On aura remarqué les inclusions $U_{\lambda}\cap U_{\mu} \subset V_{\lambda \mu}$ et que l'isomorphisme $\varphi_{\mu \lambda}$ induit l'identité sur  $U_{\lambda}\cap U_{\mu}$ ; ainsi les recollements qui définissent $T_{0}$ sont conséquences de ceux qui définissent $E$ : on recolle \og davantage\fg. Il existe donc un morphisme de schémas  $h : T_{0} \rrr E$ ;  c'est   un isomorphisme local surjectif puisque la restriction de $h$ aux ouverts $U_{\lambda}$  est une immersion ouverte ; enfin, la d\'efinition du recollement signifie que les carr\'es
$$
\xymatrix{W_{\lambda \mu} \ar[r]^{d_{1}} \ar[d]_{d_{0}} & U_{\lambda} \ar[d]^h\\
U_{\mu} \ar[r]_{h}& E}
$$
sont \emph{cart\'esiens} ; comme  les ouverts de la forme $U_{\lambda}\times_{E}U_{\mu}$ recouvrent $T_{0}\times_{E}T_{0}$, le morphisme canonique 
\begin{equation}\label{eq4.2}
 T_{1} \rrr T_{0}\times_{E}T_{0}
\end{equation}
  est un isomorphisme.
  \medskip
  
  Cet isomorphisme permet de conclure que $E$ repr\'esente le faisceau quotient $\wt{T_{0}/T_{1}}$ ; rappelons l'argument.
Par d\'efinition, on a une suite exacte de faisceaux (o\`u  la notation $\wt{X}$ d\'esigne ici le faiceau fppf repr\'esent\'e par le sch\'ema $X$)
$$
\xymatrix{
\wt{T_{1}} \ar@<0.7ex>[r]^{d_{1}} \ar@<-0.7ex>[r]_{d_{0}} & \wt{T_{0}} \ar[r] & \wt{T_{0}/T_{1}} }. 
$$
On a donc un morphisme de faisceaux $ \wt{T_{0}/T_{1}} \rrr \wt{E}$, qui est surjectif puisque $T_{0}\rrr E$ est un isomorphisme local surjectif, donc un morphisme fppf ; mais ce morphisme de faisceaux est aussi injectif en vertu de l'isomorphisme \eqref{eq4.2}.
\medskip

Supposons de plus que les morphismes $d_{0}$ et $d_{1}$ soient quasi-compacts, et montrons que $h : T_{0}\rrr E$ est alors quasi-compact. Lorsque $U$ parcourt les ouverts affines de $T_{0}$ les $h(U) \subset E$ sont des ouverts, d'ailleurs isomorphes \`a $U$, et qui recouvrent $E$. Il faut donc voir que pour un tel $U$, l'ouvert $h^{-1}h(U)$ est quasi compact. Or, on a $h^{-1}h(U) = d_{0}(d_{1}^{-1}(U))$ ; de plus  l'ouvert $d_{1}^{-1}(U)$ est quasi-compact puisque $d_{1}$ est un morphisme quasi-compact, et l'image par le morphisme surjectif $d_{0}$ d'un quasi-compact est quasi-compact.
\end{proof}

\begin{rque}\phantomsection Contrairement \`a l'\'enonc\'e trop optimiste figurant dans des versions ant\'erieures de ce texte, on ne peut pas se passer d'une hypoth\`ese sur le morphisme $\varepsilon : T_{0}\rrr T_{1}$. En effet, {\sc L. Moret-Bailly} nous a fait remarquer que, d'apr\`es {\sc Hironaka}, il existe un sch\'ema $X$, propre sur un corps et muni de l'action d'un groupe $G$ \`a deux \'el\'ements, tels que l'action soit libre et que l'espace alg\'ebrique quotient ne soit pas un sch\'ema \cite[p. 14]{Knu71}. Or, l'action \'etant libre, le morphisme $G\times X \rrr X\times_{S}X$ est un monomorphisme, et c'est m\^eme une immersion ferm\'ee puisque $X$ est propre ; donc $G\times X \simeq X \sqcup X$ est une relation d'\'equivalence sur $X$ et les deux projections sur $X$ sont des isomorphismes locaux.
\end{rque}


\section{Crit\`eres d'existence d'un s\'eparateur}\label{s5}

Dans ce paragraphe nous donnons des crit\`eres pour qu'un sch\'ema $T$ sur $S$ admette un s\'eparateur. 

\subsection{Deux énoncés proches}

\begin{thm}\phantomsection\label{p5.1} Soit $f : T \rrr S$ un morphisme quasi-s\'epar\'e, et soit $T_{1} \subset T\times_{S} T$ l'adh\'erence sch\'ematique de la diagonale $\Delta_{f} : T \rrr T\times_{S} T$. Consid\'erons les propri\'et\'es suivantes.
\begin{thlist}
\item Le morphisme  $f$ admet un s\'eparateur (définition \ref{d2.1}).

\item  Les morphismes  compos\'es $\xymatrix{T_{1} \ar[r] &T\times_{S} T\ar@<0.5ex>[r] \ar@<-0.5ex>[r]  &T}$  sont plats et de type fini.
\end{thlist}
Alors on a l'implication (i) $\Rightarrow$ (ii); la réciproque est vraie si, de plus, l'ensemble des composantes irr\'eductibes de $T$ est localement fini.
\end{thm}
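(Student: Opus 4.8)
Before seeing the authors' proof, let me sketch the plan.

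The plan is to prove the two implications separately, with the forward direction being the conceptually cleaner one and the reverse direction carrying the real weight (and requiring the hypothesis on irreducible components).

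For the implication (i) $\Rightarrow$ (ii), I would start from a separator $h : T \rrr E$ and examine how $T_1$, the schematic closure of $\Delta_f$, relates to the fibre product $T\times_E T$. The key observation should be that the morphism $u : T\times_E T \rrr T\times_S T$ is a \emph{closed immersion} (as already noted in remark \ref{r2.1}\,$i)$, since it is the base change of the diagonal of the separated $S$-scheme $E$). Since $h$ is a separator, its diagonal $\Delta_h$ is \scd, which by the lemma following definition \ref{d2.1} forces $\oo_E \rt h_\star(\oo_T)$ to be an isomorphism after the faithfully flat descent; more to the point, $T\times_E T$ is \emph{itself} the schematic closure of the diagonal image inside $T\times_S T$. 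I would argue that $T_1 \simeq T\times_E T$ as subschemes of $T\times_S T$, and then the two projections $T_1 \rrr T$ are identified with the two projections $T\times_E T \rightrightarrows T$. These are base changes of $h$, which is a quasi-compact isomorphisme local (hence flat and of type fini since an isomorphisme local is flat and the finiteness is inherited); flatness and finite type are stable under base change, giving (ii).

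\textbf{The reverse direction.}
For (ii) $\Rightarrow$ (i) under the local finiteness hypothesis, the strategy is to feed $T_1$ into the representability theorem, proposition \ref{p4.1}. First I would verify that $T_1 \subset T\times_S T$, with its two projections $d_0, d_1$, is a \emph{relation d'\'equivalence} on $T$: reflexivity is automatic since $\Delta_f$ factors through its closure, so $\varepsilon : T \rrr T_1$ exists and is \scd by construction of the schematic closure (the canonical map $\oo_{T\times_S T} \rrr \Delta_\star(\oo_T)$ is injective, hence so is the induced map for $\varepsilon$). Symmetry follows from the symmetry of the diagonal. The crucial and delicate point is \emph{transitivity}: the schematic closure of the diagonal is not automatically transitive (as the introduction's topological analogue warns), and here is precisely where flatness of $d_0, d_1$ enters — this is the content flagged in the introduction and in Appendix \ref{sB}, that platitude forces the closure to be an equivalence relation.

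Once transitivity is in hand, proposition \ref{l2.2} (or corollary \ref{l2.1}) upgrades the flat, finite-type, open projections $d_0, d_1$ into \emph{isomorphismes locaux}: this is exactly where hypothesis (i)\,$b)$ of \ref{l2.2} — the local finiteness of the irreducible components of $T$ — is needed to pass from ``affine plat de type fini'' to ``immersion ouverte'' locally, since a finite-type flat morphism need not be open without such a finiteness crutch. With $\varepsilon$ \scd and $d_0, d_1$ quasi-compact isomorphismes locaux, proposition \ref{p4.1} applies directly and produces a scheme $E = \wt{T/T_1}$ representing the fppf quotient, together with $h : T \rrr E$ which is a quasi-compact isomorphisme local surjectif. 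It remains to check that $E$ is separated over $S$ and that $\Delta_h$ is \scd; separatedness follows because $T\times_E T \simeq T_1$ (the isomorphism \eqref{eq4.2}) sits inside $T\times_S T$ as a \emph{closed} subscheme (being a schematic closure), so the diagonal of $E$ is a closed immersion; and $\Delta_h$ being \scd is inherited from $\varepsilon$ being \scd via the same identification.

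\textbf{Main obstacle.}
The hard part will be establishing transitivity of $T_1$ and, simultaneously, confirming that the machinery of proposition \ref{l2.2} genuinely requires the local finiteness of irreducible components — not as an artefact but as an essential hypothesis. I expect the transitivity argument to rest on a flat-base-change manipulation showing that $d_0^{-1}(T_1) \times$ and $d_1^{-1}(T_1)$ compose correctly, exploiting that flat pullback commutes with schematic closure (the substance of Appendix \ref{sB}); this is where all the weight sits, and it is the reason the converse needs the extra finiteness condition absent from the forward implication.
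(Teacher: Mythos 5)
Your plan is correct and follows essentially the same route as the paper: the forward direction identifies $T_1$ with $T\times_E T$ via the closed immersion $T\times_E T \rrr T\times_S T$ and the schematic dominance of $\Delta_h$, and the converse combines the appendix result on flatness forcing transitivity (prop. \ref{pB.1}), proposition \ref{l2.2} (where the local finiteness of the irreducible components enters) to make $d_0, d_1$ local isomorphisms, proposition \ref{p4.1} for the quotient, and fppf descent of the closed immersion $T_1 \rrr T\times_S T$ for the separatedness of $E$. The only cosmetic difference is the order of the last two steps of the converse (the paper establishes the local-isomorphism property before invoking the equivalence-relation result), which is immaterial.
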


\begin{rque}\phantomsection Soit $h : T \rrr E$ un s\'eparateur. La propri\'et\'e pour le morphisme $\Delta_{h}$ d'\^etre \scd\, signifie intuitivement que $h$ induit ``g\'en\'eriquement'' une injection ; c'est le cas si $T$ est int\`egre. Une caract\'erisation pr\'ecise est donn\'ee dans la proposition \ref{p2.1}.

Lorsque $T$ n'est pas int\`egre, la condition sur le morphisme diagonal $\Delta_{h}$ est bien n\'ecessaire pour obtenir la platitude de $T_{1}$ \'evoqu\'ee en $ii)$. Voici, en effet, un exemple d'isomorphisme local $h : T \rrr E$ sur un sch\'ema noeth\'erien affine, pour lequel $T_{1}$ n'est pas plat sur $T$ ; cet isomorphisme local $h$ n'admet donc pas de séparateur.

 Soient $k$ un corps et $A = k[X, Y]/(XY)$ ; on pose $ E = \s(A)$ (deux droites s\'ecantes). Notons que le compl\'ementaire de la droite $X = 0$ est la droite \'epoint\'ee $U_{0} = \s(A_{X}) = \s(k[X]_{X})$. Soit $T$ le sch\'ema obtenu en recollant  deux exemplaires $U$ et $V$ de $\s(A)$ le long de l'ouvert $U_{0}$ (deux droites s\'ecantes, l'une \'etant d\'edoubl\'ee). Comme $U$ et $V$ sont isomorphes \`a $E$, on a un isomorphisme local $h : T \rrr E$. Restreint \`a l'ouvert $U\times_{E}V \subset T\times_{E}T$, le morphisme $\Delta_{h}$ s'\'ecrit
$$
U \cap V \; \rrr \; U\times_{E}V.\leqno{(\star)}
$$
Mais, $h$ induisant un isomorphisme $V \simeq E$, la projection sur $U$ est un isomorphisme $U\times_{E}V \, \wt{\rrr} \,  U$ qui permet d'identifier l'application $(\star)$ ci-dessus \`a l'immersion ouverte $\s(A_{X}) = U_{0} \rrr U=\s(A)$ ; sa factorisation par adh\'erence sch\'ematique correspond aux morphismes d'anneaux
$$
A \; \rrr \; A/YA \; \rrr \; A_{X}, \qquad {\rm soit} \qquad k[X, Y]/(XY) \; \rrr \; k[X] \; \rrr \; k[X]_{X}.
$$
Il est clair que $A/YA$ n'est pas plat sur $A$.\qed
\end{rque}

Cependant, des hypoth\`eses l\'eg\`erement plus restrictives, et plus maniables, rendent inutile  la condition sur $\Delta_{h}$ ; elles conduisent \`a l'\'enonc\'e suivant : 

\begin{prop}\phantomsection\label{p5.2} Soient $S$ un sch\'ema  int\`egre de point g\'en\'erique $\eta$, et  $f : T \rrr S$ un morphisme plat et quasi-s\'epar\'e.  Consid\'erons les propri\'et\'es suivantes :
\begin{thlist}
\item Le morphisme $f$ se factorise en 
$$
T \; \stackrel{h}{\rrr} \; E \; \stackrel{g}{\rrr} \; S
$$
 o\`u $g$ est  \emph{s\'epar\'e} et o\`u $h$ est un isomorphisme local quasi-compact et  qui induit un isomorphisme $T_{\eta}\; \rt \; E_{\eta}$.
\item La fibre g\'en\'erique $T_{\eta} \rrr \eta$ est s\'epar\'ee, et en d\'esignant par $T_{1} \subset T\times_{S}T$ l'adh\'erence sch\'ematique de la diagonale $\Delta_{f} : T \rrr T\times_{S}T$, les morphismes compos\'es $\xymatrix{T_{1} \ar[r] &T\times_{S}T\ar@<0.5ex>[r] \ar@<-0.5ex>[r]  &T}$  sont plats et  de type fini.
\end{thlist}

Alors on a l'implication (i) $\Rightarrow$ (ii); la réciproque est vraie si, de plus, l'ensemble des composantes irr\'eductibes de $T$ est localement fini.

Si ces propri\'et\'es sont v\'erifi\'ees, le morphisme $h$ de {\rm (i)} est un s\'eparateur i.e. $h$ est aussi quasi-séparé et $\Delta_{h}$ est \scd.
\end{prop}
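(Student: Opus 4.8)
The plan is to deduce this proposition from the main criterion, Theorem~\ref{p5.1}; the genuinely new ingredient is the analysis of the generic fibre. I write $X=T\times_{E}T$ whenever $E$ is at hand, and I use freely that the formation of the schematic closure of a quasi-compact immersion commutes with flat base change, in particular with the flat localization $\Spec \oo_{S,\eta}\rrr S$ (Lemme~\ref{lA.6}).

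For the implication (i)$\Rightarrow$(ii) I would first prove that the given $h$ is a \emph{s\'eparateur}, which also settles the final assertion of the statement. Since $g$ is separated, $\Delta_{f}$ factors as $T\stackrel{\Delta_{h}}{\rrr}X\stackrel{u}{\rrr}T\times_{S}T$ with $u$ a closed immersion; hence $T_{1}$, the schematic closure of $\Delta_{f}$, is the schematic closure of $\Delta_{h}$ inside $X$. The crucial claim is that this closure is all of $X$, i.e. that $\Delta_{h}$ is \scd. Indeed the first projection $p_{1}\colon X\rrr T$ is a base change of the flat morphism $h$, so $X$ is flat over the integral scheme $S$ via $f p_{1}$, and its generic fibre $X_{\eta}$ is therefore schematically dense in $X$. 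As $h_{\eta}$ is an isomorphism, $\Delta_{h_{\eta}}$ is an isomorphism onto $X_{\eta}$, whence $(T_{1})_{\eta}=X_{\eta}$; a closed subscheme of $X$ containing the schematically dense $X_{\eta}$ equals $X$, so $T_{1}=X$ and $\Delta_{h}$ is \scd. Under this identification $d_{0},d_{1}$ are the two projections $X\rrr T$, each a base change of the quasi-compact local isomorphism $h$, hence flat and of finite type. Moreover $h$ is quasi-separated, since $\Delta_{f}=u\Delta_{h}$ is quasi-compact ($f$ being quasi-separated) and $u$ is a closed immersion (cancellation); thus $h$ is a s\'eparateur. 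The separatedness of $T_{\eta}$ required in (ii) follows from $T_{\eta}\simeq E_{\eta}$ and the separatedness of $E_{\eta}$ over $\eta$.

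For the converse (ii)$\Rightarrow$(i), under the extra hypothesis that the irreducible components of $T$ are locally finite, the flatness and finite-typeness of $d_{0},d_{1}$ are exactly condition (ii) of Theorem~\ref{p5.1}, whose converse implication produces a s\'eparateur $T\stackrel{h}{\rrr}E\stackrel{g}{\rrr}S$ with $g$ separated and $h$ a quasi-compact local isomorphism. Only the statement that $h_{\eta}$ is an isomorphism remains, and I would again pass to the generic fibre: $h_{\eta}$ is a surjective, quasi-compact, quasi-separated local isomorphism, $E_{\eta}$ is separated over $\kappa(\eta)$, and $\Delta_{h_{\eta}}$ is \scd\ by flat base change (Lemme~\ref{lA.6}), so $h_{\eta}$ is a s\'eparateur of $T_{\eta}\rrr \Spec \kappa(\eta)$. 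Since $T_{\eta}$ is separated over $\kappa(\eta)$ by (ii), Proposition~\ref{p3.1}~(ii) forces $h_{\eta}$ to be an isomorphism, which is the last clause of (i).

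The step I expect to be the crux is the proof that $\Delta_{h}$ is \scd\ in the direction (i)$\Rightarrow$(ii): it is precisely there that the \emph{fibrewise} datum ``$h_{\eta}$ is an isomorphism'' has to be upgraded to the \emph{global} equality $T_{1}=X$. The mechanism is the schematic density of the generic fibre of the flat $S$-scheme $X=T\times_{E}T$, combined with the compatibility of schematic closures with the flat localization at $\eta$; making these two interact, and checking the quasi-compactness of the relevant diagonals so that Lemme~\ref{lA.6} applies, is the only delicate point, the remainder being a formal appeal to Theorems~\ref{p5.1} and~\ref{p3.1}.
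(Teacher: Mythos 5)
Your proposal is correct and follows essentially the same route as the paper: both directions are reduced to Théorème~\ref{p5.1}, the identification $T_{1}=T\times_{E}T$ in (i)$\Rightarrow$(ii) is obtained exactly as in the text from the flatness of $T\times_{E}T$ over the integral base (so that its generic fibre is schematically dense) together with $h_{\eta}$ being an isomorphism, and the converse uses the compatibility of schematic closure with the localization at $\eta$. The only cosmetic difference is at the very end of (ii)$\Rightarrow$(i), where you invoke Proposition~\ref{p3.1}~(ii) applied to the separator $h_{\eta}$ of the separated scheme $T_{\eta}$, whereas the paper argues directly that $T_{1\eta}=T_{\eta}$ because the diagonal of $T_{\eta}$ is already closed; both are valid.
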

 
\subsection{D\'emonstration du théorème \ref{p5.1}  et de la proposition  \ref{p5.2}}

\subsubsection {$i) \Rightarrow ii)$} Supposons que $f$ se factorise en $T \; \stackrel{h}{\rrr} \; E \; \stackrel{g}{\rrr} \; S$.
 Comme $h : T \rrr E$ est un isomorphisme local quasi-compact il en est de m\^eme des  projections  $\xymatrix{T\times_{E}T \ar@<0.5ex>[r]\ar@<-0.5ex>[r]& T}$ ; en particulier, elles sont plates et de type fini.

\n Consid\'erons le diagramme suivant o\`u le carr\'e est cart\'esien
$$
\xymatrix{
T \ar[r]^(.4){\Delta_{h}} & T\times_{E}T \ar[d]  \ar[r]^{\Delta'} & T\times_{S} T\ar[d]^{h\times h}\\
&E \ar[r]_{\Delta_{g}} &  E\times_{S} E}
$$
\n Notons que le morphisme diagonal $\Delta_{h}$ est quasi-compact, i.e. $h$ est quasi-s\'epar\'e : c'est une des hypoth\`eses du théorème \ref{p5.1}, et l'hypoth\`ese de quasi-s\'eparation de $f$ dans la proposition \ref{p5.2} implique que le compos\'e $\Delta' \Delta_{h}$ est quasi-compact, donc que $\Delta_{h}$ l'est aussi puisque $\Delta'$ est une immersion (cf. \ref{sA.1.1}).

\n Montrons que l'adh\'erence sch\'ematique $T_{1}$ de la diagonale dans $T\times_{S}T$ s'identifie \`a $T\times_{E}T$, ce qui entra\^inera que les projections  $d_{0}, d_{1} :\xymatrix{T_{1} \ar@<0.5ex>[r]\ar@<-0.5ex>[r]& T}$ sont des isomorphismes locaux quasi-compacts.
\n Comme $g$ est s\'epar\'e, le morphisme diagonal $\Delta_{g}$ est une immersion ferm\'ee ; donc $\Delta'$ est aussi une immersion ferm\'ee ; il reste donc \`a voir que l'application
$$
\oo_{T\times_{E}T} \; \rrr\; (\Delta_{h})_{\star}(\oo_{T}) \leqno{(\star)}
$$ 
est injective, i.e. que $\Delta_{h}$ est \scd ; or, cela fait partie des hypoth\`eses sur $h$ dans le théorème \ref{p5.1}, $i)$. Sous les hypoth\`eses de la proposition \ref{p5.2} $i)$, les morphismes $T \rrr S$ et  $T \rrr E$ sont plats, donc $T\times_{E}T$ est plat sur $S$ ; par suite (lemme \ref{lA.7}), le morphisme $(T\times_{E}T)_{\eta} \rrr  T\times_{E}T$ est \scd\,; par hypoth\`ese, le morphisme g\'en\'erique $T_{\eta} \, \rrr\, E_{\eta}$ est un isomorphisme, donc le morphisme diagonal induit  un isomorphisme $T_{\eta} \;  \wt{\rrr}\;  (T\times_{E}T)_{\eta}$ sur les fibres g\'en\'eriques ; par suite, le morphisme $\Delta_{h}\circ i : T_{\eta} \rrr T\times_{E}T$ est \scd ; ainsi,  le morphisme vertical de gauche dans le  diagramme
$$
\xymatrix{\oo_{T\times_{E}T} \ar[r]^{(\star)} \ar[d] &  (\Delta_{h})_{\star}(\oo_{T}) \ar[d]\\
 (\Delta_{h}\circ i)_{\star}(\oo_{T_{\eta}})\ar@{=}[r] &(\Delta_{h})_{\star}i_{\star}(\oo_{T_{\eta}})}
 $$
est  injectif. L'injectivit\'e de $(\star)$ provient donc de la commutativit\'e de ce diagramme. 

\n On a donc v\'erifi\'e que les deux morphismes compos\'es $\xymatrix{T_{1} \ar[r] &T\times_{S}T\ar@<0.5ex>[r] \ar@<-0.5ex>[r]  &T}$  sont des isomorphismes locaux quasi-compacts, et aussi que $h$ est un s\'eparateur sous les hypoth\`eses de la proposition \ref{p5.2} $i)$.
\bb

\subsubsection{} R\'eciproquement, on suppose maintenant que l'ensemble des composantes irr\'eductibles de $T$ est localement fini, et que les morphismes $d_{0}, d_{1} : \xymatrix{T_{1} \ar@<0.5ex>[r] \ar@<-0.5ex>[r]  &T}$  sont plats et  de type fini, et on va montrer que ce sont des isomorphismes locaux quasi-compacts. Par sym\'etrie, il suffit de le faire pour $d_{0}$, et on peut supposer que $S$ est affine.

Il s'agit d'exhiber un recouvrement de $T_{1}$ par des ouverts $W$ sur lesquels $d_{0}$ induit une immersion ouverte dans $T$, i.e tels que $d_{0}(W)$ soit un ouvert de $T$ et que $d_{0}$  induise un isomorphisme de $W\;  \rt \; d_{0}(W)$.

Or, le sch\'ema $T\times_{S}T$ est recouvert par les ouverts affines de la forme $U\times_{S}V$, pour $U$ et $V$ des ouverts affines de $T$. Consid\'erons la factorisation par adh\'erence sch\'ematique du morphisme diagonal, soit $T \rrr T_{1} \rrr T\times_{S}T$, et sa restriction \`a $U \times_{S}V$ :
$$
U\cap V \; \stackrel{v}{\rrr} \; W  \; \stackrel{u}{\rrr} \; U\times_{S}V ,
$$
o\`u $u$ est une immersion ferm\'ee et o\`u $v$ est une immersion ouverte (lemme \ref{lA.8}), \scd e, et quasi compacte puisque $uv$ est quasi-compact ($T$ est quasi-s\'epar\'e). Le sch\'ema $W$ est affine comme sous-schéma fermé du schéma affine $U\times_{S}V$. En composant avec la projection sur $V$, on obtient les morphismes
$$
U\cap V \; \stackrel{v}{\rrr} \; W  \; \stackrel{d_{0}}{\rrr} \; V .
$$
Par hypothèse, le morphisme $W \rrr V$ est plat et de type fini ; la proposition \ref{l2.2} permet donc de conclure que  $d_{0}$ est une immersion ouverte.
\bb

\n  Puisque les morphismes $d_{0}, d_{1} : \xymatrix{T_{1}\ar@<0.5ex>[r] \ar@<-0.5ex>[r]  &T}$ sont plats,  on peut appliquer la proposition \ref{pB.1}  \`a la relation triviale $T_{\star}$, celle  pour laquelle pour tout $n$, $T_{n} = T$, muni de l'application diagonale dans $T^{n+1}$ ; on en tire que les morphismes $\xymatrix{d_{0}, d_{1} :T_{1} \ar@<0.5ex>[r] \ar@<-0.5ex>[r]& T}$ d\'efinissent une relation d'\'equivalence sur $T$. Mais on vient de voir que ces morphismes sont  des isomorphismes locaux de type fini, et, de plus, le morphisme canonique $\varepsilon : T \rrr T_{1}$ est \scd\, par construction ; la proposition \ref{p4.1} affirme alors qu'il existe un $S$-sch\'ema $E$ et un isomorphisme local quasi-compact et surjectif de $S$-sch\'emas $ h : T \rrr E$ tel que $T_{1} \rrr T\times_{E}T$ soit un isomorphisme.

\n Il reste \`a voir que $E$ est s\'epar\'e sur $S$ ; mais $h$ est fid\`element plat et quasi-compact, et on dispose du carr\'e cart\'esien
$$
\xymatrix{
T_{1} = T\times_{E}T \ar[d]  \ar[r]^>>>>>>{d} & T\times_{S} T\ar[d]^{h\times h}\\
E \ar[r]_{\Delta_{g}} &  E\times_{S} E}
$$
dans lequel $d$ est une immersion ferm\'ee puisque $T_{1}$ est une adh\'erence sch\'ematique ; donc $\Delta_{g}$ est une immersion ferm\'ee \cite[2.3.13]{EGAIV2}.

\n Enfin le morphisme diagonal $\Delta_{h} : T \rrr T\times_{E}T$ est \scd\,  
puisqu'il s'identifie au morphisme $\varepsilon :T \rrr T_{1}$.
\medskip

Dans  la proposition \ref{p5.2} $ii)$, on suppose de plus que $S$ est int\`egre de point g\'en\'erique $\eta$, et que la fibre g\'en\'erique $T_{\eta} \rrr \eta$ est s\'epar\'ee ; on en déduit que l'immersion diagonale $T_{\eta} \rrr T_{\eta}\times_{\eta}T_{\eta}$ est  ferm\'ee, donc que $T_{\eta} = T_{1 \eta}$, ce qui entra\^ine l'isomorphisme $T_{\eta}\;  \wt{\rrr}\;  E_{\eta}$.
\medskip

Ceci conclut les deux démonstrations.\qed

\subsection{Conséquences}

\begin{cor}\phantomsection\label{c5.1} Soit $S$ un sch\'ema  normal dont l'ensemble des composantes irr\'eductibles est localement fini. Soit $f : T \rrr S$ un morphisme \emph{\'etale} quasi-compact et quasi-s\'epar\'e. Alors $f$  admet un s\'eparateur  $ h : T \rrr E$, et pour tout point maximal $\eta$ de $S$, ce morphisme  induit un isomorphisme  $T_{\eta} \; \wt{\rrr}\; E_{\eta}$. Enfin, le morphisme $E\rrr S$ est  étale quasi-compact et quasi-séparé.
\end{cor}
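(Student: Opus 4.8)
Le plan est de se ramener au cas o\`u $S$ est int\`egre normal, puis d'appliquer la proposition \ref{p5.2}, dont l'hypoth\`ese de finitude sur les composantes sera automatiquement satisfaite. \emph{Premi\`ere \'etape : r\'eduction.} Comme $S$ est normal et que l'ensemble de ses composantes irr\'eductibles est localement fini, $S$ est la somme disjointe de ses composantes $S_i$, chacune int\`egre et normale, de point g\'en\'erique $\eta_i$ (lesquels sont exactement les points maximaux de $S$). Le morphisme $f$ se d\'ecompose alors en $T=\bigsqcup_i T_i$ avec $T_i = T\times_S S_i$ \'etale, quasi-compact et quasi-s\'epar\'e sur $S_i$ ; un s\'eparateur de chaque $f_i : T_i\to S_i$ fournit, par somme disjointe, un s\'eparateur de $f$, et l'isomorphisme cherch\'e sur la fibre en $\eta_i$ se lit sur la composante correspondante. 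On peut donc supposer $S$ int\`egre normal, de point g\'en\'erique $\eta$.

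\emph{Deuxi\`eme \'etape : v\'erifier (ii) de la proposition \ref{p5.2}.} Puisque $f$ est \'etale et $S$ normal, $T$ est normal, donc localement int\`egre ; en particulier ses composantes irr\'eductibles sont ouvertes et ferm\'ees et leur ensemble est localement fini, ce qui est l'hypoth\`ese requise pour la r\'eciproque de \ref{p5.2}. La fibre g\'en\'erique $T_\eta$ est \'etale sur le corps $\kappa(\eta)$, donc r\'eunion disjointe de spectres d'extensions finies s\'eparables : elle est s\'epar\'ee. Il reste \`a montrer que les projections $\xymatrix{T_1 \ar@<0.5ex>[r] \ar@<-0.5ex>[r]& T}$ sont plates et de type fini, o\`u $T_1\subset T\times_S T$ est l'adh\'erence sch\'ematique de la diagonale ; c'est le point crucial, et c'est ici que sert la normalit\'e. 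Comme $f$ est \'etale, $\Delta_f$ est une immersion ouverte, et $T\times_S T$, \'etant \'etale sur le sch\'ema normal $T$ via l'une ou l'autre projection, est lui-m\^eme normal donc localement int\`egre. Or l'adh\'erence sch\'ematique d'un ouvert dans un sch\'ema localement int\`egre est simplement la r\'eunion, munie de sa structure r\'eduite, des composantes irr\'eductibles qu'il rencontre ; donc $T_1$ est l'ouvert-ferm\'e de $T\times_S T$ form\'e de ces composantes. Chaque projection $T_1\to T$ est alors la restriction \`a un ouvert-ferm\'e d'un morphisme \'etale, donc \'etale ; et comme elle se factorise par l'immersion ferm\'ee $T_1\hookrightarrow T\times_S T$ suivie de la projection $T\times_S T\to T$, de type fini puisqu'image r\'eciproque de $f$ \'etale quasi-compact, elle est de plus de type fini. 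L'hypoth\`ese (ii) est satisfaite, et \ref{p5.2} fournit un s\'eparateur $h:T\to E$ induisant un isomorphisme $T_\eta\;\rt\;E_\eta$.

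\emph{Troisi\`eme \'etape : propri\'et\'es de $g:E\to S$.} Le s\'eparateur $h$ est un isomorphisme local surjectif et quasi-compact, donc \'etale, surjectif, fid\`element plat et localement de pr\'esentation finie. Puisque $f=g\circ h$ est \'etale et que la propri\'et\'e d'\^etre \'etale est locale sur la source pour la topologie fppf, on en d\'eduit que $g$ est \'etale. La quasi-compacit\'e de $g$ r\'esulte de ce que $h$ est surjectif et quasi-compact : l'image d'un quasi-compact \'etant quasi-compacte, $g^{-1}(V)=h\big((g\circ h)^{-1}(V)\big)$ est quasi-compact pour tout ouvert affine $V\subset S$. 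Enfin $g$ est s\'epar\'e par d\'efinition du s\'eparateur, donc \emph{a fortiori} quasi-s\'epar\'e, ce qui ach\`eve la preuve. Le point d\'elicat reste la deuxi\`eme \'etape : tout repose sur le fait que, $T\times_S T$ \'etant localement int\`egre, l'adh\'erence sch\'ematique de la diagonale est un ouvert-ferm\'e, ce qui rend automatiquement plates et de type fini les deux projections vers $T$.
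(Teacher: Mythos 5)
Votre strat\'egie est celle du texte : r\'eduction au cas o\`u $S$ est int\`egre, puis v\'erification de la condition (ii) de la proposition \ref{p5.2} en montrant que l'adh\'erence sch\'ematique $T_{1}$ de la diagonale est un ouvert et ferm\'e de $T\times_{S}T$ gr\^ace \`a la normalit\'e. Mais le pas central comporte une lacune : vous affirmez que, $T\times_{S}T$ \'etant localement int\`egre, l'adh\'erence sch\'ematique de l'ouvert $\Delta_f(T)$ est \emph{l'ouvert-ferm\'e} r\'eunion des composantes irr\'eductibles rencontr\'ees. La locale int\'egrit\'e donne bien que les composantes sont deux \`a deux disjointes et que l'adh\'erence (r\'eduite) est la r\'eunion des composantes rencontr\'ees ; elle ne donne pas que cette r\'eunion est \emph{ouverte}. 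Pour cela il faut savoir que l'ensemble des composantes irr\'eductibles est localement fini : sans cette hypoth\`ese l'\'enonc\'e g\'en\'eral que vous invoquez est faux (le spectre d'un produit infini de corps est normal, ses composantes sont les points, et ils ne sont pas ouverts). Le m\^eme glissement appara\^it plus haut lorsque vous d\'eduisez de \og $T$ normal donc localement int\`egre \fg\ que ses composantes sont ouvertes et en nombre localement fini. Or c'est pr\'ecis\'ement l'ouverture de $T_1$ qui rend les projections plates ; si $T_1$ n'\'etait que ferm\'e, rien ne garantirait la platitude.

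La lacune se comble par l'argument du texte, qui utilise de fa\c con essentielle la quasi-compacit\'e de $f$ : apr\`es r\'eduction \`a $S$ int\`egre de point g\'en\'erique $\eta$, la fibre $(T\times_{S}T)_{\eta}=T_{\eta}\times_{\eta}T_{\eta}$ est un sch\'ema fini sur un corps, et par platitude tout point maximal de $T\times_{S}T$ se trouve au-dessus de $\eta$ ; l'ensemble des points maximaux, donc des composantes irr\'eductibles, de $T\times_{S}T$ est ainsi \emph{fini}, et le lemme \ref{lA.11} donne alors que $T_{1}$ est ouvert et ferm\'e. Il vous manque donc cette v\'erification de finitude (et la m\^eme pour $T$ lui-m\^eme, afin de l\'egitimer l'emploi de la r\'eciproque de \ref{p5.2}). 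La troisi\`eme \'etape, sur les propri\'et\'es de $g:E\rrr S$, est correcte et d'ailleurs plus d\'etaill\'ee que ce que le texte explicite.
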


\begin{proof} Comme les anneaux locaux de $S$ sont int\`egres,  les composantes irr\'eductibles de ce sch\'ema sont deux \`a deux disjointes ;  elles sont donc ouvertes puisque leur ensemble est localement fini ; ainsi $S$ est somme de ses composantes irr\'eductibles. On peut donc supposer que $S$ est int\`egre ; soit $\eta$ son point g\'en\'erique. 

V\'erifions que les conditions de  la proposition \ref{p5.2}, $ii)$ sont satisfaites. On note d'abord que, $f$ \'etant \'etale et quasi-compact, la fibre g\'en\'erique $T_{\eta} \rrr \eta$, est discr\`ete (donc s\'epar\'ee) et  a un nombre fini de points, qui sont les points maximaux de $T$.  Comme $T\times_{S}T$ est \'etale sur le sch\'ema normal $S$, c'est un sch\'ema normal (\cite[11.3.13]{EGAIV3}, ou plus directement {\sc Raynaud} \cite[p. 75]{Ray70}). En particulier, les anneaux locaux de ce sch\'ema sont int\`egres. D'autre part,  l'ensemble des  points maximaux de $T\times_{S}T$ est fini puisqu'il est  \'egal \`a l'ensemble des points de la fibre $(T\times_{S}T)_{\eta} = T_{\eta}\times_{\eta}T_{\eta}$, laquelle est un sch\'ema fini sur le corps $\eta$ ; enfin, le morphisme diagonal $\Delta_{f} : T \rrr T\times_{S}T$ est quasi-compact puisque $f$ est quasi-s\'epar\'e. Toutes les hypoth\`eses du lemme  \ref{lA.11} sont satisfaites, et on en tire que l'adh\'erence sch\'ematique  $T_{1}$ de la diagonale est un ouvert et ferm\'e de $T\times_{S}T$ ; chacun des morphismes  $d_{0}, d_{1} : \xymatrix{T_{1}\ar@<0.5ex>[r] \ar@<-0.5ex>[r]  &T}$ est donc plat et de type fini.
\end{proof}

\begin{cor}\phantomsection\label{c5.2} Soit $T$ un sch\'ema quasi-s\'epar\'e dont l'ensemble des composantes irr\'eductibles est localement fini. Alors $T$ admet un s\'eparateur si et seulement si pour tout couple $U, V$ d'ouverts affines de $T$, le sch\'ema $U \cup V$ admet un s\'eparateur. Il suffit m\^eme pour cela qu'il existe un recouvrement de $T$ par des ouverts affines $U_{\lambda}$ tel que chaque r\'eunion $U_{\lambda} \cup U_{\mu}$ de deux d'entre eux admette un s\'eparateur.
\end{cor}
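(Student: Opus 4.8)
The plan is to reduce both implications to the criterion of Theorem~\ref{p5.1}, applied over $S=\s({\bf Z})$, which characterizes the existence of a separator by the flatness and finite type of the two projections from the schematic closure $T_{1}$ of the diagonal $\Delta : T \rrr T\times_{S}T$.

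For the direct implication, suppose $h : T \rrr E$ is a separator and let $U,V$ be affine opens of $T$. I would first check that $U\cup V$ is rétrocompact in $T$: since $T$ is quasi-s\'epar\'e, the intersection of two affine (hence quasi-compact) opens is quasi-compact, so for every affine open $Z$ the trace $(U\cup V)\cap Z = (U\cap Z)\cup(V\cap Z)$ is quasi-compact. Proposition~\ref{p3.1}~$i)$ then applies directly and shows that the restriction $U\cup V \rrr h(U\cup V)$ is a separator of $U\cup V$. This yields the ``only if'' part in its strongest form (every $U\cup V$, not merely those in a fixed covering).

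For the converse I would use only the weaker hypothesis: a covering $(U_{\lambda})$ by affine opens such that each $W_{\lambda\mu}=U_{\lambda}\cup U_{\mu}$ admits a separator. The key observation is that the formation of the schematic closure of the diagonal is compatible with restriction to the product opens $U_{\lambda}\times_{S}U_{\mu}$. Indeed, the restriction of $\Delta$ to $U_{\lambda}\times_{S}U_{\mu}$ is the immersion $U_{\lambda}\cap U_{\mu}\rrr U_{\lambda}\times_{S}U_{\mu}$, and by lemme~\ref{lA.8} its factorisation par adh\'erence sch\'ematique identifies $T_{1}\cap(U_{\lambda}\times_{S}U_{\mu})$ with the schematic closure of $U_{\lambda}\cap U_{\mu}$ in $U_{\lambda}\times_{S}U_{\mu}$. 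The same computation carried out inside $W_{\lambda\mu}$ shows that this scheme coincides with $(W_{\lambda\mu})_{1}\cap(U_{\lambda}\times_{S}U_{\mu})$, where $(W_{\lambda\mu})_{1}$ is the schematic closure of the diagonal of $W_{\lambda\mu}$.

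Since $W_{\lambda\mu}$ admits a separator, the unconditional implication $i)\Rightarrow ii)$ of Theorem~\ref{p5.1} guarantees that the two projections $(W_{\lambda\mu})_{1}\rightrightarrows W_{\lambda\mu}$ are plats et de type fini. Restricting to the open $U_{\lambda}\times_{S}U_{\mu}$ of source and target and using the identification above, the projections $d_{0},d_{1}: T_{1}\cap(U_{\lambda}\times_{S}U_{\mu})\rightrightarrows U_{\mu}$ (resp. $U_{\lambda}$) are flat and of finite type. As the opens $U_{\lambda}\times_{S}U_{\mu}$ cover $T\times_{S}T$, hence $T_{1}$, and as flatness and finite type are local on source and target, I conclude that $d_{0},d_{1}: T_{1}\rightrightarrows T$ are flat and of finite type. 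I then invoke the converse implication $ii)\Rightarrow i)$ of Theorem~\ref{p5.1}, whose hypothesis of local finiteness of the irreducible components of $T$ is precisely what is assumed, to produce a separator of $T$. The main obstacle, and the technical heart of the argument, is establishing this compatibility of the schematic closure of the diagonal with both restrictions — to $T\times_{S}T$ and to $W_{\lambda\mu}\times_{S}W_{\lambda\mu}$ — so that the local data computed on $T$ and on $W_{\lambda\mu}$ genuinely agree on $U_{\lambda}\times_{S}U_{\mu}$; everything else amounts to localizing the flat-of-finite-type conditions.
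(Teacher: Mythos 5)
Your argument is essentially the paper's own proof. The necessity direction is identical (retrocompactness of $U\cup V$ deduced from the quasi-separation of $T$, then proposition \ref{p3.1}~$i)$), and the sufficiency direction follows the same plan as the paper: cover $T\times_{S}T$ by the opens $U_{\lambda}\times_{S}U_{\mu}$, identify the trace of $T_{1}$ on such an open with the trace of the schematic closure of the diagonal of $W_{\lambda\mu}=U_{\lambda}\cup U_{\mu}$ (the paper does this implicitly when it sets $W=T_{1}\cap(U\times V)$ and invokes the hypothesis on $U\cup V$; your explicit justification via the compatibility of schematic closure with restriction to opens, i.e.\ lemme \ref{lA.7}, is exactly the right one), import flatness and finite type from the implication $i)\Rightarrow ii)$ of th\'eor\`eme \ref{p5.1} applied to $W_{\lambda\mu}$, and conclude by $ii)\Rightarrow i)$.

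One word of caution, which applies equally to the paper's own wording at the corresponding step: \emph{de type fini} means quasi-compact \emph{and} locally of finite type, and while flatness and \emph{localement de type fini} are local on the source, quasi-compactness is local on the target only. Your final sentence (``flatness and finite type are local on source and target'') is therefore not literally correct: from the pieces $T_{1}\cap(U_{\lambda}\times_{S}U_{\mu})$, each flat and of finite type over $T$, one gets directly only that $d_{0},d_{1}\colon T_{1}\rightrightarrows T$ are flat and locally of finite type; their quasi-compactness requires controlling how many $U_{\lambda}\times_{S}U_{\mu}$ meet $d_{0}^{-1}(U_{\mu})$ and does not follow formally. Since the paper makes exactly the same jump (``on voit donc que les deux projections de $T_{1}$ sur $T$ sont plates et de type fini''), this is a shared imprecision rather than a divergence from the intended proof, but it is the one assertion in your write-up stated in a form that would fail for a general morphism assembled from quasi-compact pieces.
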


\begin{proof} Montrons que la condition est n\'ecessaire.  Soient $U$ et $V$ deux ouverts affines. Puisque $T$ est quasi-s\'epar\'e, l'intersection d'un ouvert affine avec $U \cup V$ est quasi-compacte, autrement dit, l'ouvert $U \cup V$ est r\'etrocompact dans $T$ ; il suffit donc d'appliquer la proposition \ref{p3.1}, $i)$.

 Montrons que la condition est suffisante.  L'ouvert $(U\cup V) \times (U\cup V)  \subset T \times T$ est r\'eunion des ouverts
$U \times U,\, U\times V,\, V\times U$ et $V \times V$ ; les traces du ferm\'e $T_{1} \rrr T\times T$ sur le premier et le dernier de ces ouverts sont isomorphes, respectivement, \`a  $U$ et \`a $V$ ; elle sont donc plates et de type fini sur $T$ sans hypoth\`eses. Notons $W = T_{1} \cap (U\times V)$ ; l'hypoth\`ese et  l'implication $i) \Rightarrow ii)$ du théorème \ref{p5.1}, montrent que les deux projections $ d_{1} : W \rrr U$ et $d_{0} : W \rrr V$ sont plates et de type fini. Toujours parce que $T$ est quasi-s\'epar\'e, les immersions ouvertes $U \rrr T$ et $V \rrr T$ sont (plates et) de type fini ; enfin, les ouverts  $U\times V$, avec $U$ et $V$ affines, forment un recouvrement de $T \times T$ ; on voit donc que les deux projections de $T_{1}$ sur $T$ sont plates et de type fini ; il suffit alors, pour pouvoir conclure, d'invoquer l'implication  $ii) \Rightarrow i)$ du théorème \ref{p5.1}. 
\end{proof}

\begin{prop}\phantomsection\label{p5.3} Un sch\'ema localement noeth\'erien r\'egulier de dimension $1$ admet un s\'eparateur.
\end{prop}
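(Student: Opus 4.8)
Le plan est de se ramener, grâce au corollaire \ref{c5.2}, à la réunion de deux ouverts affines, puis d'appliquer le théorème \ref{p5.1}. Commençons par les réductions. Comme $T$ est régulier, il est normal ; étant localement noethérien, ses composantes connexes sont ouvertes, et la normalité les rend intègres, de sorte que $T$ est somme disjointe de schémas de Dedekind (intègres, réguliers, de dimension $1$), auxquels peuvent s'ajouter des points isolés $\s(k)$, déjà séparés. Un séparateur d'une somme disjointe étant la somme disjointe des séparateurs, on peut supposer $T$ intègre, de corps des fonctions $K$. Un tel $T$ est quasi-séparé (l'intersection de deux ouverts affines, ouverte dans un schéma noethérien, est quasi-compacte) et irréductible ; le corollaire \ref{c5.2} ramène donc le problème au cas où $T = U \cup V$ avec $U = \s(A)$ et $V = \s(B)$ affines, $A$ et $B$ étant deux anneaux de Dedekind de corps des fractions $K$.

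Pour un tel $T$, on vérifie la condition (ii) du théorème \ref{p5.1}, à savoir que les deux projections de l'adhérence schématique $T_{1} \subset T\times_{\s({\bf Z})}T$ de la diagonale vers $T$ sont plates et de type fini. On recouvre $T\times T$ par les ouverts $U\times U$, $U\times V$, $V\times U$ et $V\times V$. Comme $U$ est séparé, la trace de $T_{1}$ sur $U\times U$ est l'image fermée $\Delta(U)\simeq U$, et les projections y sont des isomorphismes ; de même sur $V\times V$. Tout se joue donc sur $U\times_{\s({\bf Z})}V = \s(A\otimes_{\bf Z}B)$, où $T_{1}$ est l'adhérence schématique de $\Delta(U\cap V)$, c'est-à-dire $\s(C)$, avec $C$ l'image de $A\otimes_{\bf Z}B$ dans $\Gamma(U\cap V)$ : c'est le sous-anneau de $K$ engendré par $A$ et $B$, et $A\subseteq C\subseteq K$.

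Il reste à montrer que $A\rrr C$ (et, par symétrie, $B\rrr C$) est plat et de type fini, et c'est là le point délicat. La platitude est immédiate : $C$ est un sous-$A$-module de $K$, donc sans torsion, donc plat sur l'anneau de Dedekind $A$ \cite[I \S 3.5, prop. 9]{AC}. Pour le type fini, on procède par analyse locale sur $\s(A)$. L'ouvert $U\cap V$ étant non vide (car $U$ est irréductible), son complémentaire dans $U$ est un ensemble fini de points fermés $P_{1},\dots,P_{m}$. Pour un idéal premier $\mathfrak p$ de $A$ on a $C_{\mathfrak p}=A_{\mathfrak p}\cdot B$ : si $B\subseteq A_{\mathfrak p}$ alors $C_{\mathfrak p}=A_{\mathfrak p}$ ; sinon il existe $b\in B$ avec $v_{\mathfrak p}(b)<0$, et comme $A_{\mathfrak p}$ est un anneau de valuation discrète, $A_{\mathfrak p}[b]=K$, d'où $C_{\mathfrak p}=K$. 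Ce second cas ne se produit qu'en certains des $P_{i}$ (ceux dont l'anneau local ne coïncide avec celui d'aucun point de $V$). On choisit, pour chacun de ces $P_{i}$, un élément $b_{i}\in B$ tel que $v_{P_{i}}(b_{i})<0$, et l'on pose $C'=A[b_{1},\dots,b_{k}]\subseteq C$, qui est de type fini sur $A$. Les $b_{i}$, appartenant à $B$, sont réguliers en tout point de $U$ situé dans $V$ ou dont l'anneau local coïncide avec celui d'un point de $V$ ; leurs seuls pôles dans $U$ sont donc en certains $P_{i}$. On en déduit que $C'_{\mathfrak p}=C_{\mathfrak p}$ pour tout premier $\mathfrak p$ de $A$ (les deux valant $A_{\mathfrak p}$, sauf aux $P_{i}$ exceptionnels où les deux valent $K$), d'où $C=C'$ est de type fini sur $A$.

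Le théorème \ref{p5.1} montre alors que $U\cup V$ admet un séparateur, et le corollaire \ref{c5.2} permet de conclure pour $T$. L'obstacle principal est précisément la finitude de $C$ sur $A$ : contrairement à la platitude, elle n'est pas formelle et repose sur la description locale ci-dessus, laquelle traduit que le passage au séparateur ne modifie $A$ qu'aux quelques points de $U\setminus V$ apparentés à aucun point de $V$ ; cette même idée est d'ailleurs déjà à l'œuvre dans l'exemple de l'enveloppe affine traité plus haut via le théorème d'approximation.
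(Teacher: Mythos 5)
Votre preuve est correcte et suit pour l'essentiel la même démarche que celle du texte : réduction au cas intègre, restriction de l'adhérence schématique $T_{1}$ à un ouvert $U\times V$, et exploitation de la dichotomie $C_{\mathfrak p}=A_{\mathfrak p}$ ou $K$ pour un sous-anneau de $K$ contenant l'anneau de Dedekind $A$. La seule différence est d'emballage : là où vous vérifiez séparément la platitude (sans torsion sur un anneau de Dedekind) et le type fini (générateurs explicites $b_{i}$ aux pôles exceptionnels), le texte montre directement que $W\rrr V$ est une immersion ouverte affine, ce qui donne les deux propriétés d'un coup.
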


\begin{proof} Elle utilise  l'implication $ii) \Rightarrow i)$ du théorème \ref{p5.1}. Soit $T$ un sch\'ema localement noeth\'erien r\'egulier de dimension 1. Comme les anneaux locaux de $T$ sont int\`egres, les composantes irr\'eductibles de $T$ sont deux \`a deux disjointes ;  elles sont donc ouvertes puisque leur ensemble est localement fini. On suppose donc d\'esormais que $T$ est int\`egre. Soit $T_{1}$ l'adh\'erence sch\'ematique de la diagonale dans $T\times T$, de sorte qu'on a les morphismes
$$
\xymatrix{ T \ar[r]^{\varepsilon} & T_{1} \ar[r]^d  \ar@/^1pc/[rr]^{d_{0}} \ar@/_1pc/[rr]_{d_{1}} &T\times T  \ar@<0.5ex>[r] \ar@<-0.5ex>[r] &T}
$$
On va montrer directement (i.e. sans utiliser la proposition \ref{l2.2}) que les $d_{i}$ sont des isomorphismes locaux de type fini. Il suffit de le faire pour, disons, $d_{0}$. Soient donc $U$ et $V$ deux ouverts affines non vides de $T$, et consid\'erons la restriction \`a $U\times V$ de la factorisation de la diagonale par adh\'erence sch\'ematique, soit 
$$
U\cap V \; \stackrel{v}{\rrr} \; W  \; \stackrel{u}{\rrr} \; U\times V
$$
o\`u $u$ est une immersion ferm\'ee et o\`u $v$ est une immersion ouverte (lemme \ref{lA.8}), \scd e.  En composant avec la projection sur $V$, on obtient les morphismes
$$
U\cap V \; \stackrel{v}{\rrr} \; W  \; \stackrel{d_{0}}{\rrr} \; V ,
$$
et il s'agit de voir que $d_{0}$  est une immersion ouverte. Les sch\'emas $V$ et  $W$ sont affines. Montrons d'abord que l'ensemble $d_{0}(W)$ est ouvert : comme $V$ est noeth\'erien int\`egre de dimension 1 et que $U\cap V$ est non vide, le ferm\'e compl\'ementaire $V - U\cap V$ est un ensemble fini de points ferm\'es, donc l'ensemble $V - d_{0}(W)$, qui est contenu dans $V - U\cap V$, est ferm\'e.
\medskip

Puisque $v$ est \scd, l'anneau $ B = \Gamma(W, \oo_{W})$ est contenu dans le corps des fractions $K$ de l'anneau de Dedekind  $A = \Gamma(V, \oo_{V})$ ; pour tout id\'eal premier $\mathfrak{p}$ de $A$, $B_{\mathfrak{p}}$ est donc \'egal \`a $K$, ou \`a $A_{\mathfrak{p}}$, et ce dernier cas correspond aux $\mathfrak{p}$ qui se rel\`event \`a $B$, c'est-\`a-dire aux id\'eaux premiers correspondant aux points de l'ouvert $d_{0}(W)$, puisque $W$ est affine. Ainsi,  $d_{0}$ permet d'identifier $W$ \`a l'ouvert $d_{0}(W)$ de $V$; enfin, $d_{0}$ est de type fini puisque c'est une immersion ouverte affine \cite[6.3.4]{EGAG}.
\end{proof}
\section{Ouverts admettant un séparateur}\label{s6}

\subsection{\'Enoncé et premières réductions}

\begin{thm}\phantomsection\label{t6.1} Soit $R$ un anneau noethérien et $T \rrr \s(R)$ un morphisme de type fini. On suppose que $T$ est intègre et normal. Alors il existe un ouvert  $U$ de $T$ contenant tous les points de codimension $1$, et possédant un séparateur.
\end{thm}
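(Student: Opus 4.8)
Le plan est de se ramener au critère de platitude du théorème \ref{p5.1} et de montrer que les deux projections de l'adhérence schématique de la diagonale sont plates au-dessus des points de codimension $\le 1$, la normalité de $T$ étant ici le ressort essentiel. Comme $T$ est de type fini sur $\s(R)$ noethérien, $T$ est noethérien, donc $f$ est quasi-séparé et l'adhérence schématique $T_{1}\subset T\times_{S}T$ de $\Delta_{f}$ existe. Notons $T\stackrel{\varepsilon}{\rrr}T_{1}$ la factorisation canonique ($\varepsilon$ \scd). Puisque $T$ est intègre, $T_{1}$ l'est aussi : son espace sous-jacent est l'adhérence de l'irréductible $\Delta_{f}(T)$, et $\oo_{T_{1}}\hookrightarrow \varepsilon_{\star}(\oo_{T})$ montre que $\oo_{T_{1}}$ est réduit. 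Les projections $d_{0},d_{1}:T_{1}\rightrightarrows T$ sont de type fini (car $T\times_{S}T\rrr T$ l'est par changement de base, et $T_{1}\hookrightarrow T\times_{S}T$ est une immersion fermée), dominantes et birationnelles (elles induisent l'identité sur le corps des fonctions de $T$). D'après l'implication $ii)\Rightarrow i)$ du théorème \ref{p5.1}, appliquée à un ouvert intègre $U\subset T$ (donc à composantes irréductibles localement finies), il suffira de trouver $U$ contenant les points de codimension $\le 1$ tel que les deux projections de l'adhérence schématique de $\Delta_{f|U}$ vers $U$ soient plates.

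Premier point : la platitude de $d_{0}$ et $d_{1}$ aux points situés au-dessus d'un point de codimension $\le 1$. Soit $x\in T$ avec $\operatorname{codim}(x)\le 1$ ; la normalité de $T$ assure que $\oo_{T,x}$ est un anneau principal (un corps, ou un anneau de valuation discrète). Pour $y\in d_{0}^{-1}(x)$, l'anneau local $\oo_{T_{1},y}$ est intègre (car $T_{1}$ l'est) et l'homomorphisme local $\oo_{T,x}\rrr\oo_{T_{1},y}$ est injectif ($d_{0}$ dominant, $T_{1}$ intègre) ; c'est donc un $\oo_{T,x}$-module sans torsion, donc plat sur l'anneau principal $\oo_{T,x}$. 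Ainsi $d_{0}$, et de même $d_{1}$, est plat en tout point de $T_{1}$ au-dessus de l'ensemble des points de codimension $\le 1$.

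Second point : la construction de $U$. Le lieu de platitude de $d_{0}$ est ouvert dans $T_{1}$ (schémas noethériens, $d_{0}$ de type fini, cf. \cite[11.3.1]{EGAIV3}) ; notons $N_{0}$ son complémentaire, fermé dans $T_{1}$. Par le premier point, $d_{0}(N_{0})$ ne contient que des points de codimension $\ge 2$. Comme $d_{0}|_{N_{0}}$ est de type fini, $d_{0}(N_{0})$ est constructible (Chevalley), et les points génériques des composantes irréductibles de son adhérence $\overline{d_{0}(N_{0})}$ lui appartiennent ; ils sont donc de codimension $\ge 2$, d'où $\operatorname{codim}\overline{d_{0}(N_{0})}\ge 2$ dans $T$, de sorte que ce fermé ne rencontre aucun point de codimension $\le 1$. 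On définit de même $N_{1}$ et $\overline{d_{1}(N_{1})}$, puis l'on pose
$$
U \;=\; T\smallsetminus\bigl(\overline{d_{0}(N_{0})}\cup\overline{d_{1}(N_{1})}\bigr),
$$
ouvert de $T$ contenant tous les points de codimension $\le 1$.

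Pour conclure, on note que l'adhérence schématique de $\Delta_{f|U}$ dans $U\times_{S}U$ est $T_{1}\cap(U\times_{S}U)$, car l'adhérence schématique commute à la restriction aux ouverts. Soit $y$ un point de cet ouvert : alors $d_{0}(y)\in U$ et $d_{1}(y)\in U$ ; si l'on avait $y\in N_{0}$, il viendrait $d_{0}(y)\in\overline{d_{0}(N_{0})}$, exclu, et de même $y\notin N_{1}$. Donc $d_{0}$ et $d_{1}$ sont plats en $y$, et les deux projections $T_{1}\cap(U\times_{S}U)\rightrightarrows U$ sont plates et de type fini ; le théorème \ref{p5.1} fournit alors un séparateur pour $f|_{U}$. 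L'obstacle principal est le premier point, où la normalité de $T$ permet la réduction à « sans torsion sur un anneau de valuation discrète », suivi de l'argument de codimension garantissant que le fermé retiré évite bien les points de codimension $1$.
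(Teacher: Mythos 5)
Votre d\'emonstration est correcte, mais elle emprunte un chemin sensiblement diff\'erent de celui du texte. Le ressort est le m\^eme de part et d'autre : le crit\`ere de platitude du th\'eor\`eme \ref{p5.1}, joint \`a l'observation que, $T$ \'etant normal et $T_{1}$ int\`egre, les projections $d_{0}, d_{1}$ sont automatiquement plates au-dessus des points de codimension $\le 1$ (module sans torsion sur un corps ou un anneau de valuation discr\`ete). La diff\'erence tient \`a la construction de $U$. Le texte proc\`ede par r\'ecurrence via la proposition \ref{p6.3}, en adjoignant les points de codimension $1$ un \`a un ; cela l'oblige \`a r\'etr\'ecir l'ouvert courant (suppression de l'adh\'erence d'un ensemble fini de points de codimension $\ge 2$), puis \`a redescendre la platitude, \'etablie au-dessus du trait $\s(\oo_{T,s})$, \`a un voisinage affine effectif de $s$ par deux passages \`a la limite (\cite[11.2.6]{EGAIV3} et \cite[7.2.5]{EGAG}). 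Vous court-circuitez cette r\'ecurrence en travaillant globalement : vous retirez en une seule fois les adh\'erences des images des lieux de non-platitude $N_{0}$ et $N_{1}$, l'ouverture du lieu de platitude (la r\'ef\'erence exacte est \cite[11.1.1]{EGAIV3} plut\^ot que 11.3.1) et le th\'eor\`eme de Chevalley garantissant que ces ferm\'es n'atteignent que des points de codimension $\ge 2$. Votre argument est plus court, sans passage \`a la limite ni r\'ecurrence ; celui du texte isole en contrepartie l'\'enonc\'e incr\'emental \ref{p6.3} (et le lemme \ref{l6.2}), de port\'ee ind\'ependante, dont la n\'ecessit\'e du r\'etr\'ecissement de $U$ en $U'$ est illustr\'ee par l'exemple \ref{ex5.1} --- exemple sur lequel, on le v\'erifie sans peine, votre construction redonne bien l'ouvert s\'epar\'e $X = T - \{y\}$.
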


Soit $U$ un ouvert non vide. Comme $T$ est intègre, les points de codimension 1 de $T$ qui ne sont pas dans $U$ sont des points maximaux du fermé $T - U$ ; ils sont donc en nombre fini. Par suite, le théorème  est  une conséquence immédiate du résultat suivant :

\begin{prop}\phantomsection\label{p6.3} Gardons  les m\^{e}mes hypothèses sur $R$ et $T$. Soit $U$ un ouvert non vide admettant un séparateur, et soit $s$ un point de $T$ de codimension $1$. Alors, il existe un ouvert $U' \subset U$, contenant tous les points de codimension $1$ de $U$, et un ouvert affine $V$ de $T$ contenant $s$ tels que $U' \cup V$ admette un séparateur.
\end{prop}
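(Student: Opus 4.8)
Le plan est de se ramener, via le Corollaire~\ref{c5.2}, \`a la situation \`a deux ouverts affines r\'egie par le Th\'eor\`eme~\ref{p5.1}, et de montrer qu'apr\`es avoir retir\'e un ferm\'e de codimension $\geq 2$ l'adh\'erence sch\'ematique de la diagonale devient plate sur ses deux facteurs. Comme $T$ est noeth\'erien, tout ouvert y est r\'etrocompact, de sorte que le s\'eparateur de $U$ se restreint en un s\'eparateur de tout ouvert $U'\subset U$ (proposition~\ref{p3.1}~$i)$). Je fixerais un ouvert affine $V_{0}\ni s$ et un recouvrement fini de $U$ par des ouverts affines $U_{\lambda}=\s(A_{\lambda})$, avec $V_{0}=\s(B)$, tous int\`egres normaux de corps des fractions $K$. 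D'apr\`es le Corollaire~\ref{c5.2}, il suffira de fabriquer des ouverts $U'\subset U$ et $V\subset V_{0}$ tels que, pour un recouvrement affine de $U'\cup V$ subordonn\'e \`a $\{U_{\lambda}\}\cup\{V\}$, chaque r\'eunion de deux de ses ouverts admette un s\'eparateur : les paires contenues dans $U'$ conviennent car $U'$ poss\`ede un s\'eparateur, la paire $(V,V)$ est triviale, et il ne reste donc que les r\'eunions mixtes.

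Pour une r\'eunion mixte, $U'\cup V$ \'etant int\`egre donc \`a composantes irr\'eductibles localement finies, le Th\'eor\`eme~\ref{p5.1} ram\`ene l'existence d'un s\'eparateur \`a la platitude et la finitude des deux projections de l'adh\'erence sch\'ematique $T_{1}$ de la diagonale. Sur la carte $U_{\lambda}\times_{S}V_{0}=\s(A_{\lambda}\otimes_{R}B)$, cette adh\'erence est $\s(A_{\lambda}B)$, o\`u $A_{\lambda}B\subset K$ est le sous-anneau engendr\'e par $A_{\lambda}$ et $B$ ; il est de type fini sur $A_{\lambda}$ et sur $B$, et les deux projections sont $p\colon\s(A_{\lambda}B)\rrr\s(A_{\lambda})$ et $q\colon\s(A_{\lambda}B)\rrr\s(B)$. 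On est donc ramen\'e \`a la platitude de $A_{\lambda}B$ sur $A_{\lambda}$ et sur $B$. Or cette platitude est gratuite en codimension $1$ gr\^ace \`a la normalit\'e : au-dessus d'un point $\mathfrak p$ de codimension $1$ de $\s(A_{\lambda})$, l'anneau $(A_{\lambda})_{\mathfrak p}$ est un anneau de valuation discr\`ete, et $A_{\lambda}B\otimes_{A_{\lambda}}(A_{\lambda})_{\mathfrak p}$ est un sous-anneau de $K$ contenant $(A_{\lambda})_{\mathfrak p}$ ; c'est donc $(A_{\lambda})_{\mathfrak p}$ ou $K$, un anneau de valuation discr\`ete n'ayant aucun anneau interm\'ediaire propre jusqu'\`a son corps des fractions. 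Dans les deux cas $A_{\lambda}B$ est plat sur $A_{\lambda}$ en tout point au-dessus de $\mathfrak p$. Le m\^eme raisonnement vaut pour $q$ sur $\s(B)$, en particulier au point $s$, qui est de codimension $1$.

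Il reste \`a propager la platitude \`a un ouvert, et c'est l\`a le point d\'elicat. Par l'ouverture du lieu de platitude d'un morphisme de type fini sur une base noeth\'erienne \cite[11.1.1]{EGAIV3}, le lieu de non-platitude $Z_{\lambda}\subset\s(A_{\lambda}B)$ de $p$ est ferm\'e ; d'apr\`es ce qui pr\'ec\`ede son image $p(Z_{\lambda})$ ne rencontre aucun point de codimension $\leq 1$, et comme elle est constructible (Chevalley, \cite[1.8.4]{EGAIV2}) son adh\'erence $F_{\lambda}$ est un ferm\'e de codimension $\geq 2$ de $\s(A_{\lambda})$. On obtient de m\^eme un ferm\'e $G_{\lambda}\subset V_{0}$ de codimension $\geq 2$ \`a partir de $q$. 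Je poserais alors $V=V_{0}\smallsetminus\bigcup_{\lambda}G_{\lambda}$, qui contient encore $s$, et $U'=U\smallsetminus\bigcup_{\lambda}\overline{F_{\lambda}}$, qui contient encore tous les points de codimension $1$ de $U$.

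Sur ces bases r\'etr\'ecies, les deux projections de $T_{1}$ deviennent plates et de type fini sur chaque carte mixte, par restriction, de sorte que chaque r\'eunion mixte admet un s\'eparateur par $ii)\Rightarrow i)$ du Th\'eor\`eme~\ref{p5.1} ; le Corollaire~\ref{c5.2} fournit enfin un s\'eparateur pour $U'\cup V$. L'obstacle principal est bien ce dernier pas : contr\^oler le lieu de non-platitude sur la base, voir qu'il est de codimension $\geq 2$, et l'exciser sans perdre ni les points de codimension $1$ ni le point $s$ --- la platitude elle-m\^eme \'etant acquise en codimension $1$ par normalit\'e.
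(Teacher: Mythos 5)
Votre d\'emonstration est correcte pour l'essentiel, mais elle suit une route r\'eellement diff\'erente de celle du texte. Le texte localise en $s$ : il pose $S=\Spec(\oo_{T,s})$, un trait, montre que l'adh\'erence sch\'ematique $F=\overline{U\cap S}\subset U\times S$ est automatiquement plate sur $S$ et devient plate sur $U$ apr\`es suppression des points de codimension $\ge 2$ de $d_1(F_s)$ (en nombre fini), puis \'etend ces platitudes du spectre de l'anneau local en $s$ \`a un vrai voisinage affine $V_2\ni s$ par deux passages \`a la limite (\cite[11.2.6]{EGAIV3} pour la platitude sur $V$, puis \cite[7.2.5]{EGAG} pour la platitude sur $U$). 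Vous travaillez au contraire directement sur les cartes affines $U_\lambda\times V_0$, vous \'etablissez la platitude de $\Spec(A_\lambda B)$ sur chacun des deux facteurs en tout point situ\'e au-dessus d'un point de codimension $\le 1$ --- via le m\^eme fait-cl\'e que le texte, \`a savoir qu'un anneau de valuation discr\`ete n'admet aucun sous-anneau interm\'ediaire propre entre lui et son corps des fractions --- puis vous excisez de chaque facteur l'adh\'erence de l'image du lieu de non-platitude, laquelle est constructible par Chevalley et de codimension $\ge 2$ puisque ses points maximaux sont dans cette image. Cela remplace les deux arguments de limite du texte par l'ouverture du lieu de platitude jointe \`a la constructibilit\'e des images, traite les deux projections de fa\c con sym\'etrique, et d\'egage peut-\^etre mieux le contenu g\'eom\'etrique. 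Deux points mineurs \`a corriger : d'une part l'\'enonc\'e exige que $V$ soit un ouvert \emph{affine} contenant $s$, alors que votre $V=V_0\smallsetminus\bigcup_\lambda G_\lambda$ n'a aucune raison de l'\^etre --- il suffit de le r\'etr\'ecir en un voisinage affine de $s$, ce qui ne change rien \`a l'argument ; d'autre part, dans l'appel au corollaire \ref{c5.2}, les ouverts $U_\lambda\cap U'$ doivent eux-m\^emes \^etre raffin\'es en ouverts affines, ce que vous faites implicitement. Ni l'un ni l'autre n'affecte le fond.
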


La démonstration utilise la conséquence suivante de \ref{p5.1}; désormais, et sauf mention expresse du contraire, tous les produits de schémas seront des produits  sur  $\s(R)$ sans que cette base soit mentionnée. 

\begin{lemme}\phantomsection\label{l6.2} Soit $X = U \cup V$ un schéma réunion de deux ouverts. On suppose que les schémas $U$ et $V$ admettent un séparateur. Alors $X$ admet un séparateur si les deux projections
$$
\xymatrix{& \overline{U\cap V} \ar[dl]_{d_{0}} \ar[d] \ar[dr]^{d_{1}} &\\
V &U\times V \ar[l] \ar[r] & U}
$$
$d_{0}, d_{1}$ sont plates et de type fini, où, par abus de notation, $\overline{U\cap V}$ désigne l'\emph{adhérence schématique} de $U\cap V$ dans $U\times V$.
\end{lemme}

\begin{proof} Soit $X_{1} $ l'adhérence schématique de la diagonale dans $X\times X$. D'après le théorème \ref{p5.1}, le schéma $X$ admet un séparateur si et seulement si les deux projections $\xymatrix{X_{1 } \ar[r] & X\times X  \ar@<0.5ex>[r] \ar@<-0.5ex>[r] & X}$ sont plates et de type fini ; cela se vérifie sur les ouverts du recouvrement 
\[(U\times U, V\times V, U\times V,V\times U)\] 
de $X\times X$ ; ce recouvrement induit sur le fermé $X_{1}$ le recouvrement
$$
(U_{1}, V_{1}, \overline{U\cap V}, \overline{V\cap U}).
$$
Puisque les schémas $U$ et $V$ admettent un séparateur,  les projections de source les schémas $U_{1}$ et $V_{1}$ sont plates et de type fini. 
L'hypothèse sur $\overline{U\cap V}$, et  l'isomorphisme de symétrie $U\times V  \simeq V\times U$, entra\^{i}nent donc la conclusion.
\end{proof}

\subsection{Construction de l'ouvert $U'$} Posons $S = \s(\oo_{T, s})$ ; c'est le spectre d'un anneau de valuation discrète puisque $T$ est normal intègre et noethérien.

\n Notons $F = \overline{U\cap S}$ l'adhérence \emph{schématique} de $U\cap S$ dans $U\times S$. On peut écarter le cas o\`u $s$ est déjà dans $U$, et donc supposer que l'ouvert $U \cap S$ de $S$ est réduit à son point générique $\eta$. Comme $U$ est de type fini sur $R$, le morphisme $d_{0}$ de projection $F \rrr  U\times S \rrr S$ est de type fini ; soit $F_{s} = d_{0}^{-1}(s)$ sa fibre fermée. Pour tout $x \in F_{s}$ l'homomorphisme $\oo_{T, s} \rrr \oo_{F, x}$ est un isomorphisme, en vertu de la maximalité de l'anneau de valutation discrète $\oo_{T, s}$ dans son corps des fractions $K = \oo_{T, \eta}$; de sorte que, si $F_{s} \neq \emptyset$,  le schéma $F$ est la réunion d'un ensemble fini de copies de $S$ recollées en leur point générique. Soit $Z \subset d_{1}(F_{s}) \subset U$ l'ensemble (éventuellement vide) des points $d_{1}(x)$ qui sont de codimension > 1 dans $U$; c'est un ensemble fini, et on pose 
$$
U' \; = \; U \cap (T - \bar{Z}).
$$
Comme la formation de l'adhérence schématique commute à la restriction à l'ouvert $U'\times S \subset U \times S$, le schéma $F' = d_{1}^{-1}(U')$ est l'adhérence schématique de $U'\cap S$ dans $U'\times S$. La projection $d_{1} : F' \rrr U'$ est un morphisme plat puisque pour tout $x \in F'$ tel que $d_{0}(x) = s$, l'anneau $\oo_{U', d_{1}(x)}$ est de valuation discrète.

\subsection{Démonstration de la proposition \ref{p6.3}}En vertu de ce qui précède, quitte à remplacer $U$ par $U'$ on peut faire l'hypothèse supplémentaire

\begin{para} \phantomsection\label{h6.1}
{\it Le morphisme composé  $F = \overline{U\cap S} \rrr U\times S \rrr U$ est plat.}
\end{para}

Nous commencerons par trouver un voisinage ouvert affine $V_{1}$ de $s$ tel que la projection $\overline{U\cap V_{1}} \rrr V_{1}$ soit plate, puis un ouvert $V_{2} \subset V_{1}$ contenant $s$, et tel que la projection $\overline{U\cap V_{2}} \rrr ~ U$ soit plate ; on pourra alors conclure du lemme \ref{l6.2} que l'ouvert $U \cup V_{2}$ admet un séparateur.

Pour la commodité du lecteur, rappelons un résultat général de passage à la limite  énoncé ici sous les hypothèses particulières où nous l'utiliserons pour trouver l'ouvert $V_{1}$.

\begin{vide}[\protect{\cite[11.2.6 (ii)]{EGAIV3}}] \phantomsection\label{t6.2} {\it Soient $S_{0}$ un schéma affine, $(S_{\lambda})$ un système projectif de $S_{0}$-schémas affines, de sorte que la limite projective de ce système est un schéma affine noté $S = \ul S_{\lambda}$. 

\n Soient $X_{0}$ un $S_{0}$ schéma de présentation finie,  $X_{\lambda} = S_{\lambda}\times_{S_{0}}X_{0}$ et $X = \ul X_{\lambda} = S\times_{S_{0}}X_{0}$.

\n Alors, $X$ est plat sur $S$ si et seulement si il existe $\lambda $ tel que le morphisme $X_{\lambda} \rrr S_{\lambda}$ soit plat. $\qed$}
\end{vide}

 Soit $V_{0}$ un ouvert affine de $T$ contenant $s$ ; notons $\mathsf{V}$ l'ensemble des ouverts affines de $T$ contenant $s$ et contenus dans $V_{0}$ ; les systèmes projectifs considérés seront indexés par $\mathsf{V}$ ; en particulier,
$$
\ul_{\mathsf{V}} V =  \s(\mathcal{O}_{T, s}) = S
$$

\n Pour $V \in \mathsf{V}$ on désigne par $F_{V} = \overline{U \cap V}$ l'adhérence schématique de $U\cap V$ dans $U\times V$.

Pour deux ouverts de $\mathsf{V}$, $V' \subset V$,  le carré
$$
\xymatrix{F_{V'} \ar[r] \ar[d] & V' \ar[d]\\
F_{V} \ar[r] &V}
$$
est cartésien puisque la formation de l'adhérence schématique commute au changement de base plat $V' \subset V$ (lemme \ref{lA.7},  \cite[2.3.2]{EGAIV2}). Comme les morphismes $V\rrr V_{0}$ sont des immersions ouvertes affines, il en est de m\^{e}me des morphismes $F_{V} \rrr F_{V_{0}}$ ; ils permettent d'identifier $F_{V}$ à un ouvert de $F_{V_{0}}$, et aussi d'écrire
$$
F \; = \; \bigcap F_{V}.
$$
\medskip

\n Montrons  comment trouver un ouvert $V$ tel que la projection 
$$
F_{V} \rrr U\times V \rrr V
$$
soit plate. Puisque $U$ est de type fini sur le schéma noethérien $\s(R)$, et que $F_{V}$ est fermé dans $U\times V$, $F_{V}$ est de présentation finie sur $V$. On peut donc  utiliser  \ref{t6.2} en rempla\c cant les données $(S_{0}, (S_{\lambda}), X_{0})$ par $(V_{0}, (V)_{V\in \mathsf{V}}, F_{V_{0}})$ ; la limite 
$X = \ul X_{\lambda} = S\times_{S_{0}}X_{0}$ est ici $F = \ul F_{V} = S\times_{V_{0}}F_{V_{0}}$. 

Or, en passant à la limite sur les $V \in \mathsf{V}$, on trouve les morphismes
$$
\ul U\cap V = U \cap S = \eta  \rrr \ul F_{V} = F \rrr \ul V = S
$$
Le morphisme  de type fini $d_{0} : F \rrr S$  est plat puisque $S$ est un trait. D'après \ref{t6.2}  il donc existe $V_{1} \in \mathsf{V}$ tel que le morphisme $d_{0} : F_{V_{1}} \rrr V_{1}$ soit plat ; notons qu'alors pour tout $V \in \mathsf{V}$ tel que $V \subset V_{1}$, le morphisme $F_{V}\rrr V$ est plat.  \medskip

Considérons maintenant la projection sur l'autre facteur $d_{1} : F_{V_{1}} \rrr U$. Soit $W \subset F_{V_{1}}$ son ouvert (non vide) de platitude (\cite[11.1.1]{EGAIV3}). La platitude du morphisme $F \rrr U$  (hypothèse \ref{h6.1}), se traduit par l'inclusion $F \subset W$ ; en identifiant les $F_{V}$ à des ouverts  de $F_{V_{1}}$, elle s'écrit aussi
$$
\bigcap F_{V} \; \subset \; W,
$$
pour $V$ parcourant la famille des ouverts affines de $T$, contenant $s$ et contenus dans $V_{1}$. Le théorème 7.2.5 de \cite{EGAG}  montre alors qu'il existe $V_{2} \subset V_{1}$ tel que $F_{V_{2}}$ soit contenu dans $W$, donc tel le morphisme $F_{V_{2}} \rrr U$ soit plat. Mais plutôt que d'invoquer le résultat très général de loc.cit., on peut, sous nos hypothèses très particulières,  esquisser une idée de la démonstration : soit $X = F_{V_{1}} - W$ le schéma réduit induit sur ce fermé ; posons $Y_{V} = F_{V} \cap X$ ; on obtient une famille filtrante d'ouverts de $X$ d'intersection vide, et il faut voir que l'un des $Y_{V}$ est vide ; comme $X$ est quasi-compact et que la famille est filtrante on peut supposer de plus que $X$ est affine ; puisque les immersions ouvertes $F_{V} \subset F_{V_{1}}$ sont affines, les ouverts $Y_{V}$ sont alors affines ; mais une limite inductive d'anneaux ne peut être nulle que si l'un d'eux l'est déjà. \qed

\subsection{Un exemple}

\begin{ex}\phantomsection\label{ex5.1} Soit $Y = \s(R)$ le spectre d'un anneau local r\'egulier de dimension 2 ; notons $y$ son point ferm\'e et $Y'$ l'ouvert compl\'ementaire.
Soit $p : X \rrr Y$ le morphisme d'\'eclatement de $Y$ en l'idéal maximal $\mathfrak{m}$ de $R$, de sorte que $p$ induit un isomorphisme $p^{-1}(Y') \; \widetilde{\rrr}\; Y'$ ; on note $x$ le point générique de $p^{-1}(y)$. Soit $T$ le sch\'ema 
obtenu par recollement de $X$ et de $Y$ le long de $Y'$ ; plus pr\'ecis\'ement, $T$ est d\'efini par l'exactitude (dans la cat\'egorie des sch\'emas) de la suite
$$
\xymatrix{Y'\simeq p^{-1}(Y') \ar@<0.5ex>[r]^>>>>>{j} \ar@<-0.5ex>[r]_>>>>>{p} & X\sqcup Y \ar[r] &T}  
$$
o\`u $j : Y' \rrr X$ d\'esigne l'immersion ouverte $p^{-1}(Y') \rrr X$.

\n On désignera par le même symbole les éléments ou parties de $X$ et de $Y$ et leur image dans $T$.\medskip

Alors,

\n (a)\;  Le sch\'ema $T$ est r\'egulier de dimension 2. 

\n (b)\;  L'ouvert $X$ de $T$ est s\'epar\'e et $T - X = \{y\}$; en particulier $X$ contient les points de $T$ de codimension 1.

\n (c) \; Aucun ouvert de $T$ contenant $x$ et $y$ n'admet de séparateur.

\n (d)\; L'ouvert (séparé) $Y$ de $T$ est maximal parmi les ouverts admettant un séparateur, mais il ne contient pas le point $x$ qui est de codimension 1.
\end{ex}

Vérifions  (c). Soit $U$ un ouvert de $T$ contenant $x$ et $y$ ; il contient donc l'ouvert  $Y$ ; par suite $U$ est le recollement de $U\cap X$ et de $Y$  le long de l'ouvert $Y'$ ; appliquons la proposition \ref{p3.2}, avec ici $(Y' \subset U\cap X \stackrel{p}{\rrr} Y)$ à la place des données  $(U_{0} \subset U \stackrel{\theta}{\rrr} V)$ de loc.cit. ; comme $U$ contient le point générique $x$ du diviseur exceptionnel $p^{-1}(y) \simeq \mathbf{P}_{1}$, cet ouvert contient aussi un point fermé de $x' \in p^{-1}(y) \subset X$, si bien que l'homomorphisme $\oo_{Y, y} \rrr \oo_{X, x'}$ n'est pas un isomorphisme ; cela montre que la restriction de $p$ à $U \cap X$ n'est pas un isomorphisme local, et que donc $U$ n'admet pas de séparateur (proposition \ref{p3.2}). 

Cet exemple montre que dans la proposition  \ref{p6.3}, on ne peut éviter la restriction de $U$ à $U'$, ici la restriction de $Y$ à $Y'$ : dans $T$, $Y$ est un ouvert séparé, et le point $x$ est de codimension 1, bien que $T$ ne contienne  pas d'ouvert admettant un séparateur et  contenant $x$ et $Y$ ; par contre, l'ouvert séparé $X$ contient $x$ et $Y' = Y - \{y\}$. 
\medskip

Par ailleurs, le morphisme $\varphi : T \rrr Y$ dont la restriction à $X$ est égale à $p$, et dont la restriction à $Y$ est l'application identique, est une enveloppe séparée de $T$, et aussi d'ailleurs son enveloppe affine (proposition \ref{p3.2}).  Soit $q : T' \rrr T$ le morphisme d'éclatement de $T$ en le point fermé $y$ ; en identifiant, comme plus haut $Y$ à son image dans $T$, on voit que la restriction de $q$  à $q^{-1}(Y)$ redonne le morphisme $p : X \rrr Y$, et sa restriction à l'ouvert $X \subset T$ est l'identité ; ainsi, $T'$ est-il  isomorphe au schéma obtenu en recollant deux exemplaires de $X$ le long de l'ouvert $p^{-1}(Y')$ ; le morphisme évident $h' : T' \rrr X$ est donc un séparateur de $T'$.  Finalement, le diagramme suivant résume la situation.
$$
\xymatrix{T' \ar[r]^{h'} \ar[d]_{q} & X \ar[d]^{p}\\
T \ar[r]_{\varphi} & Y}
$$

Cet exemple est compatible avec la conjecture \ref{cj3.1}.

\section{Anneaux locaux apparent\'es}\label{s07}

 Nous allons relier les r\'esultats  qui pr\'ec\`edent aux \og Crit\`eres de s\'eparation d'un sch\'ema int\`egre\fg \, qui font l'objet du \S 8.5 de \cite{EGAG}. Notons que dans la premi\`ere \'edition de EGA  I, les m\^emes d\'eveloppements  \'etaient annonc\'es sous le titre  \og \S 8. Les sch\'emas de Chevalley\fg ; les sch\'emas en question, introduits dans le s\'eminaire Cartan-Chevalley de 1955, \'etaient juste \'evoqu\'es en fin de paragraphe, et cette \'evocation elle-m\^eme a disparu dans la nouvelle \'edition ; mais on la trouve toujours dans \cite[p.125]{God58}.
 
 Le concept d'anneaux locaux \og apparentés\fg, dégagé par {\sc Chevalley}, plutôt abandonné aujourd'hui, permet cependant d'éclairer un peu la démarche suivie ici ; mais il ne semble pas qu'on puisse l'étendre à des schémas qui ne seraient pas intègres.
 
 \subsection{Le langage}
 
\begin{para}\phantomsection\label{6.1} Reprenons \cite[8.5.2]{EGAG}. Soit $K$ un anneau int\`egre (par exemple un corps). Deux sous-anneaux locaux $M$ et $N$ de $K$ sont dits {\it apparent\'es} s'il existe un sous-anneau local $Q$ de $K$ dominant \`a la fois $M$ et $N$. Cette propri\'et\'e est \'equivalente \`a la suivante : soit $P \subset K$ le sous-anneau engendr\'e par $M \cup N$, c'est-\`a-dire l'image de l'homomorphisme canonique
$$
M \otimes N \; \rrr \; K \, ;
$$
alors il existe un id\'eal premier $\mathfrak{p}$ de $P$ tel que les homomorphismes  $M \rrr P_{\mathfrak{p}}$  et $N \rrr P_{\mathfrak{p}}$ soient locaux. 

\bb 

Soit $T$ un sch\'ema int\`egre de point g\'en\'erique $\xi$; désignons par $K = \oo_{T, \xi}$ le corps des fonctions rationnelles sur $T$. Pour tout point $t \in T$, l'homomorphisme de restriction induit un homomorphisme injectif
$$
j_{t} : \oo_{t} \; \rrr\; K .
$$
Contrairement à \cite[8.5]{EGAG}, on n'identifiera pas, dans ce qui suit, l'anneau $\oo_{T, t}$ à son image $j_{t}(\oo_{t}) \subset K$.\bb

Considérons les propri\'et\'es suivantes portant sur un couple $x, y$ de points de $T$:
\medskip

a) $x = y$ ;
\medskip

b) $j_{x}(\oo_{x}) = j_{y}(\oo_{y})$;
\medskip

c) $j_{x}(\oo_{x})$ et  $j_{y}(\oo_{y})$ sont apparentés.
\medskip

\n La séparation de $T$ signifie que pour tout couple $x, y$ de points de $T$ les propriétés a)  et c)  sont équivalentes (corollaire 6.3). On va montrer que, sous une condition usuelle de finitude, le schéma $T$ admet un séparateur si pour tout couple $x, y$ les propriétés b) et c)  sont équivalentes.\bb

Le sch\'ema $T$ \'etant suppos\'e int\`egre, son morphisme diagonal $\Delta_{T} : T \rrr T\times T$  admet une factorisation par adh\'erence sch\'ematique \cite[6.10.5]{EGAG}:
$$
T \; \rrr \; T_{1} \; \rrr \; T\times T .
$$
On note encore  $d_{0}, d_{1} : \xymatrix{T_{1}\ar@<0.5ex>[r] \ar@<-0.5ex>[r]  &T}$ les deux projections sur $T$.
\medskip

\n Puisque l'immersion ouverte $T \rightarrow T_{1}$ est \scd e, et que le morphisme d'inclusion du point générique  $\xi \rrr T$ est aussi \scd, le morphisme composé $\xi \rrr T \rrr T_{1}$ est \scd ; par suite, pour tout  $z \in T_{1}$, on a l'homomorphisme de restriction  $j_{z} : \oo_{T_{1}, z} \rrr \oo_{T, \xi}=K$. 
Comme les morphismes composés $\xymatrix{T \ar[r] &T_{1}\ar@<0.5ex>[r] \ar@<-0.5ex>[r]  &T}$ sont l'identité,  pour $z \in T_{1}$, l'homomorphisme compos\'e
$$
\oo_{T, d_{0}(z)} \rrr \oo_{T_{1}, z} \rrr K 
$$
est \'egal \`a l'injection $j_{d_{0}(z)}$, et idem pour $j_{d_{1}(z)}$.

\end{para}

\begin{prop}\phantomsection\label{p6.1}
Gardons les notations qui pr\'ec\`edent. Soient $x$ et $y$ des points de $T$. Alors  $j_{x}(\oo_{x})$ et $j_{y}(\oo_{y})$ sont apparent\'es si et seulement si il existe $z \in T_{1}$ tel que $x = d_{0}(z)$  et $y = d_{1}(z)$. Autrement dit, la relation d\'efinie sur les points de $T$ par le ferm\'e $T_{1}$ de $T\times T$ est exactement la relation d'apparentement.
\end{prop}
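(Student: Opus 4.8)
Le plan est de traiter rapidement le sens direct (existence de $z$ $\Rightarrow$ apparentement) \`a partir des observations faites en \ref{6.1}, puis de ramener la r\'eciproque \`a un calcul d'alg\`ebre commutative sur des ouverts affines. Pour le sens \og s'il existe $z$, alors apparent\'es \fg, soit $z \in T_{1}$ avec $d_{0}(z) = x$ et $d_{1}(z) = y$. Je poserais $Q = j_{z}(\oo_{T_{1}, z}) \subset K$. Comme le morphisme $\xi \rrr T_{1}$ est \scd, l'homomorphisme $j_{z}$ est injectif, donc $Q$ est un sous-anneau \emph{local} de $K$, isomorphe \`a $\oo_{T_{1}, z}$. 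Les projections $d_{0}$ et $d_{1}$ induisent des homomorphismes \emph{locaux} $\oo_{T, x} \rrr \oo_{T_{1}, z}$ et $\oo_{T, y} \rrr \oo_{T_{1}, z}$, et l'on a rappel\'e en \ref{6.1} que leurs compos\'es avec $j_{z}$ redonnent $j_{x}$ et $j_{y}$. Il en r\'esulte aussit\^ot que $Q$ domine \`a la fois $j_{x}(\oo_{x})$ et $j_{y}(\oo_{y})$, qui sont donc apparent\'es.

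Pour la r\'eciproque, qui est la partie substantielle, je me placerais sur des ouverts affines $U = \s(A) \ni x$ et $V = \s(B) \ni y$, de sorte que $x$ et $y$ correspondent \`a des id\'eaux premiers $\mathfrak{p}_{x} \subset A$ et $\mathfrak{p}_{y} \subset B$, avec $j_{x}(\oo_{x}) = A_{\mathfrak{p}_{x}}$ et $j_{y}(\oo_{y}) = B_{\mathfrak{p}_{y}}$ (images dans $K$). Le point de d\'epart est d'identifier la trace $T_{1} \cap (U \times V)$. Comme la formation de l'adh\'erence sch\'ematique commute \`a la restriction \`a l'ouvert $U \times V = \s(A \otimes_{\mathbf{Z}} B)$ (lemme \ref{lA.7}), et comme $U \cap V$ est int\`egre de corps des fractions $K$, cette trace est $\s(C)$, o\`u $C = \operatorname{im}(A \otimes_{\mathbf{Z}} B \rrr K)$ est le sous-anneau de $K$ engendr\'e par les images de $A$ et de $B$. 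Sous cette identification, et puisque $d_{0}$ et $d_{1}$ correspondent respectivement aux homomorphismes $A \rrr C$ et $B \rrr C$, un point $z$ v\'erifiant $d_{0}(z) = x$ et $d_{1}(z) = y$ correspond exactement \`a un id\'eal premier $\mathfrak{q}$ de $C$ dont les contractions satisfont $\mathfrak{q} \cap A = \mathfrak{p}_{x}$ et $\mathfrak{q} \cap B = \mathfrak{p}_{y}$.

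Il reste \`a relier cette condition \`a l'apparentement, et c'est l\`a que se trouve le c\oe ur de la v\'erification. J'emploierais la caract\'erisation de \ref{6.1} : en posant $M = A_{\mathfrak{p}_{x}}$, $N = B_{\mathfrak{p}_{y}}$ et $P = \operatorname{im}(M \otimes_{\mathbf{Z}} N \rrr K)$, les anneaux $M$ et $N$ sont apparent\'es si et seulement s'il existe un premier $\mathfrak{p}$ de $P$ tel que $M \rrr P_{\mathfrak{p}}$ et $N \rrr P_{\mathfrak{p}}$ soient locaux. Le point technique, sans doute le plus d\'elicat \`a r\'ediger, est d'identifier $P$ \`a la localisation $S^{-1}C$, o\`u $S \subset C$ est la partie multiplicative engendr\'ee par les images de $A \setminus \mathfrak{p}_{x}$ et de $B \setminus \mathfrak{p}_{y}$ ; un premier $\mathfrak{p}$ de $P$ correspond alors \`a un premier $\mathfrak{q}$ de $C$ disjoint de $S$, avec $P_{\mathfrak{p}} = C_{\mathfrak{q}}$. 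On v\'erifie enfin que la localit\'e de $M \rrr C_{\mathfrak{q}}$ (resp. de $N \rrr C_{\mathfrak{q}}$) \'equivaut \`a l'\'egalit\'e $\mathfrak{q} \cap A = \mathfrak{p}_{x}$ (resp. $\mathfrak{q} \cap B = \mathfrak{p}_{y}$), ces \'egalit\'es entra\^inant d'ailleurs automatiquement $\mathfrak{q} \cap S = \emptyset$. La condition d'apparentement co\"incide ainsi mot pour mot avec l'existence du point $z$ cherch\'e, ce qui ach\`eve la d\'emonstration (et redonne au passage le sens direct trait\'e plus haut).
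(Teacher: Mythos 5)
Votre d\'emonstration est correcte et suit pour l'essentiel la m\^eme strat\'egie que le texte : pour le sens direct, l'anneau local $j_{z}(\oo_{T_{1},z})\subset K$ domine $j_{x}(\oo_{x})$ et $j_{y}(\oo_{y})$ ; pour la r\'eciproque, on identifie la trace de $T_{1}$ au spectre du sous-anneau de $K$ engendr\'e par les anneaux concern\'es, puis on fait correspondre les id\'eaux premiers. La seule diff\'erence, mineure, est que le texte effectue directement le changement de base le long du monomorphisme plat $\s(\oo_{T,x})\times\s(\oo_{T,y})\rrr T\times T$, ce qui produit d'embl\'ee l'anneau $P$ engendr\'e par $j_{x}(\oo_{x})$ et $j_{y}(\oo_{y})$ et dispense de votre \'etape de localisation $P=S^{-1}C$ \`a partir de l'anneau $C$ attach\'e aux ouverts affines $U$ et $V$.
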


\begin{proof}La condition est suffisante, car l'anneau local $j_{z}(\oo_{T_{1}, z})$ est un sous-anneau local du corps $K$ des fonctions rationnelles sur $T$, et par hypoth\`ese il domine $j_{x}(\oo_{T, x})$ et $j_{y}(\oo_{T, y})$.

R\'eciproquement, le diagramme cart\'esien
\[
\xymatrix{T_{1} \ar[r] & T\times T\\
W \ar[u]^{\psi} \ar[r] & \s(\oo_{T, x}) \times \s(\oo_{T, y}) \ar[u]_{\varphi}}
\]
d\'efinit un sous-sch\'ema ferm\'e $W$ du sch\'ema affine  $\s(\oo_{T, x}) \times \s(\oo_{T, y})$ ; c'est donc un sch\'ema affine, et il est int\`egre puisque $\varphi$, et donc aussi $\psi$, sont des monomorphismes plats. L'anneau $P$ de $W$ est engendr\'e par les sous-anneaux  $M = j_{x}( \oo_{T, x})$ et $N = j_{y}(\oo_{T, y})$  de $K$. Puisque $M$ et $N$ sont suppose\'s apparent\'es il existe un id\'eal premier $\mathfrak{p}$ de $P$ tel que $P_{\mathfrak{p}}$ domine $M$ et $N$ ; soit $w \in W = \s(P)$ le point correspondant \`a $\mathfrak{p}$ ; en prenant $ z = \psi(w)$ on a $x = d_{0}(z)$  et $y = d_{1}(z)$.
\end{proof}

\n Du r\'esultat qui pr\'ec\`ede on déduit immédiatement le  crit\`ere de s\'eparation de \cite[8.5.5]{EGAG} :

\begin{cor}\phantomsection\label{c6.1} Pour qu'un sch\'ema int\`egre $T$ soit s\'epar\'e, il faut et il suffit que la relation \og $j_{x}(\oo_{x})$ et $j_{y}(\oo_{y})$ sont apparent\'es \fg \, entre points $x$ et $y$ de $T$ implique $x = y$.
\end{cor}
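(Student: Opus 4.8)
Le plan est de d\'eduire ce crit\`ere directement de la proposition \ref{p6.1}. Je commencerais par rappeler la caract\'erisation topologique usuelle de la s\'eparation : le sch\'ema int\`egre $T$ est s\'epar\'e si et seulement si son morphisme diagonal $\Delta_{T} : T \rrr T\times T$ est une immersion ferm\'ee ; comme $\Delta_{T}$ est toujours une immersion, cela \'equivaut \`a ce que son image, la diagonale ensembliste $\{(x, x) : x \in T\}$, soit ferm\'ee dans $T\times T$. J'invoquerais ensuite le fait, rappel\'e dans la note de bas de page de la d\'efinition \ref{d2.1} d'apr\`es \cite[6.10.5]{EGAG}, que l'adh\'erence sch\'ematique $T_{1}$ de la diagonale a pour espace sous-jacent l'adh\'erence topologique de $\Delta_{T}(T)$.

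De l\`a, $\Delta_{T}(T)$ est ferm\'ee si et seulement si $T_{1}$ co\"incide ensemblistement avec la diagonale ; cette derni\`ere \'etant toujours contenue dans $T_{1}$, la condition revient \`a demander que tout point $z \in T_{1}$ v\'erifie $d_{0}(z) = d_{1}(z)$. Il resterait alors \`a traduire cette condition gr\^ace \`a la proposition \ref{p6.1} : un couple $(x, y)$ s'\'ecrit $(d_{0}(z), d_{1}(z))$ pour un $z \in T_{1}$ si et seulement si $j_{x}(\oo_{x})$ et $j_{y}(\oo_{y})$ sont apparent\'es. Ainsi l'\'enonc\'e \og tout $z \in T_{1}$ v\'erifie $d_{0}(z) = d_{1}(z)$\fg\ se lit exactement \og $x$ et $y$ apparent\'es entra\^ine $x = y$\fg, et la concat\'enation des deux cha\^ines d'\'equivalences donne le corollaire.

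La principale subtilit\'e est le soin \`a porter au passage de l'ensembliste au sch\'ematique : il faut s'assurer que l'immersion ouverte \scd e $T \rrr T_{1}$ a pour image la diagonale, de sorte que la s\'eparation de $T$ \'equivaut \`a la surjectivit\'e de cette immersion ouverte (qui est alors un isomorphisme), c'est-\`a-dire \`a l'\'egalit\'e ensembliste $T_{1} = \Delta_{T}(T)$. Une fois ce point acquis, tout le reste n'est qu'une application imm\'ediate de la proposition \ref{p6.1}.
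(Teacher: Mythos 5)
Votre d\'emonstration est correcte et suit essentiellement la m\^eme voie que celle du texte : la s\'eparation de $T$ \'equivaut \`a la surjectivit\'e de l'immersion $T \rrr T_{1}$ (l'espace sous-jacent \`a $T_{1}$ \'etant l'adh\'erence topologique de la diagonale), puis la proposition \ref{p6.1} traduit les points de $T_{1}$ en couples d'anneaux locaux apparent\'es. Le seul point laiss\'e implicite chez vous --- qu'un point $z \in T_{1}$ v\'erifiant $d_{0}(z) = d_{1}(z)$ est n\'ecessairement sur la diagonale --- l'est tout autant dans la d\'emonstration du texte.
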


\begin{proof} En effet, le morphisme $T_{1} \rrr T\times T$ \'etant une immersion ferm\'ee, la diagonale de $T$ est ferm\'ee si et seulement si le morphisme $T \rightarrow T_{1}$ est surjectif ; il est alors un isomorphisme.
\end{proof}

\subsection{Platitude et apparentement}

\begin{cor}\phantomsection\label{c6.2} Soit $T$ un sch\'ema int\`egre. Gardons les notations de \ref{6.1}. Les propri\'et\'es suivantes sont \'equivalentes :
\begin{thlist}
\item Les morphismes $d_{0}, d_{1} : \xymatrix{T_{1}\ar@<0.5ex>[r] \ar@<-0.5ex>[r]  &T}$ sont plats.

\item Pour tout couple $x, y$ de points de $T$, si les anneaux locaux  $j_{x}(\oo_{x})$ et $j_{y}(\oo_{y})$ sont apparent\'es, alors ils sont \'egaux.
\end{thlist}
\end{cor}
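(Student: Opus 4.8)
The plan is to deduce both implications from a single observation about faithfully flat extensions of local subrings of $K$ sharing the same field of fractions. First I would recall, from \ref{6.1}, that for every $z\in T_{1}$ the composite $\xi\rrr T_{1}$ is \scd, so $j_{z}:\oo_{T_{1},z}\rrr K$ is injective; writing $x=d_{0}(z)$, $y=d_{1}(z)$, $M=j_{x}(\oo_{x})$ and $N=j_{y}(\oo_{y})$, the factorisations recalled there show that $R_{z}:=j_{z}(\oo_{T_{1},z})\subset K$ contains and dominates both $M$ and $N$, and — as extracted from the proof of \ref{p6.1}, where $\oo_{T_{1},z}$ is identified with a localisation $P_{\mathfrak{p}}$ of the subring $P\subset K$ generated by $M\cup N$ — that $R_{z}$ is a localisation of the subring of $K$ generated by $M\cup N$. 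In particular $\mathrm{Frac}(R_{z})=K$.

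The key lemma I would isolate is the following: \emph{if $M\subset R$ are local subrings of $K$ with $\mathrm{Frac}(M)=\mathrm{Frac}(R)=K$ and the inclusion is flat, then $M=R$.} Since a flat local homomorphism is faithfully flat, I may use $bR\cap M=bM$ for $b\in M$; then for $r\in R$, writing $r=a/b$ with $a,b\in M$ and $b\neq 0$, one has $a\in bR\cap M=bM$, so $a=bm$ with $m\in M$ and hence $r=m\in M$. This gives the lemma. For (i)$\Rightarrow$(ii) I would then argue: if $d_{0},d_{1}$ are flat and $M,N$ are apparent\'es, \ref{p6.1} produces $z\in T_{1}$ with $d_{0}(z)=x$, $d_{1}(z)=y$; flatness makes $\oo_{x}\rrr\oo_{T_{1},z}$ and $\oo_{y}\rrr\oo_{T_{1},z}$ faithfully flat, i.e. after $j_{z}$ the inclusions $M\hookrightarrow R_{z}$ and $N\hookrightarrow R_{z}$ are faithfully flat. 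The lemma yields $M=R_{z}=N$, which is (ii).

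For (ii)$\Rightarrow$(i) I would show each $d_{i}$ is in fact a local isomorphism. Fix $z\in T_{1}$ and put $x=d_{0}(z)$, $y=d_{1}(z)$; by \ref{p6.1} the rings $M,N$ are apparent\'es, so (ii) gives $M=N$. The subring of $K$ generated by $M\cup N$ is then just $M$, so $R_{z}$ is a localisation of the local ring $M$ that dominates it, whence $R_{z}=M$. Thus $\oo_{x}\rrr\oo_{T_{1},z}$ is an isomorphism, and since $z$ was arbitrary, $d_{0}$ (and symmetrically $d_{1}$) is flat.

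The genuinely delicate point is the backward implication, where flatness is \emph{not} an input: there the equality $R_{z}=M$ does not come from the flatness lemma but from the structural description of $\oo_{T_{1},z}$ as a localisation of the ring generated by $M\cup N$, a fact I must be careful to harvest cleanly from the proof of \ref{p6.1} (in particular that the flat monomorphism $\psi$ there induces an isomorphism on local rings). The forward implication, by contrast, is a direct application of the lemma, and the remainder is bookkeeping with the injections $j_{z}$ and the relations $d_{i}\varepsilon=\mathrm{id}$.
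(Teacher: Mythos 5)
Votre démonstration est correcte et suit pour l'essentiel la même route que celle du texte : les deux implications se ramènent, via la proposition \ref{p6.1} et l'identification de $\oo_{T_{1},z}$ à un localisé $P_{\mathfrak{p}}$ du sous-anneau $P\subset K$ engendré par $j_{x}(\oo_{x})$ et $j_{y}(\oo_{y})$, à une comparaison de ces sous-anneaux locaux de $K$ à l'intérieur de $\oo_{T_{1},z}$. La seule différence, cosmétique, est que vous explicitez (et démontrez) le lemme \emph{une extension locale fidèlement plate d'anneaux locaux intègres de même corps des fractions est un isomorphisme}, que le texte invoque simplement sous la forme \emph{fidèlement plat et birationnel, donc isomorphisme}.
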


\begin{proof} On a vu plus haut que pour $z \in T_{1}$, l'homomorphisme compos\'e
$$
\oo_{T, d_{0}(z)} \rrr \oo_{T_{1}, z} \rrr K
$$
est \'egal \`a l'injection canonique $j_{d_{0}(z)}$, et idem pour $j_{d_{1}(z)}$.

 Montrons que (i) implique (ii). D'apr\`es la proposition, si les anneaux locaux  $j_{x}(\oo_{x})$ et $j_{y}(\oo_{y})$ sont apparent\'es il existe $z \in T_{1}$ tel que  $x = d_{0}(z)$  et  $y = d_{1}(z)$ ; l'hypoth\`ese (i) implique que les homomorphismes locaux $\oo_{T, x} \rrr \oo_{T_{1},z}$ et $\oo_{T, y} \rrr \oo_{T_{1}, z}$ sont fid\`element plats ; comme ils sont birationnels, ce sont des isomorphismes, et les sous-anneaux $j_{x}(\oo_{x})$ et $j_{y}(\oo_{y})$ de $K$ sont \'egaux \`a l'image de $\oo_{T_{1}, z}$ dans $K$.

 Montrons que (ii) implique (i). Considérons un point $z \in T_{1}$, d'images $x$ et $y$ dans $T$ ; on va montrer que les homomorphismes $u : \oo_{T, x} \rrr \oo_{T_{1}, z}$ et $v : \oo_{T, y} \rrr \oo_{T_{1}, z}$ sont surjectifs, donc bijectifs puisqu'ils sont birationnels; cela entra\^{i}nera la platitude annonc\'ee. 

\n La proposition \ref{p6.1} dit que les sous-anneaux locaux $j_{x}(\oo_{x})$ et $j_{y}(\oo_{y})$ de $K$ sont apparentés, donc égaux d'après l'hypothèse (ii). Comme les injections $j_{x}$  et $j_{y}$ se factorisent respectivement par $u$ et par $v$, on voit que les images ${\rm Im}(u)$  et  ${\rm Im}(v)$ sont des sous-anneaux {\it \'egaux} de $\oo_{T_{1},z}$. Considérons de nouveau le sous-anneau $P \subset K$ engendré par  $j_{x}(\oo_{x})$ et $j_{y}(\oo_{y})$ ; puisque $T_{1}$ est un sous-schéma fermé de $T\times T$, l'anneau local $\oo_{T_{1},z}$ est un localisé de $P$, soit $P_{\mathfrak{p}} = \oo_{T_{1},z}$. Consid\'erons le diagramme suivant, o\`u $w$ est surjectif 
$$
\xymatrix{\oo_{T, x} \ar[d] \ar[dr]_{u'} \ar[drr]^u &&\\
\oo_{T, x}\otimes \oo_{T, y} \ar[r]^>>>>>w & P \ar[r] &P_{\mathfrak{p}}\\
\oo_{T, y} \ar[u] \ar[ur]^{v'} \ar[urr]_{v} &&}
$$
Comme $P$ est int\`egre, l'homomorphisme $P \rrr P_{\mathfrak{p}}$ est injectif ; d'o\`u l'\'egalit\'e ${\rm Im}(u') = {\rm Im}(v')$ ; mais $w$ est surjectif par définition de $P$ ;  on en déduit les égalités $P = {\rm Im}(w) = {\rm Im}(u') = {\rm Im}(v')$ ; par suite, $P$ est un anneau local, donc $P = P_{\mathfrak{p}}$, et cela permet de conclure que $u$ et $v$ sont surjectifs.
\end{proof}

\begin{prop}\phantomsection\label{p6.2} 
Soit $T$ un schéma intègre et quasi-séparé. Alors $T$ admet un séparateur si et seulement si les morphismes $d_{0}, d_{1} : \xymatrix{T_{1}\ar@<0.5ex>[r] \ar@<-0.5ex>[r]  &T}$ sont de type fini, et si pour tout couple $x, y$ de points de $T$, les propri\'et\'es b) et c) de \ref{6.1} sont équivalentes, autrement dit: les anneaux locaux  $j_{x}(\oo_{x})$ et $j_{y}(\oo_{y})$ sont \'egaux d\`es qu'ils sont apparent\'es.
\end{prop}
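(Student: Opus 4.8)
Le plan est d'obtenir cette proposition par simple assemblage du th\'eor\`eme \ref{p5.1}, appliqu\'e avec $S=\s(\mathbf{Z})$, et du corollaire \ref{c6.2}. Je commencerais par observer que, le sch\'ema $T$ \'etant int\`egre, son ensemble de composantes irr\'eductibles se r\'eduit \`a un point, donc est \emph{a fortiori} localement fini ; l'hypoth\`ese de finitude requise pour la r\'eciproque dans le th\'eor\`eme \ref{p5.1} est ainsi automatiquement v\'erifi\'ee, de sorte que les deux implications de ce th\'eor\`eme sont \`a ma disposition. En particulier, $T$ admet un s\'eparateur si et seulement si les deux projections $d_{0}, d_{1} : T_{1} \rightrightarrows T$ sont \`a la fois plates et de type fini.

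Ensuite, je scinderais cette condition en ses deux facteurs, la platitude et la finitude. Le corollaire \ref{c6.2} identifie pr\'ecis\'ement la platitude des $d_{i}$ \`a sa condition (ii), c'est-\`a-dire \`a l'assertion que $j_{x}(\oo_{x})$ et $j_{y}(\oo_{y})$ sont \'egaux d\`es qu'ils sont apparent\'es. Comme deux anneaux locaux \'egaux sont trivialement apparent\'es (l'implication b) $\Rightarrow$ c) de \ref{6.1} est gratuite), cette condition (ii) n'est autre que l'\'equivalence des propri\'et\'es b) et c) de \ref{6.1}. En reportant, $T$ admet un s\'eparateur si et seulement si les $d_{i}$ sont de type fini et que, pour tout couple $x, y$ de points de $T$, les propri\'et\'es b) et c) sont \'equivalentes ; ce qui est exactement l'\'enonc\'e voulu.

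La d\'emonstration ne comporte donc pas d'obstacle s\'erieux : il s'agit d'un pur montage des r\'esultats ant\'erieurs. Le seul point demandant un minimum de soin est de s'assurer que les hypoth\`eses du th\'eor\`eme \ref{p5.1} sont r\'eunies (finitude locale des composantes irr\'eductibles, triviale ici puisque $T$ est int\`egre), et de v\'erifier que la condition (ii) du corollaire \ref{c6.2}, formul\'ee en termes de platitude, co\"incide bien avec l'\'equivalence des propri\'et\'es b) et c) de \ref{6.1} mise en avant dans l'\'enonc\'e.
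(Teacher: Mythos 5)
Votre assemblage est correct et co\"incide avec la premi\`ere d\'emonstration donn\'ee dans le texte : on invoque le th\'eor\`eme \ref{p5.1} (la finitude locale des composantes irr\'eductibles \'etant automatique pour $T$ int\`egre) puis le corollaire \ref{c6.2} pour traduire la platitude des $d_{i}$ en l'\'equivalence de b) et c). Le texte propose ensuite, en compl\'ement, une seconde d\'emonstration construisant explicitement le s\'eparateur, mais elle n'est pas n\'ecessaire \`a l'\'enonc\'e.
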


\begin{proof}
D'après le théorème \ref{p5.1}, l'existence d'un séparateur est équivalente à la propriété pour les morphismes $d_{i}$ d'\^{e}tre plats et de type fini, et d'après le corollaire \ref{c6.2} la condition portant sur les couples de points équivaut à la platitude des $d_{i}$. La proposition découle donc de ce qui précède.

\medskip

Mais on peut aussi montrer que les conditions de l'énoncé sont suffisantes sans utiliser le théorème \ref{p5.1}, en décrivant explicitement un séparateur $h : T \rrr E$; on verra que le passage de $T$ à $E$ consiste simplement à \emph{identifier les points $x$ et $y$ de $T$ lorsque les anneaux locaux $j_{x}(\oo_{x})$ et $j_{y}(\oo_{y})$ sont \'egaux}.

\n Notons d'abord que le corollaire \ref{c6.2} entra\^{i}ne que les morphismes $d_{i}$ sont plats (et de type fini) ; on peut donc invoquer la proposition \ref{l2.2} pour conclure que les $d_{i}$ sont des isomorphismes locaux.

\n On note encore par $K$ le corps des fonctions rationnelles sur $T$, et par $j_{t} : \oo_{t}\rrr R$ l'injection canonique dans $K$ de l'anneau local en un point de $T$.
\medskip

 On d\'efinit donc  $E$ comme l'ensemble des sous-anneaux locaux de $K$ de la forme $j_{t}(\oo_{t})$, pour $t$ parcourant $T$ ; on a donc une application surjective d'ensembles
 $$
 h : T \; \rrr \; E.
 $$
 Il r\'esulte de la proposition  \ref{p6.1} et du corollaire \ref{c6.1} que la restriction de l'application $h$ \`a un ouvert s\'epar\'e, p.ex. affine, est injective.
 \medskip
 
 On munit $E$ de la topologie engendr\'ee par les ensembles $h(U)$, pour $U$ parcourant l'ensemble des ouverts affines de $T$. Pour voir que l'application $h$ est continue il faut v\'erifier que, pour tout ouvert affine $U$ de $T$, 
l'ensemble $h^{-1}h(U)$ est ouvert; or, c'est l'ensemble des $x \in T$ pour lesquels il existe $y \in U$ tel que $h(x) = h(y)$, i.e. $j_{x}(\oo_{x}) = j_{y}(\oo_{y})$ ; par hypoth\`ese cette \'egalit\'e  est \'équivalente \`a : $j_{x}(\oo_{x})$ et $j_{y}(\oo_{y})$ sont apparent\'es; d'apr\`es la proposition \ref{p6.1},  cette derni\`ere relation \'equivaut \`a l'existence d'un \'el\'ement $z \in T_{1}$ tel que $x = d_{0}(z)$ et $y = d_{1}(z)$ ; bref, $h^{-1}h(U) = d_{0}d_{1}^{-1}(U)$ ; c'est bien un ouvert puisque $d_{0}$ est une application ouverte.
\medskip

  Finalement, on d\'efinit un faiceau $\oo_{E}$ d'anneaux sur $E$ en associant \`a un ouvert $V$ de $E$ l'intersection (dans $K$) des anneaux locaux \'el\'ements de $V$ ; mais, d'après \cite[8.5.1.1]{EGAG}, ou \cite[pp. 124--126]{God58}, cette intersection est aussi \'egale \`a 
$$ 
\bigcap_{x \in h^{-1}(V)} j_{x}(\oo_{T, x}) \; = \; \Gamma(h^{-1}(V), \oo_{T}) .
$$
En d'autres termes, l'homomorphisme de faisceaux sur $E$
$$
\oo_{E} \; \rrr \; h_{\star}(\oo_{T})
$$
est bijectif, et il aurait permis de d\'efinir plus simplement  $\oo_{E}$ comme cette image directe.
Il reste \`a voir que l'espace annel\'e $(E, \oo_{E})$ est un sch\'ema. Or, si $U$ est un 
ouvert affine de $T$, le morphisme $h$ induit un isomorphisme d'espaces annel\'es de $(U, \oo_{U})$ sur $(h(U), \oo_{j(U)})$.

Cela montre que $E$ est un schéma  et que $h$ est un isomorphisme local de schémas, quasi-compact et quasi-séparé ; $E$ est séparé en vertu du corollaire \ref{c6.1}.
\end{proof}

\subsection{Un exemple}

\begin{ex}\phantomsection\label{ex6.1} Il existe un sch\'ema int\`egre noeth\'erien $T$, de dimension 1, n'ayant que 2 points ferm\'es, et qui n'admet pas de s\'eparateur. Cependant, son morphisme  d'enveloppe affine \cite[9.1.21]{EGAG} 
$$
i_{T} : T \rrr T^{0} = \s(\Gamma(T))
$$
est universel pour les morphismes de $T$ vers un schéma séparé.
\medskip

Pour le construire, partons d'un anneau de valuation discr\`ete $P$ de corps des fractions $K$ ; notons $a \mapsto \bar{a}$ le morphisme vers le corps r\'esiduel  : $P \rrr P/\mathfrak{m} = k$.
On suppose qu'il existe deux sous-corps \emph{distincts}  $k_{1}$ et $k_{2}$ de $k$ tels que $[k : k_{1}] = [k : k_{2}] = 2.$
\medskip

\n Soit $A_{i} \subset P$, pour $i = 1, 2$,  les sous-anneaux form\'es des $a \in P$ tels que, respectivement,  $\bar{a} \in k_{1}$ et $\bar{a} \in k_{2}$ ; on v\'erifiera plus bas  les points suivants.

$a)$ La cl\^oture int\'egrale de $A_{i}$ est \'egale \`a $P$.

$b)$ L'homomorphisme $A_{1} \otimes A_{2} \; \rrr \; P$, d\'efini par  $a\otimes b \mapsto ab$ est surjectif.
\medskip

Supposons ces points acquis. On définit le sch\'ema $T$ par recollement des sch\'emas $U_{i} = \s(A_{i})$ le long de l'ouvert $U_{0} = \s(K)$. En notant $x_{1}$ et $x_{2}$ les deux points ferm\'es, on a $j_{x_{i}}(\oo_{x_{i}}) = A_{i}$ ; ces deux anneaux locaux sont apparent\'es (par $P$), et ils ne sont pas \'egaux. On en tire, via le corollaire  \ref{c6.2},  que les morphismes $d_{0}, d_{1} : \xymatrix{T_{1}\ar@<0.5ex>[r] \ar@<-0.5ex>[r]  &T}$ ne sont pas plats ; en particulier, le sch\'ema $T$ n'admet pas de s\'eparateur. 

\medskip

Vérifions que l'enveloppe affine de $T$ est aussi son enveloppe séparée. La suite exacte (dans la catégorie des schémas)
$$
\xymatrix{U_{0} \ar@<0.5ex>[r]^<<<<{u_{1}} \ar@<-0.5ex>[r]_<<<<{u_{2}} &U_{1} \sqcup U_{2} \ar[r]   &T}
$$
montre que $\Gamma(T) = A_{1}\cap A_{2} = \{b \in P \, | \, \bar{b} \in k_{1} \cap k_{2} \}$. Posons $V = \s(P)$ et désignons par $v_{i} : V \rrr U_{i}$ les morphismes locaux associés aux inclusions $A_{i} \subset P$.

Considérons un morphisme $u : T \rrr S$ vers un schéma séparé ; il faut montrer l'existence d'un unique morphisme $u^{0} : T^{0} \rrr S$ tel que $u = u^{0}\circ i_{T}$. Désignons par $u_{i} : U_{i} \rrr S$ les restrictions de $u$. Montrons que $u_{1}v_{1} = u_{2}v_{2}$, autrement dit que le carré suivant est commutatif. 
$$
\xymatrix{V \ar[r]^{v_{1}} \ar[d]_{v_{2}} & U_{1} \ar[d]^{u_{1}}\\
U_{2} \ar[r]_{u_{2}} & S} 
$$
Or, les morphismes  $u_{1}v_{1}$ et $u_{2}v_{2}$ sont, par hypothèse, égaux sur l'ouvert $U_{0}$ de $V$; puisque $S$ est séparé, ils sont égaux sur un fermé de $V$, et ce fermé contient $U_{0}$ : il est donc égal à $V$, et on a bien $u_{1}v_{1} = u_{2}v_{2}$. On en déduit, les morphismes $v_{i}$ étant locaux, que les points fermés $x_{i}$ de $U_{i}$ ont la m\^{e}me image $s \in S$, et que les morphismes $u_{i}$ se factorisent par $\s(\oo_{S,s}) \rrr S$ ; 
on a ainsi un carré commutatif d'homomorphismes d'anneaux
$$
\xymatrix{P  & A_{1} \ar[l]\\
A_{2} \ar[u] & \oo_{S,s} \ar[l] \ar[u]}
$$
Cette commutativité montre qu'il existe un unique homomorphisme d'anneaux 
$$
\oo_{S, s} \rrr A_{1}\cap A_{2} ,
$$
 d'o\`u l'on tire le morphisme composé   $u^{0} : T^{0} = \s(A_{1}\cap A_{2})  \rrr \s(\oo_{S,s}) \rrr S$, et on a   $u = u^{0}\circ i_{T}$.
\bb

Il reste donc \`a v\'erifier les propriétés $a)$ et $b)$. L'id\'eal maximal $\mathfrak{m}$ de $P$ est aussi un id\'eal maximal de chacun des anneaux $A_{i}$. Pour un \'el\'ement non nul $s \in \mathfrak{m}$ on a $sP \subset \mathfrak{m} \subset A_{i}$, donc les anneaux de fractions $(A_{i})_{s}$ et  $P_{s}$ sont \'egaux au corps des fractions  $K$. Les homomorphismes $A_{i}/\mathfrak{m} \rrr P/\mathfrak{m}$ s'identifient aux l'homomorphismes fini $k_{i} \rrr k$ ; donc $P$ est fini sur $A_{i}$ ; comme $P$ est int\'egralement clos, c'est la cl\^oture int\'egrale de $A_{i}$.

Pour vérifier la propri\'et\'e $b)$, choisissons  un \'el\'ement  $t \in A_{2}$ qui ne soit pas dans $A_{1}$, c'est-\`a-dire tel que $\bar{t} \in k_{2}$ et  $\bar{t} \notin k_{1}$. Puisque $k$ est de degr\'e 2 sur $k_{1}$, on a $k = k_{1}[\bar{t}]$, donc aussi $P = A_{1}[t]$ ; d'o\`u sa surjectivit\'e de l'homomorphisme $A_{1} \otimes A_{2} \; \rrr \; P$. 
\medskip

On n'a que l'embarras du choix pour exhiber des exemples d'anneaux de valuation discr\`ete munis des donn\'ees requises. Le plus simple est de prendre une extension galoisienne de corps, $k_{0} \subset k$ de groupe de Galois le groupe sym\'etrique $\mathfrak{S}_{3}$, et de choisir deux transpositions distinctes $\sigma$ et $\tau$ dans  $\mathfrak{S}_{3}$ ; les sous-corps d'invariants  $k_{1} = k^{\sigma}$ et $k_{2} = k^{\tau}$ conviennent ; enfin, on peut prendre pour $P$ l'anneau local \`a l'origine de $k[X]$.
 \qed
\medskip

Malgré sa propriété universelle, le morphisme $i_{T} : T \rrr T^{0}$ n'est pas un séparateur au sens donné au début du texte : ce n'est pas un isomorphisme local, et il n'est m\^{e}me pas plat.
\end{ex}

\section{Sur les schémas réunion de deux ouverts affines}\label{s7}

Dans ce paragraphe on examine l'existence d'un s\'eparateur pour les sch\'emas qui sont r\'eunion de deux ouverts affines ; comme il est indiqué en \ref{c5.2},  c'est la situation d\'ecisive. Commen\c cons par traduire le critère du théorème \ref{p5.1} pour ces schémas élémentaires.

 Lorsqu'un sch\'ema de base n'est pas mentionn\'e c'est qu'il s'agit $\s({\bf Z})$. Lorsque le contexte le permettra on \'ecrira encore $\Gamma(U)$ \`a la place de $\Gamma(U, \oo_{U})$.
 
\subsection{Principe des constructions}
 
\begin{scholie}\phantomsection\label{sc7.1}  Soit $T = U \cup V$ un sch\'ema r\'eunion de deux ouverts affines, dont l'intersection est quasi-compacte, de sorte que $T$  est un sch\'ema quasi-compact et quasi-s\'epar\'e ; on suppose aussi que l'ensemble des composantes irréductibes de $T$ est fini. Notons 
$$
\varphi_{UV} : \Gamma(U)\otimes\Gamma(V) = \Gamma(U\times V) \; \rrr \; \Gamma(U\cap V)
$$
l'homomorphisme d'anneaux induit par les restrictions à l'ouvert $U\cap V$.
Alors $T$ admet un séparateur si et seulement si  l'anneau ${\rm Im}(\varphi_{UV})$ est plat et de type fini sur $\Gamma(U)$ et sur $\Gamma(V)$.
\end{scholie}

\begin{proof} Notons, comme dans le théorème \ref{p5.1}, $T \rightarrow T_{1} \rightarrow T\times T$ la factorisation de la diagonale par adhérence schématique ; il s'agit de traduire, en termes de ${\rm Im}(\varphi_{UV})$, la propriété pour les deux projections de $T_{1}$ sur $T$ d'être plates et de type fini. Puisque $U$ et $V$ sont séparés, les restrictions de $T_{1}$ aux ouverts $U\times U$ et $V\times V$ de $T\times T$ sont égales à $U$ et $V$ respectivement, et ne sont donc pas pertinentes ; par contre, ce qui est significatif est la restriction de l'adhérence schématique à l'ouvert $U\times V$ ; on trouve
$$
U\cap V \stackrel{v}{\rrr} W \rrr U\times V
$$
où $W$ est un schéma affine puisque c'est un sous-schéma fermé de $U\times V$ ; par ailleurs, comme $v$ est \scd, l'homomorphisme
$\Gamma(W) \rrr \Gamma(U\cap V)$ est injectif ; cela montre que $\Gamma(W)$ n'est autre que l'anneau ${\rm Im}(\varphi_{UV})$ de l'énoncé. Par suite, en utilisant \cite[6.2.5  et 6.3.1]{EGAG}, on voit que les morphismes de projection $ U \longleftarrow W \rrr V$ sont plats et de type fini si et seulement si les homomorphismes
$\Gamma(U) \rrr {\rm Im}(\varphi_{UV}) \longleftarrow \Gamma(V)$ le sont.
\end{proof}

\begin{para}\phantomsection\label{s7.2} Nous donnons ci-dessous deux exemples de sch\'emas qui n'admettent pas de s\'eparateur, \`a savoir :
\begin{itemize}
\item un sch\'ema r\'egulier int\`egre de dimension 2 (exemple \ref{ex7.1}) ;
\item  un sch\'ema {\it \'etale}  sur une base affine de dimension 1 (exemple \ref{ex7.2}).
\end{itemize}

\n  Ces deux constructions utilisent le m\^eme proc\'ed\'e. Partant  d'un sch\'ema affine $U$, d'une immersion  ouverte  $i : U_{0} \rrr U$ et d'un automorphisme $\tau : U_{0} \simeq U_{0}$, le sch\'ema  annoncé sera la somme amalgam\'ee
$$
T = (U, i) \sqcup_{U_{0}}(U, i\circ \tau)
$$
 obtenue par r\'eunion de deux copies de $U$, recoll\'ees le long de $U_{0}$, l'une munie de l'injection canonique $i : U_{0} \rrr U$, et l'autre munie de l'injection, \emph{tordue par} $\tau$,\, $i\circ \tau : U_{0} \rrr U$. Autrement dit, la suite
$$
\xymatrix{U_{0} \ar@<0.5ex>[r]^<<<<<{i} \ar@<-0.5ex>[r]_<<<<<{i\circ \tau} & U\sqcup U \ar[r] &T}
$$
est exacte dans la cat\'egorie des sch\'emas.
\medskip  


 On notera que dans chacun de ces exemples, le sch\'ema affine  $U$ est noeth\'erien r\'eduit et que l'ouvert de recollement $U_{0}$ est lui aussi affine, et sch\'ematiquement dense dans $U$. 
 
\end{para}

\subsection{Un contre-exemple lisse}

\begin{ex}\phantomsection\label{ex7.1} Il existe un morphisme $T \rrr S$  lisse, quasi-compact et qui n'admet pas de s\'eparateur.
 
 \n On peut choisir $S$ affine r\'egulier de dimension 1, $T$ est alors r\'egulier de dimension 2.
\end{ex} 

Soit $S = \s(R)$ le spectre d'un anneau noeth\'erien int\`egre, poss\'edant une suite r\'eguli\`ere $(s, t)$ form\'ee d'\'el\'ements non inversibles (par exemple $R = k[X]$, o\`u $k$ est un corps, avec la suite $(X, 1-X)$). Posons $A = R[Z]$, $U = \s(A)$ et $U_{0} = \s(A_{st})$ ; l'automorphisme $\tau$ de $U_{0}$ est ici associé à l'automorphisme $ \sigma $ de la $R$-algèbre $A_{st} = R_{st}[Z]$ défini par 
$$
Z \longmapsto sZ / t
$$
 Finalement, soit $T$ le schéma défini par recollement à partir des données $(U_{0} \subset U, \tau)$ ; il est donc caractérisé par l'exactitude de la suite
$$
\xymatrix{U_{0} \ar@<0.5ex>[r]^<<<<<{i} \ar@<-0.5ex>[r]_<<<<<{i\circ \tau} & U\sqcup U \ar[r] &T}
$$
Pour appliquer le critère \ref{sc7.1}, il faut expliciter l'anneau  $C \subset R_{st}[Z]$ image du morphisme de $R$-alg\`ebres 
$$
\varphi : \Gamma(U)\otimes_{R}\Gamma(U) = A\otimes_{R}A \; \rrr \; A_{st} = \Gamma(U_{0}), \hspace{1cm} a \otimes b \longmapsto \sigma(a)b .
$$

On utilisera le 
\medskip

\begin{lemme}\phantomsection\label{l7.1} Soit $(s, t)$ une suite r\'eguli\`ere dans un anneau $A$. Alors,  le morphisme de $A$-alg\`ebres 
$$
A[T]/(sT-t) \rrr A_{s}, \qquad  T \longmapsto t/s 
$$
 est injectif. Son image $A[t/s]$ est plate sur $A$ si et seulement si l'id\'eal $sA+tA$ est \'egal \`a $A$.
 \end{lemme}

\begin{proof} Soit $F(T) = a_{n}T^n + a_{n-1}T^{n-1} + \cdots + a_{0} \in A[T]$ un polyn\^ome tel que, dans $A_{s}$, on ait $F(t/s) = 0$, c'est-\`a-dire 
$ a_{n}t^n + sa_{n-1}t^{n-1} + \cdots + s^na_{0} = 0$. Comme $s$ est r\'egulier, le morphisme $A \rrr A_{s}$ est injectif, donc l'\'egalit\'e pr\'ecedente est d\'ej\`a vraie dans $A$. Comme la suite $(s, t)$ est r\'eguli\`ere, on voit que $a_{n} = sb$, avec $b \in A$, d'o\`u l'on tire que 
$F(T) = bT^{n-1}(sT-t) + G(T)$ avec $\deg(G) < n$. Cela permet de raisonner par r\'ecurrence sur le degr\'e de $F$ pour conclure que ce polyn\^ome est un multiple de $sT-t$. 

\n Montrons la seconde assertion. Posons $I = sA+tA$ et $J = (sT-t)A[T]$, de sorte que $J \subset IA[T]$. Si le quotient $A[T]/J$ est plat sur $A$, l'application
$J/IJ \rrr (A/I)[T]$ est injective, et nulle ; on en d\'eduit  que  $J = IJ$.En notant $P(T) = sT-t$, la relation $J = IJ$ s'\'ecrit aussi dans $A[T]$,
$P = PQ$, o\`u $Q$ est un polyn\^ome \`a coefficients dans $I$. Le polyn\^ome $P$ est r\'egulier dans $A[T]$ puisqu'il l'est dans $A_{s}[T]$, et que $A \rrr A_{s}$ est injectif.   On en tire que $1 = Q$, soit $1 = Q(0) \in I$. 

R\'eciproquement, s'il existe $a, b \in A$ tels que $1 = as +bt $, alors, dans $A_{s}$, on a $1/s = a + b(t/s)$, donc $A_{s} = A[t/s]$.
\end{proof}

\n Pour expliciter l'anneau $C = {\rm Im}(\varphi)$, on identifie $A\otimes_{R}A $ et $R[Z_{0}, Z_{1}]$, ce qui permet d'écrire  l'homomorphisme $\varphi$ sous la forme
$$
R[Z_{0}, Z_{1}] \rrr R_{st}[Z], \hspace{1cm} Z_{0} \longmapsto sZ/t, \quad Z_{1} \longmapsto Z
$$
Il est alors clair qu'il se factorise par le quotient
$$
R[Z_{0}, Z_{1}]/(tZ_{0}-sZ_{1}) \rrr R_{st}[Z]
$$
Le lemme ci-dessus, appliqué en rempla\c cant $(A, s, t)$ par $(R[Z_{1}], t, sZ_{1})$, montre que cet homomorphisme est injectif, et qu'on peut donc identifier l'anneau $C$ au quotient  $R[Z_{0}, Z_{1}]/(tZ_{0}-sZ_{1})$. 
Les homomorphismes $\varphi_{0} : A \rrr C$  et $\varphi_{1} : A \rrr C$, dont il faut montrer qu'ils ne sont pas plats, s'identifient alors aux homomorphismes ``naturels'' 
$$
\varphi_{0} : R[Z_{0}] \rrr R[Z_{0}, Z_{1}]/(tZ_{0}-sZ_{1}), \hspace{1cm} \varphi_{1} : R[Z_{1}] \rrr R[Z_{0}, Z_{1}]/(tZ_{0}-sZ_{1})
$$

Ils ne sont pas plats d'après le lemme \ref{l7.1} appliqué en rempla\c cant $(A, s, t)$ respectivement par $(R[Z_{0}], s, tZ_{0})$ et $(R[Z_{1}], t, sZ_{1})$.

Ceci conclut la construction de l'exemple \ref{ex7.1}.\qed

\begin{rque}\phantomsection Déterminons l'enveloppe affine $i_{T}: T \rrr T^{0} = \s(\Gamma(T))$  de $T$ \cite[9.1.21]{EGAG}, en introduisant les morphismes
$$
\xymatrix{U_{0} \ar[r]^{i} \ar[d]_{i\tau} & U \ar[d]^{u} \ar[ddr]^{u^{0}}&\\
U\ar[r]_{v} \ar[drr]_{v^{0}} & T\ar[dr]^<<<{i_{T}}&\\
&&T^{0}}
$$
On trouve que $T^{0} = \s(R[Z])$, les morphismes $u^{0}$ et $v^{0}$ étant associés respectivement aux homomorphismes $Z \mapsto tZ$  et $Z \mapsto sZ$. Introduisons les ouverts $S' \subset S'' \subset S = \s(R)$ définis par $S' = D(s)\cap D(t)$ et $S'' = D(s) \cup D(t)$. 

Le critère \ref{sc7.1} montre qu'au dessus de l'ouvert $S'$, c'est-à-dire si $s$ et $t$ sont inversibles,  la restriction  
$$
1\times i_{T} : S'\times_{S}T \rrr S'\times_{S}T^{0}
$$
 est un séparateur puisque les homomorphismes $\varphi_{0}$  et $\varphi_{1}$ sont alors des isomorphismes.
 
 Au dessus de $S''$, c'est-à-dire si $s$ ou $t$ est inversible, alors  $v^{0}$ ou $u^{0}$ est un isomorphisme, et 
 $$
1\times i_{T} : S''\times_{S}T \rrr S''\times_{S}T^{0}
$$
 est une enveloppe séparée (proposition \ref{p3.2}).
\end{rque}

\subsection{Un contre-exemple étale}

\begin{ex}\phantomsection\label{ex7.2} Il existe un sch\'ema $S$ local noeth\'erien int\`egre de dimension 1, et un morphisme \emph{\'etale} 
$f : T \rrr S$, tels que $T$ n'admette pas de s\'eparateur.
\end{ex}

Le sch\'ema $S$ introduit ici n'est pas normal. D'ailleurs on a vu (corollaire \ref{c5.1}) que si $T$ est \'etale sur un sch\'ema affine int\`egre \emph{et  normal}, il admet un s\'eparateur.

La construction \'etant un peu technique, en voici d'abord un r\'esum\'e. On commence par d\'efinir un rev\^etement \'etale galoisien {\it connexe} de rang 3, $U \rrr S$ dont la fibre g\'en\'erique $U_{\eta}$ est constitu\'ee de 3 points isomorphes au point générique  $\eta$ de $S$ ; cette fibre admet donc un $S$-automorphisme $\tau$ d'ordre 2. On d\'efinit $T$ par recollement à partir des données $(U_{\eta} \subset U, \tau)$, c'est-à-dire le recollement de deux copies de $U$ le long des immersions ouvertes $i : U_{\eta} \rrr U$ et $i\circ \tau : U_{\eta} \rrr U$ ; on  a donc une suite exacte
$$
\xymatrix{U_{\eta} \ar@<0.5ex>[r]^<<<<<{i} \ar@<-0.5ex>[r]_<<<<<{i\circ \tau} & U\sqcup U \ar[r] &T} .
$$  
Notons $A$ l'anneau (local) de $S$, $K$ son corps des fractions, et $\bar{A}$ le normalis\'e de $A$. On v\'erifiera que l'image du morphisme 
$$
\varphi : \Gamma(U) \otimes\Gamma(U) \rrr \Gamma(U_{\eta}) = K\times K \times K
$$
est égal au  sous-anneau $ \bar{A}\times \bar{A}\times \bar{A}$ ; il est donc entier sur $A$. Comme $A$ n'est pas normal, $\bar{A}$ n'est pas plat sur $A$, et {\it a fortiori} ${\rm Im}(\varphi) $ n'est pas plat sur $\Gamma(U)$, qui est étale sur $A$.

On reconna\^itra en $S$  (le localisé à l'origine de) la cubique \`a point double et en $U$  le recollement de trois copies de sa normalis\'ee (une droite) obtenu en identifiant de fa\c con circulaire les deux points 
ferm\'es au dessus de l'origine, soient $\{a_{0}, a_{1}\} ,\, \{b_{0}, b_{1}\}, \, \{c_{0}, c_{1}\}$, identifi\'es via $a_{0}= b_{1}, \; b_{0}= c_{1}, \; c_{0}= a_{1}$.
\bb

{\footnotesize \n Voici les d\'etails.
\medskip

\n Soient $k$ un corps, et $\bar{A}$ l'anneau semi-local obtenu en localisant $k[X]$ en les points correspondants \`a $X = -1$ et $X = 1$. Soit $I \subset \bar{A}$ l'id\'eal engendr\'e par $X^2-1$, c'est donc le noyau d'un morphisme surjectif
$$
p : \bar{A} \; \rrr\;  k\times k.
$$

\n Soit $A = k + I  \subset \bar{A}$ le sous-anneau obtenu en \emph{pin\c cant} les deux points\footnote{Si, pour mieux voir la cubique \`a point double, on pr\'ef\`ere les \'equations, on d\'efinit $A$ comme le localis\'e \`a l'origine de l'anneau $k[U, V]/(U^3+U^2-V^2)$, le morphisme de cet anneau vers $k[X]$ \'etant d\'efini par $U \mapsto X^2-1,\, V \mapsto X^3-X$} ; un \'el\'ement $a \in \bar{A}$ est donc dans $A$ si son image $p(a) = (a_{0}, a_{1}) \in k\times k$ a ses deux composantes \'egales.
\medskip

\n On d\'efinit une $A$-alg\`ebre finie \'etale de rang 3, \, $B \subset \bar{A}^3$  par la condition suivante portant sur les triplets $(a, b, c) \in \bar{A}^3$ :
\begin{equation}\label{eq7.1}
(a, b, c) \; \in \; B \quad \Longleftrightarrow \quad a_{0}= b_{1}, \; b_{0}= c_{1}, \; c_{0}= a_{1}
\end{equation}
On peut voir $B$ comme le produit cart\'esien dans le diagramme suivant.
$$
\xymatrix{B \ar[dd]_{q} \ar[r] & \bar{A}\times \bar{A}\times\bar{A} \ar[d]^{p\times p \times p}\\
& k^2\times k^2 \times k^2 \ar[d]^{\theta} \\
k^3 \ar[r]_>>>>>{\delta \times \delta \times \delta} &k^2\times k^2 \times k^2 }
$$
Dans cette figure, $\delta : k \rrr k^2$ d\'esigne l'injection diagonale, et $\theta$ d\'esigne l'automorphisme d'anneaux d\'efini par
$$
\theta((a_{0},a_{1}), \, (b_{0}, b_{1}), \,  (c_{0}, c_{1})) = ((a_{0},b_{1}), \, (b_{0}, c_{1}), \,  (c_{0}, a_{1}))
$$
Le sch\'ema $U = \s(B)$ ressemble \`a ceci, o\`u les symboles $\bullet$ d\'esignent les points ferm\'es, tandis que $\circ$ d\'esignent les points g\'en\'eriques.
$$
\xymatrix{\bullet \ar@{-}[d] \ar@{-}[dr]|\hole &\bullet \ar@{-}[dl] \ar@{-}[dr] & \bullet \ar@{-}[d] \ar@{-}[dl]|\hole\\
\circ &\circ &\circ}
$$
Chacune des trois composantes irr\'eductibles est isomorphe \`a $\s(\bar{A})$ ; le recollement est indiqu\'e par l'isomorphisme $\theta$, c'est-\`a-dire par les \'egalit\'es \eqref{eq7.1}.

\n Pour voir que le morphisme $A \rrr B$ est fini \'etale de rang 3, on peut éviter une vérification directe en utilisant \cite[2.2  et 5.6]{Fer03}.
\medskip

 Notons $K = k(X)$ le corps des fractions de $A$, de sorte que $\eta = \s(K)$ et que $U_{\eta} = \s(K^3)$.
 Soit $\tau$ l'automorphisme de $U_{\eta}$ qui permute les deux premiers facteurs de $K^3$.
  Comme annonc\'e au d\'ebut, on d\'efinit le sch\'ema $T$ comme la r\'eunion de deux copies de $U$ recoll\'ees par les immersions $i : U_{\eta} \rrr U$  et $i \circ \tau : U_{\eta} \rrr U$. Explicitons le morphisme 
  $$
  \varphi : \Gamma(U)\otimes \Gamma(U) = B \otimes B \; \rrr \; \Gamma(U_{\eta}) = K^3.
  $$
Comme $B$ est un sous-anneau de $\bar{A}^3$, on peut repr\'esenter chacun de ses \'el\'ements par un triplet $(a, b, c)$ d'\'el\'ements de $\bar{A}$ (soumis à certaines conditions) ; avec cette convention, l'homomorphisme $\varphi$ s'\'ecrit
\begin{equation}\label{eq7.2}
\varphi : B\otimes B \rrr K^3, \qquad  (a, b, c) \otimes (a', b', c') \longmapsto (ab', ba', cc')
\end{equation}
On va v\'erifier que le sous-anneau $ C = {\rm Im}(\varphi) \subset K^3$ est \'egal \`a $\bar{A}^3$, ce qui montrera qu'il n'est pas plat sur $B = \Gamma(U)$, et que donc le sch\'ema $T$ n'admet pas de s\'eparateur. 

En effet, suivant l'argument d\'ej\`a \'evoqu\'e, si l'homomorphisme $B \rrr \bar{A}^3$ \'etait plat, alors $\bar{A}$ serait plat sur $A$ puisque $B$ est étale sur $A$, donc $A$ serait normal, une contradiction.
\medskip

Comme le carr\'e ci-dessus qui d\'efinit $B$ est cart\'esien, le noyau de $q$ est \'egal au noyau de l'homomorphisme vertical de droite, soit l'id\'eal
$I^{\times 3} = I \times I \times I$ de l'anneau $\bar{A}^3$.

Puisque $B/I^{\times3}$ est isomorphe \`a  $k^3$, un \'el\'ement  de $B$ est d\'etermin\'e, modulo $I^{\times 3}$, par son image dans $k^3$, c'est-\`a-dire par un triplet $(\alpha, \beta, \gamma) \in k^3$ ; si cet \'el\'ement de $B$, vu dans $\bar{A}^3$, s'\'ecrit $(a, b, c)$,  compte-tenu de $\theta$, on a, dans $k^2\times k^2 \times k^2$, la relation
\begin{equation}\label{eq7.3}
(p(a), p(b), p(c)) \; = \; ((\alpha, \gamma),\, (\beta, \alpha), \, (\gamma, \beta))
\end{equation}
Notons, enfin, que l'automorphisme $\rho : \bar{A}^3 \rrr \bar{A}^3$ de permutation circulaire : $\rho(a, b, c) = (c, a, b)$, est compatible avec $\theta$ et induit donc un automorphisme de $B$ qui, modulo $I^{\times 3}$, s'\'ecrit 
$$
(\alpha, \beta, \gamma) \longmapsto (\gamma, \alpha, \beta).
$$
 Ceci \'etant pr\'ecis\'e, montrons que $C$ contient les idempotents $(1, 0, 0), (0, 1, 0)$ et $(0, 0, 1)$.
\medskip

\n Soit $x$ un \'el\'ement de $B$ tel que $q(x) = (1, 0, 1)$ ; \'ecrit dans $\bar{A}^3$ sous la forme $(a, b, c)$ on a $(p(a), p(b), p(c)) = ((1, 1),\, (0, 1),\, (1, 0))$ ; pour $\rho(x)$, on a $((1, 0),\, (1, 1), \, (0, 1))$ ; par suite \eqref{eq7.2}
$$
\varphi(x\otimes \rho(x)) \equiv ((1,1), \, (0, 0), \, (0, 0)) \quad {\rm mod.} I^{\times3}
$$
cela montre que l'idempotent $(1, 0, 0)$ est bien dans $C$.  De m\^eme, on trouve
$$
\varphi(\rho(x) \otimes x) \equiv ((0, 0), \, (1, 1), \, (0, 0)) \quad {\rm mod.} I^{\times 3}
$$
$$
\varphi(\rho^2(x)\otimes \rho^2(x)) \equiv ((0,0),\, (0, 0),\, (1, 1)) \quad {\rm mod.} I^{\times3}.
$$
Ainsi $C$ est-il l'anneau produit de ses images dans chaque facteur de $K^3$, ce qu'on note $C = C_{a} \times C_{b} \times C_{c}$.
\medskip

Choisissons un \'el\'ement $t \in \bar{A}$ tel que $p(t) =(1, 0)$, donc tel que  $\bar{A} = A[t]$, puisque $(1, 0) \in k^2$ est un g\'en\'erateur de cette $k$-alg\`ebre. Montrons que l'\'el\'ement $(t, 0, 0) \in \bar{A}^3$ est dans $C$. Soit $y \in B$ un \'el\'ement tel que $q(y) = (1, 0, 0) \in k^3$ ; \'ecrit dans $\bar{A}^3$ sous la forme $y = (a, b, c)$, on a $(p(a), p(b), p(c)) = ((1, 0),\, (0, 1), \, (0, 0))$ \eqref{eq7.3}, et on trouve 
$$
\varphi(y\otimes \rho(y)) = ((1, 0), \,(0, 0),\, (0, 0))
$$
cela montre que $(t, 0, 0)$ est dans $C$ mod. $I^{\times3}$, donc que $t \in C_{a}$, d'o\`u l'\'egalit\'e $\bar{A} = A[t] = C_{a}$. De m\^eme, le calcul de $\varphi(y\otimes \rho^2(y))$ montre que l'on a $t \in C_{b}$, et le calcul de $\varphi(\rho^2(y)\otimes \rho^2(y))$ permet de conclure que $t \in C_{c}$. 
Cela ach\`eve la v\'erification de l'\'egalit\'e $C = {\rm Im}(\varphi) = \bar{A}^3$, et ainsi la d\'emonstration de ce que $T$ n'admet pas de s\'eparateur.\qed

}


\section{Cas des espaces algébriques}\label{s8}

\subsection{S\'eparation d'espaces alg\'ebriques}

\begin{lemme}\phantomsection\label{l8.1} Soient $S$ un sch\'ema, $X$ un $S$-espace alg\'ebrique, et  $U \rrr X$ une pr\'esentation \'etale par un $S$-sch\'ema ; enfin, notons $R = U\times_{X}U $ la relation d'\'equivalence qui d\'efinit $X$ comme quotient $U/R$. On suppose que le morphisme canonique (de sch\'emas) $\delta : R \rrr U\times_{S}U$ est une immersion quasi-compacte, i.e. que $X$ est ``localement s\'epar\'e'' avec les d\'efinitions de \cite[p. 97]{Knu71}. Consid\'erons la factorisation de $\delta$ par adh\'erence sch\'ematique 
$$
R \stackrel{v}{\rrr} \overline{R} \stackrel{\bar{\delta}}{\rrr} U\times_{S}U
$$
de sorte que $v$ est sch\'ematiquement dominant et que  $\bar{\delta}$ est une immersion ferm\'ee.

\n i) Supposons que  les morphismes compos\'es $\xymatrix{\overline{R} \ar@<0.5ex>[r] \ar@<-0.5ex>[r]& U}$ soient \'etales.
Alors $\overline{R}$ est  une relation d'\'equivalence \'etale sur $U$, l'espace alg\'ebrique quotient $Y = U/\overline{R}$ est s\'epar\'e, et le morphisme canonique $h : X \rrr Y$ est \'etale surjectif.

\n ii) Supposons de plus que $U \rrr S$ soit plat quasi-compact et quasi-s\'epar\'e, et qu'il existe un morphisme plat  $S' \rrr S$ tel que l'espace alg\'ebrique $S'\times_{S}X$ soit  s\'epar\'e sur $S'$. Alors $h$ induit un isomorphisme $S'\times_{S}X\, \wt{\rrr}\, S'\times_{S}Y$.
\end{lemme}

\begin{proof} L'\'enonc\'e $i)$ est une cons\'equence imm\'ediate de la proposition \ref{pB.1} appliqu\'ee \`a la relation $T_{\star} = R$. Rappelons que la s\'eparation sur $S$ de $U/\overline{R}$ est synonyme de ce que le morphisme $\bar{\delta} : \overline{R} \rrr U\times_{S}U$ soit une immersion ferm\'ee.

\n $ii)$  La s\'eparation du $S'$-espace alg\'ebrique $S'\times_{S}X$ signifie que le morphisme 
$$
S'\times_{S}R \rrr S'\times_{S}(U\times_{S}U)
$$
 est une immersion ferm\'ee ; puisque $U\times_{S}U$ est plat quasi-compact et quasi-s\'epar\'e sur $S$, la formation de l'adh\'erence sch\'ematique commute au changement de base plat $S' \rrr S$ (lemme \ref{lA.7}); donc  $S'\times_{S}R \rrr S'\times_{S}\overline{R}$ est un isomorphisme ; par suite, le morphisme $S'\times_{S}X\rrr S'\times_{S}Y$ est bien un isomorphisme.
 \end{proof}

\begin{prop}\phantomsection\label{p8.1} Soit $S$ un sch\'ema  normal int\`egre, de point g\'en\'erique $\eta$. Soit $X$ un $S$-espace alg\'ebrique 
\'etale et quasi-compact. Alors il existe un $S$-\emph{sch\'ema} \'etale et s\'epar\'e $Y$, et un morphisme $h : X \rrr Y$ qui est \'etale surjectif et qui induit un isomorphisme sur les fibres g\'en\'eriques : $
X_{\eta}\, \rt \, Y_{\eta}$. On notera $Y = X^{\rm sep}$.
\end{prop}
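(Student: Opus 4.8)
The plan is to imitate, at the level of an étale presentation, the scheme-theoretic construction of Corollary \ref{c5.1}, and then to feed the result into Lemma \ref{l8.1}. First I would choose an étale surjective presentation $U \rrr X$ with $U$ an affine scheme, which is possible because $X$ is quasi-compact. Since $X \rrr S$ is étale, the composite $U \rrr S$ is étale and quasi-compact, hence also quasi-separated; moreover $U$ is normal (étale over the normal scheme $S$) and has only finitely many irreducible components, its maximal points lying over $\eta$. Writing $R = U\times_{X}U$, the canonical morphism $\delta : R \rrr U\times_{S}U$ is the base change of the diagonal $\Delta_{X/S}$, which is an open immersion because $X\rrr S$ is unramified; thus $\delta$ is a quasi-compact open immersion and $X$ is \emph{localement séparé} in the sense of \cite{Knu71}, so that Lemma \ref{l8.1} becomes applicable once its hypothesis (i) is verified.

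The heart of the argument is to check that the two projections $\overline{R} \rightrightarrows U$ are étale, where $\overline{R}$ denotes the schematic closure of $R$ in $U\times_{S}U$. Since $S$ is normal and $U\rrr S$ is étale, the scheme $U\times_{S}U$ is normal as well; furthermore its set of maximal points is finite, being the (finite) underlying set of $U_{\eta}\times_{\eta}U_{\eta}$. By Lemma \ref{lA.11} the schematic closure $\overline{R}$ of the open subscheme $R$ is therefore open \emph{and} closed in $U\times_{S}U$. As each projection $U\times_{S}U \rrr U$ is étale (a base change of $U\rrr S$), its restriction to the open–closed subscheme $\overline{R}$ is again étale. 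This is precisely hypothesis (i) of Lemma \ref{l8.1}, which then tells us that $\overline{R}$ is an étale equivalence relation on $U$, that the quotient $Y = U/\overline{R}$ is a \emph{separated} $S$-algebraic space, and that the canonical morphism $h : X \rrr Y$ is étale and surjective.

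It remains to see that $Y$ is a scheme and that $h$ is an isomorphism on generic fibres. Descending étaleness along the étale surjection $U \rrr Y$ shows that $Y \rrr S$ is étale; being moreover separated over the scheme $S$ and (étale, hence) locally quasi-finite, $Y$ is representable by a scheme by the classical criterion recalled in the introduction. For the generic fibre I would invoke part (ii) of Lemma \ref{l8.1} with the flat morphism $S' = \eta \rrr S$: the fibre $X_{\eta}$ is étale over the field $\kappa(\eta)$, hence a finite disjoint union of spectra of finite separable extensions, and in particular separated over $\eta$; Lemma \ref{l8.1}(ii) then yields the isomorphism $X_{\eta} \rt Y_{\eta}$, and we set $X^{\rm sep} = Y$.

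The main obstacle is concentrated in the normality–finiteness step: one must know that $U\times_{S}U$ is normal with only finitely many maximal points in order to apply Lemma \ref{lA.11} and conclude that $\overline{R}$ is open and closed. This is exactly the place where the normality of $S$ is used, in the same way as in the proof of Corollary \ref{c5.1}. All the remaining verifications—that $\delta$ is a quasi-compact immersion, that $Y\rrr S$ is étale, and that $Y$ is separated—are either base-change formalities or immediate consequences of Lemma \ref{l8.1}. A minor bookkeeping point is the quasi-compactness and quasi-separation required to apply Lemma \ref{l8.1}(ii); both follow from choosing $U$ affine together with the quasi-compactness of $X$ (and, for the commutation of schematic closure with the flat base change $\eta \rrr S$, from Lemma \ref{lA.7}).
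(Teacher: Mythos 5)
Your argument is correct and follows essentially the same route as the paper: choose an étale quasi-compact presentation $U\rrr X$, observe that $U\times_{S}U$ is normal with finitely many maximal points so that Lemma \ref{lA.11} makes $\overline{R}$ open and closed (hence the projections étale), then apply Lemma \ref{l8.1} together with the representability criterion for separated étale algebraic spaces and part (ii) of that lemma for the base change $\eta\rrr S$. The only cosmetic difference is your choice of $U$ affine rather than merely quasi-compact over $S$, which changes nothing.
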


\begin{proof} Choisissons une pr\'esentation \'etale $U \rrr X$ par un sch\'ema $U$, \'etale et quasi compact  sur $S$. Le sch\'ema $U\times_{S}U$, \'etant \'etale sur $S$, est normal. Puisque $X$ est \'etale, le morphisme diagonal $X \rrr X\times_{S}X$, qui est repr\'esentable, est une immersion ouverte ; par produit fibr\'e on en d\'eduit que le morphisme
$R = U\times_{X}U \rrr U\times_{S}U$ est aussi une immersion ouverte ; soit $R \rrr \overline{R} \rrr U\times_{S}U$ sa factorisation par adh\'erence sch\'ematique. Comme $U$ est \'etale et quasi-compact sur $S$ l'ensemble des points maximaux de $U\times_{S}U$  s'identifie à $(U\times_{S}U)_{\eta}$, et est par suite fini. Le lemme \ref{lA.11} permet alors de conclure que  $\overline{R} \rrr U\times_{S}U$ est une immersion ouverte et ferm\'ee ; a fortiori, les morphismes $\xymatrix{\overline{R} \ar@<0.5ex>[r] \ar@<-0.5ex>[r]& U}$ sont \'etales, et on peut appliquer le lemme  \ref{l8.1}; il montre que le faisceau quotient $U/\overline{R}$ est un espace alg\'ebrique
 qui est \'etale et s\'epar\'e ; mais alors c'est un sch\'ema, d'apr\`es \cite[II 6.17]{Knu71} ou \cite[Th. A2, p.198]{LMB00}. 
 
 Enfin, la fibre $X_{\eta} \rrr \eta$ est un espace alg\'ebrique \'etale et quasi-compact sur un corps ; c'est donc un sch\'ema affine discret donc s\'epar\'e. La deuxi\`eme partie du lemme ci-dessus, appliqu\'ee au morphisme plat $\eta \rrr S$, montre que le morphisme $X_{\eta} \rrr Y_{\eta}$ est un isomorphisme.
\end{proof}

\section{Un adjoint \`a gauche}\label{s9}

La cat\'egorie des $S$-sch\'emas \'etales de pr\'esentation finie est une sous-cat\'egorie pleine de la cat\'egorie des $S$-sch\'emas plats de pr\'esentation finie. Ce paragraphe aborde la question suivante : cette inclusion de cat\'egories admet-elle un adjoint \`a gauche ?  Autrement dit, peut-on associer fonctoriellement \`a tout tel $S$-sch\'ema plat $T$ un sch\'ema $E$, \'etale et de pr\'esentation finie sur $S$, muni d'un $S$-morphisme $h : T \rrr E$ qui soit universel pour les morphismes de $T$  vers un $S$-sch\'ema \'etale du m\^eme type ?

Puisque nous suivons les conventions de \cite{EGAG},  le terme ``de pr\'esentation finie'' signifiera, comme dans cet ouvrage : \emph{localement de pr\'esentation finie, quasi-compact et quasi-s\'epar\'e} \cite[6.3.7]{EGAG}.

\subsection{Une première propriété universelle} Commen\c cons par rappeler deux r\'esultats classiques d'existence de cet adjoint.

\begin{prop}\phantomsection Soient $S$ un sch\'ema noeth\'erien et $f : T \rrr S$ un morphisme propre et lisse. Dans la factorisation de Stein
$$
T \; \stackrel{h}{\rrr} \; {\rm Spec}(f_{\star}(\oo_{T})) \; \stackrel{g}{\rrr} \; S
$$
le morphisme $g$ est \'etale fini, et les fibres de $h$ sont g\'eom\'etriquement connexes.
\end{prop}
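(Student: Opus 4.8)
Le plan est de partir de la factorisation de Stein classique, puis d'exploiter la lissit\'e de $f$ pour en renforcer les conclusions. Comme $f : T \rrr S$ est propre et que $S$ est noeth\'erien, le faisceau $f_{\star}(\oo_{T})$ est une $\oo_{S}$-alg\`ebre coh\'erente, et la factorisation de Stein $T \stackrel{h}{\rrr} S' = \s(f_{\star}(\oo_{T})) \stackrel{g}{\rrr} S$ fournit d\'ej\`a : le morphisme $g$ est \emph{fini}, le morphisme $h$ est propre, et l'homomorphisme canonique $\oo_{S'} \rrr h_{\star}(\oo_{T})$ est un \emph{isomorphisme} (EGA III, 4.3.1--4.3.4). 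En particulier, les fibres de $h$ sont connexes et non vides. Il ne reste donc \`a \'etablir que deux points : que $g$ est \emph{\'etale}, et que les fibres de $h$ sont \emph{g\'eom\'etriquement} connexes.

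Pour montrer que $g$ est \'etale, j'utiliserais le th\'eor\`eme de changement de base en cohomologie coh\'erente. Puisque $f$ est lisse, il est plat, et ses fibres g\'eom\'etriques sont des vari\'et\'es lisses, donc g\'eom\'etriquement r\'eduites ; cela entra\^ine que le $\oo_{S}$-module coh\'erent $f_{\star}(\oo_{T})$ est \emph{localement libre} et que sa formation \emph{commute \`a tout changement de base}. En particulier, $g$ est fini et plat. Pour v\'erifier qu'il est \'etale, il suffit d'en calculer les fibres g\'eom\'etriques : pour tout point g\'eom\'etrique $\bar{s} \rrr S$ de corps $\Omega$ alg\'ebriquement clos, la compatibilit\'e au changement de base donne un isomorphisme
$$
(g_{\star}\oo_{S'}) \otimes_{\oo_{S}} \Omega \; \simeq \; H^{0}(T_{\bar{s}}, \oo_{T_{\bar{s}}}) .
$$
Or $T_{\bar{s}}$ est une $\Omega$-vari\'et\'e propre et lisse ; chacune de ses composantes connexes est propre, lisse et connexe sur le corps alg\'ebriquement clos $\Omega$, donc de $H^{0}$ \'egal \`a $\Omega$. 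Ainsi $H^{0}(T_{\bar{s}}, \oo_{T_{\bar{s}}}) \simeq \Omega^{c(\bar{s})}$, o\`u $c(\bar{s})$ d\'esigne le nombre de composantes connexes de $T_{\bar{s}}$ : c'est une $\Omega$-alg\`ebre \'etale. Un morphisme fini et plat dont les fibres g\'eom\'etriques sont \'etales \'etant \'etale, on conclut que $g$ l'est.

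Pour la connexit\'e g\'eom\'etrique des fibres de $h$, je fixerais un point g\'eom\'etrique $\bar{s}' \rrr S'$ au-dessus d'un point g\'eom\'etrique $\bar{s} \rrr S$ de corps $\Omega$. La fibre g\'eom\'etrique de $g$ au-dessus de $\bar{s}$, soit $\s(\Omega^{c(\bar{s})})$, a pour points les idempotents primitifs de $\Omega^{c(\bar{s})}$, c'est-\`a-dire les composantes connexes de $T_{\bar{s}}$ ; le point $\bar{s}'$ en d\'esigne donc une. Comme $h_{\star}(\oo_{T}) = \oo_{S'}$ et que le morphisme $T_{\bar{s}} \rrr \s(H^{0}(T_{\bar{s}}, \oo_{T_{\bar{s}}}))$ n'est autre que la d\'ecomposition de $T_{\bar{s}}$ en composantes connexes, la fibre g\'eom\'etrique de $h$ en $\bar{s}'$ s'identifie \`a la composante connexe de $T_{\bar{s}}$ correspondant \`a $\bar{s}'$. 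Celle-ci \'etant une vari\'et\'e connexe sur le corps alg\'ebriquement clos $\Omega$, elle est g\'eom\'etriquement connexe ; les fibres de $h$ sont donc g\'eom\'etriquement connexes.

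Le point d\'elicat est la \textbf{compatibilit\'e de $f_{\star}(\oo_{T})$ au changement de base} : c'est elle qui assure \`a la fois que $g$ est fini plat \`a fibres g\'eom\'etriques \'etales (donc \'etale) et que les fibres g\'eom\'etriques de $h$ s'identifient aux composantes connexes des $T_{\bar{s}}$. Elle repose sur la platitude de $f$ et sur la r\'eduction g\'eom\'etrique de ses fibres (cons\'equence de la lissit\'e), via les th\'eor\`emes de semicontinuit\'e et de changement de base en cohomologie coh\'erente ; une fois ce point acquis, tout le reste est formel.
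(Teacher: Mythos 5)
Votre d\'emonstration est correcte et suit essentiellement la voie que l'article se contente d'indiquer par renvoi : le texte ne donne aucun argument pour cette proposition mais cite EGA III 4.3.4, 7.8.7 et 7.8.10 (ou SGA 1 X 1.2, ou FAG 8.5.16), et ce sont pr\'ecis\'ement les \'enonc\'es de factorisation de Stein et de changement de base en cohomologie coh\'erente que vous d\'eployez. Signalons seulement que le lemme clef que vous isolez \`a juste titre comme le point d\'elicat --- pour $f$ propre, plat, de pr\'esentation finie et \`a fibres g\'eom\'etriquement r\'eduites, le module $f_{\star}(\oo_{T})$ est localement libre et de formation compatible \`a tout changement de base --- ne d\'ecoule pas de la seule semicontinuit\'e : il requiert aussi la constance locale du nombre de composantes connexes g\'eom\'etriques des fibres, laquelle utilise la propret\'e via le th\'eor\`eme de connexit\'e de Zariski (on se ram\`ene \`a un trait, o\`u la r\'egularit\'e de $T$ et l'int\'egrit\'e de $H^{0}$ de chaque composante connexe emp\^echent les composantes de la fibre g\'en\'erique de se recoller ou de se scinder dans la fibre sp\'eciale) ; c'est exactement le contenu de EGA III 7.8.6--7.8.10 que vous invoquez, de sorte que votre r\'edaction est un d\'eveloppement fid\`ele des r\'ef\'erences de l'article.
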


Pour une d\'emonstration voir \cite[4.3.4, 7.8.7 et 7.8.10]{EGAIII}, ou bien \cite[X 1.2]{SGA1}, ou enfin \cite[8.5.16]{FAG}. La propri\'et\'e universelle de $h$ ne semble pas explicit\'ee dans ces r\'ef\'erences, mais c'est en tout cas  une cons\'equence de la proposition \ref{p9.1} ci-dessous. 

\begin{prop}\phantomsection\label{p9.2} Soient $S = \s(k)$ le spectre d'un corps et $f : T \rrr S$ un morphisme localement de type fini. Alors, il existe un $S$-sch\'ema \'etale, not\'e $\pi_{0}(T/S)$, et une factorisation de $f$ en
$$
T \; \stackrel{h}{\rrr} \; \pi_{0}(T/S) \; \stackrel{g}{\rrr} \; S.
$$
Le morphisme $h$ est universel pour les $S$-morphismes de $T$ vers un $S$-sch\'ema \'etale, et il reste universel apr\`es toute extension de corps $k \rrr k'$ ; autrement \'ecrit, si $S' \rrr S$ d\'esigne le morphisme de sch\'emas associ\'e \`a cette extension, on a un isomorphisme
$$
S'\times_{S}\pi_{0}(T/S) \quad \wt{\rrr}\quad \pi_{0}(S'\times_{S}T / S').
$$
Si $T'\rrr S$ est un second $S$-sch\'ema localement de type fini, on a un isomorphisme 
$$
\pi_{0}(T\times_{S}T') \; \rrr \; \pi_{0}(T/S)\times_{S}\pi_{0}(T'/S).
$$
 Enfin, les fibres de $h$ sont g\'eom\'etriquement connexes.
 \end{prop}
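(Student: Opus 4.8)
Le plan est de décrire explicitement $\pi_0(T/S)$ au moyen des composantes connexes \emph{géométriques} munies de leur action de Galois, puis d'en déduire d'un seul coup la propriété universelle, sa compatibilité aux changements de base et la formule pour les produits. Comme $T$ est localement de type fini sur un corps, il est localement noethérien, donc ses composantes connexes sont ouvertes ; on écrit $T = \coprod_i T_i$ et il suffira de traiter chaque $T_i$ connexe et de poser $\pi_0(T/S) = \coprod_i \pi_0(T_i/S)$ : un $S$-morphisme de source connexe vers un schéma étale (qui est somme de spectres d'extensions finies séparables de $k$) se factorise par une unique composante du but, de sorte que la propriété universelle, la connexité géométrique des fibres et les énoncés de compatibilité se recollent composante par composante.

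Pour $T$ connexe, soit $k^s$ une clôture séparable de $k$ et $G = \mathrm{Gal}(k^s/k)$. Je poserais $\Pi = \pi_0(T_{k^s})$, l'ensemble des composantes connexes de $T_{k^s}$, muni de l'action naturelle de $G$. La première vérification est que cette action est \emph{continue}, c'est-à-dire que chaque composante connexe géométrique est définie sur une sous-extension finie de $k^s$ ; l'ensemble $G$-continu $\Pi$ correspond alors à un $k$-schéma étale $E$ (réunion disjointe, sur les orbites, des $\s$ des corps d'invariants des stabilisateurs), et l'on pose $\pi_0(T/S) = E$. De façon concrète et équivalente, $E = \s(k')$ où $k'$ est la clôture séparable de $k$ dans $\Gamma(T, \oo_T)$, qui est une extension \emph{finie} séparable, et $T$ est géométriquement connexe sur $k'$ (critères de connexité géométrique de \cite{EGAIV2}) ; le morphisme $h$ est induit par l'inclusion $k' \rrr \Gamma(T, \oo_T)$.

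La propriété universelle se lit alors après extension à $k^s$ : pour $E'$ étale sur $S$, un $S$-morphisme $T \rrr E'$ donne un morphisme $T_{k^s} \rrr E'_{k^s}$ vers une somme de copies de $\s(k^s)$, c'est-à-dire une application localement constante, soit une application $G$-équivariante $\Pi \rrr E'(k^s)$ ; réciproquement une telle application provient d'un unique $S$-morphisme, l'unicité résultant de ce que $h$ est schématiquement dominant (l'inclusion $k' \rrr \Gamma(T, \oo_T)$ est injective, et $E'$ est séparé). On obtient la bijection fonctorielle $\mathrm{Hom}_S(T, E') \simeq \mathrm{Hom}_S(\pi_0(T/S), E')$. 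La compatibilité à une extension $k \rrr k''$ est immédiate sur cette description : $\pi_0(T_{k''}/k'')$ correspond à $\Pi$ vu comme ensemble continu sous $\mathrm{Gal}(k^s/k'')$, ce qui n'est autre que $S'\times_S \pi_0(T/S)$. Pour les produits, on utilise que le produit de deux schémas connexes localement de type fini sur un corps séparablement clos est connexe (\cite{EGAIV2}), d'où $\pi_0((T\times_S T')_{k^s}) = \pi_0(T_{k^s}) \times \pi_0(T'_{k^s})$ de façon $G$-équivariante, et donc $\pi_0(T\times_S T') \simeq \pi_0(T/S)\times_S \pi_0(T'/S)$. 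Enfin, la connexité géométrique des fibres de $h$ se lit sur $T_{k^s} \rrr \Pi$ : les fibres en sont les composantes connexes, géométriquement connexes par construction (ou : dans le cas connexe, la fibre unique de $h$ est $T$ vu sur $k'$, géométriquement connexe).

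Le point délicat sera la deuxième étape, à savoir la continuité de l'action de Galois — autrement dit le fait que chaque composante connexe géométrique descend à une extension \emph{finie} — et, dans la même veine, la finitude de la clôture séparable $k'$ de $k$ dans $\Gamma(T,\oo_T)$ : c'est là qu'intervient de façon essentielle l'hypothèse « localement de type fini », via un argument de passage à la limite. Tout le reste est formel une fois admis les critères classiques de connexité géométrique et le théorème sur le produit de schémas connexes au-dessus d'un corps séparablement clos.
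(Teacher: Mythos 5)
Le texte ne démontre pas cette proposition : il renvoie à \cite[I, \S4, n$^o$ 6]{DG70}, dont l'argument est précisément celui que vous esquissez (description galoisienne de $\pi_{0}(T_{k^{s}})$, réduction au cas connexe, identification avec $\s(k')$ où $k'$ est la clôture séparable de $k$ dans $\Gamma(T,\oo_{T})$). Votre rédaction est correcte et identifie bien le seul point non formel --- la continuité de l'action de Galois, c'est-à-dire la finitude de $k'$ et la descente de chaque composante géométrique à une sous-extension finie --- qui repose sur l'hypothèse \emph{localement de type fini}.
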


 Voir, par exemple,  \cite[I, \S4, n$^o$ 6, p.122-126]{DG70}.
 \medskip
 
 Dans ces deux cas, les fibres du morphisme $h$ sont g\'eom\'etriquement con\-nexes. En fait cette propri\'et\'e est caract\'eristique de l'adjonction, en vertu du r\'esultat suivant. 

\begin{prop}\phantomsection\label{p9.1} Soient $f : T \rrr S$ un morphisme plat de pr\'esentation finie et $T \stackrel{h}{\rrr} E \stackrel{g}{\rrr} S$ une factorisation de $f$, o\`u $g$ est \'etale de pr\'esentation finie. Consid\'erons les propri\'et\'es suivantes :
\begin{thlist}
\item Le morphisme $h$ est surjectif et ses fibres sont g\'eom\'etriquement con\-nexes.

\item Le morphisme $h$ est universel pour les $S$-morphismes de $T$ vers un $S$-sch\'ema \'etale et de pr\'esentation finie, et il reste universel apr\`es tout changement de base $S' \rrr S$.

\item Comme {\rm (ii)}, mais en se restreignant aux changements de base \'etales de pr\'esentation finie.
\end{thlist}
Alors on a les implications {\rm (i)}$ \Rightarrow  {\rm (ii)} \Rightarrow {\rm (iii)}$, et si $S$ est un sch\'ema noeth\'erien ces trois propri\'et\'es sont \'equivalentes.
\end{prop}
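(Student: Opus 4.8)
Le plan est d'\'etablir successivement (i)$\Rightarrow$(ii), puis (ii)$\Rightarrow$(iii), et enfin, sous l'hypoth\`ese noeth\'erienne, (iii)$\Rightarrow$(i). Pour \textbf{(i)$\Rightarrow$(ii)}, je commencerais par remarquer que $h$ est fid\`element plat de pr\'esentation finie : le graphe $\Gamma_h : T \rrr T\times_S E$ est une section du morphisme \'etale $T\times_S E \rrr T$ image r\'eciproque de $g$, donc une immersion ouverte, et $h = \mathrm{pr}_2\circ\Gamma_h$ o\`u $\mathrm{pr}_2$ (image r\'eciproque de $f$) est plat ; la pr\'esentation finie s'obtient par \og simplification \fg\, puisque $g$ est quasi-s\'epar\'e, et la surjectivit\'e est l'hypoth\`ese. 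Ainsi $h$ est un \'epimorphisme effectif fppf. \'Etant donn\'e un $S$-morphisme $\phi : T \rrr E'$ vers un sch\'ema \'etale de pr\'esentation finie, je formerais $\Psi = (h,\phi) : T \rrr Z := E\times_S E'$ ; la descente de $\Psi$ le long de $h$ fournira le morphisme cherch\'e, et se ram\`ene \`a l'\'egalit\'e $\phi p_0 = \phi p_1$ des deux images r\'eciproques \`a $T\times_E T$ par les projections $p_0,p_1 : T\times_E T\rrr T$ (les composantes sur $E$ co\"incidant d\'ej\`a). Le lieu de co\"incidence $(\phi p_0,\phi p_1)^{-1}(\Delta_{E'/S})$ est \emph{ouvert} car $E'\rrr S$ est non ramifi\'e ; pour le voir \'egal \`a $T\times_E T$, je v\'erifierais qu'il contient tous les points : au-dessus de $e\in E$, la fibre $h^{-1}(e)$ est g\'eom\'etriquement connexe, donc le $\kappa(e)$-morphisme induit vers le sch\'ema \'etale (discret) $E'\times_S\kappa(e)$ est constant, de valeur un point $\kappa(e)$-rationnel, de sorte que $\phi$ prend une valeur unique sur $h^{-1}(e)$ et que tout point au-dessus de $e$ tombe dans la diagonale. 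La descente fppf donne alors un unique $s:E\rrr Z$ avec $sh=\Psi$, puis, $h$ \'etant un \'epimorphisme, $q s=\mathrm{id}_E$ pour la projection $q:Z\rrr E$ ; le $u$ cherch\'e est la seconde composante de $s$, et son unicit\'e r\'esulte encore de ce que $h$ est un \'epimorphisme. Comme la propri\'et\'e (i) est manifestement stable par tout changement de base $S'\rrr S$, le m\^eme argument appliqu\'e au-dessus de $S'$ montre que $h$ reste universel, d'o\`u (ii).

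L'implication \textbf{(ii)$\Rightarrow$(iii)} est imm\'ediate, (iii) n'\'etant que la restriction de (ii) aux changements de base \'etales de pr\'esentation finie. Pour la r\'eciproque \textbf{(iii)$\Rightarrow$(i)} avec $S$ noeth\'erien, j'\'etablirais d'abord la \emph{surjectivit\'e} : comme ci-dessus $h$ est plat de pr\'esentation finie, donc ouvert, et son image $E_0=h(T)$ est un ouvert \'etale de pr\'esentation finie de $E$ (qui est noeth\'erien) ; la propri\'et\'e universelle absolue, appliqu\'ee \`a la corestriction $h':T\rrr E_0$, fournit $u:E\rrr E_0$ avec $uh=h'$, d'o\`u $\iota u\,h=h=\mathrm{id}_E\,h$ pour l'immersion ouverte $\iota:E_0\rrr E$, puis $\iota u=\mathrm{id}_E$ par unicit\'e ; donc $\iota$ est surjective et $E_0=E$.

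Reste la \emph{connexit\'e g\'eom\'etrique des fibres}, qui est la difficult\'e principale. Pour $s\in S$, la fibre $h_s:T_s\rrr E_s$ est un $\kappa(s)$-morphisme vers un sch\'ema \'etale ; la proposition \ref{p9.2} fournit une factorisation unique $T_s\rrr \pi_0(T_s/\kappa(s))\xrightarrow{\ \lambda\ }E_s$, avec $\lambda$ surjectif puisque $h_s$ l'est. Tout revient \`a montrer que $\lambda$ est un \emph{isomorphisme} : les fibres de $h_s$ s'identifieront alors \`a celles du morphisme $\pi_0$, g\'eom\'etriquement connexes, et ceci pour tout $s$ donnera (i). Le plan pour l'injectivit\'e de $\lambda$ est de \emph{se ramener au cas o\`u $S$ est strictement hens\'elien local} de point ferm\'e $s$ : \'ecrivant $\Spec(\oo_{S,s}^{\mathrm{sh}})$ comme limite projective filtrante de voisinages \'etales de pr\'esentation finie, et toutes les donn\'ees \'etant de pr\'esentation finie, la propri\'et\'e (iii) --- stable par changement de base \'etale --- passe \`a la limite gr\^ace aux th\'eor\`emes de finitude de \cite[\S 8 et \S 17]{EGAIV3}. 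Le corps $\kappa(s)$ est alors s\'eparablement clos, $E_s$ et $\pi_0(T_s/\kappa(s))$ sont finis \`a points rationnels, et une non-injectivit\'e de $\lambda$ signifierait que deux composantes connexes de $T_s$ ont m\^eme image $e\in E_s$, donc que la fibre $h^{-1}(e)$ est disconnexe ; l'idempotent correspondant se rel\`everait, par hens\'elianit\'e, en une d\'ecomposition clopen $h^{-1}(U)=\mathcal A\sqcup\mathcal B$ au-dessus d'un voisinage $U$ de $e$, les deux morceaux se surjectant sur $U$. J'en d\'eduirais un $S$-sch\'ema \'etale de pr\'esentation finie \og d\'edoublant \fg\, $E$ au-dessus de $U$, muni d'un $S$-morphisme issu de $T$ qui s\'epare $\mathcal A$ et $\mathcal B$ : son unique factorisation par $h$, fournie par (iii), les recollerait au contraire, une contradiction. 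Ainsi $\lambda$ est injectif, donc un isomorphisme, et (i) s'ensuit. Les points d\'elicats seront le passage \`a la limite et la construction du recollement \og d\'edoublant \fg\,, qui requi\`erent un maniement attentif des r\'esultats de finitude de \cite{EGAIV3}.
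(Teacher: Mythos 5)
Votre traitement de (i)$\Rightarrow$(ii) est correct et emprunte une route diff\'erente de celle du texte : vous descendez le morphisme $(h,\phi)$ le long du recouvrement fppf $h$ (apr\`es avoir v\'erifi\'e la condition de co\"incidence sur $T\times_E T$ gr\^ace \`a la rationalit\'e du point image sur chaque fibre g\'eom\'etriquement connexe), tandis que le texte forme l'image ouverte $F$ de $(h,h')$ dans $E\times_S E'$ et montre que la projection $F\rrr E$ est un isomorphisme par une analyse fibre \`a fibre puis descente fpqc. Les deux arguments sont valables. La partie \og surjectivit\'e \fg\, de (iii)$\Rightarrow$(i) co\"incide avec celle du texte.

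En revanche, l'\'etape cl\'e de votre preuve de la connexit\'e des fibres dans (iii)$\Rightarrow$(i) est fausse : sur une base strictement hens\'elienne locale, un idempotent de la fibre ferm\'ee $h^{-1}(e)$ ne se rel\`eve \emph{pas} en une d\'ecomposition ouverte-ferm\'ee de $h^{-1}(U)$, car le rel\`evement hens\'elien des idempotents ne vaut que pour les morphismes finis (ou propres), et $h$ n'est que plat de pr\'esentation finie. Contre-exemple : $S=\Spec(R)$ avec $R$ un anneau de valuation discr\`ete strictement hens\'elien d'uniformisante $\pi$, et $T=\Spec\bigl(R[x,y]/(xy-\pi)\bigr)\setminus\{(x,y,\pi)\}$, qui est lisse et surjectif sur $S$ ; sa fibre ferm\'ee est la r\'eunion disjointe de deux droites \'epoint\'ees (donc d\'econnect\'ee), mais $T$ est irr\'eductible et $\Gamma(T,\oo_T)=R[x,y]/(xy-\pi)$ n'a aucun idempotent non trivial : la d\'ecomposition $h^{-1}(U)=\mathcal A\sqcup\mathcal B$ que vous invoquez n'existe pas. (Ici (iii) est \'egalement en d\'efaut, donc l'\'enonc\'e n'est pas contredit, mais votre d\'emonstration d\'eduit le rel\`evement de la seule hens\'elianit\'e, sans utiliser (iii) \`a cette \'etape ; elle est donc invalide telle quelle.) Le texte contourne pr\'ecis\'ement cette difficult\'e : il montre d'abord que pour tout ferm\'e \emph{connexe} $F\subset E$, $h^{-1}(F)$ est connexe, en \'ecrivant $h^{-1}(F)=T_1\cup T_2$ avec $T_1\cap T_2=\emptyset$, en recollant les ouverts $h(U_i)$ ($U_i=T-T_i$) le long de $h(U_1\cap U_2)=E-F$ pour fabriquer un \'etale de pr\'esentation finie $E'$ (c'est l\`a qu'intervient l'hypoth\`ese noeth\'erienne), puis en utilisant la propri\'et\'e universelle pour forcer $h(U_1\cap U_2)=h(U_1)\cap h(U_2)$, ce qui partitionne $F$ ; les points non ferm\'es sont ensuite trait\'es par un passage \`a la limite sur les voisinages affines de $s$ (via \cite[8.4.1]{EGAIV3}), sans hens\'elisation. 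Il vous faudrait remplacer votre \'etape de rel\`evement par une construction de ce type, qui tire la contradiction directement de la propri\'et\'e universelle et non d'un rel\`evement d'idempotents.
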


\begin{proof} Rappelons  \cite[7.3.10]{EGAG} qu'un morphisme plat et de pr\'esentation finie est  universellement ouvert.
\medskip

\n ${\rm (i)} \Rightarrow {\rm (ii)}$\, Supposons que $h$ soit surjectif \`a fibres g\'eom\'etriquement connexes. Ces hypoth\`eses \'etant stables par changement de base, il suffit  de montrer que $h$ est universel au-dessus de $S$. Soit donc $h' : T \rrr E'$ un $S$-morphisme vers un \'etale de pr\'esentation finie ; il faut exhiber un $S$-morphisme $E \rrr E'$ compatible avec $h$ et $h'$. Notons d'abord que le morphisme $(h, h') : T \rrr E\times_{S}E'$ est ouvert, puisque le morphisme $T\rrr S$ est universellement ouvert, et que $E\times_{S}E'$ est \'etale sur $S$, donc \`a diagonale ouverte (cf. \ref{sA.1}); notons $F \subset E\times_{S}E'$ l'ouvert image de $(h, h')$, et  $h'' : T \rrr F$ le morphisme surjectif  d\'eduit de $(h, h')$. Le morphisme $h$ se factorise donc en
$$
T \; \stackrel{h''}{\rrr} \; F \; \stackrel{u}{\rrr} \; E ,
$$
o\`u $u$ désigne la restriction à $F$ de la première projection. Il s'agit de v\'erifier que $u$ est un isomorphisme. Comme le morphisme $h = uh''$ est surjectif par hypoth\`ese, $u$ est surjectif ; puisque $F$ est un ouvert du sch\'ema $E\times_{S}E'$ qui est \'etale sur $E$, le morphisme $u$ est \'etale. Montrons  que les fibres de $u$ sont des isomorphismes : les morphismes $h''$ et $u$ induisent des morphismes (d\'esign\'es par les m\^emes lettres)
sur les fibres en $x \in E$ (o\`u les symboles $h^{-1}(x)$ et $u^{-1}(x)$ et $x$ d\'esignent les sch\'emas obtenus par le changement de base $\s(\kappa(x)) \rightarrow E$)
$$
h^{-1}(x) \; \stackrel{h''}{\rrr}\; u^{-1}(x)\; \stackrel{u}{\rrr} \; x
$$
Comme le morphisme $h^{-1}(x) \rrr x$ est g\'eom\'etriquement connexe par hypoth\`ese, et que $h''$ est surjectif, on voit que $u^{-1}(x) \rrr x$ est g\'eom\'etriquement connexe ; mais, $u$ \'etant \'etale de pr\'esentation finie, $u^{-1}(x)$ est le spectre d'un produit fini de corps extensions finies s\'eparables de $\kappa(x)$ ; la connexit\'e g\'eom\'etrique entra\^ine qu'il n'y a qu'un seul corps, et qu'il est isomorphe \`a $\kappa(x)$. On en d\'eduit imm\'ediatement que le morphisme $\Delta_{u} : F \rrr F\times_{E}F$ est bijectif ; comme c'est une immersion ouverte, $\Delta_{u}$ est un isomorphisme ; on voit donc que $u$ est un monomorphisme fid\`element plat ; pour pouvoir conclure, par descente fpqc, il reste \`a v\'erifier que $u$ est quasi-compact, ou que $h = uh''$ l'est, puisque $h''$ est surjectif. Or, le compos\'e $T \stackrel{h}{\rrr} E \stackrel{g}{\rrr} S$ est quasi-compact par hypoth\`ese, et $\Delta_{g}$ est quasi-compact puisque $g$ est quasi-s\'epar\'e ; il suffit donc, une fois de plus, d'utiliser le proc\'edé de \ref{sA.1}.
\bb

\n L'implication (ii) $\Rightarrow$ (iii) est claire.
\bb

\n (iii) $\Rightarrow$ (i)\,  Puisque le compos\'e $T \stackrel{h}{\rrr} E \stackrel{g}{\rrr} S$ est universellement ouvert, ainsi que le morphisme diagonal $\Delta_{g}$ (car $g$ est \'etale), on d\'eduit de \ref{sA.1} que le morphisme $h$ est ouvert. Le sch\'ema induit par $E$ sur l'ouvert $h(T)$ est donc \'etale  et de pr\'esentation finie sur $S$. La propri\'et\'e universelle de $h$ entra\^ine donc que $h(T) = E$. 

Soit $T_{x} = h^{-1}(x) \rrr x$ une fibre de $h$ ; il s'agit de montrer que pour toute extension finie s\'eparable $k$ de $\kappa(x)$, le sch\'ema $k\times_{\kappa(x)}T_{x}$ est connexe. Or, il existe un morphisme \'etale de pr\'esentation finie $S' \rightarrow S$ et un point $x' \in S'\times_{S}E$ qui se projette sur $x$ et dont le corps r\'esiduel $\kappa(x')$ est isomorphe \`a $k$, et il s'agit de montrer  que la fibre $T_{x'} \rrr x'$ est connexe ; comme les hypoth\`eses sur $h$ sont pr\'eserv\'ees par le changement de base \'etale $S' \rrr S$, il suffit de montrer que les fibres sont connexes.

 On va proc\'eder par \'etapes, en montrant d'abord que pour tout ferm\'e connexe $F \subset E$, $h^{-1}(F)$ est connexe. Consid\'erons deux ferm\'es   $T_{1}$ et $T_{2}$ de $h^{-1}(F)$, donc de $T$, tels que
$$
h^{-1}(F) \, = \, T_{1} \cup T_{2}, \qquad T_{1} \cap T_{2}  = \emptyset .
$$
On va montrer que l'un des deux est vide. Les ouverts compl\'ementaires $U_{i} = T - T_{i}$  ont les propri\'et\'es suivantes :
$$
U_{1} \cap U_{2} = T - h^{-1}(F), \qquad U_{1} \cup U_{2} = T.
$$
Soit $E'$ le sch\'ema obtenu en recollant les sch\'emas  $h(U_{i})$ (des ouverts de $E$) le long de l'ouvert  $h(U_{1}\cap U_{2}) = E - F$. Ce sch\'ema $E'$ est \'etale sur $S$, tout comme les ouverts $h(U_{i})$ ; mais on a suppos\'e que $S$ est noeth\'erien (c'est ici le seul passage o\`u cette hypoth\`ese est utilis\'ee) ; comme $E$ est de pr\'esentation finie sur $S$ ses ouverts $h(U_{i})$ sont noeth\'eriens, et, en particulier, quasi-compacts et quasi-s\'epar\'es ; ainsi, $E'$ est lui aussi \'etale et de pr\'esentation finie sur $S$. Comme $h$ se factorise en $T \rrr E' \rrr E$, o\`u  $T \rrr E'$ est surjectif, la propri\'et\'e universelle de $h$ implique que $E' \rrr E$ est un isomorphisme, i.e. que l'ouvert de recollement est le m\^eme pour $E'$ et pour $E$, soit 
$$
h(U_{1} \cap U_{2}) \, = \, h(U_{1}) \cap h(U_{2}).
$$
On v\'erifie que $F \cap h(U_{1}) = h(T_{2})$  et $F \cap h(U_{2}) = h(T_{1})$; ce sont donc des ouverts de $F$ ; comme $h(U_{1} \cap U_{2}) = E - F$, l'\'egalit\'e ci-dessus montre que $\{h(T_{1}), h(T_{2})\}$ forme une partition ouverte de $F$ ; mais on a suppos\'e que $F$ est connexe ; donc l'un de ces ouverts est vide, ce qui entra\^ine que $T_{1}$ ou $T_{2}$ est vide ; bref, $h^{-1}(F)$ est connexe.
\medskip

Revenons \`a la connexit\'e des fibres. Soit $x$ un point de $E$, et soit $s$ son image dans $S$. Si $s$ est un point ferm\'e de $S$, alors $x$ est un point ferm\'e de $E$, et l'argument pr\'ec\'edent permet de conclure ; sinon on doit utiliser le ``passage \`a la limite'' suivant. Soit $S_{0}$ un ouvert affine de $S$ contenant $s$, et soit $S_{\lambda} \subset S_{0}$ la famille des ouverts affines de $S_{0}$ contenant $s$, de sorte que l'on a $\bigcap_{\lambda} S_{\lambda} = \s(\oo_{S, s})$. Enfin, notons 
$$
T_{\lambda} \; \stackrel{h_{\lambda}}{\rrr}\; E_{\lambda} \; \stackrel{g_{\lambda}}{\rrr}\; S_{\lambda}
$$
les morphismes obtenus par image r\'eciproque. On a $\bigcap_{\lambda} E_{\lambda} = g^{-1}(\s(\oo_{S, s}))$ ; par suite, $g^{-1}(s)$ est ferm\'e dans $\bigcap_{\lambda} E_{\lambda}$ ; mais, $g$ \'etant \'etale, $x$ est ferm\'e dans $g^{-1}(s)$ ; il est donc aussi ferm\'e dans $\bigcap_{\lambda} E_{\lambda}$ ; on en tire qu'en posant $F_{\lambda} = E_{\lambda} \cap \overline{\{x\}}$, on a
$$
\bigcap_{\lambda} F_{\lambda} = \{x\},\qquad {\rm et} \qquad  h^{-1}(x) = \bigcap_{\lambda} h^{-1}(F_{\lambda}) .
$$

\n Comme $S_{\lambda} \rrr S_{0}$ est \'etale de pr\'esentation finie, l'hypoth\`ese (iii) implique que le morphisme $h_{\lambda} : T_{\lambda} \rrr E_{\lambda}$ est universel pour les  $S_{\lambda}$-morphismes de $T_{\lambda}$ vers un sch\'ema \'etale de pr\'esentation finie sur $S_{\lambda}$ ; d'apr\`es le d\'ebut de la d\'emonstration, les ferm\'es irr\'eductibles $F_{\lambda} \subset E_{\lambda}$ ont donc une image r\'eciproque connexe dans $T_{\lambda}$, et il faut en d\'eduire que leur intersection 
  $h^{-1}(x)$ est connexe. Or, les morphismes de transition $h^{-1}(F_{\mu}) \rrr h^{-1}(F_{\lambda})$ sont affines, et, $h$ \'etant  quasi-compact,  le sch\'ema $h^{-1}(F_{0})$ est quasi-compact ; on peut donc utiliser \cite[8.4.1, ii)]{EGAIV3}, en rempla\c cant, dans cet \'enonc\'e, $S_{\alpha}$ et les $S_{\lambda}$ par notre $h^{-1}(F_{0})$ et nos $h^{-1}(F_{\lambda})$ ; cela permet de conclure que la fibre $h^{-1}(x)$ est connexe. 
\end{proof}

\subsection{Un théorème de Romagny (et de Laumon--Moret-Bailly)}\

\noindent L'\'e\-qui\-va\-len\-ce \'etablie dans la proposition \ref{p9.1} conduit \`a un changement de point de vue, et elle explique l'importance pour notre propos du travail de {\sc Romagny}, qui porte sur la repr\'esentabilit\'e du foncteur des ouverts qui sont r\'eunion de composantes connexes des fibres.

\begin{prop}[\sc M. Romagny]\phantomsection\label{p10.2}  Soit $f : T \rrr S$ un morphisme lisse et de pr\'esentation finie. Alors il existe un \emph{espace alg\'ebrique} $\pi_{0}(T/S)$ qui est \'etale et quasi-compact sur $S$ et un morphisme $h : T \rrr \pi_{0}(T/S)$ \`a fibres g\'eom\'etriquement connexes. La formation de $\pi_{0}(T/S)$ commute \`a tout changement de base sur $S$.
\end{prop}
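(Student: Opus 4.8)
La stratégie que j'adopterais est de réaliser $\pi_{0}(T/S)$ comme quotient de $T$ par la relation d'équivalence $R$ « appartenir à la même composante connexe de la même fibre géométrique ». Concrètement, je poserais $R \subset T\times_{S}T$ égal à la réunion des composantes connexes des fibres de la seconde projection $p_{1} : T\times_{S}T \rrr T$ qui rencontrent la section diagonale $\Delta$. Comme $\Delta$ est une section de $p_{1}$ --- on a $p_{1}\circ\Delta = \mathrm{id}_{T}$ ---, la fibre de $p_{1}$ au-dessus de $x \in T$ est la fibre $T_{f(x)}$ pointée en $x$, et la composante de $\Delta$ y est exactement la composante connexe de $x$ dans $T_{f(x)}$ ; la définition de $R$ est donc purement fibre à fibre (sur les fibres géométriques), ce qui rendra sa formation automatiquement compatible à tout changement de base $S' \rrr S$ et fournira gratuitement la dernière assertion. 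Resteront à établir : la représentabilité de $R$ par un sous-schéma \emph{ouvert} de $T\times_{S}T$, le fait que c'est une relation d'équivalence à projections lisses surjectives et à fibres géométriquement connexes, puis le passage au quotient.

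Le principal obstacle est l'ouverture de $R$. Je montrerais d'abord que $R$ est constructible : la propriété, pour deux points d'une même fibre d'un morphisme de présentation finie, d'appartenir à une même composante connexe est constructible sur la base \cite[9.7]{EGAIV3}. Un ensemble constructible étant ouvert dès qu'il est stable par générisation, tout reviendra à montrer que la relation « même composante » est stable par générisation, c'est-à-dire que les composantes connexes des fibres de $f$ peuvent se \emph{scinder} par spécialisation mais jamais \emph{fusionner}. C'est le cœur véritable du théorème, et c'est ici que la lissité est indispensable : les fibres géométriques de $f$ étant régulières, leurs composantes connexes sont ouvertes, fermées et deux à deux disjointes, et la platitude de $f$ interdit à deux composantes génériquement disjointes de se rejoindre dans une fibre spéciale (un tel point de jonction serait non unibranche dans l'espace total, ce qui est exclu lorsque $T$ est régulier, le cas général s'y ramenant par les techniques usuelles de limite projective et de localisation). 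Une fois $R$ ouvert, la réflexivité ($\Delta \subset R$), la symétrie (invariance par l'involution d'échange de $T\times_{S}T$) et la transitivité se vérifient sur les points géométriques, où elles traduisent simplement que « être dans la même composante connexe » est une relation d'équivalence.

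Les deux projections $d_{0}, d_{1} : R \rrr T$ sont alors les restrictions à l'ouvert $R$ des projections $p_{0}, p_{1}$, lesquelles sont lisses (changements de base de $f$) ; elles sont donc lisses, surjectives (puisque $\Delta \subset R$) et à fibres géométriquement connexes (ces fibres sont les composantes connexes des fibres de $f$). Comme $f$ est de présentation finie, $R$ l'est aussi sur $T$. Je disposerais ainsi d'une relation d'équivalence plate et de présentation finie sur $T$ dont les projections sont lisses ; le théorème d'Artin sur les quotients par une telle relation (\cite{Knu71}, \cite{LMB00}) garantit que le faisceau fppf quotient $E = \wt{T/R}$ est représentable par un \emph{espace algébrique}, muni d'un morphisme canonique $h : T \rrr E$. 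Le carré
$$
\xymatrix{R \ar[r]^{d_{1}} \ar[d]_{d_{0}} & T \ar[d]^{h}\\ T \ar[r]_{h} & E}
$$
est cartésien, si bien que les fibres de $h$ sont précisément les composantes connexes des fibres de $f$, donc géométriquement connexes.

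Il resterait enfin à voir que $g : E \rrr S$ est étale et quasi-compact. La quasi-compacité résulte de la surjectivité de $h$ et de la quasi-compacité de $f$. Pour l'étalité, je noterais que $h$ est fidèlement plat quasi-compact (comme morphisme quotient par une relation plate), d'où, par descente fidèlement plate le long de $h$, la platitude et la présentation finie de $g$ à partir de celles de $f = g\circ h$. Enfin, pour tout point géométrique $\bar{s}$ de $S$, la fibre $E_{\bar{s}} = \pi_{0}(T_{\bar{s}})$ est une somme finie de copies de $\mathrm{Spec}\,\kappa(\bar{s})$, donc est étale sur $\kappa(\bar{s})$ ; un morphisme plat de présentation finie à fibres étales étant étale, $g$ est étale. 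La compatibilité aux changements de base, déjà acquise au niveau de $R$, passe au quotient.
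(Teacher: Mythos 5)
Le texte ne d\'emontre pas cette proposition : sa \emph{preuve} se r\'eduit aux renvois \cite[th. 2.5.2]{Rom11} et \cite[6.8]{LMB00}. Votre esquisse reconstitue pour l'essentiel l'argument de ces r\'ef\'erences : $\pi_{0}(T/S)$ y est bien construit comme quotient fppf de $T$ par la relation ouverte \emph{appartenir \`a la m\^eme composante connexe de la fibre}, la repr\'esentabilit\'e du quotient par un espace alg\'ebrique venant du th\'eor\`eme d'Artin sur les relations d'\'equivalence plates de pr\'esentation finie, et l'\'etalit\'e de $\pi_{0}(T/S)\rrr S$ se lisant ensuite sur les fibres g\'eom\'etriques par descente le long de $h$. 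Le plan est donc correct, et la d\'efinition fibre \`a fibre de $R$ donne effectivement gratuitement la connexit\'e g\'eom\'etrique des fibres de $h$ et la compatibilit\'e \`a tout changement de base.

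Le seul point r\'eellement d\'elicat est celui que vous identifiez, l'ouverture de $R$, et c'est l\`a que votre justification est trop rapide. La r\'eduction au cas o\`u $T$ est r\'egulier ne fonctionne pas : la lissit\'e de $T/S$ n'entra\^ine la r\'egularit\'e (ni m\^eme la normalit\'e, donc l'unibranchit\'e) de $T$ que si la base $S$ est elle-m\^eme r\'eguli\`ere (resp. normale), et aucun passage \`a la limite projective ne ram\`ene une base arbitraire \`a une base r\'eguli\`ere --- l'approximation noeth\'erienne fournit une base de type fini sur ${\bf Z}$, pas une base r\'eguli\`ere. Votre argument d'unibranchit\'e \'etablit le fait voulu lorsque $S$ est normal, mais pas en g\'en\'eral (et c'est pr\'ecis\'ement le cas g\'en\'eral qui importe ici, puisque la proposition est invoqu\'ee sans hypoth\`ese de normalit\'e sur $S$). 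L'\'enonc\'e correct, valable pour tout morphisme plat localement de pr\'esentation finie \`a fibres g\'eom\'etriquement r\'eduites muni d'une section, est que la r\'eunion des composantes connexes des fibres rencontrant la section est ouverte \cite[\S 15.6, notamment 15.6.4 et 15.6.5]{EGAIV3}; appliqu\'e \`a la section diagonale de $p_{1} : T\times_{S}T \rrr T$, il donne l'ouverture de $R$, et c'est exactement l'ingr\'edient utilis\'e dans \cite[6.8]{LMB00}. Modulo ce renvoi (ou une d\'emonstration compl\`ete de ce point, qui ne passe pas par la r\'egularit\'e de l'espace total), votre d\'emonstration est celle des r\'ef\'erences cit\'ees.
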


\begin{proof} \cite[th. 2.5.2]{Rom11}, \cite[6.8]{LMB00}.
\end{proof}

\n  Le r\'esultat de {\sc Romagny} reste vrai si $T \rrr S$ est un espace alg\'ebrique lisse de pr\'esentation finie. C'est donc un r\'esultat d'adjonction, mais relative \`a l'inclusion de la cat\'egorie des espaces alg\'ebriques \'etales quasi-compacts, dans celle des espaces alg\'ebriques lisses quasi-compacts. Notre question initiale est plus sp\'ecifique en ce qu'elle vise une adjonction r\'ealis\'ee \emph{par un sch\'ema} ;  le travail de {\sc Romagny} indique que cela est en g\'en\'eral sans espoir. Mais il indique aussi une issue : puisqu'un espace alg\'ebrique \'etale est un sch\'ema d\`es qu'il est \emph{s\'epar\'e}, il faut reformuler la question initiale en se restreignant aux \'etales {\it s\'epar\'es}. C'est l'objet de ce qui suit.

Cette restriction ne permet plus d'utiliser  l'\'equivalence \'etablie dans \ref{p9.1} car la d\'emonstration de cette proposition repose sur la construction de sch\'emas \'etales \'eminemment non s\'epar\'es ; aussi, l'adjoint consid\'er\'e plus bas, puisqu'il est essentiellement ind\'ependant de la connexit\'e des fibres, ne peut  plus 
\^etre not\'e $\pi_{0}(T/S)$, et nous avons d\^u  introduire la notation $\pi^s(T/S)$, l'exposant ``$s$'' \'etant  l\`a pour \'evoquer la s\'eparation.

\`A un sch\'ema $T$ sur  $S$ (pour le moins plat de pr\'esentation finie) on cherche donc dans la suite \`a associer un sch\'ema  $\pi^s(T/S)$, \'etale quasi-compact \emph{et s\'epar\'e} sur $S$ muni d'un morphisme de $S$-sch\'emas
 $$
 T \; \rrr \; \pi^s(T/S)
 $$
 qui soit universel pour les $S$-morphismes de $T$ vers un \'etale s\'epar\'e.
  
 Nous allons montrer l'existence de cet adjoint sous des hypoth\`eses g\'en\'erales sur $T/S$, mais \`a condition de supposer $S$ int\`egre.
 Pour obtenir les  propri\'et\'es raisonnables attendues (compatibilit\'e au produit, \`a certains changements de base, etc.) il faudra supposer de plus que $S$ est normal.
 
\subsection{Sorites} Commen\c cons par quelques remarques qui montrent que l'hypoth\`ese de s\'eparation simplifie beaucoup les choses.

Soit $f : T \rrr S$ un morphisme  plat et de pr\'esentation finie de sch\'emas. On consid\`ere   d'abord la cat\'egorie  $\e(T/S)$ dont les objets sont les  factorisations de $f$ en   
$
T \stackrel{h}{\rrr} E   \stackrel{g}{\rrr}  S,
$
o\`u  $g$ est \'etale  et \emph{s\'epar\'e} et o\`u $h$ est  surjectif, une fl\`eche de $(g, h)$ vers $(g', h')$ \'etant un morphisme de $S$-sch\'emas
$u : E \rrr E'$ tel que $h' = uh$ et $g'u = g$. Notons que $h$ \'etant supos\'e surjectif et $gh$ quasi-compact, le morphisme $g$ est quasi-compact.

\begin{lemme}\label{l10.1}\phantomsection Entre deux factorisations il existe au plus un morphisme. 
\end{lemme}

\begin{proof}  Soit $f = gh$ une factorisation de $\e(T/S)$. Le morphisme diagonal $\Delta_{g}$ \'etant une immersion ouverte et ferm\'ee  le morphisme $h$ est  plat et de pr\'esentation finie, tout comme $gh = f$  (cf. \ref{sA.1}) ; il est donc ouvert ; comme il est suppos\'e surjectif, $h$ est un \'epimorphisme \cite[VIII 5.3]{SGA1}.
\end{proof}

\begin{lemme}\phantomsection La cat\'egorie $\e(T/S)$ est filtrante.
\end{lemme}

\begin{proof}  Soient, en effet, $f = g'h' = g''h''$ deux factorisations ; elles donnent lieu au diagramme
$$
\xymatrix{&&&E'\ar[dr]^{g'} &\\
T \ar[urrr]^{h'} \ar[rr]^h \ar[drrr]_{h''} &&E'\times_{S}E'' \ar[ur] \ar[dr] && S\\ 
&&& E'' \ar[ur]_{g''} &}
$$
dans lequel $h$ est ouvert ; par suite, $E = h(T)$ est un ouvert du produit fibr\'e, et il est muni des projections  $u' : E \rrr E'$  et $u'' : E \rrr E''$ qui sont \'etales et s\'epar\'ees, et le morphisme compos\'e $g'u'=g''u'' : E \rrr S$ est donc lui aussi \'etale et s\'epar\'e.
\end{proof}

\begin{lemme}\phantomsection\label{l9.1} Soit $S$ un sch\'ema int\`egre, de point g\'en\'erique $\eta$. Soient $g : E \rrr S$  et  $g' : E' \rrr S$ deux morphismes. On suppose que \\
a) $g$ est fidèlement plat et quasi-compact (fpqc), et s\'epar\'e ;\\
b) $g'$ est étale et quasi-s\'epar\'e. \\
Si un morphisme de $S$-sch\'emas $u : E \rrr E'$ est surjectif et s'il induit un isomorphisme $E_{\eta} \; \wt{\rrr}\; E'_{\eta}$ sur les fibres g\'en\'eriques, alors $u$ est un isomorphisme.
\end{lemme}

\begin{proof} Montrons d'abord que $u$ est un morphisme s\'epar\'e, plat et quasi-compact. Le morphisme $g = g'u$ poss\`ede ces propri\'et\'es ; gr\^ace au proc\'ed\'e de \ref{sA.1}, il suffit de v\'erifier que  $\Delta_{g'}$ les poss\`ede aussi ; or ce morphisme diagonal est une immersion (donc un morphisme s\'epar\'e), qui est ouverte (car $g'$ est \'etale), c'est donc un morphisme plat, et  $\Delta_{g'}$ est quasi-compact (car $g'$ est quasi-s\'epar\'e). 

\n Pour montrer que $u$ est un isomorphisme, il suffit donc, par descente fpqc \cite[VIII 5.4]{SGA1}, de montrer que l'un des morphismes de projection $E\times_{E'}E \rrr E$ est un isomorphisme, ou, encore, que le morphisme diagonal $\Delta_{u} : E \rrr E\times_{E'}E$ est un isomorphisme. Comme $\Delta_{u}$ est une immersion ferm\'ee, il suffit de voir qu'elle est \scd e. Consid\'erons le carr\'e commutatif
$$
\xymatrix{E  \ar[r]^{\Delta_{u}} & E\times_{E'}E\\
E_{\eta} \ar[u] \ar[r]_>>>>{ (\Delta_{u})_{\eta}}& (E\times_{E'}E)_{\eta} \ar[u] .}  
$$
Les morphismes verticaux sont \scd s\, puisqu'ils proviennent par changements de base plats  du monomorphisme quasi-compact  $\eta \rrr S$ qui est \scd, et le morphisme horizontal du bas est un isomorphisme puisque $u_{\eta}$ est un isomorphisme ; $\Delta_{u}$ est donc bien \scd. 
\end{proof}

\n On notera que la conclusion est fausse si on ne suppose pas $g$ s\'epar\'e : par exemple, le sch\'ema $S'$ obtenu par recollement de deux copies du sch\'ema int\`egre $S$ le long d'un ouvert non vide et $\neq S$ fournit un morphisme \'etale surjectif $S' \rrr S$ qui est g\'en\'eriquement un isomorphisme, et qui n'est pas lui-m\^eme un isomorphisme.

\subsection{Construction de $\pi^s$}
 
\begin{prop}\phantomsection \label{p9.3} Soit $f : T \rrr S$ un morphisme fid\`element plat de pr\'esentation finie. On suppose que $S$ est int\`egre. On consid\`ere  la cat\'egorie $\e(T/S)$ dont les objets sont les factorisations  de $f$ de la forme
$$
T \; \stackrel{h}{\rrr} \; E \; \stackrel{g}{\rrr} \; S ,
$$
o\`u $h$ est surjectif et $g$ est \'etale et \emph{s\'epar\'e}.
Alors cette cat\'egorie poss\`ede un objet initial, dont le $S$-sch\'ema \'etale  s\'epar\'e sera not\'e $\pi^s(T/S)$. Ce sch\'ema $\pi^s(T/S)$ d\'epend fonctoriellement de $T$, et ce foncteur est un adjoint \`a gauche de l'inclusion de la cat\'egorie des $S$-sch\'emas \'etales et s\'epar\'es dans celle des sch\'emas fid\`element plats de pr\'esentation finie sur $S$.
\end{prop}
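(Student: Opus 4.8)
The plan is to produce the initial object of $\e(T/S)$ by controlling what happens over the generic point, exploiting that over a field everything becomes finite. The two preceding lemmas already give us that $\e(T/S)$ is cofiltered (filtrant à gauche) and that there is at most one morphism between any two of its objects (lemme \ref{l10.1}); moreover $\e(T/S)$ is nonempty, since $f$ being fppf is surjective, so $T\stackrel{f}{\rrr}S\stackrel{\mathrm{id}}{\rrr}S$ is an object (in fact the terminal one). First I would introduce the generic-fibre functor $\e(T/S)\rrr\e(T_\eta/\eta)$, $E\mapsto E_\eta$, where $\eta$ is the generic point of $S$. Since each structural map $g:E\rrr S$ is quasi-compact (as noted after the definition of $\e(T/S)$), every $E_\eta$ is étale, quasi-compact and separated over the field $\kappa(\eta)$, hence \emph{finite} étale. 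Consequently the isomorphism classes of generic fibres that receive a surjection from $T_\eta$ form a \emph{finite} poset $P$ (a poset by the analogue of lemme \ref{l10.1} over $\eta$; one knows by proposition \ref{p9.2} that it has a least element $\pi_0(T_\eta/\eta)$, though this will not be needed).

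Next I would consider the subset $\mathcal G\subset P$ of classes actually arising as $E_\eta$ for $E$ an object of $\e(T/S)$. Cofilteredness makes $\mathcal G$ downward directed: given $E,E'$, a common predecessor $E''$ yields $E''_\eta$ below both $E_\eta$ and $E'_\eta$. A downward-directed subset of a finite poset has a least element, so there is an object $E_0$ whose generic fibre $(E_0)_\eta$ is minimal among all $E_\eta$. I claim $E_0$ is initial. Given any object $E$, cofilteredness provides $E''$ together with morphisms $u:E''\rrr E_0$ and $w:E''\rrr E$. Then $u_\eta:(E'')_\eta\rrr(E_0)_\eta$ shows $(E'')_\eta\le(E_0)_\eta$ in $P$, while minimality gives the reverse inequality, so $u_\eta$ is an isomorphism. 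Now $E''\rrr S$ is faithfully plat quasi-compact and separated (it is étale and separated, and surjective because $f=g_{E''}h_{E''}$ is surjective), while $E_0\rrr S$ is étale quasi-séparé; since $u$ is surjective (from $h_0=u\,h_{E''}$ with $h_0$ surjective), lemme \ref{l9.1} forces $u$ to be an isomorphism. Hence $w\,u^{-1}:E_0\rrr E$ is the required morphism, unique by lemme \ref{l10.1}. Thus $E_0$ is the initial object, which we name $\pi^s(T/S)$; it is étale, separated, and of finite presentation over $S$.

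It remains to promote this into the left-adjoint property. The structural morphism $h_0:T\rrr\pi^s(T/S)$ is surjective and, by the argument already used in lemme \ref{l10.1} (the diagonal of $\pi^s(T/S)/S$ being an immersion ouverte et fermée and $f$ being plat de présentation finie), it is itself plat de présentation finie, hence an épimorphisme fppf. Given an arbitrary $S$-morphism $\psi:T\rrr E'$ into an étale separated $S$-scheme, I would first note that $\psi$ is ouvert: $T\rrr S$ is universellement ouvert (plat de présentation finie) and $E'\rrr S$ has diagonale ouverte (étale), so the sorite of \ref{sA.1} applies. Therefore $\psi(T)\subset E'$ is an open subscheme, again étale et séparé sur $S$, and $T\rrr\psi(T)$ is an object of $\e(T/S)$; composing the unique morphism $\pi^s(T/S)\rrr\psi(T)$ with the open immersion yields a factorisation $\psi=(\,\cdot\,)\circ h_0$, unique because $h_0$ is an épimorphisme. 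This is exactly the universal property exhibiting $h_0$ as the unit of an adjunction, and functoriality of $\pi^s$ together with the stated adjonction follow formally from this universal property.

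The main obstacle is the step producing the initial object: one must transport minimality from the generic fibre back over all of $S$, and the mechanism for this is precisely lemme \ref{l9.1}, whose hypotheses (separation and faithful flatness of the source over $S$, étaleness of the target) must be verified with care. The finiteness of the poset $P$ of generic fibres, which rests on the quasi-compactness of the objects and hence on $f$ being de présentation finie, is what guarantees in the first place that a minimal generic fibre exists.
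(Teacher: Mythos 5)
Your strategy is the paper's: pass to the generic fibre, find an object of $\e(T/S)$ whose generic fibre is extremal, transport the conclusion back over $S$ via the lemme \ref{l9.1} (whose hypotheses you verify correctly), and obtain the adjunction from the openness of the image of any $T\rrr E'$ together with the fact that $h_{0}$ is un \'epimorphisme fppf --- this last part coincides with the paper's treatment of functoriality and is fine. The problem lies in the one non-formal step, the assertion that the isomorphism classes of generic fibres receiving a surjection from $T_{\eta}$ form a \emph{finite} poset $P$. This does not follow from each $E_{\eta}$ being finite \'etale over $k=\kappa(\eta)$: already over $\mathbf{Q}$ there are infinitely many pairwise non-isomorphic \'etale algebras of rank $2$, so the quasi-compactness of the objects --- which you name as the source of the finiteness --- makes each individual $E_{\eta}$ finite but says nothing about the family, and in particular gives no bound on the ranks.

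What is needed, and is precisely the content of the paper's argument, is a uniform bound on ${\rm rang}_{k}(E_{\eta})$: one chooses a closed point in each of the finitely many connected components of $T_{\eta}$, forming a finite reduced $\eta$-scheme $Z\subset T_{\eta}$, and observes that for every factorisation $T_{\eta}\rrr F\rrr\eta$ with $F$ reduced fini, each fibre of $T_{\eta}\rrr F$ is open and closed, hence a union of connected components, so that $Z\rrr F$ is surjective and ${\rm rang}_{k}(F)\le{\rm rang}_{k}(Z)$. Granting this bound, one takes $E_{0}$ with ${\rm rang}_{k}((E_{0})_{\eta})$ maximal; your least element of the downward-directed set $\mathcal G$ is then recovered, since a surjection of finite \'etale $k$-schemas of equal rank is an isomorphism, and the rest of your argument goes through unchanged. (A direct proof that $P$ itself is finite is also possible, but requires controlling the residue fields of $F$ through those of closed points of $T_{\eta}$; it is not a one-line consequence of quasi-compactness.) As written, the existence of the extremal object --- hence of the initial object --- is not established.
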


\begin{proof} Remarquons d'abord que, le morphisme $f$ \'etant quasi-com\-pact et $h$ surjectif, les sch\'emas $E$ qui interviennent, et en particulier $\pi^s(T/S)$, sont quasi-compacts sur $S$. Soit $\eta = \s(k)$ le point g\'en\'erique de $S$. Puisque $T_{\eta}$ est de type fini sur un corps, ses composantes connexes sont en nombre fini, et chacune d'elles poss\`ede des points ferm\'es, lesquels ont un corps r\'esiduel fini sur $k$ \cite[6.5.2]{EGAG}. Choisissons un point ferm\'e dans chacune des composantes connexes de $T_{\eta}$, et soit $Z \subset T_{\eta}$ le sch\'ema r\'eduit constitu\'e par la r\'eunion de ces points. C'est un sch\'ema fini sur $\eta$ qui a la propri\'et\'e suivante : pour toute factorisation $T_{\eta} \rrr F \rrr \eta$ o\`u $F$ est un sch\'ema r\'eduit et fini sur $\eta$, le morphisme compos\'e $\varphi : Z \rrr F$ est surjectif ; en effet, puisque chaque point de $F$ est ouvert et ferm\'e, chaque fibre de $T_{\eta}\rrr F$ est une partie ouverte et ferm\'ee de $T_{\eta}$ ; par suite elle est une r\'eunion de composantes connexes, et elle rencontre donc $Z$.

\n On en d\'eduit l'in\'egalit\'e 
$$
{\rm rang}_{k}(F) \leq {\rm rang}_{k}(Z) .
$$
En particulier si $E$ provient de la cat\'egorie $\e(T/S)$, le rang de $E_{\eta}$ sur $k$ est born\'e. (On aura reconnu ici l'argument usuel pour d\'emontrer  \ref{p9.2}.)

\n Consid\'erons une factorisation dans $\e(T/S)$,  $T\stackrel{h}{\rrr}  E  \stackrel{g}{\rrr} S$  pour laquelle le rang de $E_{\eta} \rrr \eta$ est maximum, et montrons qu'elle est un \'el\'ement initial de la cat\'egorie $\e(T/S)$. Comme cette cat\'egorie est filtrante, tout revient \`a voir que pour toute autre factorisation $f = g'h'$, tout morphisme $u : E' \rrr E$ est un isomorphisme. La commutativit\'e du triangle de gauche dans
$$
\xymatrix{
& E' \ar[dd]^u \ar[dr]^{g'} &\\
T \ar[ur]^{h'} \ar[dr]_{h} && S\\
& E \ar[ur]_{g}&}
$$
montre que $u$ est surjectif puisque  $h$ l'est ; par suite le morphisme $u_{\eta} : E'_{\eta} \rrr E_{\eta}$ est surjectif, d'où l'on tire l'inégalité  ${\rm rang}_{k}(E_{\eta}) \leq {\rm rang}_{k}(E'_{\eta})$ ; mais le choix de $E$ implique que ${\rm rang}_{k}(E_{\eta})$ est maximum, donc $u_{\eta}$ est un isomorphisme et le lemme \ref{l9.1} permet de conclure.
\medskip

V\'erifions la fonctorialit\'e en $T$. Soit $\theta : T' \rrr T$ un morphisme (quelconque) de $S$-sch\'emas  plats de pr\'esentation finie. Le morphisme  plat de pr\'esentation finie $ T' \rrr S$ se factorise en
$$
T' \; \stackrel{\theta}{\rrr}\; T \; \stackrel{h}{\rrr}\; \pi^s(T/S) \; \stackrel{g}{\rrr}\; S
$$
Comme $\Delta_{g}$ est une immersion ouverte et ferm\'ee, c'est aussi un morphisme plat de pr\'esentation finie ; donc $h\theta$ est plat de pr\'esentation finie  (cf. \ref{sA.1}), et en particulier c'est un morphisme ouvert ; soit $F \subset \pi^s(T/S)$ son image ; c'est un $S$-sch\'ema \'etale et s\'epar\'e, et quasi-compact puisque $T'$ est quasi-compact sur $S$ ; la propri\'et\'e universelle de $\pi^s(T'/S)$ fournit le morphisme  $\pi^s(T'/S) \rrr F \subset \pi^s(T/S)$ cherch\'e.
\end{proof}

\subsection{Variation de la base: pathologies} Sous les hypoth\`eses de la proposition \ref{p9.3}, le sch\'ema \'etale  $\pi^s(T/S)$ est caract\'eris\'e par le fait que le rang sur $k$ de sa fibre $\pi^s(T/S)_{\eta}$ est maximum, {\it parmi les rangs g\'en\'eriques des \'etales s\'epar\'es de} $\e(T/S)$.
\medskip

\n En utilisant de $k$-sch\'ema \'etale $\pi_{0}(T_{\eta}/\eta)$ de  la proposition \ref{p9.2}, on a donc l'in\'egalit\'e
 \begin{equation}\label{eq9.1}
 {\rm rang}_{k}(\pi^s(T/S)_{\eta}) \leq {\rm rang}_{k}(\pi_{0}(T_{\eta}/\eta)).
 \end{equation}

 Cette in\'egalit\'e peut \^etre stricte, comme le montrent les deux exemples suivants ; dans le premier, la base est normale, le morphisme est fini libre mais il n'est pas \'etale, dans le second le morphisme est \'etale mais la base n'est pas normale.
 
\begin{ex}\phantomsection Considérons l'anneau $ A  = {\bf R}[X] $ comme un sous-anneau de $B = {\bf R}[X, Y]/(X^2+Y^2)$ ;  l'homomorphisme de ${\bf R}[X]$-alg\`ebres   ${\bf R}[X, Y]/(X^2+Y^2) \rrr  {\bf C}[X]$, d\'efini par $Y \mapsto iX$, permet d'identifier $B$ \`a l'ensemble des polyn\^omes complexes dont le terme constant est r\'eel. Le morphisme $A \rrr B$  est fini libre de rang 2, et il n'est pas \'etale, comme on le voit en faisant $X = 0$ ; mais l'homomorphisme localis\'e $A_{X} \rrr B_{X}$ est \'etale. 
 
 Désignons par $f : T = \s(B)  \rrr S = \s(A) = {\bf A}^{1}_{{\bf R}}$ le morphisme associé à l'inclusion $A \subset B$ ; on peut voir $T$ comme le schéma obtenu à partir de ${\bf A}^{1}_{{\bf C}}$ en ``pin\c cant`` l'origine par remplacement du corps résiduel ${\bf C}$ par ${\bf R}$. Au dessus du compl\'ementaire de l'origine, le morphisme $f$ est \'etale fini, de sorte que $\pi_{0}(T_{\eta}/\eta) \simeq T_{\eta}$ est de rang 2. Montrons que $\pi^s(T/S) = S$. Soit $f = gh$ une factorisation avec $h : T \rrr E$ un $S$-morphisme surjectif et $g : E \rrr S$ un morphisme étale et séparé ; il faut voir que $g$ est un isomorphisme. Si ce n'est pas le cas, alors $h$ induit un isomorphisme $T_{\eta} \rrr E_{\eta}$ puisque $T_{\eta}$ est de rang 2 sur $\eta$, et on applique le lemme \ref{l9.1}, en rempla\c cant dans l'énoncé $( g, g', u)$ par $(f, g, h)$, pour voir que $h$ serait un isomorphisme, ce qui est absurde puisque $f$ n'est pas étale.
\end{ex}
  
\begin{ex}\phantomsection On a construit en \ref{ex7.2} un morphisme \emph{\'etale}, donc lisse,  $f : T \rrr S$, o\`u $S$ est le spectre d'un anneau local noeth\'erien int\`egre de dimension 1 (non normal !), et qui n'admet pas de s\'eparateur. Notant $\eta$ le point g\'en\'erique de $S$, on a dans cet exemple, $T_{\eta} = \eta \sqcup \eta \sqcup \eta = \pi_{0}(T_{\eta}/\eta)$ ; pour toute factorisation $T \stackrel{h}{\rightarrow} E \stackrel{g}{\rightarrow} S$ avec $g$ \'etale s\'epar\'e, le rang de $E_{\eta}$ est donc $\leq$ 3 ; s'il était égal à 3, le lemme \ref{l9.1} impliquerait que $h$ serait un isomorphisme, ce qui est impossible puisque $f$ n'est pas séparé et que $g$ l'est. Le rang de $E_{\eta}$ est donc < $3$, et $\pi^s(T/S)$ ne prolonge pas $\pi_{0}(T_{\eta}/\eta) = T_{\eta}$.
 \end{ex}
 
   On va montrer plus bas que si $S$ est normal et si $T$ est lisse sur $S$, l'in\'egalit\'e \eqref{eq9.1} est une \'egalit\'e, i.e. que le morphisme \'etale ``g\'en\'erique''  $\pi_{0}(T_{\eta}/\eta) \rrr \eta$ se prolonge en un \'etale s\'epar\'e sur $S$ ; de plus, dans ce cas, la formation de $\pi^s$ commute aux changements de base lisses. 
  
  Voir ces consid\'erations comme une tentative de prolongement \`a $S$ d'un \'etale  ``g\'en\'erique''  peut \'evoquer la probl\'ematique des mod\`eles de N\'eron \cite[p. 12]{BLR90}; mais l'analogie tourne court, d'abord parce que la construction de N\'eron repose de fa\c con cruciale sur une loi de groupe (g\'en\'erique), alors qu'ici on impose au prolongement \`a $S$ d'\^etre un quotient de $T$, et ensuite parce que le mod\`ele de N\'eron est objet final de la cat\'egorie des prolongements, alors que $\pi^s(T/S)$ est un objet initial.

\subsection{Cas d'une base normale}

\begin{thm}\phantomsection\label{p9.4} Soient $S$ un sch\'ema int\`egre et \emph{normal}, et $f : T \rrr S$ un morphisme lisse et de pr\'esentation finie. D\'esignons par $\pi_{0}(T/S)^{\rm sep}$ le $S$-sch\'ema  \'etale quasi-compact et s\'epar\'e  qui est associ\'e par la proposition \ref{p9.3} au $S$-espace alg\'ebrique $\pi_{0}(T/S)$ de {\sc Romagny} (proposition \ref{p10.2}). Alors la factorisation 
$$
T \; \rrr \;  \pi_{0}(T/S)^{\rm sep} \; \rrr \; S
$$
est la factorisation initiale de $\e(T/S)$. Autrement dit, on a, si $S$ est normal et int\`egre, un isomorphisme de foncteurs {\rm (}en $T${\rm )} : 
$$
\pi^s(T/S) \simeq \pi_{0}(T/S)^{\rm sep}.
$$ 
\end{thm}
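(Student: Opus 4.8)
Le plan est d'identifier $\pi_{0}(T/S)^{\rm sep}$ à l'objet initial de $\e(T/S)$, ce qui donnera l'isomorphisme annoncé par unicité d'un tel objet (proposition \ref{p9.3}). Quitte à remplacer $S$ par l'ouvert image $f(T)$ — qui est intègre et normal, de même point générique $\eta$, puisqu'un morphisme lisse de présentation finie est ouvert — on peut supposer $f$ \emph{surjectif}, donc fidèlement plat de présentation finie. C'est la normalité de $S$ qui sera l'ingrédient essentiel, via la proposition \ref{p8.1}.

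On commencerait par vérifier que $\pi_{0}(T/S)^{\rm sep}$, muni de sa factorisation naturelle, est bien un objet de $\e(T/S)$, puis on en calculerait la fibre générique. D'après la proposition \ref{p8.1}, le morphisme de séparation $\pi_{0}(T/S) \rrr \pi_{0}(T/S)^{\rm sep}$ est étale surjectif et son but est un $S$-schéma étale séparé, quasi-compact comme image de $\pi_{0}(T/S)$ qui l'est (proposition \ref{p10.2}); composé avec le morphisme de Romagny $T \rrr \pi_{0}(T/S)$, qui est surjectif car ses fibres sont géométriquement connexes, il fournit une factorisation de $f$ à première flèche surjective et seconde flèche étale séparée, donc un objet de $\e(T/S)$. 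Pour la fibre générique, la proposition \ref{p8.1} assure que $\pi_{0}(T/S) \rrr \pi_{0}(T/S)^{\rm sep}$ est un isomorphisme sur les fibres au-dessus de $\eta$, et la formation de $\pi_{0}$ commute au changement de base $\eta \rrr S$ (proposition \ref{p10.2}); d'où
$$
(\pi_{0}(T/S)^{\rm sep})_{\eta} \; \simeq \; (\pi_{0}(T/S))_{\eta} \; \simeq \; \pi_{0}(T_{\eta}/\eta).
$$

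Il resterait à conclure par un argument de rang générique. L'égalité précédente montre que ${\rm rang}_{k}(\pi_{0}(T/S)^{\rm sep}_{\eta}) = {\rm rang}_{k}(\pi_{0}(T_{\eta}/\eta))$, qui est le majorant universel de \eqref{eq9.1}. Or l'objet initial $\pi^s(T/S)$ se surjecte sur tout objet de $\e(T/S)$, donc ${\rm rang}_{k}(\pi^s(T/S)_{\eta}) \geq {\rm rang}_{k}(\pi_{0}(T_{\eta}/\eta))$; joint à \eqref{eq9.1}, ceci force l'égalité des deux rangs génériques. Le morphisme canonique $u : \pi^s(T/S) \rrr \pi_{0}(T/S)^{\rm sep}$ issu de l'initialité est alors surjectif et induit, par cette égalité, un isomorphisme sur les fibres génériques; comme $\pi^s(T/S) \rrr S$ est fidèlement plat quasi-compact et séparé (surjectif puisque $T \rrr \pi^s(T/S)$ et $f$ le sont) tandis que $\pi_{0}(T/S)^{\rm sep} \rrr S$ est étale donc quasi-séparé, le lemme \ref{l9.1} garantit que $u$ est un isomorphisme. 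La fonctorialité en $T$ résulte alors de celle de $\pi_{0}$ (proposition \ref{p10.2}), du passage au séparé et de l'unicité des morphismes dans $\e(T/S)$ (lemme \ref{l10.1}).

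La principale difficulté est concentrée dans l'identification de la fibre générique, où se conjuguent la commutation de $\pi_{0}$ au changement de base et l'invariance de la fibre générique sous le passage au séparé; c'est uniquement cette dernière propriété qui repose sur la normalité de $S$ (proposition \ref{p8.1}), et c'est elle qui assure que $\pi^s(T/S)$ atteint le rang générique maximal $\pi_{0}(T_{\eta}/\eta)$. Cette coïncidence tombe en défaut dès que la base cesse d'être normale, comme l'illustrent les exemples précédents où l'inégalité \eqref{eq9.1} est stricte.
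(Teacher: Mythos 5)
Votre preuve est correcte et suit essentiellement la même démarche que celle du texte : identification de la fibre générique de $\pi_{0}(T/S)^{\rm sep}$ à $\pi_{0}(T_{\eta}/\eta)$ via la commutation de $\pi_{0}$ aux changements de base et la proposition \ref{p8.1}, puis conclusion par maximalité du rang générique. Vous explicitez simplement (vérification de l'appartenance à $\e(T/S)$, réduction au cas surjectif, recours au lemme \ref{l9.1}) des étapes que le texte laisse implicites en renvoyant à la caractérisation de l'objet initial donnée après la proposition \ref{p9.3}.
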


\begin{proof} Soit $\eta$ le point g\'en\'erique de $S$ ; puisque la formation de $\pi_{0}$ commute aux changements de base, on a un isomorphisme
$$
\pi_{0}(T/S)_{\eta} \; \wt{\rrr}\; \pi_{0}(T_{\eta}/\eta) ,
$$
o\`u le sch\'ema de droite repr\'esente les composantes connexes de $T_{\eta}$, i.e. c'est exactement celui de la proposition \ref{p9.2}.
D'apr\`es la proposition \ref{p8.1}, le morphisme g\'en\'erique  $\pi_{0}(T/S)_{\eta} \rrr \pi_{0}(T/S)^{\rm sep}_{\eta}$ est un isomorphisme ; par suite, le sch\'ema \'etale $\pi_{0}(T/S)^{\rm sep}$ a le rang g\'en\'erique maximum, celui de $\pi_{0}(T_{\eta}/\eta)$ ; il r\'ealise donc la factorisation initiale cherch\'ee. 
\end{proof}

Noter qu'en g\'en\'eral le morphisme $\pi_{0}(T/S) \rrr \pi_{0}(T/S)^{\rm sep}$ n'est pas un isomorphisme et que, par suite, les fibres de $T \rrr \pi^s(T/S)$ ne sont pas connexes, comme le montre un exemple explicite, et bien classique, celui d'une conique qui se sp\'ecialise en deux droites parall\`eles.

\begin{ex}\phantomsection Soit $S = \s(R)$ le spectre d'un anneau de valuation discr\`ete, d'uniformisante $t$, et tel que $2 \in R^{\times}$. Posons $F(X, Y) = X(X-1)+tY^2$. Comme on a $1 = F'^2_{X} - 4F + 2YF'_{Y}$, le morphisme 
 $$
 f : T = \s(R[X, Y]/(F)) \rrr S
 $$
est lisse. La fibre sp\'eciale ($t=0$) est la r\'eunion de deux droites disjointes, et la fibre g\'en\'erique est g\'eom\'etriquement int\`egre : pour le voir, notons $K$  le corps des fractions de $R$, et $L$ celui de l'anneau de $T$, et v\'erifions que $L$ est une extension transcendante pure de $K$ ; en effet, si $x$ et $y$ d\'esignent les images de $X$ et de $Y$ dans $L$, on a $x = x^2 + ty^2$, soit $1/x = 1 + t(y/x)^2$ ; d'o\`u $L = K(y/x)$. 

\n L'espace alg\'ebrique $\pi_{0}(T/S)$ est ici le sch\'ema obtenu par recollement de deux copies de $S$ le long du point g\'en\'erique, et  $\pi^s(T/S) = \pi_{0}(T/S)^{\rm sep} = S$.
\end{ex}

\begin{prop}\label{p10.1}\phantomsection Soit $S$ un sch\'ema normal  int\`egre de point g\'en\'erique $\eta$. Soient $f : T \rrr S$  et $f' : T' \rrr S$ deux morphismes  lisses et de pr\'esentation finie. Alors
\begin{thlist}
\item {\rm (Changement de base lisse)} \, Le morphisme $\pi^s(T\times_{S}T' / T') \rrr  \pi^s(T/S)\times_{S}T'$ est un isomorphisme.

\item {\rm (Compatibilit\'e au produit)} \, Le morphisme $\pi^s(T\times_{S}T' /S) \rrr  \break\pi^s(T/S)\times_{S}\pi^s(T'/S)$ est un isomorphisme.
\end{thlist}
\end{prop}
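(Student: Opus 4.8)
The plan is to reduce both assertions to the classical behaviour of $\pi_{0}$ over a field (Proposition \ref{p9.2}), using three ingredients that play the same role in each part: the universal property of the initial object of $\e(T\times_{S}T'/-)$ (Proposition \ref{p9.3}) to produce the comparison morphism; the identification $\pi^{s}(-/-)=\pi_{0}(-/-)^{\rm sep}$ of Theorem \ref{p9.4}, together with Proposition \ref{p8.1}, to compute the generic fibre of $\pi^{s}$ as $\pi_{0}$ of the generic fibre; and Lemma \ref{l9.1} to propagate an isomorphism on generic fibres to the whole base. Throughout I assume, as in Proposition \ref{p9.3}, that $f$ and $f'$ are faithfully flat, so that all the $\pi^{s}$ in sight are defined and quasi-compact over their base, and the relevant comparison morphisms will automatically be surjective.

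For part (i), since $S$ is normal and $f'$ is smooth, $T'$ is normal, so its connected components are open and integral; both sides decompose over this clopen partition, and I may assume $T'$ integral with generic point $\eta'$. As $f'$ is flat, going-down forces $\eta'$ to lie over the generic point $\eta$ of $S$. First I would build the comparison map: $\pi^{s}(T/S)\times_{S}T'$ is étale and separated over $T'$, and $T\times_{S}T'\rrr \pi^{s}(T/S)\times_{S}T'$ is surjective (base change of the surjection $T\rrr\pi^{s}(T/S)$), so this factorization is an object of $\e(T\times_{S}T'/T')$; initiality of $\pi^{s}(T\times_{S}T'/T')$ then yields a surjective $T'$-morphism $u$ to it. To finish by Lemma \ref{l9.1} over the integral base $T'$ (the source being fpqc and separated, the target étale), it suffices to check $u_{\eta'}$ is an isomorphism. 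Here Theorem \ref{p9.4} over the normal integral base $T'$, combined with Romagny's base-change compatibility (Proposition \ref{p10.2}), identifies the generic fibre of the source with $\pi_{0}((T\times_{S}T')_{\eta'}/\eta')$, and $(T\times_{S}T')_{\eta'}=T_{\eta}\times_{\eta}\eta'$; the generic fibre of the target is $\pi_{0}(T_{\eta}/\eta)\times_{\eta}\eta'$. Their equality is exactly the stability of $\pi_{0}$ under the field extension $\kappa(\eta)\rrr\kappa(\eta')$ of Proposition \ref{p9.2}.

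For part (ii), the same mechanism applies over $S$ itself. The product $\pi^{s}(T/S)\times_{S}\pi^{s}(T'/S)$ is étale and separated over $S$, and the product map $T\times_{S}T'\rrr\pi^{s}(T/S)\times_{S}\pi^{s}(T'/S)$ is surjective, being fibrewise the product over $\kappa(s)$ of two surjections of schemes over a field; initiality again provides a surjective comparison morphism $u:\pi^{s}(T\times_{S}T'/S)\rrr\pi^{s}(T/S)\times_{S}\pi^{s}(T'/S)$. By Lemma \ref{l9.1} it is enough to see $u_{\eta}$ is an isomorphism. By Theorem \ref{p9.4} the generic fibre of the left-hand side is $\pi_{0}(T_{\eta}\times_{k}T'_{\eta}/\eta)$ with $k=\kappa(\eta)$, while that of the right-hand side is $\pi_{0}(T_{\eta}/\eta)\times_{k}\pi_{0}(T'_{\eta}/\eta)$; these coincide by the product compatibility of $\pi_{0}$ over a field (Proposition \ref{p9.2}), so $u$ is an isomorphism.

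The conceptual heart, and the only genuinely new input, is concentrated entirely on the generic fibre, where the problem collapses to the classical $\pi_{0}$ over a field: normality of the base is precisely what guarantees, through Theorem \ref{p9.4} and the generic isomorphism of Proposition \ref{p8.1}, that passing to the \emph{separated} theory loses no information generically, i.e. that the generic fibre of $\pi^{s}$ is still $\pi_{0}$ of the generic fibre. I expect the main obstacle to be bookkeeping rather than depth: getting the reduction to an integral base in (i) right (using that a normal scheme is a disjoint union of integral ones and that the flat maps send generic points to generic points), and verifying surjectivity of the two comparison morphisms so that the hypotheses of Lemma \ref{l9.1} are genuinely met.
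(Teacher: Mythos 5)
Your argument is correct and follows essentially the same route as the paper: reduce to the case of an integral base, use Lemma \ref{l9.1} to localize the question at the generic point, then invoke Theorem \ref{p9.4} (normality identifying $\pi^{s}$ generically with Romagny's $\pi_{0}$) and the base-change/product compatibilities of $\pi_{0}$ over a field from Proposition \ref{p9.2}. The extra care you take in constructing the comparison morphisms via initiality and checking their surjectivity is detail the paper leaves implicit, but it is the same proof.
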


\begin{proof} L'hypoth\`ese de lissit\'e du morphisme $T' \rrr S$ et la normalit\'e de $S$ impliquent que le sch\'ema $T'$ est normal \cite[11.3.13]{EGAIV3} ; il est donc somme de ses composantes irr\'eductibles, ce qui permet de le supposer int\`egre ; soit $\xi$ son point g\'en\'erique. 

\n Pour d\'emontrer $i)$, il suffit, d'apr\`es le lemme \ref{l9.1}), de v\'erifier que le morphisme g\'en\'erique
$$
\pi^s(T\times_{S}T' / T')_{\xi} \, \rrr \, \pi^s(T/S)\times_{S}\xi = \pi^s(T/S)_{\eta}\times_{\eta}\xi
$$ 
est un isomorphisme. Or, la formation de l'espace alg\'ebrique $\pi_{0}$ commute \`a tout changement de base ; donc le morphisme de $T'$-espaces alg\'ebriques  
$\pi_{0}(T\times_{S}T' / T') \, \rrr \, \pi_{0}(T/S)\times_{S}T'$ est un isomorphisme, et il en est de m\^eme de 
$\pi_{0}(T\times_{S}T' / T')_{\xi} \, \rrr \, \pi_{0}(T/S)\times_{S}\xi = \pi_{0}(T/S)_{\eta}\times_{\eta}\xi$ 

Or, $T'$ \'etant normal, la proposition \ref{p9.4} permet les identifications 
$$\pi_{0}(T\times_{S}T' / T')_{\xi} = \pi^s(T\times_{S}T' / T')_{\xi} , \quad {\rm et } \quad
\pi_{0}(T/S)_{\eta} = \pi^s(T/S)_{\eta}.
$$ 

La v\'erification de $ii)$ est analogue : on se ram\`ene \`a la situation g\'en\'erique gr\^ace au lemme \ref{l9.1}, 
puis \`a l'\'enonc\'e correspondant pour les $\pi_{0}(T_{\eta}/\eta)$ parce que $S$ est normal, et, 
sur un corps cet \'enonc\'e est d\'emontr\'e dans \cite[I, \S4, 6.10, p.126]{DG70}.
\end{proof}
 
 \appendix
 
 \section{Isomorphismes locaux et mor\-phis\-mes s\'epar\'es}\label{sA}

\subsection{Le morphisme diagonal \cite[\S 4.4  et  \S5]{EGAG}}\label{sA.1}

Soient $f : X \rrr S$ un morphisme de sch\'emas et   $\Delta_{f} : X \rrr X\times_{S}X$ son morphisme diagonal. C'est une immersion, i.e. il existe un ouvert $U$ de $X\times_{S}X$ contenant $\Delta_{f}(X)$ et tel que $\Delta_{f}$ induise une immersion ferm\'ee $X \rrr U$.
\medskip

Rappelons comment interviennent en pratique les propri\'et\'es du morphisme diagonal :
pour  des morphismes $Z \stackrel{h}{\rrr} Y \stackrel{g}{\rrr} X$, des hypoth\`eses sur $gh$ et sur $\Delta_{g}$, entra\^inent souvent  des propri\'et\'es de $h$ ; en effet,  le morphisme $h$ est le compos\'e  $Z \rrr Z\times_{X}Y \rrr Y$, et les carr\'es  repr\'esent\'es ci-dessous sont cart\'esiens
$$
\xymatrix{
&Z \ar[r]^{gh} &X\\
 Z\ar[d]_{h} \ar[r]^<<<<<{(1,h)} &Z\times_{X}Y \ar[u] \ar[r]  \ar[d]^{h\times 1}& Y \ar[u]_{g}\\
 Y \ar[r]_<<<<<{\Delta_{g}}& Y\times_{X}Y &}
 $$
 Par suite, soit {\bf P}  une propri\'et\'e de morphismes qui est stable par changement de base et par composition. Si $gh$ et $\Delta_{g}$ v\'erifient {\bf P}, alors $h$ v\'erifie  {\bf P}. \medskip
 
 \subsubsection{}\label{sA.1.1} Voici un cas particulier utile.
 
 \n Supposons que $g$ soit un monomorphisme (i.e. $\Delta_{g}$ est un isomorphisme), par exemple une immersion, alors le carr\'e
 $$
 \xymatrix{Z \ar[r]^{gh} \ar@{=}[d] & X\\
 Z \ar[r]_{h} & Y \ar[u]_{g}}
 $$
 est cart\'esien.
  
\subsection{Morphismes s\'epar\'es}

 On dit qu'un morphisme de sch\'emas  $f : X \rrr S$ est {\it s\'epar\'e} si son morphisme diagonal $\Delta_{f}$ est une immersion ferm\'ee ; il revient au m\^eme de dire que l'ensemble  $\Delta_{f}(X)$ est ferm\'e dans $X\times_{S}X$.  
 
 \n Un sch\'ema $X$ est dit s\'epar\'e si le morphisme $X \rrr \s({\bf Z})$ est s\'epar\'e.
\medskip

\n Un sch\'ema affine est s\'epar\'e \cite[5.2.2]{EGAG}, et tout ouvert d'un sch\'ema s\'epar\'e est lui-m\^eme s\'epar\'e.
\medskip

\n On dit que $f$ est \emph{quasi-s\'epar\'e} si le morphisme diagonal $\Delta_{f}$ est quasi-compact ; de m\^eme un sch\'ema $X$ est dit quasi-s\'epar\'e si le morphisme $X \rrr \s({\bf Z})$ est quasi-s\'epar\'e.
\medskip

\begin{lemme}\phantomsection\label{lA.1} Si $X$ est un sch\'ema s\'epar\'e (resp. quasi-s\'epar\'e), tout morphisme $f : X\rrr S$ est s\'epar\'e (resp. quasi-s\'epar\'e). R\'eciproquement, si le sch\'ema $S$ est s\'epar\'e (resp. quasi-s\'epar\'e) et si le morphisme $f$ est s\'epar\'e (resp. quasi-s\'epar\'e), alors $X$ est s\'epar\'e (resp. quasi-s\'epar\'e).
\end{lemme}

\begin{proof} Consid\'erons le diagramme suivant, o\`u le carr\'e est cart\'esien
$$
\xymatrix{ X \ar[r]^{\Delta_{f}} & X\times_{S}X \ar[r]^{\delta} \ar[d] & X\times X \ar[d]^{f\times f}\\
& S \ar[r]_{\Delta_{S}} & S\times S .} 
$$
Pour l'\'enonc\'e direct, il suffit d'appliquer \ref{sA.1.1} \`a la premi\`ere ligne du diagramme, puisque $\delta$ est une immersion, tout comme $\Delta_{S}$. Dans l'\'enonc\'e r\'eciproque, on suppose que $\Delta_{S}$, donc aussi $\delta$, sont des immersions ferm\'ees (resp. sont quasi-compacts), ainsi que $\Delta_{f}$ ; le compos\'e $\delta \circ \Delta_{f} = \Delta_{X}$ a donc la m\^eme propri\'et\'e.
\end{proof}

Rappelons que si $U$ et $V$ sont deux ouverts de $X$ le carr\'e suivant est cart\'esien :
$$
\xymatrix{U\times_{S}V \ar[r] & X\times_{S}X\\
U\cap V \ar[r] \ar[u] & X \ar[u]_{\Delta_{f}}
}
$$
\n Cette remarque est \`a la base du crit\`ere suivant \cite[5.3.6]{EGAG}:

\begin{lemme}\phantomsection\label{lA.3} Soient $S$ un sch\'ema affine d'anneau $R$,  et $f : X \rrr S$ un morphisme de sch\'emas. Pour que $f$ soit s\'epar\'e il faut et il suffit que pour tout couple d'ouverts affines $U$ et $V$ de $X$, $U \cap V$ soit un ouvert affine et que le morphisme de $R$-alg\`ebres provenant des restrictions
$$
\varphi_{UV} : \Gamma(U)\otimes_{R} \Gamma(V) \; \rrr \; \Gamma(U\cap V)
$$
soit surjectif.
\end{lemme}


\subsection{Isomorphismes locaux \cite[4.4]{EGAG}}

Un morphisme de sch\'emas $f : X \rrr Y$ est dit un \emph{\isl} si tout point de $X$ est contenu dans un ouvert $U$ de $X$ sur lequel $f$ induit une immersion ouverte $U \rrr Y$.
\medskip

Comme il est signalé dans l'introduction, cette notion traduit et précise l'intuition géométrique du \emph{recoller davantage}. En effet si $U$ et $V$ sont des ouverts de $X$ sur lesquels $f$ induit une immersion ouverte, alors l'image $f(U \cup V)$ est le schéma obtenu en recollant les ouverts $f(U)$ et $f(V)$ - qui sont respectivement isomorphes à $U$ et à $V$ - le long de l'ouvert $f(U) \cap f(V)$, lequel contient $f(U\cap V)$.

La notion d'isomorphisme local sera donc constamment utilisée dans notre article, et cela nous a conduit à approfondir les indications de {\it loc. cit.}.Voir le \S $2.3$. \medskip

Un tel morphisme est ouvert, plat et localement de pr\'esentation finie \cite[6.2.1]{EGAG}.
Pour tout point $x$ de $X$, le morphisme $\theta_{x} : \oo_{Y, f(x)} \rrr \oo_{X, x}$ est alors un isomorphisme. Réciproquement, si un morphisme $f$ est localement de présentation finie, et si les $\theta_{x}$ sont des isomorphismes, alors $f$ est un isomorphisme local \cite[6.6.4]{EGAG}.
\medskip

Noter que la propriété pour un morphisme d'\^{e}tre un isomorphisme local n'est évidemment pas locale pour la topologie fpqc : par exemple, une extension finie séparable de corps $K \rightarrow L$ de degré $n > 1$ n'est pas un isomorphisme local, mais le devient par produit tensoriel :  $\overline{K}\otimes_{K} L \simeq \overline{K}^{n}$. 
\medskip

Un isomorphisme local injectif est une immersion ouverte.

\bb

 Depuis \v{C}ech, un isomorphisme local est devenu familier, celui associé à une famille d'immersions ouvertes $f_{i} : U_{i} \rrr Y$, soit le morphisme
$$
f : X = \coprod_{i} U_{i} \rrr Y .
$$
Notons que ce morphisme est séparé. Cela le caractérise :
\medskip

\begin{prop}\phantomsection\label{pA.1} Soit $f : X \rrr Y$ un isomorphisme local \emph{séparé}. Alors  

\n i) Le morphisme $f$ est associé à une famille d'immersions ouvertes $f_{i} : U_{i} \rrr Y$, au sens o\`u $f$ est isomorphe au morphisme $\coprod_{i} U_{i} \rrr Y$ construit avec les $f_{i}$.

\n ii) Si, de plus, $f$ induit une injection sur l'ensemble des points maximaux de $X$, alors $f$ est une immersion ouverte.

\n iii) Si $Y$ est connexe et si $f$ est entier, alors $f$ est isomorphe au morphisme  $\coprod_{n}Y \rrr Y$, o\`u le premier symbole désigne le schéma somme d'un nombre fini de  copies du schéma $Y$.
\end{prop}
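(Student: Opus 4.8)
La d\'emonstration repose sur une observation unique, valable pour tout isomorphisme local s\'epar\'e $f : X \rrr Y$. Comme $f$ est localement une immersion ouverte, donc localement un monomorphisme, son morphisme diagonal $\Delta_f : X \rrr R := X\times_Y X$ est une immersion ouverte ; comme $f$ est s\'epar\'e, $\Delta_f$ est une immersion ferm\'ee. Ainsi $\Delta_f(X)$ est un ouvert et ferm\'e de $R$. De plus, $f$ \'etant un isomorphisme local, l'espace sous-jacent \`a $R$ est le produit fibr\'e ensembliste (cf. la d\'emonstration de la proposition \ref{p2.1}) : un point de $R$ est la donn\'ee d'un couple $(x_0,x_1)$ de points de $X$ tels que $f(x_0)=f(x_1)$, et il appartient \`a $\Delta_f(X)$ si et seulement si $x_0=x_1$. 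Ce sont ces deux faits qui gouvernent les trois \'enonc\'es.

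Pour \'etablir i), je partirais du faisceau d'ensembles $\mathcal S$ sur $Y$ des sections de $f$, o\`u $\mathcal S(W)$ d\'esigne l'ensemble des sections de $f$ au-dessus d'un ouvert $W\subseteq Y$. Une telle section est \`a la fois une immersion ouverte, comme section d'un morphisme \`a diagonale ouverte, et une immersion ferm\'ee, comme section d'un morphisme s\'epar\'e ; c'est donc une immersion ouverte et ferm\'ee, et les images $s(W)$ sont des ouverts de $X$ que $f$ applique isomorphiquement sur un ouvert de $Y$, qui recouvrent $X$. Le point d\'ecisif est que, pour deux sections $s,t$, leur lieu de co\"incidence $\{y : s_y=t_y\}$ est l'image r\'eciproque de l'ouvert-ferm\'e $\Delta_f(X)$ par $(s,t)$, donc est ouvert et ferm\'e dans le domaine commun : deux feuillets sont, l\`a o\`u ils se rencontrent, recoll\'es le long d'un ouvert-ferm\'e. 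Il resterait \`a organiser $X$ en une r\'eunion disjointe de tels feuillets, soit $X=\coprod_i U_i$, par un argument de Zorn sur les familles d'ouverts deux \`a deux disjoints sur lesquels $f$ induit une immersion ouverte, le caract\`ere ouvert et ferm\'e des lieux de co\"incidence servant \`a rogner une section au voisinage d'un point non encore atteint afin de la rendre disjointe de la famille d\'ej\`a choisie. C'est cette exhaustivit\'e de la d\'ecomposition qui constitue le principal obstacle, d\'elicat lorsque des feuillets s'accumulent ; elle est imm\'ediate lorsque les ouverts de $Y$ en jeu sont connexes (par exemple si $Y$ est irr\'eductible), les lieux de co\"incidence y \'etant alors vides ou pleins.

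L'\'enonc\'e ii) d\'ecoule directement de l'observation initiale, sans recourir \`a i). Si $f$ n'est pas injectif, il existe $x_0\neq x_1$ dans $X$ avec $f(x_0)=f(x_1)$, donc l'ouvert-ferm\'e $Z := R\setminus \Delta_f(X)$ est non vide. Soit $\zeta$ un point maximal de $Z$ ; c'est aussi un point maximal de $R$. Les deux projections $p_0,p_1 : R\rrr X$ sont des isomorphismes locaux, obtenus de $f$ par changement de base, donc $\xi_0:=p_0(\zeta)$ et $\xi_1:=p_1(\zeta)$ sont des points maximaux de $X$ (un isomorphisme local conserve la maximalit\'e, cf. \ref{p2.1}). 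Ils v\'erifient $f(\xi_0)=f(\xi_1)$ et, comme $\zeta\notin\Delta_f(X)$, la description ensembliste de $R$ donne $\xi_0\neq\xi_1$ ; ceci contredit l'injectivit\'e de $f$ sur les points maximaux. Donc $f$ est injectif, et un isomorphisme local injectif est une immersion ouverte.

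Enfin iii) r\'esulte de i). Un morphisme entier \'etant affine et universellement ferm\'e, et un isomorphisme local \'etant localement de pr\'esentation finie, $f$ est fini, donc ferm\'e. \'Ecrivons $X=\coprod_i U_i$ comme en i), $f$ induisant une immersion ouverte $U_i \rt W_i\subseteq Y$. Chaque $U_i$ \'etant ouvert et ferm\'e dans $X$ et $f$ \'etant ferm\'e, $W_i=f(U_i)$ est ouvert et ferm\'e dans $Y$ ; comme $Y$ est connexe et $U_i$ non vide, $W_i=Y$, et $f$ induit un isomorphisme $U_i\rt Y$. Les fibres de $f$ \'etant finies, le nombre d'indices est fini, \'egal au cardinal $n$ d'une fibre ; d'o\`u $X\simeq\coprod_{i=1}^{n}Y$.
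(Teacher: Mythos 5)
Your treatment of ii) is correct and follows a genuinely different, self-contained route: you use the fact that $\Delta_f(X)$ is open and closed in $R=X\times_Y X$ and that the underlying set of $R$ is the set-theoretic fibre product, then pick a maximal point of the open set $R\setminus\Delta_f(X)$ and push it down by the two projections (local isomorphisms, hence preserving maximality of points). This avoids the paper's reliance on i) for ii) and is, if anything, cleaner. Part iii) is argued essentially as in the paper, but it rests on i).

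The genuine gap is in i), and you acknowledge it yourself: the exhaustiveness of the decomposition is precisely the content of the statement, and your Zorn argument on families of pairwise disjoint ``sheets'' does not close. Concretely, given a maximal such family $(U_i)$ and a point $x$ not yet covered, you would take a small open $W\ni x$ on which $f$ is an open immersion and ``trim'' it; but $W\cap U_i$, while open in $W$, is only closed in $W\cap f^{-1}(f(U_i))$ (that is where your coincidence-locus argument lives), not in $W$, so $W\setminus\bigcup_i U_i$ need not be open and there is no evident way to shrink $W$ so as to make it disjoint from the family. The paper resolves exactly this point by running Zorn on a different poset: the set $\mathcal U$ of \emph{all} opens on which $f$ is injective (inductive because injectivity is a property of finite character), and then proving that a \emph{maximal} $U\in\mathcal U$ is closed in $X$. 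The key steps are: separatedness gives a decomposition $f^{-1}(f(U))=U\sqcup V$ with $U$ clopen therein; maximality of $U$ forces $X=U\cup\overline V$ (any $W\in\mathcal U$ disjoint from $\overline V$ satisfies $W\cap f^{-1}f(U)\subset U$, hence $f$ is injective on $W\cup U$, hence $W\subset U$); and $\overline U\cap\overline V=\emptyset$ (a $W\in\mathcal U$ meeting both $U$ and $V$ would contradict the injectivity of $f$ on $W$, since $f(V)\subset f(U)$). You need an argument of this kind, or some other device producing \emph{clopen} sheets; without it, i) --- and therefore also iii) --- is not established.
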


\begin{proof} $i)$\; Désignons par $\mathcal{U}$ l'ensemble des ouverts $U$ de $X$ au dessus desquels la restriction de $f$ est injective, c'est-à-dire une immersion ouverte $U \rrr Y$ ; pour un tel ouvert $U$, $f(U)$ est donc un ouvert de $Y$, et $f$ induit un isomorphisme $U \; \wt{\rrr} \;  f(U)$. Cet ensemble $\mathcal{U}$ est une base de la topologie de $X$. 

 L'injectivité étant une propriété de caractère fini, l'ensemble $\mathcal{U}$ est inductif. Soit $U$ un élément maximal de $\mathcal{U}$. On va montrer que $U$ est fermé dans $X$. Puisque les ouverts de $\mathcal{U}$ forment un recouvrement de $X$, cela montrera qu'il existe une \emph{partition} de $X$ formée de tels ouverts ; c'est une autre fa\c con d'énoncer $i)$. 

Soit donc $U$ un élément maximal de $\mathcal{U}$. L'isomorphisme $U \simeq f(U)$ induit par $f$ se factorise en
$$
U \; \stackrel{u}{\rrr} \; f^{-1}f(U) \; \stackrel{v}{\rrr} \; f(U)
$$
Puisque $v$ est séparé, tout comme $f$, $u$ est une immersion (ouverte et) fermée ; il existe donc un ouvert $V$ de $X$ et une partition 
$$
f^{-1}f(U) \; = \; U \sqcup V .
$$
Montrons que l'on a $X = U \cup \overline{V}$. L'ouvert $X - \overline{V}$ est recouvert par les ouverts $W \in \mathcal{U}$ qui sont  disjoints de $\overline{V}$; un tel ouvert $W$, étant en particulier disjoint de $V$, on a  l'inclusion $W\cap f^{-1}f(U) \subset U$, d'o\`u l'on déduit que $f$ est injective sur l'ouvert $W \cup U$ ; mais comme $U$ a été choisi maximal dans $\mathcal{U}$, on voit que $W \subset U$. 

Montrons ensuite que $\overline{U} \cap \overline{V} = \emptyset$, ce qui, compte tenu de ce qui précède,  entra\^inera l'égalité cherchée $U = \overline{U}$. On raisonne par l'absurde en considérant un élément $x$ de cette intersection. Soit $W$ un ouvert de $\mathcal{U}$ contenant $x$ ; les ouverts $W \cap U$ et $W \cap V$ sont donc non vides ; mais la restriction de $f$ à l'ouvert $(W\cap U)\cup (W \cap V) \subset W$ est injective puisque $W \in \mathcal{U}$, ce qui est absurde en vertu de l'inclusion $f(V) \subset f(U)$.
\medskip

\n $ii)$ (Pour $X$ irréductible, cet énoncé figure aussi dans \cite[4.4.8]{EGAG}.) L'hypothèse implique que les ouverts $f_{i}(U_{i})$ de $Y$ sont deux à deux disjoints. 
\medskip

\n $iii)$ Chaque $U_{i}$ s'identifie à un (ouvert et)  fermé du schéma somme  $X = \coprod_{i\in I} U_{i}$ ; le morphisme $f$ étant fermé,  $f(U_{i})$ est ouvert et fermé dans le schéma $Y$ qui est supposé connexe ; donc chaque $f_{i}$ est un isomorphisme. Enfin, un morphisme entier étant quasi-compact, l'ensemble $I$ est fini. 
\end{proof}

\subsection{Morphismes sch\'ematiquement dominants \cite[5.4]{EGAG}}\label{sA.4}

\begin{defn}\phantomsection\label{dA.1}  On dit qu'un morphisme $t : Y \rrr X$ est \emph{\scd}\, si l'application $\oo_{X} \rrr t_{\star}(\oo_{Y})$ est injective.
\end{defn}

Un tel morphisme est dominant, i.e. l'ensemble $t(Y)$ est dense dans $X$.
Une immersion \scd e est ouverte.
Une immersion ferm\'ee qui est \scd e  est un isomorphisme.

Pour un sch\'ema int\`egre  $X$, de point g\'en\'erique $\eta$, le morphisme $\eta \rrr X$ est \scd. Plus g\'en\'eralement, si $X$ est r\'eduit et si l'ensemble de ses points maximaux est fini, en notant $X_{0}$ le sch\'ema somme de ces points, le morphisme $X_{0} \rrr X$ est quasi-compact, quasi-s\'epar\'e et \scd.
\medskip

Soit $U$ un ouvert d'un sch\'ema $X$. Si l'immersion  $U\rrr X$ est \scd e, alors $U$ contient les points maximaux de $X$. Si $X$ est localement noeth\'erien, $U \rrr X$ est sch\'ematiquement dominant si et seulement si $U$ contient l'ensemble Ass$(X)$ des points associ\'es \cite[3.1.8]{EGAIV2}.

\begin{lemme}\phantomsection \label{lA.5} Soit $t : Y \rrr X$ un morphisme quasi-compact, quasi-s\'epar\'e et \scd. Pour tout $\oo_{X}$-module $P$, quasi-coh\'erent et plat, l'application
$$
P \; \rrr \; t_{\star}t^{\star}(P)
$$
\it est injective. En particulier, si $t^{\star}(P) = 0$, alors $P = 0$.
\end{lemme}

\begin{proof} La platitude de $P$ entra\^ine que l'application $P \rrr P\otimes_{\oo_{X}}t_{\star}(\oo_{Y})$ est injective. Il suffit donc voir que l'application $P\otimes_{\oo_{X}}t_{\star}(\oo_{Y}) \rrr t_{\star}t^{\star}(P))$ est un isomorphisme. C'est \'evident pour $P = \oo_{X}$, puis pour $P$ libre de type fini. Dans le cas g\'en\'eral, on peut se restreindre au cas o\`u $X$ est affine, $X = \s(A)$ ; puisque $P$ est quasi coh\'erent il est associ\'e \`a un $A$-module plat, lequel est donc limite inductive filtrante de $A$-modules libres de type fini. D'o\`u le r\'esultat.
\end{proof}

La propri\'et\'e suivante sera tr\`es utilis\'ee.

\begin{lemme}[\protect{\cite[9.3.3]{EGAG}}]\phantomsection \label{lA.6} Soit $t : Y \rrr X$ un morphisme quasi-compact, quasi-s\'epar\'e et \scd. Alors le morphisme $t' : X'\times_{X}Y \rrr X'$ obtenu par un changement de base \emph{plat} $X' \rrr X$ est encore \scd.
\end{lemme}

\subsection{Adh\'erence sch\'ematique  \cite[6.10]{EGAG}}

Soit $t : Y \rrr X$ un morphisme quasi-compact et quasi-s\'epar\'e. L'image directe  $t_{\star}(\oo_{Y})$ est une $\oo_{X}$-alg\`ebre quasi-coh\'erente ; le noyau $\mathcal{I}$ du morphisme canonique $\oo_{X} \rrr t_{\star}(\oo_{Y})$ est alors quasi-coh\'erent.

\n \emph{L'adh\'erence sch\'ematique} de $t$, ou de $Y$ dans $X$, est  le ferm\'e $Z$ de $X$ d\'efini par l'id\'eal $\mathcal{I}$. Le morphisme $t :Y \rrr X$ se factorise donc en
 $$
 Y \; \stackrel{v}{\rrr} \; Z \; \stackrel{u}{\rrr} \; X
 $$
 o\`u $u$ est une immersion ferm\'ee et o\`u $v$ est \emph{\scd}, i.e. l'application $\oo_{Z} \rrr v_{\star}(\oo_{Y})$ est injective \cite[5.4.2]{EGAG}.

 \begin{lemme}[\protect{\cite[2.3.2]{EGAIV2}}]\phantomsection \label{lA.7} La formation de l'adh\'erence sch\'ematique d'un morphisme quasi-compact et quasi-s\'epar\'e commute aux changements de base plats.
 \end{lemme}

Si $Y$ est un sous-espace localement fermé d'un espace topologique $X$, on sait que $Y$ est ouvert dans son adhérence dans $X$ ([TG] I.20). Voici l'énoncé analogue pour les schémas.

 \begin{prop}\phantomsection \label{lA.8} Soient $t : Y \rrr X$ une immersion quasi-compacte, et $Y\stackrel{w}{\rrr} U \stackrel{j}{\rrr} X$ une factorisation de $t$, où $w$ est une immersion fermée et $j$ une immersion ouverte. Soit, d'autre part, $Y \; \stackrel{v}{\rrr} \; Z \; \stackrel{u}{\rrr} \; X$ la factorisation de $t$ par adh\'erence sch\'ematique. Alors, le carré
 $$
 \xymatrix{Y \ar[r]^{v} \ar[d]_{w} & Z \ar[d]^{u}\\
 U \ar[r]_{j}&X}
 $$
 est cartésien; en particulier $v$ est une immersion ouverte. De plus, $v$  quasi-compacte, et l'espace sous-jacent \`a $Z$ est l'adh\'erence de $Y$dans $X$.
\end{prop}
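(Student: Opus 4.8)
The plan is to deduce the cartesian property from the compatibility of the schematic closure with flat base change (lemme \ref{lA.7}), applied to the open immersion $j$, which is flat. First note that $t$, being an immersion, is a monomorphism, hence separated, hence quasi-s\'epar\'e; together with the quasi-compactness hypothesis this ensures that the schematic closure $Z$ exists and that lemme \ref{lA.7} applies to $t$. The square commutes, since $uv = t = jw$.

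The key step is the following identification. Because $t = jw$ factors through the open immersion $j$, the projection identifies the base change $Y\times_{X}U$ with $Y$ itself (one has $Y\times_{X}U = t^{-1}(U) = Y$ as $t(Y)\subset U$), the induced morphism $Y\times_{X}U \rrr U$ being exactly $w$. Now $w$ is a closed immersion, so its own schematic closure in $U$ is $Y$: indeed the ideal $\ker(\oo_{U}\rrr w_{\star}(\oo_{Y}))$ is precisely the ideal defining the closed subscheme $Y$, and $w$ factors as $Y \stackrel{\mathrm{id}}{\rrr} Y \stackrel{w}{\rrr} U$ with $\mathrm{id}$ \scd. On the other hand, lemme \ref{lA.7} asserts that the base change $Z\times_{X}U$ of the schematic closure $Z$ coincides with the schematic closure of $w$, namely $Y$. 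Hence $Z\times_{X}U = Y$, the two projections being $v$ and $w$; this is exactly the statement that the square is cartesian.

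It remains to read off the three consequences. From the cartesian square, $v : Y \rrr Z$ is the base change of the open immersion $j$ along $u$, and open immersions are stable under base change, so $v$ is an open immersion. For the quasi-compactness of $v$ I would instead use that $u$ is a monomorphism: by \ref{sA.1.1} the square with top edge $uv = t$, right edge $u$, and left edge the identity of $Y$ is cartesian, so $v$ is the base change of the quasi-compact morphism $t$ along $u$, hence quasi-compact. Finally, the identification of the underlying space of $Z$ with the topological closure of $t(Y)$ in $X$ is \cite[6.10.5]{EGAG}. I expect no serious obstacle here: the only point demanding care is the identification $Y\times_{X}U \simeq Y$ together with the remark that the schematic closure of the closed immersion $w$ is $Y$ itself, after which lemme \ref{lA.7} does all the work.
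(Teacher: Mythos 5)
Your proof is correct, but it follows a genuinely different route from the paper's. The paper forms the fibre product $V=U\times_{X}Z$ and studies the induced morphism $\theta:Y\rrr V$ directly: $\theta$ is \scd\ because it is the restriction of the \scd\ morphism $v$ to the open subscheme $V$ of $Z$, and it is a closed immersion by the cancellation principle of \ref{sA.1.1} (since $w=u'\theta$ is a closed immersion and $u'$ is one); a closed immersion that is \scd\ is an isomorphism, which is exactly the cartesian property. You instead invoke the flat base change lemma \ref{lA.7} along the open immersion $j$, combined with the (correct) observations that $Y\times_{X}U\simeq Y$ with projection $w$, and that a closed immersion is its own schematic closure. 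Both arguments are sound; yours is shorter and conceptually transparent once \ref{lA.7} is granted, while the paper's is more self-contained, using only the definition of \scd\ and the cancellation trick rather than the heavier external input \cite[2.3.2]{EGAIV2} behind \ref{lA.7}. Your verification that $t$ is quasi-separated (immersion $\Rightarrow$ monomorphism $\Rightarrow$ separated) is exactly the point needed to legitimize applying \ref{lA.7}. For the remaining claims you and the paper essentially coincide: quasi-compactness of $v$ via \ref{sA.1.1} in both cases, and for the underlying space of $Z$ the paper argues directly that $v$, being \scd, is dominant, whereas you cite \cite[6.10.5]{EGAG} — equivalent conclusions.
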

 
\begin{proof} L'existence de l'immersion  fermée $w$ et de l'immersion ouverte $j$ provient de la définition d'une immersion;  la factorisation de $t$ par adhérence schématique existe puisque $t$ est supposée quasi-compacte; on en déduit immédiatement que $v$ est quasi-compact (\ref{sA.1.1}). Le produit cartésien $V = U\times_{U}Z$ s'insère dans le diagramme commutatif suivant
$$
\xymatrix{&&Z\ar[dr]^{u} &\\
Y\ar[rru]^{v} \ar[r]^{\theta} \ar[rrd]_{w} & V\ar[ru]_{j'} \ar[dr]^{u'} && X\\
&& U\ar[ru]_{j}&} 
$$
 dans lequel $j$, et donc aussi $j'$, sont des immersions ouvertes. Il s'agit de voir que le morphisme $\theta$ est bijectif. Or, $\theta$ est \scd\,  puisque c'est la restriction à l'ouvert $V \subset Z$ du morphisme \scd \, $v = j'\theta$ ; de plus, $u'$ est une immersion fermée, tout comme $u$, ainsi que $w = u'\theta$ par hypothèse ; donc $\theta$ est aussi une immersion fermée, et la conjonction de ces deux propriétés implique que $\theta$ est  un isomorphisme.
 Enfin, comme $v : Y \rrr Z$ est \scd, il est dominant i.e. $\overline{v(Y)} = Z$.
 \end{proof}
  
\begin{lemme}[Unicit\'e de l'adh\'erence sch\'ematique]\phantomsection \label{lA.9} Consid\'erons un carr\'e commutatif de morphismes de sch\'emas
  $$
  \xymatrix{Y  \ar[r]^v \ar[d]_{v'} & Z \ar[d]^{u}\\
  Z' \ar[r]_{u'}& X .} 
  $$
 On suppose que $u$ et $u'$ sont des immersions ferm\'ees d\'efinies respectivement par les id\'eaux $\mathcal{I}$  et $\mathcal{I'}$ de $\oo_{X}$.
 Si le morphisme $v$ est sch\'ematiquement dominant, i.e. si \, $\oo_{Z} \rrr v_{\star}(\oo_{Y})$ est injectif, alors $\mathcal{I'} \subset \mathcal{I}$, et il existe un morphisme $w : Z \rrr Z'$ rendant les deux triangles commutatifs.
 
 \n Si, de plus, $v'$ est aussi sch\'ematiquement dominant, alors $w$ est un isomorphisme.\qed
 \end{lemme}

\begin{lemme}\phantomsection \label{lA.10} Soient $t : Y \rrr X$ et $t' : Y' \rrr X'$ des morphismes quasi-compacts et quasi-s\'epar\'es, $Y \; \stackrel{v}{\rrr} \; Z \; \stackrel{u}{\rrr} \; X$  et 
 $Y' \; \stackrel{v'}{\rrr} \; Z' \; \stackrel{u'}{\rrr} \; X'$ leur factorisation par adh\'erence sch\'ematique. Pour tout couple $f : X \rrr X'$  et  $g : Y \rrr Y'$ de morphismes de sch\'emas tels que $t'g = ft$, il existe un unique morphisme $h : Z \rrr Z'$ rendant commutatif le diagramme suivant.
 $$
 \xymatrix{Y \ar[r]^g \ar[d]_{v} &Y' \ar[d]^{v'}\\
 Z \ar[d]_{u} \ar[r]^h & Z' \ar[d]^{u'}\\
 X \ar[r]_{f} & X'}
 $$
 \end{lemme}
 
\begin{proof} L'unicit\'e de $h$ tel que $u'h = fu$ provient de ce que $u'$ est une immersion ferm\'ee, donc un monomorphisme. Pour voir l'existence  de $h$  introduisons  le produit fibr\'e $Z'' = X\times_{X'}Z'$ ; on a donc le diagramme commutatif suivant
 $$
  \xymatrix{&Y  \ar[dl]_{v} \ar[d]^{v''} \ar[r]& Y' \ar[d]\\
  Z \ar@{-->}[r] \ar[dr]_{u}&Z'' \ar[d]^{u''} \ar[r] & Z' \ar[d]\\
  & X \ar[r] & X'} 
  $$
 auquel on peut appliquer le lemme pr\'ec\'edent, qui fournit un morphisme $Z\rrr Z''$ ;  en le composant avec la projection $Z'' = X\times_{X'}Z' \rrr Z'$, on obtient le morphisme $h$ cherch\'e.
\end{proof}

\begin{lemme}\phantomsection \label{lA.11}  Soit $X$ un sch\'ema dont les anneaux locaux sont int\`egres, par ex. un sch\'ema normal, et qui n'a qu'un nombre fini de composantes irr\'eductibles. Soit $t : Y \rrr X$ un morphisme plat quasi-compact et quasi-s\'epar\'e, et 
$$
Y \stackrel{v}{\rrr} Z  \stackrel{u}{\rrr} X
$$
 sa factorisation par adh\'erence sch\'ematique. Alors $u$ est une immersion ouverte (et ferm\'ee)  ; en particulier, $u$ est plat.
 \end{lemme}
  
\begin{proof} Comme les anneaux locaux de $X$ sont int\`egres, les composantes irr\'eductibles de $X$ sont deux \`a deux disjointes ; \'etant en nombre fini, elles sont donc aussi ouvertes. De plus, $X$ est r\'eduit.
 
 \n L'image par le morphisme plat $t$ d'un point maximal $y$ de $Y$ est un point maximal $t(y)$ de $X$ ; soit $Z$ la r\'eunion des adh\'erences de ces points maximaux $t(y)$ ; c'est un ouvert et ferm\'e de $X$, que l'on munit de la structure de sch\'ema induite. Il est clair que $t$ se factorise en  $Y \stackrel{v}{\rrr} Z  \stackrel{u}{\rrr} X$. Il reste \`a voir que le morphisme $\oo_{Z} \rrr v_{\star}(\oo_{Y})$ est injectif ; mais $Z$ est r\'eduit, tout comme $X$, et les points maximaux de $Z$ sont dans $v(Y)$. 
 \end{proof}

\section{Adh\'erence sch\'ematique d'une  relation d'\'e\-qui\-va\-len\-ce}\label{sB}

Soit $T \rrr S$ un morphisme de sch\'emas. Soit $R$ le sous-sch\'ema ferm\'e de $T\times_{S}T$ adh\'erence sch\'ematique de la diagonale. On d\'egage ici une condition pour que $R$ d\'efinisse une relation d'\'equivalence effective sur $T$.
\vspace{1cm}

 Nous suivrons les notations et les conventions d'indices adopt\'ees par {\sc Gabriel} dans \cite[V, \S\S1 \`a 3]{SGA3}. 
En particulier, les morphismes de projection entre produits,  $p_{k} :X^n \rrr X^{n-1}$ sont index\'es de $0$ \`a $n-1$, l'indice $k$ d\'esignant la composante \emph{omise} ; ainsi, $p_{0}(x_{0}, x_{1}, x_{2}) = (x_{1}, x_{2})$.

\enlargethispage*{20pt}

\subsection{Vocabulaire}

\begin{para}\phantomsection Un sch\'ema de base $S$ est fix\'e, et sera omis en indice dans les produits de $S$-sch\'emas.
Soit $T_{0}$ un $S$-sch\'ema, et soit $d : T_{1} \rrr T_{0}^2\, ( = T_{0}\times_{S}T_{0})$ une immersion. Rappelons comment traduire en termes de diagrammes le fait que cette donn\'ee d\'efinisse une relation d'\'equivalence sur $T_{0}$. 

\n La reflexivit\'e \'equivaut \`a l'existence d'un morphisme $\varepsilon : T_{0} \rrr T_{1}$ tel que la diagonale se factorise en $T_{0} \stackrel{\varepsilon}{\rrr} T_{1} \stackrel{d}{\rrr} T_{0}^2$, et la sym\'etrie se traduit par l'existence d'un automorphisme $\sigma$ de $T_{1}$ compatible avec la permutation des facteurs de $T_{0}^2$. 

La transitivit\'e requiert un peu d'attention.

\n Soient 
$$\xymatrix{T_{1} \ar@<0.5ex>[r]^{d_{1}} \ar@<-0.5ex>[r]_{d_{0}}& T_{0}}
$$ 
les morphismes obtenus en composant $d$ et les projections $p_{0}, p_{1} :\xymatrix{T_{0}^2 \ar@<0.5ex>[r] \ar@<-0.5ex>[r]& T_{0}}$.

 \n On introduit le produit fibr\'e $T_{2} = (T_{1}, d_{0})\times_{T_{0}}(T_{1}, d_{1})$, d'o\`u  un carr\'e cart\'esien
 $$
\xymatrix{T_{2} \ar[d]_{d'_{2}} \ar[r]^{d'_{0}} &T_{1} \ar[d]^{d_{1}}\\
T_{1}\ar[r]_{d_{0}} & T_{0}
}
$$
Pour clarifier $T_{2}$ on peut ins\'erer ce carr\'e dans le diagramme suivant
\begin{equation}\label{eqB.1}
\xymatrix{T_{2} \ar[rr]^{d'_{0}} \ar[dr]^{d'} \ar[d]_{u} && T_{1} \ar[d]^{d}  \ar@/^2pc/[dd]^{d_{1}}\\
V \ar[d] \ar[r]_{v} &T_{0}^3 \ar[r]^{p_{0}} \ar[d]_{p_{2}} & T_{0}^2 \ar[d]^{p_{1}}\\
T_{1} \ar[r]_{d} \ar@/_2pc/[rr]_{d_{0}}& T_{0}^2 \ar[r]_{p_{0}} & T_{0}
}
\end{equation}
o\`u on a not\'e $V = (T_{1}, d_{0})\times_{T_{0}}(T^2_{0}, p_{1})$, de sorte que tous les carr\'es sont cart\'esiens, et o\`u $d' = vu$. Puisque $u$ et $v$ proviennent de $d$ par changement de base, ce sont des immersions, et il en est de m\^eme de $d'$ ; cela montre aussi que si $d$ est une immersion ferm\'ee, alors $d'$ est une immersion ferm\'ee.

Si on interpr\`ete, via $d$, $T_{1}$ comme l'ensemble des couples $(x, y)$ d'\'el\'ements  de $T_{0}$ ``en relation'', (notation : $x \sim y$), alors le morphisme $d' : T_{2} \rrr T_{0}^3$  permet de voir (de concevoir) un \'el\'ement de $T_{2}$  comme un triplet $(x, y, z)$ tel que $x \sim y$  et $y\sim z$, les projections \'etant $ d'_{0}(x, y, z) = (y, z)$ et $d'_{2}(x, y, z) = (x, y)$. La transitivit\'e exige que l'on ait alors $x\sim z$ ; comme $(x, z) = p_{1}(x, y, z)$, on peut \'enoncer, plus correctement,
\medskip

\n {\it Pour qu'un couple de morphismes $d_{0}, d_{1} : \xymatrix{T_{1} \ar@<0.5ex>[r] \ar@<-0.5ex>[r]& T_{0}}$ d\'efinisse un relation transitive sur $T_{0}$ il faut que, en posant comme plus haut, $T_{2} = (T_{1}, d_{0})\times_{T_{0}}(T_{1}, d_{1})$,  il existe un morphisme $d'_{1} : T_{2} \rrr T_{1}$ tel que $dd'_{1} = p_{1}d'$ (un tel morphisme est alors unique puisque $d$ est une immersion)}
\medskip

\n Autrement dit on doit avoir un morphisme de $S$-sch\'emas semi-simpliciaux (tronqu\'es)
\begin{equation}\label{eqB.2}
\xymatrix{T_{2} \ar@<1ex>[rr]^{d'_{0},d'_{1},d'_{2}}  \ar[rr]  \ar@<-1ex>[rr]  \ar[d]_{d'}&& T_{1} \ar@<0.5ex>[rr]^{d_{0},d_{1}} \ar@<-0.5ex>[rr]  \ar[d]^{d} && T_{0} \ar@{=}[d]\\
T_{0}^3  \ar@<1ex>[rr]  \ar[rr]  \ar@<-1ex>[rr]_{p_{0},p_{1},p_{2}} &&T_{0}^2  \ar@<0,5ex>[rr]  \ar@<-0.5ex>[rr]_{p_{0}, p_{1}} && T_{0} }
\end{equation}
\end{para}
 
\subsection{\'Equivalence et platitude}
 
\begin{prop}\phantomsection \label{pB.1} Soit $S$ un sch\'ema. Soit $T_{\star}$ une relation d'\'equivalence, au sens d\'ecrit plus haut, telle que le morphisme $d : T_{1} \rrr T_{0}^2$ soit une immersion quasi-compacte.
 Consid\'erons la factorisation de $d$ par adh\'erence sch\'ematique, soit
 $$
 T_{1} \stackrel{i}{\rrr} \wt{T_{1}} \stackrel{\wt{d}}{\rrr} T^2_{0}
 $$
 Alors, si les  morphismes  $\xymatrix{\wt{d_{1}}, \wt{d_{0}} : \wt{T_{1}} \ar@<0.5ex>[r] \ar@<-0.5ex>[r]& T_{0}}$, compos\'es de $\wt{d}$ et des projections sur $T_{0}$,
sont plats, ils proviennent d'une relation d'\'equivalence sur $T_{0}$.
\end{prop}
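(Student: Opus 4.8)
The plan is to equip the closed immersion $\wt{d}:\wt{T_1}\rrr T_0^2$ with the three structures (reflexivity, symmetry, transitivity) that make it an equivalence relation, transporting each from $T_\star$ across the schematic closure. The decisive preliminary observation is that, because $d$ is a \emph{quasi-compact immersion}, Proposition \ref{lA.8} shows that $i:T_1\rrr\wt{T_1}$ is a quasi-compact, schematically dominant \emph{open} immersion; it is therefore separated, quasi-separated and, above all, \emph{flat}. Consequently the two projections $d_0=\wt{d_0}\circ i$ and $d_1=\wt{d_1}\circ i$ are flat as well, being compositions of the flat open immersion $i$ with the flat morphisms $\wt{d_j}$. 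This flatness of $d_0,d_1$, which is not among the hypotheses but follows from them, is exactly what will make the key base-change step work.

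Reflexivity and symmetry are immediate and use no flatness. For reflexivity take $\wt{\varepsilon}=i\circ\varepsilon:T_0\rrr\wt{T_1}$; then $\wt{d}\circ\wt{\varepsilon}=d\circ\varepsilon=\Delta_{T_0}$, so the diagonal factors through $\wt{T_1}$. For symmetry, the involution $\sigma$ of $T_1$ compatible with the flip $\tau$ of $T_0^2$ satisfies $d\circ\sigma=\tau\circ d$; applying the functoriality of schematic closure (Lemme \ref{lA.10}) to the pair $(\tau,\sigma)$ produces a unique $\wt{\sigma}:\wt{T_1}\rrr\wt{T_1}$ with $\wt{d}\circ\wt{\sigma}=\tau\circ\wt{d}$, and uniqueness forces $\wt{\sigma}^2=\mathrm{id}$.

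The heart of the proof is transitivity. Writing $T_2=(T_1,d_0)\times_{T_0}(T_1,d_1)$ and $\wt{T_2}=(\wt{T_1},\wt{d_0})\times_{T_0}(\wt{T_1},\wt{d_1})$, with $d':T_2\rrr T_0^3$ the canonical morphism of \eqref{eqB.1} and $\wt{d'}:\wt{T_2}\rrr T_0^3$ its analogue built from $\wt{d}$, the claim I would establish is that the canonical map $\iota_2=i\times_{T_0}i:T_2\rrr\wt{T_2}$ is \scd. To prove this I would factor it as $T_2\xrightarrow{\,\mathrm{id}\times i\,}N\xrightarrow{\,i\times\mathrm{id}\,}\wt{T_2}$ through $N=(T_1,d_0)\times_{T_0}(\wt{T_1},\wt{d_1})$. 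Each factor is a base change of the \scd\ morphism $i$: the first along the projection $N\rrr\wt{T_1}$ onto the second factor, which is a base change of $d_0$, and the second along the projection $\wt{T_2}\rrr\wt{T_1}$ onto the first factor, which is a base change of $\wt{d_1}$. Since $d_0$ and $\wt{d_1}$ are flat, both projections are flat, so by Lemme \ref{lA.6} each factor is \scd; as \scd\ morphisms compose (left-exactness of the direct image), $\iota_2$ is \scd.

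Granting this, transitivity follows at once. Let $\mathcal I\subset\oo_{T_0^2}$ be the ideal of the closed immersion $\wt{d}$, and consider the outer-pair morphism $p_1\wt{d'}:\wt{T_2}\rrr T_0^2$, where $p_1$ omits the middle factor. By naturality $p_1\wt{d'}\circ\iota_2=p_1 d'=d\circ d_1'$, where $d_1':T_2\rrr T_1$ is the composition law of $T_\star$; in particular $\mathcal I$ maps to $0$ in $\oo_{T_2}$. Because $\iota_2$ is \scd, the induced map $\oo_{\wt{T_2}}\rrr(\iota_2)_\star\oo_{T_2}$ is injective, so the image of $\mathcal I$ already vanishes in $\oo_{\wt{T_2}}$; hence $p_1\wt{d'}$ factors through the closed immersion $\wt{d}$, yielding the unique $\wt{d_1'}:\wt{T_2}\rrr\wt{T_1}$ with $\wt{d}\circ\wt{d_1'}=p_1\wt{d'}$. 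Together with reflexivity and symmetry this exhibits $\wt{d}:\wt{T_1}\rrr T_0^2$ as an equivalence relation on $T_0$ with projections $\wt{d_0},\wt{d_1}$. The main obstacle is precisely the \scd\ statement for $\iota_2$: in the factorization one of the two base changes is taken along $d_0$ (resp. $d_1$), which is not flat by hypothesis; what rescues the argument is the observation of the first paragraph that $i$ is an \emph{open} immersion, so that $d_0,d_1$ are in fact flat and Lemme \ref{lA.6} applies to both steps.
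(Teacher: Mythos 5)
Your proof is correct and follows essentially the same route as the paper's: the decisive step in both is showing that $T_2\to(\wt{T_1},\wt{d_0})\times_{T_0}(\wt{T_1},\wt{d_1})$ is \scd\ via the identical two-step factorization through $(T_1,d_0)\times_{T_0}(\wt{T_1},\wt{d_1})$, using that $i$ is a quasi-compact \scd\ open immersion (so $d_0$ is flat) for one base change and the hypothesis that $\wt{d_1}$ is flat for the other. The only (harmless) organizational difference is that the paper additionally identifies this fiber product with the schematic closure of $T_2$ in $T_0^3$ and then extracts $\wt{d'_1}$ from the functoriality lemma \ref{lA.10} applied to the truncated semi-simplicial diagram, whereas you obtain $\wt{d'_1}$ directly from the vanishing of the ideal of $\wt{d}$, bypassing that identification.
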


\begin{proof} Notons d'abord que l'hypoth\`ese implique que les morphis\-mes $d_{0}, d_{1} :\xymatrix{T_{1} \ar@<0.5ex>[r] \ar@<-0.5ex>[r]& T_{0}}$ sont plats puisque $i$ est une immersion ouverte (lemme \ref{lA.8}), et que $d_{j} = \wt{d_{j}}\circ i$. Consid\'erons le morphisme \eqref{eqB.2} de sch\'emas semi-simpliciaux (tronqu\'es) et les factorisations par adh\'erence sch\'ematique des morphisme verticaux $d$ et $d'$. En appliquant le lemme \ref{lA.10}, on obtient un sch\'ema semi-simplicial $\wt{T}_{\star}$ et des morphismes 
 $$
 T_{\star} \rrr \wt{T}_{\star} \rrr T_{0}^{\star +1}
 $$
 En particulier, le morphisme $d'_{1} : T_{2}\rrr T_{1}$ induit un morphisme analogue $\wt{d'_{1}}: \wt{T_{2}} \rrr \wt{T_{1}}$ ; le m\^eme lemme donne aussi imm\'ediatement les morphismes de r\'eflexivit\'e et de sym\'etrie pour $\wt{T_{1}}$. Le seul point non formel est que l'adh\'erence sch\'ematique $\wt{T_{2}}$   de $T_{2}$ dans $T_{0}^3$ soit le produit fibr\'e attendu $(\wt{T_{1}}, \wt{d_{0}})\times_{T_{0}}(\wt{T_{1}}, \wt{d_{1}})$ ; c'est pour cela qu'il faut supposer que les morphismes $\wt{d_{1}}$ et $ \wt{d_{0}}$ sont plats.

 Pour le montrer, notons d'abord que, dans la factorisation de $d$ par adh\'erence sch\'ematique $T_{1} \stackrel{i}{\rrr} \wt{T_{1}} \stackrel{\wt{d}}{\rrr} T^2_{0}$, 
 le morphisme $i$ est une immersion ouverte quasi-compacte (lemme \ref{lA.8}), et \scd e. Consid\'erons le diagramme
 $$
\xymatrix{T_{2} \ar[rr]^{d'_{0}} \ar[dr]^{w} \ar[d]_{u} && T_{1} \ar[d]^{i}  \ar@/^2pc/[dd]^{d_{1}}\\
V \ar[d] \ar[r]_{v} & U \ar[r]^{\wt{d'_{0}}} \ar[d]_{\wt{d'_{2}}} & \wt{T_{1}} \ar[d]^{\wt{d_{1}}}\\
T_{1} \ar[r]_{i} \ar@/_2pc/[rr]_{d_{0}}& \wt{T_{1}} \ar[r]_{\wt{d_{0}}} & T_{0}}
$$
 o\`u on a not\'e $U = (\wt{T_{1}}, \wt{d_{0}})\times_{T_{0}}(\wt{T_{1}}, \wt{d_{1}})$, et $V = (T_{1}, d_{0})\times_{T_{0}}(\wt{T_{1}}, \wt{d_{1}})$. On veut montrer que le produit fibr\'e $U$ est isomorphe \`a l'adh\'erence sch\'ematique $\wt{T_{2}}$ ; il est d'abord clair (surtout si on a le diagramme  \eqref{eqB.1} pr\'esent \`a l'esprit) que le morphisme $U \rrr T_{0}^3$ est une immersion ferm\'ee ; il reste donc \`a voir  que $w = vu$ est \scd . Or, tous les carr\'es du diagramme sont cart\'esiens  ; comme $d_{0}$ est plat, il en est de m\^eme du morphisme horizontal m\'edian $\wt{d'_{0}}v: V \rrr \wt{T_{1}}$, donc $u$ est \scd \, tout comme $i$  (lemme \ref{lA.7}); d'autre part, dans le carr\'e en bas \`a gauche, $i$ est quasi-compact et \scd\,, et $\wt{d'_{2}}$ est plat puisque $\wt{d_{1}}$ l'est par hypoth\`ese, donc $v$ est \scd.
 \end{proof}
  
 Le même raisonnement montrerait que si $\wt{d_{0}}$  et $\wt{d_{1}}$ sont universellement ouverts, alors ils proviennent d'une relation d'équivalence.
 
 \renewcommand{\refname}{Bibliographie}

\end{document}